\documentclass[a4paper,10pt]{amsart}
\usepackage{amsmath,amsthm,amssymb,enumerate}
\usepackage{epsfig}
\usepackage{amssymb}
\usepackage{amsmath}
\usepackage{amssymb}
\usepackage{amsmath,amsthm}
\usepackage[latin1]{inputenc}
\usepackage{bm}
\usepackage{bbm}
\usepackage{esint}
\usepackage{tikz-cd}
\usetikzlibrary{decorations.pathreplacing}
\usepackage{amsfonts}
\usepackage{amsxtra}
\usepackage{euscript,mathrsfs}
\usepackage{color}
\usepackage[left=3.75cm,right=3.75cm,top=4cm,bottom=3cm]{geometry}
\usepackage[colorlinks=true, linktocpage=true, linkcolor=red!70!black, citecolor=green!50!black]{hyperref}
\allowdisplaybreaks

\usepackage{tikz-cd}
\usetikzlibrary{decorations.pathreplacing}

\usepackage{enumitem}
\setenumerate{label={\rm (\alph{*})}}

\usepackage{amsfonts}
\usepackage{amsxtra}

\numberwithin{equation}{section}

\newtheorem{theorem}{Theorem}[section]
\newtheorem{lemma}[theorem]{Lemma}
\newtheorem{proposition}[theorem]{Proposition}

\newtheorem{corollary}[theorem]{Corollary}

\newtheorem{remark}[theorem]{Remark}

\numberwithin{equation}{section}

\usepackage{xcolor}
\colorlet{ColorPink}{red!30}
\usepackage{graphicx}

\newcommand{\R}{\mathbb R}

\newcommand{\E}{\operatorname{E}\!}

\newcommand{\dista}{\operatorname{dist}}
\newcommand{\dif}{\mathrm{d}}

\DeclareMathOperator{\dist}{dist}

\newcommand{\sym}{\operatorname{sym}}

\renewcommand{\dif}{\operatorname{d}\!}
\newcommand{\lebe}{\operatorname{L}}
\newcommand{\sobo}{\operatorname{W}}

\newcommand{\locc}{\operatorname{loc}}

\newcommand{\hold}{\operatorname{C}}

\newcommand{\sg}{\varepsilon}

\newcommand{\bv}{\operatorname{BV}}

\newcommand{\ball}{\operatorname{B}}
\newcommand{\di}{\operatorname{div}}
\newcommand{\bd}{\operatorname{BD}}
\newcommand{\gm}{\operatorname{GM}}
\newcommand{\A}{\mathbb{A}}
\newcommand{\rsym}{\mathbb{R}_{\operatorname{sym}}^{n\times n}}
\newcommand{\ld}{\operatorname{LD}}

\newcommand{\mres}{%
  \,\raisebox{-.127ex}{\reflectbox{\rotatebox[origin=br]{-90}{$\lnot$}}}\,%
}

\newcommand{\D}{\operatorname{D}}

\newcommand{\dashint}{\fint}
\newcommand{\spt}{\operatorname{spt}}

\renewcommand{\mres}{\mathbin{\vrule height 1.6ex depth 0pt width
0.13ex\vrule height 0.13ex depth 0pt width 1.3ex}}
\newcommand{\rscew}{\mathbb{R}_{\operatorname{scew}}^{n\times n}}
\newcommand{\excenew}{\Phi}
\newcommand{\excesso}{\widetilde{\Phi}}
\newcommand{\devi}{\operatorname{dev}_{\alpha}}
\newcommand{\Lip}{\operatorname{Lip}}
\newcommand{\comp}{\operatorname{comp}}
\newcommand{\con}{\operatorname{con}}
\newcommand{\scew}{\operatorname{scew}}
\newcommand{\m}{\operatorname{m}}

\newcommand{\decay}{\operatorname{dec}}

\allowdisplaybreaks
\newcommand{\M}{\operatorname{M}}

%%%%%%%%%%%%%%%%%%%%%%%%%%%%%%%%%%%%%%%%%%%%%%%%%%%%%%%%%%%%%%%%%%%%%%%%%%%%%%%%
%%%
%%% define the titlepage
%%%
%%%%%%%%%%%%%%%%%%%%%%%%%%%%%%%

%\setlength{\parindent}{0pt} %Setzt den Erstzeileneinzug auf Null.
%\setlength{\parskip}{\baselineskip} %Trennt Abs\"{a}tze durch eine zus\"{a}tzliche Leerzeile.
%\setlength{\oddsidemargin}{30mm}

%\setlength{\textwidth}{155mm}
%\setlength{\oddsidemargin}{-5.0mm}
%\setlength{\topmargin}{-6.0mm}
%\setlength{\textheight}{236mm}
%\oddsidemargin (linker Rand allgemein bzw. f\"{u}r ungerade Seiten bei twoside und book)
%\evensidemargin (linker Rand f\"{u}r gerade Seiten (wirksam nur bei twoside und book))

\newcommand{\trace}{\operatorname{Tr}}

\begin{document}

%\begin{frontmatter}

\title{Regularity for the Dirichlet problem on $\bd$}
\author[F.~Gmeineder]{Franz Gmeineder}
\address{Author's Address: Mathematical Institute, University of Bonn, Endenicher Allee 60, 53115 Bonn, Germany. }

\keywords{Functions of bounded deformation, Ornstein's Non-Inequality, convex variational problems, linear growth functionals, degenerate elliptic equations, Poincar\'{e} inequality}

\subjclass[2010]{35A02,35A15,35A23,35J20,35J70,49N60}
\begin{abstract}
We establish that the Dirichlet problem for convex linear growth functionals on $\bd$, the functions of bounded deformation, gives rise to the same unconditional Sobolev and partial $\hold^{1,\alpha}$-regularity theory as presently available for the full gradient Dirichlet problem on $\bv$. By \textsc{Ornstein}'s Non-Inequality, $\bv$ is a proper subspace of $\bd$, and full gradient techniques known from the $\bv$-situation do not apply here. In particular, applying to all generalised minima (i.e., minima of a suitably relaxed problem) despite their non-uniqueness and reaching the ellipticity ranges known from the $\bv$-case, this paper extends previous results by \textsc{Kristensen} and the author \cite{GK1} in an optimal way.
\end{abstract}

\maketitle

\setcounter{tocdepth}{1}
%\end{frontmatter}
\tableofcontents

\section{Introduction}
A variety of physically relevant convex variational problems that describe the displacements of bodies subject to external forces are posed in the space $\bd$ of \emph{functions of bounded deformation}, see \cite{Anze1,FS2,ST,Suquet,Temam} for overviews. For a given open set $\Omega\subset\R^{n}$, this space consists of all $u\in\lebe^{1}(\Omega;\R^{n})$ such that the distributional symmetric gradient $\sg(u):=\frac{1}{2}(\D\!u+\D\!u^{\top})$ is a finite, matrix-valued Radon measure on $\Omega$. By \textsc{Ornstein}'s Non-Inequality \cite{Ornstein,CFM,KiKr,KaWoj}, there exists \emph{no} constant $c>0$ such that $\|D\varphi\|_{\lebe^{1}(\Omega;\R^{n\times n})}\leq c\|\sg(\varphi)\|_{\lebe^{1}(\Omega;\R^{n\times n})}$ holds for all $\varphi\in\hold_{c}^{\infty}(\Omega;\R^{n})$. In consequence, $\bd(\Omega)$ is in fact larger than $\bv(\Omega;\R^{n})$, and the full distributional gradients of $\bd$-maps in general do not need to exist as (locally) finite $\R^{n\times n}$-valued Radon measures. Yet, by coerciveness considerations as outlined below, this space displays the natural function space setup for a vast class of variational integrals. For minima of such functionals, the present paper aims to develop a regularity theory which -- from a Sobolev regularity and partial H\"{o}lder continuity perspective -- essentially yields the \emph{same results which are presently known for the Dirichlet problem on $\bv$}.  

This task can be viewed as a borderline case of a theory having emerged over the past decades. Namely, considering variational integrals 
\begin{align}\label{eq:W1pvariationalintegrals}
v\mapsto \int_{\Omega}g(\nabla v)\dif x,\qquad v\colon\Omega\to\R^{N},
\end{align}
over suitable Dirichlet classes, an abundance of criteria for improved regularity of minima is available subject to convexity, smoothness and the growth behaviour of $g\colon\R^{N\times n}\to\R$. It is only possible to give an incomplete list of the wealth of contributions, and instead we refer to \textsc{Mingione} \cite{Mingione1,Mingione2} and the references therein for more detail. When linear growth functionals are considered -- i.e., $c_{1}|z|-\gamma\leq g(z)\leq c_{2}(1+|z|)$ for some $c_{1},c_{2},\gamma>0$ and all $z\in\R^{N\times n}$ -- then compactness considerations lead to the study of minima of a suitably relaxed problem on $\bv$, cf. \cite{GMSI,Bi1,Bi2,BS1}. In both linear and superlinear growth regimes, these contributions crucially utilise at various steps that \emph{the full gradients of minimising sequences} are uniformly bounded in some $\lebe^{p}$-space, $p\geq 1$. When \eqref{eq:W1pvariationalintegrals} is modified to act on the symmetric gradients exclusively, convexity and $1<p<\infty$-growth of $g$ still allow to work on $\sobo^{1,p}$ by \textsc{Korn}'s inequality. Also, in the borderline case of $L\log L$-growth integrands as considered in the seminal works by \textsc{Fuchs \& Seregin} \cite{FS1,FS2}, one may essentially still work on $\sobo^{1,1}$ (cf. Section~\ref{sec:Orlicz}). In the \emph{linear growth}, symmetric gradient situation, however, \textsc{Ornstein}'s Non-Inequality neither allows to \emph{a priori} consider $\sobo^{1,1}$- or $\bv$-regular minima nor to employ the usual full-gradient techniques. A key question in this setting thus is \emph{to which extent the results from corresponding full gradient theory on $\bv$} continue to hold for the Dirichlet problem on $\bd$, too. 
\subsection{Aim and scope}
Toward a unifying regularity theory for the Dirichlet problem on $\bd$, we begin by giving the underlying setup first. Let $\Omega\subset\R^{n}$ be open and bounded with Lipschitz boundary $\partial\Omega$. We consider (generalised) minima of variational principles
\begin{align}\label{eq:varprin}
\text{to minimise}\;\;\;F[v]:=\int_{\Omega}f(\sg(v))\dif x \;\;\;\text{over}\;v\in\mathscr{D}_{u_{0}}, 
\end{align}
where $\mathscr{D}_{u_{0}}$ is a suitable Dirichlet class. As a key feature, we suppose that the convex integrand $f\colon\rsym\to\R$ is of \emph{linear growth}, by which we understand that 
there exist constants $c_{1},c_{2},\gamma>0$ such that there holds 
\begin{align}\label{eq:lingrowth1}\tag{LG}
c_{1}|z|-\gamma\leq f(z) \leq c_{2}(1+|z|)\qquad\text{for all}\;z\in\rsym.  
\end{align}
In this situation, we put $\ld(\Omega):=\{v\in\lebe^{1}(\Omega;\R^{n})\colon\;\sg(v)\in\lebe^{1}(\Omega;\rsym)\}$ to be endowed with the canonical norm $\|v\|_{\ld(\Omega)}:=\|v\|_{\lebe^{1}(\Omega;\R^{n})}+\|\sg(v)\|_{\lebe^{1}(\Omega;\rsym)}$, and define $\ld_{0}(\Omega)$ as the closure of $\hold_{c}^{\infty}(\Omega;\R^{n})$ with respect to $\|\cdot\|_{\ld(\Omega)}$. With this terminology, we pick $u_{0}\in\ld(\Omega)$ and set $\mathscr{D}_{u_{0}}:=u_{0}+\ld_{0}(\Omega)$. Subject to \eqref{eq:lingrowth1}, $F$ is bounded below on $\mathscr{D}_{u_{0}}$ and minimising sequences are bounded in $\ld(\Omega)$; note that this is \emph{not necessarily} the case in $\sobo^{1,1}(\Omega;\R^{n})$. By non-reflexivity of $\ld(\Omega)$ and possible concentration effects, minimising sequences do not need to be weakly relatively compact in $\ld(\Omega)$ but \emph{can be shown to be weak*-relatively compact in $\bd(\Omega)$} (cf.~Section~\ref{sec:prelims} for the requisite background terminology). As a routine consequence, for $F$ to be defined for $\bd$-maps, it must be suitably relaxed. For $u,v\in\bd_{\locc}(\Omega)$ and an open Lipschitz subset $\omega\subseteq\Omega$ we put 
\begin{align}\label{eq:relaxed}
\begin{split}
\overline{F}_{v}[u;\omega]  = \int_{\omega}f(\mathscr{E}u)\dif\mathscr{L}^{n} & + \int_{\omega}f^{\infty}\Big(\frac{\dif\E^{s}u}{\dif |\!\E^{s}u|}\Big)\dif|\!\E^{s}u| \\ & \;\;\;\;\;\;\;\;\;\;\;\;+ \int_{\partial\omega}f^{\infty}(\trace_{\partial\omega}(v-u)\odot\nu_{\partial\omega})\dif\mathscr{H}^{n-1}.
\end{split}
\end{align}
Following the by now classical works \cite{GoSe,GMSI}, $\overline{F}_{u_{0}}[u]:=\overline{F}_{u_{0}}[u;\Omega]$ then coincides with the weak*-relaxation (or weak*-Lebesgue-Serrin extension) of $F$ to $\bd(\Omega)$ subject to the Dirichlet constraint $u|_{\partial\Omega}=u_{0}$. Here, for $u\in\bd(\Omega)$ we denote the Lebesgue-Radon-Nikod\'{y}m decomposition $\E u=\E^{a}u+\E^{s}u=\mathscr{E}u\mathscr{L}^{n}+\E^{s}u$ of\footnote{From now, if the symmetric gradient of an integrable map is a measure, we write $\E u$ instead of $\sg(u)$.} $\E u$ into its absolutely continuous and singular parts for $\mathscr{L}^{n}$. Moreover, $f^{\infty}(z):=\lim_{t\searrow 0}tf(z/t)$ denotes the \emph{recession function} of $f$, capturing the integrand's behaviour at infinity. Consequently, we call a map $u\in\bd(\Omega)$ a \emph{generalised minimiser} if $\overline{F}_{u_{0}}[u]\leq \overline{F}_{u_{0}}[v]$ for all $v\in\bd(\Omega)$. Similarly, we call $u\in\bd_{\locc}(\Omega)$ a \emph{local generalised minimiser} if $\overline{F}_{u}[u;\omega]\leq \overline{F}_{u}[v;\omega]$ for all open subsets $\omega\Subset\Omega$ with Lipschitz boundary $\partial\omega$ and all $v\in\bd_{\locc}(\Omega)$. Subject to the Dirichlet datum $u_{0}$, the set of all generalised minima is denoted $\gm(F;u_{0})$ and, similarly, the set of all local generalised minima is denoted $\gm_{\locc}(F)$. As a consequence of \cite[Sec.~5]{GK1}, generalised minimisers always exist in this framework. For future reference, we remark that even if $f$ is strictly convex, generalised minima are \emph{not unique} in general; see Section~\ref{sec:obstructions} for more detail. 
\begin{figure}
\begin{centering}
\begin{tikzpicture} 
\node [above] at (0,1) {$a=1$}; 
\node [above] at (2,1) {$a=\frac{n+1}{n}$}; 
\node [above] at (4,1) {$a=\frac{n}{n-1}$};
\node [above] at (6,1) {$a=1+\frac{2}{n}$};
%\node [above] at (6,1) {$a=3$};  
\node [above] at (8,1) {$a\to\infty$}; 
\node [left] at (0,0.5) {Partial Regularity, Thm.~\ref{thm:PR}}; 
%\node [left] at (0,0) {$\gm\cap\lebe_{\locc}^{\infty}\subset\sobo_{\locc}^{1,L\log L}$, Thm.~\ref{thm:W11reg}}; 
\node [left] at (0,0) {$\exists\,p>1\colon\,\gm\subset\sobo_{\locc}^{1,p}$, Thm.~\ref{thm:W11reg}};
\node [left] at (0,-0.5) {$\exists\,\widetilde{p}>1\colon\,\gm\subset\sobo_{\locc}^{2,\widetilde{p}}$, Cor.~\ref{cor:2ndorder}}; 
\node [left] at (0,-1) {$\gm\subset\sobo_{\locc}^{1,1}$, cf.~\cite[Thm.~1.2]{GK1} }; 
\draw [|-|] (0,1) -- (2,1);
\draw [|-|] (2,1) -- (4,1);
\draw [|-|] (4,1) -- (6,1);
\draw [|-] (6,1) -- (8,1);
\draw [->] (0,0.5) -- (8,0.5); 
\draw [->] (0,-0.5) -- (4,-0.5); 
%\draw [-|] (0,0) -- (6,0); 
\draw [->] (0,-0) -- (6,-0); 
\draw [->] (0,-1) -- (2,-1); 
\end{tikzpicture} 
\caption{The regularity theory for the Dirichlet problem on $\bd$ in the framework of $a$-ellipticity (cf.~\eqref{eq:ellipticity}), contextualising the results obtained in this paper with previous work. The arrows indicate '\emph{up to, not including}'. \label{fig:1}}
\end{centering}
\end{figure}
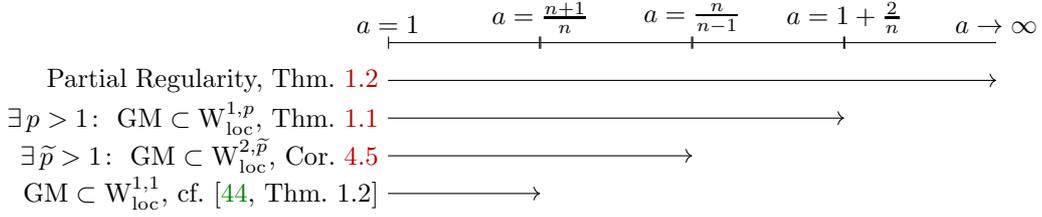

In view of the main theme of the paper, we shall focus on \emph{higher Sobolev and partial regularity} for generalised minima of the variational principle \eqref{eq:varprin}, even leading to novel results in the radially symmetric case $f=\widetilde{f}(|\cdot|)$. The corresponding results crucially rely on the degenerate elliptic behaviour of the integrands $f$, being roughly depicted in Figure~\ref{fig:1}, and let us retrieve what is unconditionally known for the Dirichlet problem on $\bv$. As such, we particularly obtain criteria for the \emph{full gradients} of generalised minima to exist as locally finite Radon measures. To explain why the results given below are close to optimal, we briefly pause to introduce the relevant ellipticity scale.
\vspace{-0.075cm}
\subsection{$\sobo_{\locc}^{1,1}$-regularity of minima}
As it is customary in the linear growth context and motivated by \textsc{Bernstein}'s genre \cite{Bernstein,GMSI,Serrin} and the conditions considered by \textsc{Ladyzhenskaya \& Ural'ceva} \cite{LU}, a natural scale of $\hold^{2}$-integrands is given by those $f\colon\rsym\to\R$ that satisfy for some $a>1$ and $0<\lambda\leq\Lambda<\infty$
\begin{align}\label{eq:ellipticity}
\lambda\frac{|\xi|^{2}}{(1+|z|^{2})^{\frac{a}{2}}}\leq \langle f''(z)\xi,\xi\rangle\leq \Lambda \frac{|\xi|^{2}}{(1+|z|^{2})^{\frac{1}{2}}}\qquad\text{for all}\;z,\xi\in\rsym.
\end{align}
For such integrands, \eqref{eq:ellipticity} precisely describes the degeneration of the ellipticity ratio of $f''$. From a more systematic viewpoint, this scale has been studied by \textsc{Bildhauer, Fuchs \& Mingione} \cite{BiFu1,Bi1,Bi2,FuMi} in the $(p,q)$-growth or $\bv$-context, respectively, under the name of $\mu$-\emph{ellipticity}, where $\mu=a$ in our terminology; also see \cite{GMSI}. Note that $a=1$ is \emph{excluded here} as then the corresponding integrands are not of linear growth. Even though convex integrands $f$ with \eqref{eq:lingrowth1} have the same growth behaviour from above and below, this is not the case on the level of second derivatives. To some extent, such problems thus have some resemblance with $(p,q)$-growth type problems. Higher integrability of the minimisers' gradients can only be expected when $p$ and $q$ are not too far apart or additional hypotheses are imposed, see the seminal work \cite{ELM} by \textsc{Esposito, Leonetti \& Mingione} (also cf. \cite[Thm.~6.2]{Mingione1} and \textsc{Carozza, Kristensen \& Passarelli di Napoli} \cite{CKP}). More precisely, for suitably regular, convex $(p,q)$-type problems the critical exponent ratio to yield $\sobo_{\locc}^{1,q}$-regular minima was determined in \cite{ELM} as 
\begin{align}
\frac{q}{p}<1+\frac{2}{n}, 
\end{align}
a threshold ratio getting in line with others identified earlier in a slightly different context by \textsc{Marcellini} \cite{Mar1,Mar2}. Beyond this threshold, one usually imposes additional hypotheses -- such as local boundedness, cf. \cite{CKP} -- on minima in order to obtain regularity results, and such conditions in fact can be justified for a variety of situations, so for instance by maximum principles or, in the radial situation, Moser-type $\lebe^{\infty}$-bounds.

This distinction of ellipticity regimes also enters in the $\bv$-theory for full gradient functionals. In fact, it is known from \cite{Bi1,BS1} that if $1<a<1+\frac{2}{n}$, then generalised minima of the corresponding full gradient functionals belong to some $\sobo_{\locc}^{1,p}$ with $p>1$ whereas in the regime $1+\frac{2}{n}\leq a \leq 3$, the only $\sobo^{1,1}$-regularity results \cite{Bi1,BS1} are subject to the additional $\lebe_{\locc}^{\infty}$-hypothesis on the generalised minima. For variational principles of the form \eqref{eq:varprin} subject to \eqref{eq:ellipticity}, a first result has been given by \textsc{Kristensen} and the author \cite{GK1} by passing to fractional estimates. While striving for the optimal ellipticity $1<a<1+\frac{2}{n}$, the method as employed therein only yields the $\sobo_{\locc}^{1,1}$-regularity for $1<a<1+\frac{1}{n}$, revealing a crucial ellipticity gap of size $\frac{1}{n}$. The first main result of this paper precisely closes this gap:

%  However, due to the approach of \textsc{Beck \& Schmidt} \cite{BS1} based on Ekeland's variational principle, it is known that $a\leq 3$ implies that $\D^{s}u\equiv 0$ in the interior for \emph{all} (locally bounded) generalised minimisers. As mentioned above, this strategy was extended to convex functionals on $\bd$ in \cite{GK1} and led to $\gm(F;u_{0})\subset\sobo_{\locc}^{1,1}(\Omega;\R^{n})$ for if $1<a<1+\frac{1}{n}$ by virtue of weighted Nikolski\u{\i}-estimates. As a main drawback, these estimates rely on perturbed Euler-Lagrange equations -- essentially differential inequalities -- which make it difficult to utilise the little but nevertheless available crucial identities satisfied by generalised minimisers. Here we do not focus so much on uniqueness but on the existence of suitably regular generalised minima. In this respect, our first main result reads as follows: 
\begin{theorem}[Universal $\sobo_{\locc}^{1,1}$-regularity estimates]\label{thm:W11reg}
Let $u_{0}\in\ld(\Omega)$ and suppose that $f\in\hold^{2}(\rsym)$ satisfies \eqref{eq:lingrowth1} and \eqref{eq:ellipticity} with $1<a<1+\frac{2}{n}$. If
\begin{enumerate}
\item $n=2$, then every generalised minimiser $u\in\gm(F;u_{0})$ is of class $\ld(\Omega)\cap\sobo_{\locc}^{1,q}(\Omega;\R^{n})$ for \emph{any} $1\leq q < \infty$. More precisely, $u$ has locally exponentially integrable gradients in the following sense: There exists $c=c(a,c_{1},c_{2},\gamma,\lambda,\Lambda)>0$ such that for any $x_{0}\in\Omega$ and $0<r<1$ with $\ball(x_{0},5r)\subset\Omega$ there holds 
\begin{align}\label{eq:Kornmain0}
\|\nabla u\|_{\exp\lebe^{\frac{2-a}{3-a}}(\ball(x_{0},r);\R^{n\times n})} \leq c\Big(\Big(1+\dashint_{\ball(x_{0},5r)}|\!\E u|\Big)^{\frac{1}{2-a}} + \frac{1}{r}\dashint_{\ball(x_{0},r)}|u|\dif x\Big).
\end{align}
\item $n\geq 3$, then every generalised minimiser $u\in\gm(F;u_{0})$ is of class $\ld(\Omega)\cap\sobo_{\locc}^{1,q}(\Omega;\R^{n})$ for $q=\frac{2-a}{n-2}n$. 
More precisely, there exists $c=c(n,a,c_{1},c_{2},\gamma,\lambda,\Lambda)>0$ such that for any $x_{0}\in\Omega$ and $0<r<1$ with $\ball(x_{0},5r)\subset\Omega$ there holds 
\begin{align}\label{eq:Kornmain}
\Big(\dashint_{\ball(x_{0},r)}|\nabla u|^{q}\dif x \Big)^{\frac{1}{q}} \leq c\Big(\Big(1+\dashint_{\ball(x_{0},5r)}|\!\E u|\Big)^{\frac{1}{2-a}} + \frac{1}{r} \dashint_{\ball(x_{0},r)}|u|\dif x \Big).
\end{align}
\end{enumerate}
\end{theorem}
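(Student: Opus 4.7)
The principal obstruction to the theorem is \textsc{Ornstein}'s Non-Inequality, which precludes any $\lebe^{1}$-transfer from $\E u$ to $\D u$. The plan is therefore first to gain integrability of $\E u$ strictly beyond $\lebe^{1}$ and then to engage \textsc{Korn}'s inequality in $\lebe^{q}$, $q>1$, to lift the estimate to the full gradient. Fixing $\ball(x_{0},5r)\Subset\Omega$ and exploiting that every $u\in\gm(F;u_{0})$ is a local generalised minimiser on this ball, I would regularise by vanishing viscosity: mollifying $f$ to $f_{\tau}=f\ast\rho_{\tau}$ and the boundary trace, I would minimise
\[
F_{\varepsilon,\tau}[v]:=\int_{\ball(x_{0},5r)}f_{\tau}(\E v)\dif x + \varepsilon\int_{\ball(x_{0},5r)}(1+|\D v|^{2})^{q_{0}/2}\dif x,\qquad q_{0}>n,
\]
whose strict convexity yields unique minimisers $u_{\varepsilon,\tau}\in\sobo^{1,q_{0}}$, smooth enough for classical second-order difference quotient arguments and a genuine (non-distributional) Euler-Lagrange equation.

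The heart of the matter is a weighted second-order Caccioppoli inequality: differentiating the Euler-Lagrange equation in direction $\partial_{\alpha}$ and testing with $\eta^{2}\partial_{\alpha}u_{\varepsilon,\tau}$, the lower ellipticity bound in \eqref{eq:ellipticity} produces, modulo viscous errors absorbable as $\varepsilon\to 0$ via the energy inequality,
\[
\int\eta^{2}\frac{|\nabla\E u_{\varepsilon,\tau}|^{2}}{(1+|\E u_{\varepsilon,\tau}|^{2})^{a/2}}\dif x \;\leq\; C\int|\nabla\eta|^{2}(1+|\E u_{\varepsilon,\tau}|)\dif x.
\]
Introducing the auxiliary field $V_{a}(z):=\int_{0}^{1}(1+|tz|^{2})^{-a/4}z\dif t$, so that $|\nabla V_{a}(\E u_{\varepsilon,\tau})|^{2}\asymp|\nabla\E u_{\varepsilon,\tau}|^{2}(1+|\E u_{\varepsilon,\tau}|^{2})^{-a/2}$ while $|V_{a}(z)|\asymp|z|^{(2-a)/2}$ at infinity (recall $a<2$), the above bound reads $V_{a}(\E u_{\varepsilon,\tau})\in\sobo^{1,2}_{\locc}$. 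For $n\geq 3$, Sobolev embedding delivers $V_{a}(\E u_{\varepsilon,\tau})\in\lebe^{2n/(n-2)}_{\locc}$, which unwraps to $\E u_{\varepsilon,\tau}\in\lebe^{q}_{\locc}$ with precisely $q=n(2-a)/(n-2)$; the hypothesis $a<1+\frac{2}{n}$ is exactly the threshold $q>1$.

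With symmetric gradient integrability strictly above $\lebe^{1}$ in hand, \textsc{Korn}'s inequality in $\lebe^{q}$ on concentric balls yields $\|\nabla u_{\varepsilon,\tau}\|_{\lebe^{q}(\ball(x_{0},r))}\lesssim \|\E u_{\varepsilon,\tau}\|_{\lebe^{q}(\ball(x_{0},2r))} + r^{-1}\|u_{\varepsilon,\tau}\|_{\lebe^{1}(\ball(x_{0},2r))}$, with constants independent of $(\varepsilon,\tau)$. Sending $\tau\to 0$ and then $\varepsilon\to 0$, weak-$*$ compactness in $\bd$, weak lower semicontinuity of the $\lebe^{q}$ norm and the energy inequality transfer the estimate to any limit point. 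For $n=2$, Sobolev on $V_{a}(\E u_{\varepsilon,\tau})$ returns $\lebe^{p}_{\locc}$ for every $p<\infty$ with constants of $\sqrt{p}$-growth; a Trudinger-Moser argument on the Caccioppoli bound combined with a Moser-type iteration should produce exponential integrability with the sharp Orlicz exponent $(2-a)/(3-a)$, transferred to $\nabla u_{\varepsilon,\tau}$ by an $\exp\lebe^{p}$-variant of Korn.

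The main obstacle I anticipate is not the Caccioppoli estimate itself but the \emph{identification} step: since $\gm(F;u_{0})$ may contain many minimisers, the limit of $(u_{\varepsilon,\tau})$ need not equal a prescribed $u\in\gm(F;u_{0})$. To route the estimate to \emph{every} such $u$, I would exploit the local generalised minimality of $u$ on $\ball(x_{0},5r)$ by aligning the mollified boundary data to an approximation of $u|_{\partial\ball(x_{0},5r)}$, using the relaxed boundary integral in $\overline{F}_{u}$ to compare $u$ with $u_{\varepsilon,\tau}$ across $\partial\ball(x_{0},5r)$. Because the right-hand side of the claimed estimate depends on $u$ only through the lower-semicontinuous quantities $\int|\E u|$ and $\int|u|$, this boundary-matching together with uniqueness of the regularised minimiser given its own datum should close the loop and deliver the bound for the originally chosen $u$.
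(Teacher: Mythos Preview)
Your outline mirrors the paper's overall architecture (viscosity approximation, weighted second-order estimate, $V_{a}$-Sobolev embedding, then Korn), but it glosses over precisely the step where the symmetric-gradient setting bites, and this is a genuine gap.

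\textbf{The Caccioppoli step.} You write that testing the differentiated Euler--Lagrange equation with $\eta^{2}\partial_{\alpha}u_{\varepsilon,\tau}$ yields
\[
\int\eta^{2}\frac{|\nabla\E u_{\varepsilon,\tau}|^{2}}{(1+|\E u_{\varepsilon,\tau}|^{2})^{a/2}}\dif x \;\leq\; C\int|\nabla\eta|^{2}(1+|\E u_{\varepsilon,\tau}|)\dif x
\]
``modulo viscous errors''. But the obstruction is not in the viscous part. Expanding $\sg(\eta^{2}\partial_{\alpha}u)=\eta^{2}\partial_{\alpha}\sg(u)+2\eta\,\nabla\eta\odot\partial_{\alpha}u$ and applying Young's inequality to the cross term, the \emph{principal} part $f''_{\tau}(\sg(u))$ produces
\[
\int \big\langle f''_{\tau}(\sg(u))\,(\nabla\eta\odot\partial_{\alpha}u),\,\nabla\eta\odot\partial_{\alpha}u\big\rangle\dif x
\;\leq\;\Lambda\int\frac{|\nabla\eta|^{2}\,|\partial_{\alpha}u|^{2}}{(1+|\sg(u)|^{2})^{1/2}}\dif x,
\]
which involves \emph{full} gradient components $\partial_{\alpha}u$ and is exactly the term the paper singles out (Section~4.1, equation~\eqref{eq:Tdef}) as uncontrollable by $\|\sg(u)\|_{\lebe^{1}}$ because of Ornstein. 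In the $\bv$-case one bounds this by $\|\nabla u\|_{\lebe^{1}}$; here that is unavailable, and your viscosity term only gives $\varepsilon$-dependent control that blows up as $\varepsilon\to 0$. The paper circumvents this not by absorption but by an algebraic \emph{splitting strategy} (Theorem~\ref{thm:regdual}, Step~2): the offending terms are rewritten via two integrations by parts so that only $\partial_{k}w_{j}^{(k)}=\di(w_{j})$ survives, and $|\di(w)|\leq|\sg(w)|$ pointwise. This is the decisive new ingredient over \cite{GK1}, and your proposal does not supply it.

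\textbf{The identification step.} Your suggestion to align boundary data with an approximation of $u|_{\partial\ball(x_{0},5r)}$ does not force the viscosity limit to equal the \emph{given} $u$, since generalised minimisers are non-unique even for fixed boundary data. The paper handles this by Ekeland's variational principle in $\sobo^{-2,1}$ (Section~\ref{sec:viscosity}): starting from an area-strict approximation $(u_{j})$ of the prescribed $u$, Ekeland produces almost-minimisers $(v_{j})$ that remain $\sobo^{-2,1}$-close to $(u_{j})$ and hence converge weak* to $u$ itself. The choice of $\sobo^{-2,1}$ (rather than $\sobo^{-1,1}$ as in \cite{BS1}) is forced by the need to differentiate the perturbed Euler--Lagrange \emph{inequality} once more in Lemma~\ref{lem:ELbetter}, which in turn feeds the splitting argument above. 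Your direct viscosity scheme, with a genuine Euler--Lagrange \emph{equation}, sidesteps the need for Ekeland but then loses the mechanism that pins the limit to the chosen $u$.
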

Theorem~\ref{thm:W11reg} thus \emph{gives exactly the same Sobolev regularity in the $\bd$-situation as is presently known for the autonomous Dirichlet problem on $\bv$}. As mentioned above, for the autonomous Dirichlet problem on $\bv$ it is possible to establish $\sobo_{\locc}^{1,L\log^{2}L}$-regularity of \emph{locally bounded} generalised minima for the wider ellipticity range $1+\frac{2}{n}\leq a \leq 3$; note that for $a>3$, no $\sobo^{1,1}$-regularity results are available at present\footnote{The only systematic $\sobo^{1,1}$-regularity theory for $a>3$ is available for Neumann-type problems on $\bv$, cf. \textsc{Beck, Bul\'{i}\v{c}ek} and the author \cite{BBG}, being conceptually different from the Dirichlet problem.}. While, in principle, the strategy underlying Theorem~\ref{thm:W11reg} can be modified to work in the $\lebe_{\locc}^{\infty}$-constrained case, too, no method is known to us that would provide locally bounded generalised minima at all. In fact, whereas maximum principles and Moser-type $\lebe_{\locc}^{\infty}$-bounds can be employed in the full gradient setting subject to specific structural conditions on the integrands (cf.~\cite[Thm.~1.11, Thms.~D.1--3]{BS1}), the symmetric gradient seems to destroy the impact of any such good structural hypotheses (so e.g. radial dependence on the arguments). In order not to produce a possibly vacuous result, we thus stick to the ellipticity range $1<a<1+\frac{2}{n}$ for which the additional local boundedness is \emph{not} required. Deferring the precise discussion to Section~\ref{sec:W11reg}, let us now briefly outline the underlying chief obstructions that make Theorem~\ref{thm:W11reg} considerably harder to obtain than its BV-analogue. 

To establish the regularity assertions of Theorem~\ref{thm:W11reg}, one might consider a vanishing viscosity sequence and then derive uniform \emph{second order} estimates. Essentially inspired by the foundational works of \textsc{Seregin} \cite{Seregin1,Seregin2,Seregin3,Seregin4}, in the $\bv$-case  a difference quotient approach yields the requisite estimates as a consequence of the fact that \emph{the full gradients of the single viscosity approximations} are uniformly bounded in $\lebe^{1}(\Omega;\R^{n\times n})$; cf.~\cite{Bi1,Bi2}. Within the framework of Theorem~\ref{thm:W11reg}, however, the latter boundedness cannot be assumed and $\lebe^{1}$-estimates on the full gradients must be avoided throughout. On the other hand, generalised minima are in general non-unique -- despite strict convexity of the integrands $f$. Hence, even if it were applicable, the vanishing viscosity approach would only apply to one particular generalised minimiser. The claimed \emph{universal} regularity estimates (i.e., for \emph{all} generalised minima) thus require another argument.

In doing so, we modify and extend the \textsc{Ekeland} viscosity approximation scheme as introduced by \textsc{Beck \& Schmidt} \cite{BS1} in the $\bv$-context and generalised to the $\bd$-situation by \textsc{Kristensen} and the author \cite{GK1}; see \cite{Mar0,AcerbiFusco0} for the first applications of the \textsc{Ekeland} variational principle in the regularity context.  Here, on the one hand, the \textsc{Ekeland}-type approximations must be strong enough for the (perturbed) Euler-Lagrange equations to permit a splitting strategy, thereby implying the requisite second order estimates for the corresponding almost-minima. Simultaneously, they must be weak enough to be treatable by the a priori information on the minimising sequences. By our arguments below -- and contrary to the $\sobo^{-1,1}$-perturbations in the $\bv$-context \cite{BS1} -- the correct perturbation space now turns out to be $\sobo^{-2,1}$ (see Section~\ref{sec:prelimsauxiliaryFS} for the definition). Without the aforementioned splitting strategy, in turn inspired by \textsc{Seregin} et al. \cite{Seregin4,FS1}, we would be bound to argue as in \cite{GK1}, and then the desired ellipticity range $1<a<1+\frac{2}{n}$ would not be reached. By the degenerate elliptic behaviour of the integrands, non-uniqueness of generalised minima and the overall lack of Korn's inequality, the proof of Theorem~\ref{thm:W11reg} requires to overcome both technical and conceptual difficulties and is given in Section~\ref{sec:W11reg} below. 

Once the presence of the singular parts $\E^{s}u$ is ruled out \emph{for all} $u\in\gm(F;u_{0})$, the boundary integrals in \eqref{eq:relaxed} are identified as the only source of non-uniqueness. This admits to apply more general principles (to be established in the Appendix, Section~\ref{sec:uniquenessappendix}, with emphasis on the two-dimensional case) to draw conclusions on the structure of $\gm(F;u_{0})$, cf. Section~\ref{sec:cors}.  
\subsection{Partial $\hold^{1,\alpha}$-regularity of minima}
The second part of this paper is devoted to the partial (H\"{o}lder) regularity of generalised minima of $F$. We note that, essentially because the minimisation of $F$ constitutes a \emph{vectorial} problem, full H\"{o}lder continuity in general is not to be expected; see \cite{Giaquinta,Giusti,Mingione2,Mingione3} and the references therein. To streamline terminology, in this paper we say that a map $v\in\lebe_{\locc}^{1}(\Omega;\R^{n})$ is \emph{partially regular}  if there exists a relatively open subset $\Omega_{u}\subset\Omega$ such that $v$ is of class $\hold^{1,\alpha}$ in a neighbourhood of any of the elements of $\Omega_{u}$ for any $0<\alpha<1$. 

There is an extensive literature on the topic of partial regularity and proof strategies, most notably the (indirect) blow-up method with roots in \textsc{De Giorgi}'s work \cite{DeGiorgi} and the $\mathcal{A}$-harmonic approximation method with roots in \textsc{Almgren}'s and \textsc{Allard}'s work in geometric measure theory \cite{Allard,Almgren}. These proof strategies have been adapted to the setting of functionals of the type \eqref{eq:varprin} with $\sg$ replaced by the full gradient, see \cite{AcerbiFusco0,AcerbiFusco,EvansPR,Du1,Du3,Mingione1} for an incomplete list. For instance, even in the convex full-gradient linear growth case, indirect methods such as blow-up are difficult to implement by the relatively weak compactness properties of $\bv$ \emph{as long as no additional Sobolev regularity is available}. Appealing to Theorem~\ref{thm:W11reg}, this is e.g. the case in the very degenerate regime $a\geq 1+\frac{2}{n}$. On the other hand, should an integrand degenerate completely for large values of the argument, one might still aim for a \emph{local-in-phase-space} regularity result (in the terminology of \textsc{Schmidt} \cite{Schmidt2}). 

To establish such a regularity theorem, in turn being able to cover all degenerate ellipticities, we make use of a direct strategy using mollifications as comparison maps. Since, by Jensen's inequality, mollifications can be suitably controlled by convex functions, this method is particularly designed for \emph{convex} problems. Originally employed by \textsc{Anzellotti \& Giaquinta} \cite{AG} in the full gradient context (also see the related result by \textsc{Schmidt} \cite{Schmidt1} for the model integrands $\m_{p}(\cdot)=(1+|\cdot|^{p})^{\frac{1}{p}}$, $p\neq 2$), functionals \eqref{eq:relaxed} require a different treatment. First, now the decay of the comparison maps must appear as a consequence of a careful linearisation and hereafter \textsc{Korn}'s inequality in $\lebe^{2}$. More importantly, the comparison argument forces us to control $V$-function type distances from a given generalised minimiser to its mollifications \emph{by the symmetric gradients only}. While this is a consequence of the fundamental theorem of calculus in the $\bv$-context, the requisite estimates now must be accessed without appealing to the full gradients. This motivates the derivation of a novel family of convolution-type Poincar\'{e} inequalities in Section~\ref{sec:poincare}, which might be of independent interest. Lastly, the estimates of Section~\ref{sec:poincare} necessitate a refined construction of \emph{good} annuli in the partial regularity proof, where the key parts of the comparison are performed. A combination of these tools in Section~\ref{sec:PR} then yields an $\varepsilon$-regularity result (cf.~Corollary~\ref{cor:epsreg}), and implies the following second main result of the paper:
\begin{theorem}[Local-in-phase-space regularity]\label{thm:PR}
Let $f\in \hold^{2}(\R_{\sym}^{n\times n})$ be convex and satisfy \eqref{eq:lingrowth1}. Given $u_{0}\in\ld(\Omega)$, let $u\in\gm(F;u_{0})$. If $(x_{0},z_{0})\in\Omega\times\R_{\sym}^{n\times n}$ is such that 
\begin{align}\label{eq:theimportantcondition}
\lim_{R\searrow 0}\left[\dashint_{\ball(x_{0},R)}|\mathscr{E}u-z_{0}|\dif x + \frac{|\E^{s}u|(\ball(x_{0},R))}{\mathscr{L}^{n}(\ball(x_{0},R))}\right]=0
\end{align}
and $f''(z_{0})$ is positive definite, then there holds $u\in \hold^{1,\alpha}(U;\R^{n})$ for a suitable open neighbourhood $U$ of $x_{0}$ for all $0<\alpha<1$. In consequence, if $f''$ is positive definite everywhere on $\rsym$, then the \emph{singular set} $\Sigma_{u}$ of points in whose neighourhood $u$ is not of class $\hold^{1,\alpha}$ for any $0<\alpha<1$ satisfies $\mathscr{L}^{n}(\Sigma_{u})=0$, is relatively closed and is given by 
\begin{align}
\Sigma_{u}=\Big\{x_{0}\in \Omega\colon\;\text{there exists no $z_{0}\in\rsym$ with \eqref{eq:theimportantcondition}} \Big\}.
\end{align} 
\end{theorem}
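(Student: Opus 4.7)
The plan is to first establish a pointwise $\varepsilon$-regularity criterion at $(x_{0},z_{0})$ and then to deduce both statements of the theorem from it. Indeed, once this criterion is in hand, the second statement follows by Lebesgue-Radon-Nikod\'ym: at $\mathscr{L}^{n}$-almost every $x_{0}$, choosing $z_{0}:=\mathscr{E}u(x_{0})$ produces a value satisfying \eqref{eq:theimportantcondition}, so if $f''$ is positive definite everywhere then $\Sigma_{u}$ is contained in the $\mathscr{L}^{n}$-negligible union of non-Lebesgue points of $\mathscr{E}u$ and concentration points of $\E^{s}u$. Relative closedness of $\Sigma_{u}$ is immediate from its definition, since \eqref{eq:theimportantcondition} is stable under small perturbations of the base point together with a suitable shift of $z_{0}$.

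For the $\varepsilon$-regularity itself, I would follow a direct comparison approach of Anzellotti--Giaquinta type, but built \emph{entirely on symmetric gradients}. Introduce a scale-invariant excess
\begin{align*}
\Phi(x_{0},R,z_{0}) := \dashint_{\ball(x_{0},R)}|\mathscr{E}u-z_{0}|\,\dif x \;+\; \frac{|\E^{s}u|(\ball(x_{0},R))}{\mathscr{L}^{n}(\ball(x_{0},R))},
\end{align*}
and assume $\Phi(x_{0},R,z_{0})$ is sufficiently small relative to the ellipticity constants of $f''(z_{0})$. By a Fubini-type averaging, select a \emph{good radius} $r\in[R/2,R]$ such that the trace of $u$ on $\partial\ball(x_{0},r)$ is well-controlled, $|\E^{s}u|$ is not concentrated near $\partial\ball(x_{0},r)$, and an annular estimate on $u-u_{\rho}$ is available. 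Testing local generalised minimality $\overline{F}_{u}[u;\ball(x_{0},r)]\leq\overline{F}_{u}[u_{\rho};\ball(x_{0},r)]$ against the mollification $u_{\rho}$ at a scale $\rho\ll r$, Jensen's inequality on $f$ absorbs $f(\mathscr{E}u_{\rho})$ into a convolved copy of $f(\mathscr{E}u)$, while the coercive part coming from $f''(z_{0})>0$ yields a $V$-function estimate on $\mathscr{E}u-\mathscr{E}u_{\rho}$. Linearising $f$ at $z_{0}$ reduces the problem to comparing against an $\mathcal{A}$-harmonic competitor with $\mathcal{A}:=f''(z_{0})$, which is Korn-elliptic on $\rsym$; standard $L^{2}$-decay for such maps then produces a decay lemma
\begin{align*}
\Phi(x_{0},\theta R,\widetilde{z}_{0}) \leq C\,\theta^{\gamma}\,\Phi(x_{0},R,z_{0}) + \text{(error)}
\end{align*}
for suitable $\theta\in(0,1)$, $\gamma>0$ and a new base point $\widetilde{z}_{0}$. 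Iteration then delivers Campanato-type decay of $\mathscr{E}u$, and $\hold^{1,\alpha}$-regularity of $u$ follows by $L^{2}$-Korn.

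The principal obstacle -- and the reason the argument does not reduce to the $\bv$ case -- is to control $u-u_{\rho}$ in a norm governed \emph{only} by $\mathscr{E}u$: in the full gradient setting this is immediate from the fundamental theorem of calculus, but here neither $\nabla u$ nor $\nabla u_{\rho}$ is available as an $L^{1}$ object. The convolution-type Poincar\'e inequalities of Section~\ref{sec:poincare} are tailored precisely to this task and make the comparison go through. A second technical obstacle is that the boundary penalty $\int_{\partial\ball(x_{0},r)}f^{\infty}(\trace(u-u_{\rho})\odot\nu)\,\dif\mathscr{H}^{n-1}$ in \eqref{eq:relaxed}, which has no counterpart in the classical Anzellotti--Giaquinta setting, must be of strictly lower order than the coercive left-hand side; this is what forces the refined construction of the good annulus, as a crude choice would make the boundary contribution dominate and destroy the decay.
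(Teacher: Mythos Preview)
Your outline is correct and follows essentially the same route as the paper: Anzellotti--Giaquinta type comparison with mollifications $u_{\varepsilon,\varepsilon}$, linearisation at $z_{0}$ with decay for the second-order problem via $L^{2}$-Korn, and the convolution-type Poincar\'e inequalities of Section~\ref{sec:poincare} to control $u-u_{\varepsilon,\varepsilon}$ purely in terms of $\E u$. Two execution points differ from what you sketch and are worth flagging. First, the iteration is run not on your $L^{1}$-excess $\Phi$ but on the $V$-function excess $\widetilde{\Phi}(u;x_{0},R)=\dashint_{\ball(x_{0},R)}V(\E u-(\E u)_{x_{0},R})$; this is what makes the comparison with the quadratic linearised problem work cleanly (for small arguments $V\sim|\cdot|^{2}$), and your $L^{1}$-smallness assumption enters only at the end via $V\leq|\cdot|$. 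Second, the paper does not test minimality against $u_{\rho}$ directly and then fight the boundary penalty on $\partial\ball(x_{0},r)$; instead it builds a competitor $\psi=\widetilde{v}_{a}+\rho(\widetilde{u}_{a}-\widetilde{v}_{a})$ with a cutoff across a carefully selected thin annulus, so that $\psi=\widetilde{u}_{a}$ on the outer sphere and no surface term appears. The annular bulk term $|\nabla\rho|\,|u-u_{\varepsilon,\varepsilon}|$ that this produces is exactly what Corollary~\ref{cor:convolutioncontrol} controls, and the good-annulus selection (via Lemma~\ref{lem:convex} and a pigeonhole over $\sim\widetilde{\Phi}^{-1/(2n+8\alpha)}$ disjoint annuli) makes it lower order. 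Incidentally, this boundary-matching issue is already present in the classical $\bv$ setting of \cite{AG}; what is genuinely new here is that the annular lower-order term must be bounded by symmetric gradients only, which is precisely the content of Section~\ref{sec:poincare}.
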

Similar as in $\bv$-theory, the importance of the previous theorem is manifested by its minimal assumptions regarding locality and (degenerate) ellipicity; in fact, no global uniform strong convexity needs to be imposed on $f$ in order to yield the corresponding partial $\hold^{1,\alpha}$-regularity result. Recalling the $a$-ellipticity scale \eqref{eq:ellipticity}, Theorem~\ref{thm:PR} thus particularly complements Theorem~\ref{thm:W11reg}
in the very degenerate ellipticity regime $1+\frac{2}{n}\leq a <\infty$, cf. Figure~\ref{fig:1}. As a routine matter, however, strengthening the ellipticity to $1<a<\frac{n}{n-1}$, Theorem~\ref{thm:W11reg} can be invoked to yield bounds on $\dim_{\mathscr{H}}(\Sigma_{u})$ -- cf. Corollary~\ref{cor:dimbound}. We moreover note that the previous theorem equally proves interesting for \emph{radially symmetric integrands}. Indeed, techniques to arrive at \emph{full $\hold^{1,\alpha}$-regularity results} in the full gradient setting (cf. \textsc{Uhlenbeck} \cite{Uhlenbeck}, \textsc{Ural'ceva} \cite{Uralceva} or \textsc{Beck \& Schmidt} \cite{BS2} in the $\bv$-context) are hard to be implemented: The symmetric gradient seems to destroy the beneficial structure of the corresponding Euler-Lagrange equations. As such, Theorem~\ref{thm:PR} seems hard to be generalised to the model integrands $\m_{p}$ (revealing $p$-Laplacean type behaviour at the origin) for $p\neq 2$, cf. Section~\ref{sec:exampleintegrands} for a discussion. Finally, recalling the aim of a regularity result in the very degenerate ellipticity regime, Theorem~\ref{thm:PR} proves independent of the recent companion theorem~\cite{G3} for strongly symmetric quasiconvex integrals by the author. Whereas the main difficulties in \cite{G3} stem from the weakened convexity notion, its application to convex integrands only yields a partial regularity theorem for $a$-elliptic integrands, $1<a\leq 3$. A discussion of these matters, together with possible generalisations of Theorems~\ref{thm:W11reg} and \ref{thm:PR} is given in Section~\ref{sec:extensions}.

\subsection{Organisation of the paper}
In Section~\ref{sec:prelims} we fix notation, record basic definitions and auxiliary estimates. After a discussion of sample integrands in Section~\ref{sec:exampleintegrands}, we provide the proof of Theorem~\ref{thm:W11reg} and selected implications in Section~\ref{sec:W11reg}. Section~\ref{sec:poincare} provides convolution-type Poincar\'{e} inequalities to crucially enter the proof of Theorem~\ref{thm:PR} in Section~\ref{sec:PR}. Section~\ref{sec:extensions}  discusses generalisations of the results of the paper, and the appendices, Sections~\ref{sec:uniquenessappendix} and \ref{sec:proofaux}, comprise selected uniqueness assertions and proofs of auxiliary results.

{\small \subsection*{Acknowledgments} 
I am grateful to \textsc{Jan Kristensen} for a fruitful interaction on the theme of the paper.  Moreover, I am indebted to \textsc{Gianni Dal Maso} and \textsc{Gregory Seregin} for commenting on previous results of mine and thereby making valuable suggestions, which motivated the Sobolev regularity improvement compared to \cite{GK1}. I am also thankful to \textsc{Lars Diening} for discussions related to the convolution-type Poincar\'{e} inequalities of Section~\ref{sec:poincare}. Financial support through the Hausdorff Center in Mathematics, Bonn, is gratefully acknowledged.}
\section{Preliminaries}\label{sec:prelims}
\subsection{General notation and background}\label{sec:notation}
We briefly comment on the notation used throughout. By $\rsym$ or $\R_{\scew}^{n\times n}$ we denote the symmetric or scew-symmetric $(n\times n)$-matrices with real entries.  All finite dimensional vector spaces are equipped with the euclidean (or, in the matrix case, Frobenius) norm $|\cdot|$, and the inner product on such spaces is denoted $\langle\cdot,\cdot\rangle$. Given $a,b\in\R^{n}$, the symmetric tensor product is given by $a\odot b :=\frac{1}{2}(ab^{\mathsf{T}}+ba^{\mathsf{T}})$. Given $x_{0}\in\R^{n}$ and $r>0$, the open ball of radius $r>0$ centered at $x_{0}\in\R^{n}$ is denoted $\ball(x_{0},r):=\{x\in\R^{n}\colon\;|x-x_{0}|<r\}$. For $0<t<s<\infty$, we denote $\mathcal{A}(x_{0};t,s):=\ball(x_{0},s)\setminus\ball(x_{0},t)$ 
the annulus centered at $x_{0}$ having outer and inner radii $s$ and $t$, respectively.  To distinguish from balls in matrix space, we write $\mathbb{B}(z,r):=\{y\in\rsym\colon\;|y-z|<r\}$ for $z\in\rsym$ and $r>0$. Cubes $Q$ in $\R^{n}$ are tacitly assumed to be non-degenerate, and we denote by $\ell(Q)$ their sidelengths. The $n$-dimensional Lebesgue and $(n-1)$-dimensional Hausdorff measure are denoted $\mathscr{L}^{n}$ and $\mathscr{H}^{n-1}$, respectively. Accordingly, the Hausdorff dimension of a Borel set $A\in\mathscr{B}(\R^{n})$ is denoted $\dim_{\mathscr{H}}(A)$. For $u\in\lebe_{\locc}^{1}(\R^{n};\R^{m})$ and an open set $U\subset\R^{n}$ with $\mathscr{L}^{n}(U)<\infty$, we use the shorthand $(u)_{U}:=\dashint_{U}u\dif x:=\mathscr{L}^{n}(U)^{-1}\int_{U}u\dif x$ whereas, if $U=\ball(x,r)$ is ball, we abbreviate $(u)_{x,r}:=(u)_{\ball(x,r)}$. Moreover, for a given finite dimensional real vector space $V$, we denote $\mathscr{M}_{(\locc)}(\Omega;V)$ the $V$-valued (locally) finite Radon measures on (the open set) $\Omega$. For $\mu\in\mathscr{M}(\Omega;V)$, its Lebesgue-Radon-Nikod\'{y}m decomposition is given by $\mu=\mu^{a}+ \frac{\dif\mu}{\dif|\mu^{s}|}|\mu^{s}|$, where $\mu^{a}\ll\mathscr{L}^{n}$ and $\mu^{s}\bot\mathscr{L}^{n}$.

By $c,C>0$  we denote generic constants whose value might change from line to line, and shall only be specified if their precise value is required. 
\subsection{Function spaces and integral operators}
In this section we give an overview of the requisite function spaces on which the main part is based. This comprises functions of bounded deformation, to be discussed in Section~\ref{sec:prelimsBD}, as well as Orlicz and negative Sobolev spaces to be introduced and discussed in Sections~\ref{sec:Orlicz} and \ref{sec:prelimsauxiliaryFS}. 
\subsubsection{Functions of bounded deformation}\label{sec:prelimsBD}
Let $\Omega\subset\R^{n}$ be open and bounded. We then define $\bd(\Omega)$ as the space of all $u\in\lebe^{1}(\Omega;\R^{n})$ for which the \emph{total deformation} 
\begin{align}
|\!\E u|(\Omega):=\sup\Big\{\int_{\Omega}\langle u,\di(\varphi)\rangle\dif x\colon\;\varphi\in\hold_{c}^{1}(\Omega;\R_{\sym}^{n\times n}),\|\varphi\|_{\lebe^{\infty}(\Omega;\R_{\sym}^{n\times n})}\leq 1\Big\}
\end{align}
is finite; note that by writing $\E u$ we indicate that the symmetric distributional gradient of $u$ is a measure whereas by $\sg(u)$ we tacitly understand that it is representable by an $\lebe^{1}$-map. This space has been introduced in \cite{CMS,Suquet} and studied from various perspectives in \cite{Anze1,ACD,ST,Baba}; unless stated otherwise, all of the following can be traced back to these references. Given $u\in\bd(\Omega)$, the Lebesgue-Radon-Nikod\'{y}m decomposition of $\E u$ reads as $\E u=\E^{a}u+\E^{s}u=\mathscr{E}u\mathscr{L}^{n}\mres\Omega+\frac{\dif\E^{s}u}{\dif|\E^{s}u|}|\E^{s}u|$. Here, $\mathscr{E}u$ takes the r\^{o}le of the symmetric part of the approximate gradient (cf.~\cite{AFP} for this terminology).

Let $u,u_{1},u_{2},...\in\bd(\Omega)$. We say that $u_{k}\stackrel{*}{\rightharpoonup}u$ if and only if $u_{k}\to u$ in $\lebe^{1}(\Omega;\R^{n})$ and $\E u_{k}\stackrel{*}{\rightharpoonup} \E u$ in $\mathscr{M}(\Omega;\rsym)$. If $u_{k}\stackrel{*}{\rightharpoonup} u$ as just defined and $|\!\E u_{k}|(\Omega)\to |\!\E u|(\Omega)$, then we say that $(u_{k})$ converges \emph{(symmetric) strictly} to $u$. If, moreover, $\sqrt{1+|\E u_{k}|^{2}}(\Omega)\to  \sqrt{1+|\E u|^{2}}(\Omega)$ with 
\begin{align*}
\sqrt{1+|\!\E v|^{2}}(\Omega):=\int_{\Omega}\sqrt{1+|\mathscr{E}v|^{2}}\dif x + |\!\E^{s}v|(\Omega),\qquad v\in\bd(\Omega),
\end{align*}
then we say that $(u_{k})$ converges \emph{(symmetric) area-strictly} to $u$. These notions are usually reserved for the $\bv$-context, but as we deal with the $\bd$-situation exclusively we shall often omit the supplementary \emph{symmetric} and simply speak of strict and area-strict convergence. 

Now let $\Omega$ have Lipschitz boundary $\partial\Omega$. Both $\ld(\Omega)$ and $\bd(\Omega)$ then have trace space $\lebe^{1}(\partial\Omega;\R^{n})$; however, note that the trace operator onto $\lebe^{1}(\partial\Omega;\R^{n})$ is not continuous with respect to weak*-convergence on $\bd(\Omega)$. In this case, continuity can only be achieved when $\bd(\Omega)$ is equipped with strict convergence. Moreover, as $\Omega$ has Lipschitz boundary, any $u\in\bd(\Omega)$ can be extended by zero to the entire $\R^{n}$ so that the trivial extension $\overline{u}$ again belongs to $\bd(\R^{n})$ and we have 
\begin{align*}
\E\overline{u} = \E u\mres\Omega + \trace_{\partial\Omega}(u)\odot\nu_{\partial\Omega}\mathscr{H}^{n-1}\mres\partial\Omega, 
\end{align*}
where $\nu_{\partial\Omega}$ is the outward unit normal to $\partial\Omega$. Also, we have the \emph{Gau\ss -Green formula}
\begin{align}\label{eq:GaussGreen}
\int_{\Omega}\langle\varphi,\E u\rangle + \int_{\Omega}\langle\di(\varphi),u\rangle\dif x = \int_{\partial\Omega}\langle \varphi,\trace_{\partial\Omega}(u)\odot\nu_{\partial\Omega}\rangle\dif\mathscr{H}^{n-1}
\end{align}
for all $u\in\bd(\Omega)$ and all $\varphi\in\hold^{1}(\overline{\Omega};\rsym)$; here, $\di$ denotes the row-wise divergence. For latter applications, the following approximation result will turn out particularly useful:
\begin{lemma}\label{lem:smooth}
Let $\Omega\subset\R^{n}$ be an open and bounded Lipschitz domain. Then for any $u\in\bd(\Omega)$ and any $u_{0}\in\ld(\Omega)$ there exists a sequence $(u_{k})\subset u_{0}+\hold_{c}^{\infty}(\Omega;\R^{n})$ such that $u_{k}\to u$ in $\lebe^{1}(\Omega;\R^{n})$ and 
\begin{align*}
\sqrt{1+|\!\E u_{k}|^{2}}(\Omega) \to \sqrt{1+|\!\E u|^{2}}(\Omega) + \int_{\partial\Omega}|\trace_{\partial\Omega}(u_{0}-u)\odot\nu_{\partial\Omega}|\dif\mathscr{H}^{n-1}\qquad\text{as}\;k\to\infty. 
\end{align*}
\end{lemma}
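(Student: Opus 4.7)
My approach is the classical Anzellotti--Giaquinta density construction (partition of unity plus mollification), recast in the symmetric-gradient setting and adapted to incorporate the prescribed boundary datum $u_{0}$. The first move is to \emph{reduce to the zero-trace case}: set $v:=u-u_{0}\in\bd(\Omega)$ and look for $v_{k}\in\hold_{c}^{\infty}(\Omega;\R^{n})$ such that $v_{k}\to v$ in $\lebe^{1}(\Omega;\R^{n})$ and such that the Radon measures $\mu_{k}:=(\mathscr{E}u_{0}+\mathscr{E}v_{k})\mathscr{L}^{n}\mres\Omega$ converge \emph{area-strictly} on the compact set $\overline{\Omega}$ to
\[
\mu := \E u\mres\Omega + \trace_{\partial\Omega}(u_{0}-u)\odot\nu_{\partial\Omega}\,\mathscr{H}^{n-1}\mres\partial\Omega.
\]
Indeed, $\int_{\overline{\Omega}}\sqrt{1+|\mu_{k}|^{2}}=\int_{\Omega}\sqrt{1+|\mathscr{E}(u_{0}+v_{k})|^{2}}$ while the Lebesgue--Radon--Nikod\'ym decomposition of $\mu$ gives $\int_{\overline{\Omega}}\sqrt{1+|\mu|^{2}}$ equal to the desired right-hand side, so setting $u_{k}:=u_{0}+v_{k}$ then closes the argument.

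\textbf{Construction.} I pick an exhaustion $\Omega_{0}\Subset\Omega_{1}\Subset\dots\Subset\Omega$ with $\bigcup_{j}\Omega_{j}=\Omega$ and a smooth partition of unity $\{\varphi_{j}\}_{j\geq 0}$ with $\spt(\varphi_{0})\subset\Omega_{1}$, $\spt(\varphi_{j})\subset\Omega_{j+1}\setminus\overline{\Omega_{j-1}}$ for $j\geq 1$, and $\sum_{j}\varphi_{j}\equiv 1$ on $\Omega$. For standard mollifiers $\rho_{\varepsilon_{j}}$ I set
\[
v_{\eta}:=\sum_{j\geq 0}(v\varphi_{j})\ast\rho_{\varepsilon_{j}},
\]
where the $\varepsilon_{j}>0$ are chosen so small that each summand is compactly supported inside the corresponding annular region and that the $\lebe^{1}$-errors of both $v\varphi_{j}$ and $v\odot\nabla\varphi_{j}$ under mollification are bounded by $2^{-j}\eta$. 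Then $v_{\eta}\in\hold_{c}^{\infty}(\Omega;\R^{n})$ (locally finite sum) and $v_{\eta}\to v$ in $\lebe^{1}(\Omega;\R^{n})$ as $\eta\to 0$. A diagonal sequence yields $v_{k}:=v_{\eta_{k}}$.

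\textbf{Convergence of measures.} For the symmetric gradient the Leibniz rule gives
\[
\E v_{\eta}=\sum_{j\geq 0}\bigl[\varphi_{j}\E v+v\odot\nabla\varphi_{j}\,\mathscr{L}^{n}\bigr]\ast\rho_{\varepsilon_{j}},
\]
and the key Anzellotti--Giaquinta cancellation $\sum_{j}\nabla\varphi_{j}\equiv 0$ together with $\lebe^{1}$-continuity of translations forces the cross-terms $v\odot\nabla\varphi_{j}\,\mathscr{L}^{n}$ to telescope up to errors that vanish as $\eta\to 0$. Extending by zero, the $\overline{v_{\eta}}$'s are uniformly bounded in $\bd(\R^{n})$ and converge weakly-$\ast$ to $\overline{v}$, whose restriction to $\overline{\Omega}$ equals $\E v\mres\Omega+\trace_{\partial\Omega}(u-u_{0})\odot\nu_{\partial\Omega}\mathscr{H}^{n-1}\mres\partial\Omega$; adding the (strictly continuous) contribution $\mathscr{E}u_{0}\mathscr{L}^{n}\mres\Omega$ gives $\mu_{k}\stackrel{*}{\rightharpoonup}\mu$ on $\overline{\Omega}$.

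\textbf{Area-strict convergence and main obstacle.} Reshetnyak lower semicontinuity immediately yields $\liminf_{k}\int_{\overline{\Omega}}\sqrt{1+|\mu_{k}|^{2}}\geq\int_{\overline{\Omega}}\sqrt{1+|\mu|^{2}}$. For the matching upper bound I invoke Jensen's inequality for the convex function $z\mapsto\sqrt{1+|z|^{2}}$ against each mollification $\rho_{\varepsilon_{j}}\ast\,$, which produces
\[
\int_{\overline{\Omega}}\sqrt{1+|\mu_{\eta}|^{2}}\leq \int_{\overline{\Omega}}\sqrt{1+|\mu|^{2}}+o(1)\qquad (\eta\to 0),
\]
the $o(1)$-error absorbing the aforementioned telescoping cross-terms and the contribution of the thin layer near $\partial\Omega$ where the singular mass of $\E v$ is mollified into the bulk. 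Combining with the lower bound, area-strict convergence of $\mu_{k}$ to $\mu$ on $\overline{\Omega}$ follows, which by the Reshetnyak continuity theorem is exactly what is required. The principal technical obstacle is twofold: first, controlling the partition-of-unity cross-terms $v\odot\nabla\varphi_{j}$ (where $|\nabla\varphi_{j}|$ can blow up with $j$), which is handled exactly by the cancellation $\sum_{j}\nabla\varphi_{j}\equiv 0$; and second, ensuring that the singular interior mass $\E^{s}u$ and the boundary jump $\trace_{\partial\Omega}(u_{0}-u)\odot\nu_{\partial\Omega}\mathscr{H}^{n-1}\mres\partial\Omega$ each contribute \emph{exactly once} to the limit of the area functional and are not double counted as the mollification scale interacts with the boundary layer. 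This is precisely guaranteed by performing the entire construction at the level of the trivially extended functions on $\R^{n}$ and invoking Reshetnyak's theorem, rather than attempting to match the two contributions by hand.
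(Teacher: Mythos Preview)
The paper does not give a proof of this lemma; it is stated in Section~2.2.1 as a known fact, covered by the blanket reference to \cite{Anze1,ACD,ST,Baba,CMS,Suquet} at the beginning of that subsection. So there is no ``paper's own proof'' to compare against.

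Your proposal is the standard Anzellotti--Giaquinta strict-density construction transported to the symmetric-gradient setting, and it is essentially correct. Two minor points are worth tightening. First, there is a sign slip in the boundary term: with the paper's convention $\E\overline{v}=\E v\mres\Omega+\trace_{\partial\Omega}(v)\odot\nu_{\partial\Omega}\,\mathscr{H}^{n-1}\mres\partial\Omega$ you get $\trace_{\partial\Omega}(u-u_{0})$, not $\trace_{\partial\Omega}(u_{0}-u)$; this is harmless since only the modulus enters the final statement, but your weak-$*$ limit $\mu$ should be written consistently. Second, in the Jensen step for the upper bound you write $\mu_{\eta}=(\sg(u_{0})+\sg(v_{\eta}))\mathscr{L}^{n}$ and then ``apply Jensen against each mollification'', but $\sg(u_{0})$ is not mollified in your construction. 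The fix is immediate: since $u_{0}\in\ld(\Omega)$, one has $\sg(u_{0})\in\lebe^{1}$, so $\sg(u_{0})=\sum_{j}(\varphi_{j}\sg(u_{0}))*\rho_{\varepsilon_{j}}+o_{\lebe^{1}}(1)$ by $\lebe^{1}$-continuity of translations (this can be built into the choice of the $\varepsilon_{j}$), after which the Jensen argument applies to $\sum_{j}(\varphi_{j}\E u)*\rho_{\varepsilon_{j}}$ exactly as in the full-gradient case. With these two clarifications your sketch goes through.
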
 
\subsubsection{Korn- and Poincar\'{e} inequalities in Lebesgue and Orlicz spaces}\label{sec:Orlicz}
To transfer integrability from $\sg(u)$ to the full gradients in a flexible space scale, we recall here Korn-type inequalities in Orlicz spaces; our notation is mainly taken from the recent work of \textsc{Cianchi} \cite{Cianchi}, also see \textsc{Acerbi \& Mingione} \cite{AcerbiMingione} for related results. 

Let $A\colon[0,\infty)\to [0,\infty)$ be a Young function; by this we understand that $A(t)=\int_{0}^{t}a(\tau)\dif\tau$ for $t\geq 0$, where $a\colon [0,\infty)\to[0,\infty]$ is non-decreasing, left-continuous and being neither identical to $0$ nor $\infty$. We then denote $\lebe^{A}(\Omega;\R^{m})$ the linear space of all measurable maps $u\colon\Omega\to\R^{m}$ such that the Luxembourg norm
\begin{align*}
\|u\|_{\lebe^{A}(\Omega;\R^{m})}:=\inf\left\{\lambda>0\colon\; \int_{\Omega}A\Big(\frac{|u|}{\lambda} \Big)\dif x \leq 1\right\}
\end{align*} 
is finite. We then define $E^{1}A(\Omega)$ as the space of all $u\in\lebe^{A}(\Omega;\R^{n})$ such that the distributional symmetric gradient belongs to $\lebe^{A}(\Omega;\rsym)$. As examples, if $A(t)=|t|$, then $E^{1}A(\R^{n})=\ld(\R^{n})$, if $A(t)=|t|^{p}$ for $1<p<\infty$, then $E^{1}A(\R^{n})=\sobo^{1,p}(\R^{n};\R^{n})$. It is worth noting that the Young function $A(t):=t\log(1+t)$ displays a borderline case: For $\alpha\geq 0$, the general conclusion
\begin{align}\label{eq:logsing}
\sg(v)\log^{\alpha}(1+|\sg(v)|)\in\lebe_{\locc}^{1}(\R^{n};\rsym) \Longrightarrow Dv\in\lebe_{\locc}^{1}(\R^{n};\R^{n\times n})
\end{align}
persists if and only if $\alpha\geq 1$; hence, briefly recalling the $L\log L$-setup mentioned in the introduction, variational problems with symmetric gradients belonging to $L\log L$ are essentially dealt with in $\sobo^{1,1}$. Namely, by the \textsc{Smith} representation \cite{Smith} to be used in a different context later on, $u=(u^{1},...,u^{n})\in\hold_{c}^{\infty}(\R^{n};\R^{n})$ can be retrieved from $\sg(u)=(\sg_{ij}(u))_{i,j=1}^{n}$ via 
\begin{align}\label{eq:Smith}
u^{k} = \frac{2}{n\omega_{n}}\sum_{1\leq i\leq j \leq n}\sg_{jk}(u)*\partial_{i}K_{ij}-\sg_{ij}(u)*\partial_{k}K_{ij}+\sg_{ki}*\partial_{j}K_{ij}
\end{align}
for all $k\in\{1,...,n\}$, 
where $K_{ij}(x):=x_{i}x_{j}/|x|^{n}$ for $x\in\R^{n}\setminus\{0\}$. The convolutions here are understood in the Cauchy principal value sense, and so the map $\Phi\colon \sg(u)\mapsto \nabla u$ displays a singular integral of convolution type satisfying the usual H\"{o}rmander condition. Then \eqref{eq:logsing} follows from the theory of singular integrals on Orlicz spaces, cf.~\cite{Cianchi}. For the following, let us remind the reader of the space of \emph{rigid deformations}
\begin{align}
\mathscr{R}(\Omega):=\big\{u\colon\Omega\to\R^{n}\colon\;u(x)=Ax+b,\;A\in\rscew,\,b\in\R^{n} \big\}
\end{align}
which, for open and connected $\Omega$, is precisely the nullspace of $\sg$. Since elements of $\mathscr{R}(\Omega)$ are polynomials, we shall often identify $\mathscr{R}(\Omega)$ with $\mathscr{R}(\R^{n})$.
\begin{lemma}[{\cite[Thm.~3.3, Cor.~3.4, Ex.~3.11]{Cianchi}}]\label{lem:Cianchi}
Let $\Omega\subset\R^{n}$ be an open, bounded and connected Lipschitz domain. Then the following holds: 
\begin{enumerate}
\item\label{item:Cianchi0} For each $1<p<\infty$ there exists $c=c(p,n,\Omega)>0$ such that  
\begin{align*}
\inf_{\pi\in\mathscr{R}(\Omega)}\|\nabla (v-\pi)\|_{\lebe^{p}(\Omega;\R^{n\times n})}\leq c\|\sg(v)\|_{\lebe^{p}(\Omega;\rsym)}\qquad\text{for all}\; v\in \sobo^{1,p}(\Omega;\R^{n}). 
\end{align*}
Moreover, for all $v\in\sobo^{1,p}(\Omega;\R^{n})$ there holds
\begin{align*}
\|\nabla v - (\nabla v)_{\Omega}\|_{\lebe^{p}(\Omega;\R^{n\times n})}\leq c\|\sg(v)-(\sg(v))_{\Omega}\|_{\lebe^{p}(\Omega;\rsym)}.
\end{align*}
\item\label{item:Cianchi1} For each $\beta>0$ there exists $c=c(\beta,n,\Omega)>0$ such that 
\begin{align*}
\inf_{\pi\in\mathscr{R}(\Omega)}\|\nabla(v-\pi)\|_{\exp\lebe^{\frac{\beta}{\beta+1}}(\Omega;\R^{n\times n})}\leq c\|\sg(v)\|_{\exp\lebe^{\beta}(\Omega;\rsym)}\qquad\text{for all}\;v\in E^{1}\exp\lebe^{\beta}(\Omega), 
\end{align*}
where $\exp\lebe^{\beta}(\Omega)$ is the Orlicz space corresponding to $A(t):=\exp(t^{\beta})$. 
\end{enumerate}
\end{lemma}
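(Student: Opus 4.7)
Both parts are essentially Calder\'{o}n-Zygmund (CZ) results after applying the \textsc{Smith} representation \eqref{eq:Smith}. My plan is therefore to reduce the inequalities to the boundedness of a classical singular integral of convolution type on the relevant function space, and then to pass from $\R^{n}$ to a bounded Lipschitz domain $\Omega$ by a standard extension/quotient argument.

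\textbf{Step 1: A global identity for $\nabla u$ on $\R^{n}$.} Applied to $u\in\hold_{c}^{\infty}(\R^{n};\R^{n})$, formula \eqref{eq:Smith} differentiated in $x$ expresses every component of $\nabla u$ as a finite linear combination of principal-value convolutions of $\sg(u)$ with kernels of the form $\partial_{i}\partial_{j}K_{k\ell}$, where $K_{k\ell}(x)=x_{k}x_{\ell}/|x|^{n}$. These kernels are smooth off the origin, homogeneous of degree $-n$, with vanishing spherical mean, and hence satisfy the standard H\"{o}rmander conditions. Consequently the operator $\Phi\colon\sg(u)\mapsto\nabla u$ is bounded on any function space on which CZ operators act boundedly.

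\textbf{Step 2: Proof of (a).} Classical CZ theory gives $\Phi\colon\lebe^{p}(\R^{n};\rsym)\to\lebe^{p}(\R^{n};\R^{n\times n})$ for $1<p<\infty$. To pass to a bounded connected Lipschitz domain $\Omega$, one may argue by duality through the \textsc{Ne\v{c}as} negative-norm theorem: using the pointwise identity
\begin{align*}
\partial_{k}\partial_{\ell}v^{i} = \partial_{k}\sg_{i\ell}(v) + \partial_{\ell}\sg_{ik}(v) - \partial_{i}\sg_{k\ell}(v)
\end{align*}
one controls $\nabla^{2}v$ in $\sobo^{-1,p}(\Omega)$ by $\nabla\sg(v)$ in $\sobo^{-1,p}(\Omega)$, and \textsc{Ne\v{c}as}' lemma then promotes this to an $\lebe^{p}$-bound of $\nabla v$ modulo the kernel of $\sg$, which on a connected set is exactly $\mathscr{R}(\Omega)$. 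This proves the first inequality in \ref{item:Cianchi0}. The mean-value version follows by applying the first to $v-\pi$ with $\pi\in\mathscr{R}(\Omega)$ chosen so that $(\nabla (v-\pi))_{\Omega}=0$, which is possible since $\nabla\mathscr{R}(\Omega)=\rscew$, and then using Poincar\'{e} on the scew-symmetric part.

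\textbf{Step 3: Proof of (b).} The endpoint analogue used here is the Orlicz mapping property of CZ operators on exponential classes: for $\beta>0$, any CZ operator $T$ of convolution type with H\"{o}rmander kernel maps $\exp\lebe^{\beta}(\R^{n})$ boundedly into $\exp\lebe^{\beta/(\beta+1)}(\R^{n})$. Morally, this is the Orlicz counterpart of the $\lebe^{\infty}\to\bmo$ endpoint and can be derived by a good-$\lambda$/John--Nirenberg argument combined with the subordination between $\bmo$ and $\exp\lebe^{1}$. Applying this to each component of $\Phi$ yields the desired inequality for compactly supported test fields on $\R^{n}$, and the passage to a bounded Lipschitz $\Omega$ with the quotient by $\mathscr{R}(\Omega)$ is identical to Step~2: one extends $v-\pi$ to $\R^{n}$ by a bounded extension operator on $E^{1}\exp\lebe^{\beta}(\Omega)$ (constructed by the usual reflection/partition-of-unity over the Lipschitz charts) and chooses $\pi\in\mathscr{R}(\Omega)$ to annihilate a finite-dimensional projection, making the estimate compatible with the quotient norm.

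\textbf{Main obstacle.} The genuinely delicate point is the exponent loss $\beta\mapsto\beta/(\beta+1)$ in Step~3: this reflects the sharp failure of CZ boundedness on $\lebe^{\infty}$ and cannot be recovered by interpolating $\lebe^{p}$-bounds. Establishing it rigorously requires the level-set/good-$\lambda$ inequality for CZ operators together with a careful check that the exponent $\beta/(\beta+1)$ is indeed optimal in the sense that $A(t)=\exp(t^{\beta/(\beta+1)})$ satisfies the correct $\Delta_{2}$ and integral-growth conditions to absorb the John-Nirenberg tail; everything else (Step~1 via \eqref{eq:Smith} and the domain reduction via \textsc{Ne\v{c}as}) is essentially bookkeeping.
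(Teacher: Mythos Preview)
The paper does not provide its own proof of this lemma; it is stated with a direct citation to \textsc{Cianchi}~\cite[Thm.~3.3, Cor.~3.4, Ex.~3.11]{Cianchi}. The only argumentative content the paper offers is the paragraph preceding the lemma, where the \textsc{Smith} representation~\eqref{eq:Smith} is recalled and it is noted that the map $\Phi\colon\sg(u)\mapsto\nabla u$ is a Calder\'on--Zygmund operator of convolution type with H\"ormander kernel, so that the requisite Korn-type inequalities follow from the mapping properties of such operators on Orlicz spaces as established in~\cite{Cianchi}. Your proposal follows exactly this route, and in that sense you have correctly identified the mechanism behind the result.

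Two minor comments. First, in Step~2 you mix the \textsc{Smith}/CZ argument on $\R^{n}$ with the \textsc{Ne\v{c}as} negative-norm approach on $\Omega$; either works for part~\ref{item:Cianchi0}, but they are logically independent and it would be cleaner to commit to one. Second, in Step~3 the passage from $\R^{n}$ to a bounded Lipschitz domain via an extension operator for $E^{1}\exp\lebe^{\beta}(\Omega)$ is not entirely routine: extension operators compatible with arbitrary Orlicz--Sobolev norms on Lipschitz domains require some care (the $\exp\lebe^{\beta}$ scale lacks the $\Delta_{2}$ condition), and \textsc{Cianchi}'s original argument handles the localisation differently. This is not a gap in the overall strategy, but the sentence ``the passage \ldots\ is identical to Step~2'' understates the work involved.
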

In the sequel, we gather some instrumental results on certain projection operators and augment \eqref{eq:Smith} by a decomposition result due to \textsc{Reshetnyak} \cite{Reshetnyak2}. Note that, since $\mathscr{R}(\ball(0,1))$ is a finite dimensional vector space, \emph{all norms are equivalent on} $\mathscr{R}(\ball(0,1))$. Thus, by scaling, we find that for each $1\leq q < \infty$ there exists a constant $c(n,q)>0$ such that for all $x_{0}\in\R^{n}$ and $r>0$ there holds 
\begin{align}\label{eq:rigidscaling}
\Big(\dashint_{\ball(x_{0},r)}|\pi|^{q}\dif x\Big)^{\frac{1}{q}} + r\Big(\dashint_{\ball(x_{0},r)}|\nabla \pi|^{q}\dif x\Big)^{\frac{1}{q}} \leq c(n,q) \dashint_{\ball(x_{0},r)}|\pi|\dif x
\end{align}
for all $\pi\in\mathscr{R}(\ball(x_{0},r))$. The same inequality holds true with the obvious modifications for if $q=\infty$ on the left-hand side. Moreover, there exists a bounded linear projection operator $\Pi_{\ball(x_{0},r)}\colon\lebe^{1}(\ball(x_{0},r);\R^{n})\ni u\mapsto \pi_{u}\in\mathscr{R}(\ball(x_{0},r))$ satisfying 
\begin{align}\label{eq:Lpstability}
\Big(\dashint_{\ball(x_{0},r)}|\Pi_{\ball(x_{0},r)}u|^{q}\dif x \Big)^{\frac{1}{q}}\leq c(n,q)\Big(\dashint_{\ball(x_{0},r)}|u|^{q}\dif x\Big)^{\frac{1}{q}}
\end{align}
for all $u\in\lebe^{q}(\ball(x_{0},r);\R^{n})$ and each $1\leq q <\infty$; see the appendix, Section~\ref{sec:stability}, for an elementary proof. A similar result holds for cubes $Q$ instead of balls, and we shall refer to this property as \emph{$\lebe^{q}$-stability} of $\Pi_{\ball(x_{0},r)}$ or $\Pi_{Q}$, respectively. In a routine manner, the foregoing now yields the next lemma which should be well-known, but is hard to be found in the following form:
\begin{lemma}[Projections in Poincar\'{e}- and Korn-type inequalities]\label{rem:stability}
Let $1\leq p < \infty$, $x_{0}\in\R^{n}$ and $r>0$. For each $1\leq q\leq p$ there exists a constant $c=c(n,q)>0$ such that for all $u\in\sobo^{1,p}(\ball(x_{0},r);\R^{n})$ there exists $\pi_{u}\in\mathscr{R}(\ball(x_{0},r))$ such that 
\begin{align*}
\dashint_{\ball(x_{0},r)}|u-\pi_{u}|^{q}\dif x \leq c r^{q} \dashint_{\ball(x_{0},r)}|\sg(u)|^{q}\dif x.
\end{align*} 
In particular, the map $\Pi_{\ball(x_{0},r)} u\mapsto \pi_{u}$ is independent of $q$. The same holds true if we set $q=1$ and replace $\sobo^{1,1}(\ball(x_{0},r);\R^{n})$ by $\ld(\ball(x_{0},r))$. Moreover, if $1<q\leq p$, then there exists a constant $c=c(n,q)>0$ such that for all $u\in\sobo^{1,p}(\ball(x_{0},r);\R^{n})$ there holds (with the same $\pi_{u}$ as above) 
\begin{align*}
\dashint_{\ball(x_{0},r)}|D(u-\pi_{u})|^{q}\dif x \leq c \dashint_{\ball(x_{0},r)}|\sg(u)|^{q}\dif x.
\end{align*}
Moreover, the map $\Pi_{\ball(x_{0},r)}\colon u\mapsto\pi_{u}$ is $\lebe^{q}$-stable for each $1\leq q \leq p$ in the above sense.
\end{lemma}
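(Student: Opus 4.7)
The plan is to rescale to the unit ball via $x\mapsto x_{0}+rx$ (under which both sides of each claimed estimate transform covariantly, producing the $r^{q}$-factor in the Poincar\'{e} bound and leaving the Korn bound scale-invariant), and then to exploit the single, $\lebe^{q}$-stable projection $\Pi_{\ball(0,1)}\colon u\mapsto \pi_{u}$ whose construction is deferred to Section~\ref{sec:stability}. The conceptual point is that $\lebe^{q}$-stability permits us to replace any $q$-dependent best approximant furnished by a classical Korn--Poincar\'{e}-type inequality by the fixed map $u\mapsto\pi_{u}:=\Pi_{\ball(0,1)}u$; the independence of $\pi_{u}$ from $q$ is then automatic from this construction.

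For the Poincar\'{e}-type estimate I first produce some $\tilde{\pi}_{q}\in\mathscr{R}(\ball(0,1))$ with $\|u-\tilde{\pi}_{q}\|_{\lebe^{q}}\leq c\|\sg(u)\|_{\lebe^{q}}$. When $1<q\leq p$ this follows from Lemma~\ref{lem:Cianchi}\eqref{item:Cianchi0} after fixing the additive vector constant in $\tilde{\pi}_{q}$ by a zero-mean condition (which does not affect $\nabla\tilde{\pi}_{q}$) and invoking the ordinary Poincar\'{e} inequality; when $q=1$ and $u\in\ld(\ball(0,1))$, I invoke instead the classical Temam--Strang Korn--Poincar\'{e} inequality on $\ld$. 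Because $\Pi_{\ball(0,1)}$ fixes rigid deformations, one has the identity
\begin{align*}
u-\pi_{u}=(u-\tilde{\pi}_{q})-\Pi_{\ball(0,1)}(u-\tilde{\pi}_{q}),
\end{align*}
whence the stability bound \eqref{eq:Lpstability} gives $\|u-\pi_{u}\|_{\lebe^{q}}\leq(1+c)\|u-\tilde{\pi}_{q}\|_{\lebe^{q}}\leq c'\|\sg(u)\|_{\lebe^{q}}$, as required.

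For the Korn-type gradient estimate with $1<q\leq p$, Lemma~\ref{lem:Cianchi}\eqref{item:Cianchi0} supplies a single $\tilde{\pi}\in\mathscr{R}(\ball(0,1))$ with $\|D(u-\tilde{\pi})\|_{\lebe^{q}}\leq c\|\sg(u)\|_{\lebe^{q}}$ and, under the same zero-mean normalisation, $\|u-\tilde{\pi}\|_{\lebe^{q}}\leq c\|\sg(u)\|_{\lebe^{q}}$. The same identity and the triangle inequality reduce matters to estimating $\|D\Pi_{\ball(0,1)}(u-\tilde{\pi})\|_{\lebe^{q}}$; but $\Pi_{\ball(0,1)}(u-\tilde{\pi})$ is a rigid deformation, so its differential is a constant skew matrix whose Frobenius norm is controlled, via the norm equivalence on the finite-dimensional space $\mathscr{R}(\ball(0,1))$ encoded in \eqref{eq:rigidscaling}, by $c\|\Pi_{\ball(0,1)}(u-\tilde{\pi})\|_{\lebe^{q}}$, which by stability is in turn bounded by $c\|u-\tilde{\pi}\|_{\lebe^{q}}\leq c\|\sg(u)\|_{\lebe^{q}}$. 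The main subtlety to keep in mind is the $q=1$ branch of the Poincar\'{e} estimate: Ornstein's non-inequality precludes a gradient-level $\lebe^{1}$-Korn bound (so Lemma~\ref{lem:Cianchi} is unavailable there), which is precisely why the function-level estimate must be argued directly on $\ld$ through Temam--Strang and why the gradient inequality of the lemma is restricted to $1<q$. Undoing the rescaling finally restores the claimed $r$-dependent prefactors.
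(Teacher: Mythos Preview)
Your argument is correct and is precisely the routine deduction the paper has in mind: the lemma is stated without proof, preceded only by the remark that ``in a routine manner, the foregoing now yields the next lemma,'' where ``the foregoing'' comprises exactly the ingredients you use---Lemma~\ref{lem:Cianchi}\ref{item:Cianchi0}, the finite-dimensional norm equivalence \eqref{eq:rigidscaling}, and the $\lebe^{q}$-stability \eqref{eq:Lpstability} of the fixed projection $\Pi_{\ball(x_{0},r)}$. Your key identity $u-\pi_{u}=(u-\tilde{\pi}_{q})-\Pi_{\ball(0,1)}(u-\tilde{\pi}_{q})$ together with stability is exactly how one passes from a $q$-dependent minimiser to the $q$-independent $\pi_{u}$, and your handling of the $q=1$ branch (where Ornstein forbids the gradient-level Korn inequality, so only the function-level estimate on $\ld$ is available) is the correct reading of the lemma's restriction of the second display to $1<q$.
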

Clearly, a similar version holds for cubes. 
The foregoing lemma will not be sufficient for all future applications, and so we record the following result due to \textsc{Reshetnyak} \cite{Reshetnyak2}. As we will exclusively use it for cubes, we directly state it in the following form:
\begin{lemma}[{\textsc{Reshetnyak}, \cite{Reshetnyak2}}]\label{lem:Reshrep}
For any open, non-empty cube $Q\subset\R^{n}$, there exists a projection $\widetilde{\Pi}_{Q}\colon \hold^{\infty}(Q;\R^{n})\cap\ld(Q)\to\mathscr{R}(Q)$ and an operator $T_{Q}\colon (\hold^{\infty}\cap\lebe^{1})(Q;\rsym)\to\lebe^{1}(Q;\R^{n})$ such that for any $v\in\ld(Q)$ there holds 
\begin{align}\label{eq:decompResh}
v(x)=(\widetilde{\Pi}_{Q} v)(x) + T_{Q}[\sg(v)](x)\qquad\text{for all}\;\;x\in Q.
\end{align}
Moreover, the operator $T_{Q}$ is of the form 
\begin{align}\label{eq:repkernel}
T_{Q}[\sg(v)](x)= \int_{Q}R_{Q}(x,y)\sg(v)(y)\dif y,
\end{align}
where $R_{Q}\colon Q\times Q\to\mathscr{L}(\rsym;\R^{n})$ satisfies $|R_{Q}(x,y)|\leq c/|x-y|^{n-1}$ for all $x,y\in Q$ with $c=c(n)>0$.
\end{lemma}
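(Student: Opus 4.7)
The plan is to construct $\widetilde{\Pi}_Q$ and $T_Q$ by starting from the second-order Taylor expansion of $v$ around a running basepoint $y\in Q$, integrating against a fixed cut-off, and then rewriting every occurrence of $\nabla v$ or $\nabla^2 v$ by means of the pointwise identities relating derivatives of $v$ to $\sg(v)$. Concretely, fix once and for all a weight $\omega\in\hold_{c}^{\infty}(Q)$ with $\int_{Q}\omega\dif y = 1$ and denote $y_{\omega}:=\int_{Q}y\,\omega(y)\dif y$. For $v\in \hold^{\infty}(Q;\R^{n})$ and $x\in Q$ write
\begin{equation*}
v(x) \,=\, v(y) + \nabla v(y)(x-y) + \int_{0}^{1}(1-t)\nabla^{2}v\big(y+t(x-y)\big)[x-y,x-y]\dif t,
\end{equation*}
split $\nabla v(y) = \sg(v)(y) + \tfrac{1}{2}(\nabla v(y)-\nabla v(y)^{\top})$, and integrate in $y$ against $\omega$.

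In the resulting identity, the zeroth order term and the skew-symmetric part of the linear term combine to a $y$-averaged affine function of $x$. The skew-symmetric mean gradient is rewritten by one integration by parts as
\begin{equation*}
A(v)_{ij} \,:=\, -\tfrac{1}{2}\int_{Q}\big(v^{j}\partial_{i}\omega - v^{i}\partial_{j}\omega\big)(y)\dif y \;\in\; \rscew,
\end{equation*}
so that both $A(v)$ and the translation $b(v):=\int_{Q}\omega v\dif y - A(v)y_{\omega}$ depend \emph{linearly and continuously on $v\in\lebe^{1}(Q;\R^{n})$}. Setting
\begin{equation*}
(\widetilde{\Pi}_{Q}v)(x) \,:=\, A(v)\,x + b(v)
\end{equation*}
thus yields a bounded projection into $\mathscr{R}(Q)$; the projection property $\widetilde{\Pi}_{Q}|_{\mathscr{R}(Q)}=\operatorname{id}$ is a direct computation using only $\int\omega=1$ and integration by parts.

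It remains to represent $v-\widetilde{\Pi}_{Q}v$ as an integral of $\sg(v)$ against a kernel with the claimed singularity. The symmetric part of the linear term directly yields a contribution $\int_{Q}\omega(y)\sg(v)(y)(x-y)\dif y$, whose kernel is \emph{bounded}. For the Taylor remainder, use the classical identity
\begin{equation*}
\partial_{l}\partial_{k}v^{j} \,=\, \partial_{l}\sg_{jk}(v) + \partial_{k}\sg_{jl}(v) - \partial_{j}\sg_{kl}(v),
\end{equation*}
which follows from $\sg_{ij}(v)=\tfrac{1}{2}(\partial_{i}v^{j}+\partial_{j}v^{i})$, to replace $\nabla^{2}v$ in the remainder by first derivatives of $\sg(v)$. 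Then integrate by parts in $y$ to transfer the derivative off $\sg(v)$ onto the test factor $\omega(y)(x-y)\otimes(x-y)$, and finally perform the change of variables $z = y+t(x-y)$ at fixed $x$ to eliminate the parameter $t\in (0,1)$. This produces the representation $T_{Q}[\sg(v)](x)=\int_{Q}R_{Q}(x,y)\sg(v)(y)\dif y$ with a kernel assembled from $\omega$, $\nabla\omega$ and the Jacobian of the above change of variables.

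The main obstacle is the final kernel estimate $|R_{Q}(x,y)|\leq c|x-y|^{-(n-1)}$. The Jacobian of $z=y+t(x-y)\mapsto y$ contributes a factor $|x-y|^{-n}$, while one factor $(x-y)$ survives from the Taylor remainder; boundedness of $\omega,\nabla\omega$ on $Q$ absorbs the rest. Carefully tracking these factors, together with the extra contribution from the boundary term in the integration by parts in $y$ (which, thanks to $\omega\in\hold_{c}^{\infty}(Q)$, vanishes), yields the required bound with a constant depending only on $n$ and on $\|\omega\|_{\hold^{1}(Q)}$, the latter itself being controllable in terms of $n$ by an affine rescaling of $\omega$ to the unit cube. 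Combining the symmetric-linear and the remainder contributions then gives the full operator $T_{Q}$ with the kernel representation \eqref{eq:repkernel}.
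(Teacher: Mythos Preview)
The paper does not prove this lemma at all; it is stated as a result due to \textsc{Reshetnyak} and attributed to \cite{Reshetnyak2} without further argument. Your sketch, by contrast, supplies an actual construction, and it is essentially the classical one: average a second-order Taylor expansion against a fixed bump, use the Saint-Venant identity $\partial_{l}\partial_{k}v^{j}=\partial_{l}\sg_{jk}(v)+\partial_{k}\sg_{jl}(v)-\partial_{j}\sg_{kl}(v)$ to trade $\nabla^{2}v$ for $\nabla\sg(v)$, integrate by parts, and then change variables along the segment $z=y+t(x-y)$. This is precisely the mechanism behind Reshetnyak's formula (and close in spirit to the \textsc{Smith} representation \eqref{eq:Smith} recalled earlier in the paper), so your route agrees with the intended reference.

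Two small points deserve to be made explicit in a full write-up. First, since $\sg(v)$ in the remainder is evaluated at $y+t(x-y)$ rather than at $y$, the integration by parts in $y$ picks up a chain-rule factor $(1-t)^{-1}$; this cancels the $(1-t)$ from the Taylor remainder weight and is what makes the subsequent $t$-integral produce exactly the $|x-y|^{1-n}$ singularity. Second, for the constant to depend only on $n$ (and not on $\ell(Q)$), one should fix $\omega$ as the rescaling $\omega(y)=\ell(Q)^{-n}\omega_{0}((y-y_{Q})/\ell(Q))$ of a single unit-cube bump $\omega_{0}$; then $|\omega|\lesssim\ell(Q)^{-n}$ and $|\nabla\omega|\lesssim\ell(Q)^{-n-1}$ combine with the $t$-integral bounds $\int_{|x-z|/\ell(Q)}^{1}s^{-n-1}\dif s$ and $\int_{|x-z|/\ell(Q)}^{1}s^{-n-2}\dif s$ to give a scale-free $c(n)|x-z|^{1-n}$. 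With these two clarifications your argument is complete.
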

%In general, the projection operators $\Pi$ and $\widetilde{\Pi}$ from Lemmas \ref{rem:stability} and \ref{lem:Reshrep} need not be the same. Certainly, Lemma~\ref{lem:Reshrep} implies Lemma~\ref{rem:stability}, but does not yield the required $\lebe^{q}$-stability as easily. 
%\begin{proof}
%In \cite{Reshetnyak2} the argument is carried out for sets $U\subset\R^{n}$ being starshaped with respect to a ball. Here we consider all cubes $Q$ to be starshaped with respect to a ball $\ball_{Q}$ having the same center as the cube and with radius $\frac{\ell(Q)}{100}$; in this way, the corresponding estimates will scale correctly. By \cite[Eqs.~(2.38), (2.39)~ff.]{Reshetnyak2}, we can write $v =(v^{1},...,v^{n})\in (\hold^{\infty}\cap\sobo^{1,p})(Q;\R^{n})$ as \eqref{eq:decompResh} and we have the explicit representation
%\begin{align*}
%(T_{Q}[\sg(v)])^{i}(x)=\int_{Q}\sum_{k,l=1}^{n}\sg_{kl}(v)R^{(i)}_{kl,Q}(x,y)\dif y\qquad x\in Q, 
%\end{align*}
%and $R_{Q}^{(i)}=(R_{kl,Q}^{(i)})_{k,l=1}^{n}$ satisfies for a constant $C=C(Q,\ball_{Q})>0$, $|R_{Q}(x,y)|\leq C/|x-y|^{n-1}$ for all $x,y\in Q$. By our choice of the ball $\ball_{Q}$ with respect to which $Q$ is starshaped, it is seen that $C>0$ actually can be chosen independently of $Q$, and even though \eqref{eq:decompResh} is stated for $\hold^{\infty}\cap\sobo^{1,1}$-maps, the above bound on $R_{Q}$ shows that it equally remains valid for $v\in\hold^{\infty}(Q;\R^{n})\cap\ld(Q)$. 
%\end{proof}

\subsubsection{Negative Sobolev spaces}\label{sec:prelimsauxiliaryFS}
The viscosity approximation strategy to be set up in Section~\ref{sec:W11reg} shall require certain negative Sobolev spaces in a crucial manner. As shall become clear later, we have to go beyond the space $\sobo^{-1,1}$ as introduced in \cite{BS1}. Given $k\in\mathbb{N}$, we define the space $\sobo^{-k,1}(\Omega;\R^{n})$ as follows:
\begin{align*}
\sobo^{-k,1}(\Omega;\R^{n}):=\Big\{T\in\mathscr{D}'(\Omega;\R^{n})\colon\;T=\sum_{\substack{\alpha\in\mathbb{N}_{0}^{n} \\ |\alpha|\leq k}}\partial^{\alpha}T_{\alpha},\;T_{\alpha}\in\lebe^{1}(\Omega;\R^{n})\;\text{for all}\;|\alpha|\leq k\Big\}.
\end{align*}
The linear space $\sobo^{-k,1}(\Omega;\R^{n})$ is canonically endowed with the norm 
\begin{align}
\|T\|_{\sobo^{-k,1}(\Omega;\R^{n})}:=\inf \sum_{|\alpha|\leq k}\|T_{\alpha}\|_{\lebe^{1}(\Omega;\R^{n})}, 
\end{align}
the infimum ranging over all representations $T=\sum_{|\alpha|\leq k}\partial^{\alpha}T_{\alpha}$ with $T_{\alpha}\in\lebe^{1}(\Omega;\R^{n})$. Similar as for $\sobo^{-1,1}(\Omega;\R^{n})$ as discussed in \cite{BS1}, $\sobo^{-k,1}$ is not approachable by duality. We collect its most important properties in the following lemma. 
\begin{lemma}\label{lem:negative}
Let $\Omega\subset\R^{n}$ be open and let $k\in\mathbb{N}$ be given. Then the following holds: 
\begin{enumerate}
\item \label{item:negative1} $(\sobo^{-k,1}(\Omega;\R^{n}),\|\cdot\|_{\sobo^{-k,1}(\Omega;\R^{n})})$ is a Banach space. 
\item \label{item:negative2} For every $u\in\lebe^{1}(\Omega;\R^{n})$ and every $\beta\in\mathbb{N}_{0}^{n}$ with $|\beta|\leq k$ there holds 
\begin{align*}
\|\partial^{\beta}u\|_{\sobo^{-k,1}(\Omega;\R^{n})}\leq \|u\|_{\sobo^{|\beta|-k,1}(\Omega;\R^{n})}.
\end{align*} 
\end{enumerate}
\end{lemma}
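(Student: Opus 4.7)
The plan is to verify both assertions by unraveling the definition of $\sobo^{-k,1}$, which by construction is (a quotient of) a finite direct sum of copies of $\lebe^{1}$; thus both claims follow by working with nearly optimal representations and exploiting the $\lebe^{1}$-structure.

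For part (\ref{item:negative1}), I would first check that $\|\cdot\|_{\sobo^{-k,1}(\Omega;\R^{n})}$ is genuinely a norm. Positive homogeneity and subadditivity follow directly from the infimum structure: adding nearly optimal representations of $T$ and $S$ yields a representation of $T+S$. For definiteness, suppose $\|T\|_{\sobo^{-k,1}(\Omega;\R^{n})}=0$; then there exist representations $T=\sum_{|\alpha|\leq k}\partial^{\alpha}T_{\alpha}^{(j)}$ with $\sum_{|\alpha|\leq k}\|T_{\alpha}^{(j)}\|_{\lebe^{1}(\Omega;\R^{n})}\to 0$. Testing against $\varphi\in\hold_{c}^{\infty}(\Omega;\R^{n})$ and integrating by parts gives
\begin{align*}
|\langle T,\varphi\rangle|\leq \|\varphi\|_{\hold^{k}(\Omega;\R^{n})}\sum_{|\alpha|\leq k}\|T_{\alpha}^{(j)}\|_{\lebe^{1}(\Omega;\R^{n})}\xrightarrow{j\to\infty} 0,
\end{align*}
so $T=0$ in $\mathscr{D}'(\Omega;\R^{n})$. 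Completeness follows via the standard absolute-convergence trick: from any Cauchy sequence I pass to a subsequence $(T^{(j)})$ with $\|T^{(j+1)}-T^{(j)}\|_{\sobo^{-k,1}(\Omega;\R^{n})}<2^{-j}$, select representations $T^{(j+1)}-T^{(j)}=\sum_{|\alpha|\leq k}\partial^{\alpha}S_{\alpha}^{(j)}$ with $\sum_{|\alpha|\leq k}\|S_{\alpha}^{(j)}\|_{\lebe^{1}(\Omega;\R^{n})}<2^{-j+1}$, and, starting from any fixed representation $T^{(1)}=\sum_{|\alpha|\leq k}\partial^{\alpha}T_{\alpha}^{(1)}$, set $T_{\alpha}:=T_{\alpha}^{(1)}+\sum_{j\geq 1}S_{\alpha}^{(j)}$. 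Absolute summability yields $T_{\alpha}\in\lebe^{1}(\Omega;\R^{n})$, and $T:=\sum_{|\alpha|\leq k}\partial^{\alpha}T_{\alpha}$ is the desired limit in $\sobo^{-k,1}(\Omega;\R^{n})$.

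For part (\ref{item:negative2}), given $u\in\lebe^{1}(\Omega;\R^{n})$ and $|\beta|\leq k$, I set $j:=k-|\beta|\geq 0$. For any representation $u=\sum_{|\gamma|\leq j}\partial^{\gamma}u_{\gamma}$ with $u_{\gamma}\in\lebe^{1}(\Omega;\R^{n})$ exhibiting $u$ as an element of $\sobo^{-j,1}=\sobo^{|\beta|-k,1}$, distributional differentiation yields
\begin{align*}
\partial^{\beta}u=\sum_{|\gamma|\leq j}\partial^{\beta+\gamma}u_{\gamma},
\end{align*}
and each index $\beta+\gamma$ satisfies $|\beta+\gamma|\leq k$. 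This is thus an admissible representation of $\partial^{\beta}u$ in $\sobo^{-k,1}$ with cost $\sum_{|\gamma|\leq j}\|u_{\gamma}\|_{\lebe^{1}(\Omega;\R^{n})}$; taking the infimum over all representations of $u$ gives the required inequality.

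The main point worth care — rather than a genuine obstacle — is that, as noted in the excerpt, the space $\sobo^{-k,1}$ is not a dual space, so one cannot substitute a duality argument for the infimum-over-representations reasoning; all estimates must be produced by manipulating concrete representations, and the seminorm-to-norm step in (\ref{item:negative1}) crucially relies on testing against $\hold_{c}^{\infty}$ functions to conclude vanishing of $T$.
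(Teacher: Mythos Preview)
Your argument for part~\ref{item:negative2} is essentially identical to the paper's: pick a near-optimal representation of $u$ in $\sobo^{|\beta|-k,1}$, apply $\partial^{\beta}$ termwise, and observe that the shifted multi-indices stay within the admissible range for $\sobo^{-k,1}$.

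For part~\ref{item:negative1} your approach differs from the paper's. You verify the norm axioms by hand and then prove completeness via the absolutely-summable-subsequence trick, building the limiting coefficients $T_{\alpha}\in\lebe^{1}$ by summing representations of successive differences. The paper instead identifies $\sobo^{-k,1}(\Omega;\R^{n})$ as the isometric image of the quotient $\lebe^{1}(\Omega;\R^{n})^{N}/\ker(\Phi)$, where $\Phi\colon(T_{\alpha})_{|\alpha|\leq k}\mapsto\sum_{|\alpha|\leq k}\partial^{\alpha}T_{\alpha}$, and then invokes the general fact that a quotient of a Banach space by a closed subspace is Banach. Both routes are correct and standard; the paper's argument is more structural (it makes transparent that the infimum norm is exactly a quotient norm, whence completeness is automatic), while yours is self-contained and produces the limit explicitly. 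Your definiteness step---testing against $\hold_{c}^{\infty}$ to conclude $T=0$---is precisely the content behind the paper's implicit use of the closedness of $\ker(\Phi)$.
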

\begin{proof}
In view of \ref{item:negative1}, we closely follow \cite{BS1} and consider the mapping $\Phi\colon \lebe^{1}(\Omega;\R^{n})^{N} \ni (T_{\alpha})_{|\alpha|\leq k} \mapsto \sum_{|\alpha|\leq k}\partial^{\alpha}T_{\alpha}\in\sobo^{-k,1}(\Omega;\R^{n})$, where $N:=\#\{\alpha\in\mathbb{N}_{0}^{n}\colon\;|\alpha|\leq k\}$. By definition of $\sobo^{-k,1}(\Omega;\R^{n})$, $\Phi$ is a bounded linear operator and thus $\ker(\Phi)$ is a Banach space in itself. By definition of the quotient norm, the canonical quotient map $\Psi\colon \lebe^{1}(\Omega;\R^{n})^{N}/\ker(\Phi)\to \sobo^{-k,1}(\Omega;\R^{n})$ is surjective and isometric. Thus, as $\ker(\Phi)$ is Banach, so is $\lebe^{1}(\Omega;\R^{n})^{N}/\ker(\Phi)$ and eventually, as the isometric image of a Banach space, $(\sobo^{-k,1}(\Omega;\R^{n}),\|\cdot\|_{\sobo^{-k,1}(\Omega;\R^{n})})$. For \ref{item:negative2}, let $\varepsilon>0$ and choose $(T_{\alpha})_{\alpha}\in\lebe^{1}(\Omega;\R^{n})^{N}$ such that $u=\sum_{|\alpha|\leq k-|\beta|}\partial^{\alpha}T_{\alpha}$ and
\begin{align*}
\sum_{|\alpha|\leq k-|\beta|}\|T_{\alpha}\|_{\lebe^{1}(\Omega;\R^{n})} \leq \|u\|_{\sobo^{|\beta|-k,1}(\Omega;\R^{n})}+\varepsilon.
\end{align*}
On the other hand, $\partial^{\beta}u=\sum_{|\alpha|\leq k-|\beta|}\partial^{\alpha+\beta}T_{\alpha}=:\sum_{|\gamma|\leq k}\partial^{\gamma}S_{\gamma}$, where $S_{\gamma}=T_{\alpha}$ if $\gamma=\alpha+\beta$ for some $\alpha$ with $|\alpha|\leq k-|\beta|$ and $S_{\gamma}=0$ otherwise. Therefore, 
\begin{align*}
\|\partial^{\beta}u\|_{\sobo^{-k,1}(\Omega;\R^{n})}\leq \sum_{|\gamma|\leq k}\|S_{\gamma}\|_{\lebe^{1}(\Omega;\R^{n})}\leq \sum_{|\alpha|\leq k-|\beta|}\|T_{\alpha}\|_{\lebe^{1}(\Omega;\R^{n})} \leq \|u\|_{\sobo^{|\beta|-k,1}(\Omega;\R^{n})}+\varepsilon, 
\end{align*}
and we then send $\varepsilon\searrow 0$ to conclude the proof. 
\end{proof}
Next, a lower semicontinuity result in the spirit of \cite[Lem.~3.2]{GK1}, \cite[Lem.~2.6]{BS1}:
\begin{lemma}\label{lem:EkelandLSC}
Let $1<q<\infty$, $k\in\mathbb{N}$ be given and let $\Omega$ be open and bounded with Lipschitz boundary $\partial\Omega$. Suppose that $\mathfrak{f}\colon\rsym\to\R$ is a convex function that satisfies $c^{-1}|z|^{q}-d\leq \mathfrak{f}(z)\leq c(1+|z|^{q})$ for some $c,d>0$ and all $z\in\rsym$. Then, for every $u_{0}\in\sobo^{1,q}(\Omega;\R^{n})$, the functional 
\begin{align*}
\mathcal{F}[u]:=\begin{cases} \displaystyle \int_{\Omega}\mathfrak{f}(\sg(u))\dif x&\;\text{if}\;u\in \mathscr{D}_{u_{0}}:=u_{0}+\sobo_{0}^{1,q}(\Omega;\R^{n}),\\
+\infty&\;\text{if}\;u\in\sobo^{-k,1}(\Omega;\R^{n})\setminus\mathscr{D}_{u_{0}}
\end{cases}
\end{align*}
is lower semicontinuous for the norm topology on $\sobo^{-k,1}(\Omega;\R^{n})$. 
\end{lemma}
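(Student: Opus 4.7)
The plan is to reduce the assertion to the standard weak lower semicontinuity of convex integral functionals on $\sobo^{1,q}$, by upgrading $\sobo^{-k,1}$-convergence of a sequence with bounded $\mathcal{F}$-energy to weak $\sobo^{1,q}$-convergence. Concretely, let $(u_j)\subset\sobo^{-k,1}(\Omega;\R^{n})$ converge to $u$ in $\sobo^{-k,1}(\Omega;\R^{n})$; we may assume that $\liminf_{j}\mathcal{F}[u_j]=\lim_{j}\mathcal{F}[u_j]=:L$ after extraction, and that $L<\infty$, otherwise the claim is vacuous. Hence $u_j\in\mathscr{D}_{u_0}$ for all $j$ and $\mathcal{F}[u_j]\leq M$ uniformly, so the coercivity bound $c^{-1}|z|^{q}-d\leq \mathfrak{f}(z)$ yields a uniform bound on $\|\sg(u_j)\|_{\lebe^{q}(\Omega;\rsym)}$.

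Next I would promote this to a $\sobo^{1,q}$-bound. Writing $v_j:=u_j-u_0\in\sobo_{0}^{1,q}(\Omega;\R^{n})$ and extending $v_j$ by zero to a ball $B\supset\Omega$, Lemma~\ref{lem:Cianchi}\ref{item:Cianchi0} applied on $B$ (equivalently, the first Korn inequality in the $\lebe^{q}$-range $1<q<\infty$ for maps vanishing on the boundary) combined with Poincaré's inequality gives
\begin{align*}
\|v_j\|_{\sobo^{1,q}(\Omega;\R^{n})} \leq c\|\sg(v_j)\|_{\lebe^{q}(\Omega;\rsym)} \leq c\bigl(\|\sg(u_j)\|_{\lebe^{q}(\Omega;\rsym)}+\|\sg(u_0)\|_{\lebe^{q}(\Omega;\rsym)}\bigr),
\end{align*}
so $(u_j)$ is bounded in $\sobo^{1,q}(\Omega;\R^{n})$. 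By reflexivity and the Rellich-Kondrachov theorem, a further subsequence satisfies $u_j\rightharpoonup \widetilde u$ weakly in $\sobo^{1,q}$ and $u_j\to\widetilde u$ strongly in $\lebe^{q}(\Omega;\R^{n})$, hence in $\lebe^{1}(\Omega;\R^{n})$. Since the inclusion $\lebe^{1}(\Omega;\R^{n})\hookrightarrow \sobo^{-k,1}(\Omega;\R^{n})$ is contractive (taking $T_{0}=\cdot$ and $T_{\alpha}=0$ for $|\alpha|\geq 1$ in the defining norm, or by Lemma~\ref{lem:negative}\ref{item:negative2} with $\beta=0$), this forces $\widetilde u=u$; in particular $u\in\lebe^{q}(\Omega;\R^{n})$. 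As $\sobo_{0}^{1,q}(\Omega;\R^{n})$ is convex and norm-closed, it is weakly closed, and therefore $u-u_0\in\sobo_{0}^{1,q}(\Omega;\R^{n})$, i.e. $u\in\mathscr{D}_{u_0}$.

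Finally, $\sg(u_j)\rightharpoonup\sg(u)$ weakly in $\lebe^{q}(\Omega;\rsym)$, so Ioffe's classical lower semicontinuity theorem for convex integrands of $q$-growth yields
\begin{align*}
\mathcal{F}[u] = \int_{\Omega}\mathfrak{f}(\sg(u))\dif x \leq \liminf_{j\to\infty}\int_{\Omega}\mathfrak{f}(\sg(u_j))\dif x = L,
\end{align*}
which is the required inequality along the chosen subsequence; a standard subsequence-of-subsequence argument then delivers the lower semicontinuity of $\mathcal{F}$ along the original sequence. The main conceptual obstacle is that $\sobo^{-k,1}$ is not a dual space, so one cannot directly extract a weak-$*$ convergent subsequence there; this is circumvented by using the coercivity of $\mathfrak{f}$ in conjunction with Korn's inequality in the reflexive range to generate genuine $\sobo^{1,q}$-compactness, and then identifying the $\sobo^{1,q}$-weak limit with the $\sobo^{-k,1}$-strong limit via the embedding $\lebe^{1}\hookrightarrow \sobo^{-k,1}$.
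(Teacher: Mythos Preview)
Your proof is correct and follows essentially the same route as the paper's: both use coercivity plus Korn's inequality in $\sobo_{0}^{1,q}$ to obtain a uniform $\sobo^{1,q}$-bound, extract a weakly convergent subsequence, identify the weak limit with the $\sobo^{-k,1}$-limit via the embedding $\lebe^{1}\hookrightarrow\sobo^{-k,1}$, and conclude by standard weak lower semicontinuity of convex integrals. The only cosmetic differences are that the paper invokes Korn's inequality in $\sobo_{0}^{1,q}$ directly (rather than via extension to a ball and Lemma~\ref{lem:Cianchi}\ref{item:Cianchi0}) and cites Reshetnyak's theorem as an alternative for the final step, whereas you cite Ioffe's theorem; both are equally valid here.
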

\begin{proof}
Let $g,g_{1},g_{2},... \in \sobo^{-k,1}(\Omega;\R^{n})$ be such that $g_{m}\to g$ with respect to the norm topology on $\sobo^{-k,1}(\Omega;\R^{n})$. If $\liminf_{m\to\infty}\mathcal{F}[g_{m}]=+\infty$, there is nothing to prove. Hence assume without loss of generality that $\lim_{j\to\infty}\mathcal{F}[g_{m(j)}]=\liminf_{m\to\infty}\mathcal{F}[g_{m}]<\infty$. Then necessarily $g_{m(j)}\in \mathscr{D}_{u_{0}}$ for all sufficiently large indices $j$ and, since $c^{-1}|z|^{q}-d\leq \mathfrak{f}(z)$ for all $z\in\rsym$, we obtain that $(\sg(g_{m(j)}))$ is bounded in $\lebe^{q}(\Omega;\rsym)$. Since $g_{m(j)}\in\mathscr{D}_{u_{0}}$ and $q>1$, \textsc{Korn}'s inequality in $\sobo_{0}^{1,q}(\Omega;\R^{n})$ implies that $(g_{m(j)})$ is uniformly bounded in $\sobo^{1,q}(\Omega;\R^{n})$. Since $1<q<\infty$, there exists a subsequence $(g_{m(j(i))})\subset (g_{m(j)})$ which converges weakly in $\sobo^{1,q}(\Omega;\R^{n})$ to some $\widetilde{g}\in\mathscr{D}_{u_{0}}$ (note that $\mathscr{D}_{u_{0}}$ is weakly closed in $\sobo^{1,q}(\Omega;\R^{n})$). By the \textsc{Rellich-Kondrachov} theorem, we can moreover assume that $g_{m(j(i))}\to \widetilde{g}$ strongly in $\lebe^{q}(\Omega;\R^{n})$. Then, since $\lebe^{q}(\Omega;\R^{n})\hookrightarrow\sobo^{-k,1}(\Omega;\R^{n})$ by Lemma~\ref{lem:negative}\ref{item:negative2}, 
\begin{align*}
\|g-\widetilde{g}\|_{\sobo^{-k,1}(\Omega;\R^{n})}\leq \|g-g_{m(j(i))}\|_{\sobo^{-k,1}(\Omega;\R^{n})} + \|\widetilde{g}-g_{m(j(i))}\|_{\lebe^{1}(\Omega;\R^{n})}\to 0,\qquad i\to\infty, 
\end{align*}
and thus $g=\widetilde{g}$. By standard results on lower semicontinuity of convex variational integrals of superlinear growth (or, alternatively, \textsc{Reshetnyak}'s lower semicontinuity theorem, Theorem~\ref{lem:reshetnyak} below) $\sg(g_{m(j(i))})\mathscr{L}^{n}\stackrel{*}{\rightharpoonup}\sg(g)\mathscr{L}^{n}$ as $i\to\infty$ thus yields 
\begin{align*}
\mathcal{F}[g] & \leq \liminf_{i\to\infty}\mathcal{F}[g_{m(j(i))}]=\liminf_{m\to\infty}\mathcal{F}[g_{m}].
\end{align*}
The proof is complete. 
\end{proof}
\subsection{The Ekeland variational principle}
In this section we recall a variant of the \textsc{Ekeland} variational principle \cite{Ekeland} that is suitable for our purposes. The version which we state here is a merger of \cite[Thm.~5.6, Rem.~5.5]{Giusti}:
\begin{proposition}\label{prop:Ekeland}
Let $(V,d)$ be a complete metric space and let $\mathcal{F}\colon V\to\R\cup\{\infty\}$ be a lower semicontinuous function (for the metric topology) which is bounded from below and takes a finite value at some point. Suppose that, for some $u\in V$ and some $\varepsilon>0$, there holds $\mathcal{F}[u]\leq \inf \mathcal{F}[V]+\varepsilon$. Then there exists $v\in V$ such that 
\begin{enumerate}
\item $d(u,v)\leq \sqrt{\varepsilon}$,
\item $\mathcal{F}[v]\leq \mathcal{F}[u]$, 
\item for all $w\in V$ there holds $\mathcal{F}[v]\leq \mathcal{F}[w] + \sqrt{\varepsilon}d(v,w)$. 
\end{enumerate}
\end{proposition}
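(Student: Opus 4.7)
The plan is to produce $v$ as the limit of a sequence of iterated almost-minimisers for the order on $V$ defined by
\[ z \preceq w \quad \Longleftrightarrow\quad \mathcal{F}[z] + \sqrt{\varepsilon}\, d(z,w) \le \mathcal{F}[w]. \]
Reflexivity and antisymmetry are immediate; transitivity follows from the triangle inequality. Condition (c) for a point $v$ is precisely the assertion that $v$ is minimal for $\preceq$, so the task reduces to producing such a minimal element within distance $\sqrt{\varepsilon}$ of $u$.

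Starting from $u_{0}:=u$, I would inductively set $S_{n}:=\{z\in V\colon z\preceq u_{n}\}$ and pick $u_{n+1}\in S_{n}$ with $\mathcal{F}[u_{n+1}]\le \inf_{S_{n}}\mathcal{F} + 2^{-n-1}$, which is possible since $u_{n}\in S_{n}$ and $\mathcal{F}$ is bounded from below. Because $u_{m}\preceq u_{n}$ for $m\ge n$ by transitivity, we have $\sqrt{\varepsilon}\, d(u_{n},u_{m})\le \mathcal{F}[u_{n}]-\mathcal{F}[u_{m}]$, and since $(\mathcal{F}[u_{n}])$ is non-increasing and bounded below the sequence $(u_{n})$ is Cauchy. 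Completeness of $(V,d)$ yields a limit $v\in V$, while lower semicontinuity of $\mathcal{F}$ gives $\mathcal{F}[v]\le \lim_{n}\mathcal{F}[u_{n}]$.

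Properties (a) and (b) are then routine. For (a), telescoping the estimate above from $n=0$ and using that $u$ is an $\varepsilon$-minimiser yields
\[ \sqrt{\varepsilon}\, d(u_{0},u_{n}) \le \mathcal{F}[u]-\mathcal{F}[u_{n}] \le \mathcal{F}[u]-\inf\mathcal{F}[V]\le \varepsilon, \]
and passing to the limit gives $d(u,v)\le \sqrt{\varepsilon}$. Property (b) follows from monotonicity of $(\mathcal{F}[u_{n}])$ combined with $\mathcal{F}[v]\le \lim\mathcal{F}[u_{n}]\le \mathcal{F}[u_{0}]=\mathcal{F}[u]$.

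The main obstacle is (c), which I would treat by contradiction. Suppose some $w\in V$ violated it with $w\ne v$; then $w\preceq v$ strictly and in particular $\mathcal{F}[w]<\mathcal{F}[v]$. Taking $m\to\infty$ in the inequality $\mathcal{F}[u_{m}]+\sqrt{\varepsilon}\, d(u_{m},u_{n})\le \mathcal{F}[u_{n}]$, and exploiting continuity of $d(\cdot,u_{n})$ jointly with lower semicontinuity of $\mathcal{F}$, produces $v\preceq u_{n}$ for every $n$. Transitivity of $\preceq$ then gives $w\preceq u_{n}$, so $w\in S_{n}$ and $\mathcal{F}[w]\ge \inf_{S_{n}}\mathcal{F}\ge \mathcal{F}[u_{n+1}]-2^{-n-1}$. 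Letting $n\to\infty$ yields $\mathcal{F}[w]\ge \lim\mathcal{F}[u_{n}]\ge \mathcal{F}[v]$, contradicting $\mathcal{F}[w]<\mathcal{F}[v]$. The delicate point is precisely this lsc-limit step in the order relation, which must be performed carefully because $\preceq$ couples a continuous quantity ($d$) with a merely lower semicontinuous one ($\mathcal{F}$); once it is in place, the remainder of the argument is purely order-theoretic.
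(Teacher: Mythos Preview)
Your argument is a correct and standard proof of the Ekeland variational principle via the partial order $z\preceq w\Leftrightarrow \mathcal{F}[z]+\sqrt{\varepsilon}\,d(z,w)\le\mathcal{F}[w]$. The one place that warrants care --- passing to the limit in $u_{m}\preceq u_{n}$ to obtain $v\preceq u_{n}$ --- you handle correctly, since $(\mathcal{F}[u_{m}])$ is non-increasing and convergent, $d(\cdot,u_{n})$ is continuous, and $\mathcal{F}[v]\le\lim_{m}\mathcal{F}[u_{m}]$ by lower semicontinuity; these combine to give $\mathcal{F}[v]+\sqrt{\varepsilon}\,d(v,u_{n})\le\mathcal{F}[u_{n}]$ as you claim.

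As for comparison with the paper: there is nothing to compare. The paper does not supply its own proof of this proposition; it is stated as a quotation of a classical result and referenced to \cite[Thm.~5.6, Rem.~5.5]{Giusti}. Your write-up is essentially the proof one finds in standard references (including Giusti's book and Ekeland's original paper), so in that sense it aligns with what the paper implicitly relies on.
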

\subsection{Functions of measures and convolutions}\label{sec:funofmeas}
In this section we collect background facts on linear growth integrands and functionals of the form \eqref{eq:varprin}. We begin with 
\begin{lemma}\label{lem:boundbelow}
Suppose that $f\in\hold^{2}(\rsym)$ is convex and satisfies \eqref{eq:lingrowth1} with $c_{1},c_{2},\gamma>0$. Then $f$ is Lipschitz with $\Lip(f)\leq c_{2}$. 
\end{lemma}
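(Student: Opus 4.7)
The plan is to reduce the Lipschitz bound to a pointwise bound on the gradient $f'$, using in an essential way that convex $\hold^{1}$-functions are Lipschitz precisely when their gradient is globally bounded, and that the bound on $|f'|$ is governed by the linear growth constant $c_{2}$ from above in \eqref{eq:lingrowth1}. Since $f\in\hold^{2}(\rsym)$ is in particular differentiable, the plan avoids subgradient arguments and works directly with $f'$.

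First, I would fix $z\in\rsym$ and a direction $h\in\rsym$ and exploit the convexity of $f$ along the ray $t\mapsto z+th$. By the standard support/tangent inequality for convex differentiable functions,
\begin{align*}
\langle f'(z),h\rangle \;\leq\; \frac{f(z+th)-f(z)}{t}\qquad\text{for all }t>0.
\end{align*}
Next, I would insert the two sides of \eqref{eq:lingrowth1}: the upper bound on $f(z+th)$ and the trivial consequence $-f(z)\leq\gamma-c_{1}|z|\leq\gamma$ of the lower bound. This yields
\begin{align*}
\langle f'(z),h\rangle \;\leq\; \frac{c_{2}(1+|z|+t|h|)+\gamma-c_{1}|z|}{t},
\end{align*}
and letting $t\to\infty$ the right-hand side tends to $c_{2}|h|$. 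Replacing $h$ by $-h$ and combining gives $|\langle f'(z),h\rangle|\leq c_{2}|h|$ for every $h\in\rsym$, so that $|f'(z)|\leq c_{2}$ uniformly in $z$.

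To conclude, I would integrate $f'$ along segments: for arbitrary $z_{1},z_{2}\in\rsym$ the fundamental theorem of calculus gives
\begin{align*}
|f(z_{1})-f(z_{2})| \;=\; \Big|\int_{0}^{1}\langle f'(z_{2}+t(z_{1}-z_{2})),z_{1}-z_{2}\rangle\,\mathrm{d}t\Big| \;\leq\; c_{2}|z_{1}-z_{2}|,
\end{align*}
which is the asserted Lipschitz estimate with $\Lip(f)\leq c_{2}$. There is no real obstacle here; the only point requiring minimal care is to choose the lower bound on $f(z)$ independently of $z$ (via $c_{1}|z|\geq 0$) so that the limit $t\to\infty$ in the difference quotient is clean and produces precisely $c_{2}|h|$ rather than a larger constant.
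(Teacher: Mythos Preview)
Your proof is correct and follows essentially the same route as the paper, which simply cites \cite[Lem.~5.2]{Giusti}: bound $\langle f'(z),h\rangle$ by the difference quotient via convexity, use the linear growth bound \eqref{eq:lingrowth1} and let $t\to\infty$ to obtain $|f'(z)|\leq c_{2}$. Your closing remark about needing the lower bound on $f(z)$ to be independent of $z$ is unnecessary, since $z$ is fixed while $t\to\infty$ and any constant numerator vanishes in the limit; but this does not affect the validity of the argument.
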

The proof of the preceding lemma evolves in the same way as \cite[Lem.~5.2]{Giusti}; the reader might notice that for the conclusion of Lemma~\ref{lem:boundbelow} it is sufficient that $f$ is symmetric rank-one convex -- so convex with respect to directions $a\odot b$, $a,b\in\R^{n}$ -- and satisfies \eqref{eq:lingrowth1}. As to (lower semi)continuity, we shall mostly rely on the following theorem due to \textsc{Reshetnyak} \cite{Reshetnyak1} (see \cite{Reshetnyak1,Anzellotti,AFP} for more information on functions of measures):
\begin{theorem}[\textsc{Reshetnyak} (lower semi-)continuity]\label{lem:reshetnyak}
Let $V$ be a finite dimensional real vector space and let $(\nu_{j})$ be a sequence in $\mathscr{M}(\Omega;V)$ that converges in the weak*-sense to some $\nu\in\mathscr{M}(\Omega;V)$. Moreover, assume that all of $\nu,\nu_{1},\nu_{2},... $ take values in some closed convex cone $K\subset V$. Then the following holds:
\begin{enumerate}
\item If $g \colon K \to \R_{\geq 0}\cup\{+\infty\}$ is lower semicontinuous, convex and $1$-homogeneous, then there holds
\begin{align*}
\int_{\Omega} g\Big(\frac{\dif \nu}{\dif |\nu|}\Big)\dif |\nu| \leq \liminf_{j\to\infty}\int_{\Omega}g\Big(\frac{\dif \nu_{j}}{\dif |\nu_{j}|}\Big)\dif |\nu_{j}|.
\end{align*}
\item  If $g \colon K \to \R_{\geq 0}\cup\{+\infty\}$ is continuous, $1$-homogeneous and if $(\nu_{j})$ converges strictly to~$\nu$ (in the sense that $\nu_{j}\stackrel{*}{\rightharpoonup}\nu$ and $|\nu_{j}|(\Omega)\to|\nu|(\Omega)$), then there holds
\begin{align*}
\int_{\Omega}g\Big(\frac{\dif \nu}{\dif |\nu|}\Big)\dif |\nu| = \lim_{j\to\infty}\int_{\Omega}g\Big(\frac{\dif \nu_{j}}{\dif |\nu_{j}|}\Big)\dif |\nu_{j}|.
\end{align*}
\end{enumerate}
\end{theorem}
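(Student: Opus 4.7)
The plan is to combine a dual representation of the functional for \emph{(i)} with a Young-measure-type disintegration for \emph{(ii)}. For \emph{(i)}, I would first extend $g$ from $K$ to all of $V$ by $+\infty$ on $V\setminus K$. The extension remains convex, lower semicontinuous, positively $1$-homogeneous and $[0,+\infty]$-valued, and standard convex analysis represents it as a countable pointwise supremum $g(v)=\sup_{n\in\mathbb{N}}L_{n}(v)$ of linear functionals $L_{n}\in V^{*}$ satisfying $L_{n}\leq g$ on $K$. Then for any finite family $(\phi_{i})\subset\hold_{c}(\Omega)$ with $\phi_{i}\geq 0$, $\sum_{i}\phi_{i}\leq 1$, and any linear $(L_{i})$ with $L_{i}\leq g$ on $K$, the pointwise bound $L_{i}(\tfrac{\mathrm{d}\mu}{\mathrm{d}|\mu|})\leq g(\tfrac{\mathrm{d}\mu}{\mathrm{d}|\mu|})$ holds $|\mu|$-a.e.\ by the $K$-valuedness of $\mu$, yielding
\begin{align*}
\sum_{i}\int_{\Omega}\phi_{i}(x)\,L_{i}\bigl(\tfrac{\mathrm{d}\mu}{\mathrm{d}|\mu|}(x)\bigr)\,\mathrm{d}|\mu|(x)\;\leq\; \int_{\Omega}g\bigl(\tfrac{\mathrm{d}\mu}{\mathrm{d}|\mu|}\bigr)\,\mathrm{d}|\mu|.
\end{align*}
The left-hand side equals $\sum_{i}L_{i}\bigl(\int_{\Omega}\phi_{i}\,\mathrm{d}\mu\bigr)$ and hence is weak*-continuous in $\mu$. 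Passing $j\to\infty$ along $\nu_{j}\stackrel{*}{\rightharpoonup}\nu$ and taking suprema over all such partitions and functionals recovers the integral of $g$ against $\nu$ on the left by a measure-theoretic sup lemma, giving \emph{(i)}.

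For \emph{(ii)}, I associate to each $V$-valued measure $\mu$ the parametrized measure $\widehat{\mu}\in\mathscr{M}^{+}(\Omega\times S_{V})$ on the product of $\Omega$ with the unit sphere $S_{V}\subset V$, defined by
\begin{align*}
\langle\phi,\widehat{\mu}\rangle:=\int_{\Omega}\phi\bigl(x,\tfrac{\mathrm{d}\mu}{\mathrm{d}|\mu|}(x)\bigr)\,\mathrm{d}|\mu|(x),\qquad \phi\in\hold_{c}(\Omega\times S_{V}).
\end{align*}
Strict convergence gives the total-mass equality $\widehat{\nu}_{j}(\Omega\times S_{V})=|\nu_{j}|(\Omega)\to|\nu|(\Omega)=\widehat{\nu}(\Omega\times S_{V})$, so $(\widehat{\nu}_{j})$ is tight. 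Any weak*-subsequential limit $\sigma$ agrees with $\widehat{\nu}$ on product test functions $\phi(x)L(s)$ with $L\in V^{*}$, since
\begin{align*}
\langle \phi\otimes L,\widehat{\nu}_{j}\rangle=\int_{\Omega}\phi\,\mathrm{d}(L\circ\nu_{j})\to\int_{\Omega}\phi\,\mathrm{d}(L\circ\nu)=\langle \phi\otimes L,\widehat{\nu}\rangle
\end{align*}
by weak*-convergence of the $\nu_{j}$; combined with the total-mass equality and tightness, this pins down $\sigma=\widehat{\nu}$. Testing the resulting full weak*-convergence $\widehat{\nu}_{j}\stackrel{*}{\rightharpoonup}\widehat{\nu}$ against the bounded continuous function $(x,s)\mapsto\phi(x)g(s)$ and exhausting $\Omega$ by compactly contained subsets then delivers \emph{(ii)}.

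The main obstacle lies precisely in the upgrade in \emph{(ii)} from convergence against fibrewise-linear test functions to full weak*-convergence of the $\widehat{\nu}_{j}$. Linear functionals on $V$ do not form an algebra on $S_{V}$, so Stone--Weierstrass does not directly close the gap; it is the strict-convergence hypothesis, via mass equality and tightness, that rules out any escape of mass in the fibre direction and so forces the subsequential limit to coincide with $\widehat{\nu}$. In \emph{(i)}, by contrast, only one inequality is required and the convex, $1$-homogeneous structure of $g$ suffices to extract it directly from the duality representation, explaining why strict convergence is not needed there.
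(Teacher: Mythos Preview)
The paper does not prove this theorem; it is quoted as a classical result of \textsc{Reshetnyak} with references to \cite{Reshetnyak1,Anzellotti,AFP}. So there is no ``paper's own proof'' to compare against, and your proposal must be assessed on its own merits.

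Your argument for \emph{(i)} via the dual representation $g=\sup_{n}L_{n}$ and the weak*-continuity of $\mu\mapsto\sum_{i}L_{i}(\int\phi_{i}\,\mathrm{d}\mu)$ is the standard route and is correct as sketched.

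For \emph{(ii)} there is a genuine gap. You show that any subsequential weak* limit $\sigma$ of $(\widehat{\nu}_{j})$ agrees with $\widehat{\nu}$ on test functions of the form $\phi(x)L(s)$ with $L\in V^{*}$, and that the total masses match. You then assert that ``mass equality and tightness \ldots\ rules out any escape of mass in the fibre direction and so forces $\sigma=\widehat{\nu}$''. But $S_{V}$ is compact, so there is no escape of mass in the fibre direction to rule out; the real danger is that the fibre disintegration $(\sigma_{x})_{x}$ of $\sigma$ over its $\Omega$-marginal could be a non-Dirac probability measure on $S_{V}$ with the correct barycenter. Agreement on fibrewise-linear test functions together with equality of total mass only tells you that the $\Omega$-marginal of $\sigma$ is $|\nu|$ and that the barycenter of $\sigma_{x}$ equals $\tfrac{\mathrm{d}\nu}{\mathrm{d}|\nu|}(x)$ for $|\nu|$-a.e.\ $x$; it does not by itself identify $\sigma_{x}$.

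The missing ingredient is the elementary geometric fact (for the Euclidean norm on $V$, which one may use since $g(\tfrac{\mathrm{d}\nu}{\mathrm{d}|\nu|})\,\mathrm{d}|\nu|$ is norm-independent by $1$-homogeneity) that a probability measure on the unit sphere whose barycenter also lies on the sphere must be the Dirac mass at that point: if $\sigma_{x}$ is supported in $S_{V}$ and $b:=\int s\,\mathrm{d}\sigma_{x}(s)$ satisfies $|b|=1$, then $\int|s-b|^{2}\,\mathrm{d}\sigma_{x}=1-2|b|^{2}+|b|^{2}=0$, whence $\sigma_{x}=\delta_{b}$. This is precisely what upgrades the fibrewise-linear identification to $\sigma=\widehat{\nu}$, and it is the step your sketch omits. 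Once you insert this, the remainder of your argument (testing against $\phi(x)g(s)$ and exhausting $\Omega$) goes through.
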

Given a lower semicontinuous, convex function $h\colon \rsym\to\R_{\geq 0}$, we put $V:=\R\times \rsym$  and introduce the \emph{linear perspective integrand} $h^{\#}\colon \R_{\geq 0}\times \rsym\to\R\cup\{+\infty\}$ by 
\begin{align}\label{eq:perspectivefunction}
h^{\#}(t,\xi):=\begin{cases} th\Big(\frac{\xi}{t}\Big),&\;t>0,\;\xi \in \rsym,\\
h^{\infty}(\xi)&\;t=0,\;\xi\in \rsym, 
\end{cases}
\end{align}
where $h^{\infty}(\xi)=\lim_{t\searrow 0}th(\frac{\xi}{t})$ so that $h^{\#}$ is positively $1$-homogeneous. Also, if $h$ has linear growth, then $h^{\#}<\infty$. We put $K:=\R_{\geq 0}\times\rsym$.  For $\Omega\subset\R^{n}$ open and $\mu\in\mathscr{M}(\Omega;\rsym)$, we put $\nu:=(\mathscr{L}^{n},\mu)\in\mathscr{M}(\Omega;K)$ and define for $A\in\mathscr{B}(\Omega)$ 
\begin{align*}
h[\mu](A):=\int_{A}h(\mu):= \int_{A}h^{\#}\Big(\frac{\dif\nu}{\dif|\nu|}\Big)\dif|\nu| & = \int_{A}h^{\#}\Big(\frac{\dif\mathscr{L}^{n}}{\dif|\nu|},\frac{\dif\mu}{\dif|\nu|}\Big)\dif|\nu| \\ 
& = \int_{A}h\Big(\frac{\dif\mu}{\dif\mathscr{L}^{n}}\Big)\dif\mathscr{L}^{n} + \int_{A}h^{\infty}\Big(\frac{\dif\mu^{s}}{\dif|\mu^{s}|}\Big)\dif|\mu^{s}|. 
\end{align*}
In particular, if $u,u_{1},u_{2},...\in\bd(\Omega)$ are such that $u_{j}\to u$ symmetric area-strictly in $\bd(\Omega)$ and $f\colon\rsym\to\R_{\geq 0}$ satisfies \eqref{eq:lingrowth1}, then $f[\E u_{j}](\Omega)\to f[\E u](\Omega)$. 

For $\mu\in\mathscr{M}(\Omega;\rsym)$ and $\xi_{0}\in\rsym$, we use the convention 
\begin{align*}
\mu - \xi_{0} := \mu - \xi_{0}\mathscr{L}^{n}. 
\end{align*}
As for $\lebe_{\locc}^{1}$-maps, we define the average of $\mu\in\mathscr{M}(\Omega;\rsym)$ over $\ball(x_{0},r)\subset\Omega$ by
\begin{align}\label{eq:meanvaluemeasures}
(\mu)_{x_{0},r}:=\dashint_{\ball(x_{0},r)}\mu := \frac{\mu(\ball(x_{0},r))}{\mathscr{L}^{n}(\ball(x_{0},r))}.
\end{align}
By the Lebesgue differentiation theorem for Radon measures, $\mathscr{L}^{n}$-a.e. $x_{0}\in\R^{n}$ is a Lebesgue point for $\mu$ in the sense that there exists $\xi_{0}\in\rsym$ such that 
\begin{align}\label{eq:Lebesgue}
\lim_{r\searrow 0}(|\mu-\xi_{0}|)_{x_{0},r}=0.
\end{align}
The Jensen inequality here takes the following form,  cf.~\cite[Lem.~4.12]{Schmidt1}: If $h\colon\rsym\to\R_{\geq 0}$ is convex, then 
\begin{align}\label{eq:Jensenmain}
h\big((\mu)_{x_{0},r}\big)\leq \big(h[\mu] \big)_{x_{0},r}.
\end{align}
For future applications in Section~\ref{sec:poincare} and~\ref{sec:PR}, we call a compactly supported, radial  function $\rho\colon\R^{n}\to [0,1]$ a \emph{standard mollifier} provided $\|\rho\|_{\lebe^{1}(\R^{n})}=1$, $\spt(\rho)\subset\overline{\ball(0,1)}$ and $\rho$ is of class $\hold^{\infty}$ in $\ball(0,1)$. Given $\varepsilon>0$, we then define the $\varepsilon$-rescaled variant by $\rho_{\varepsilon}(x):=\varepsilon^{-n}\rho(\tfrac{x}{\varepsilon})$. As a consequence of \eqref{eq:Jensenmain}, whenever $\mu\in\mathscr{M}_{\locc}(\R^{n};\rsym)$ and $\varepsilon>0$, 
\begin{align*}
h\big((\rho_{\varepsilon}*\mu)\big) \leq (\rho_{\varepsilon}*h[\mu])\qquad\text{in}\;\R^{n}.
\end{align*}
Below, we shall particularly work with the following two choices $\rho^{(1)},\rho^{(2)}\colon\R^{n}\to\R$:
\begin{align*}
\rho^{(1)}:=(\mathscr{L}^{n}(\ball(0,1)))^{-1}\mathbbm{1}_{\ball(0,1)}\;\;\text{and}\;\;\rho^{(2)}:=\gamma_{n}\mathbbm{1}_{\ball(0,1)}\exp\Big(-\frac{1}{1-|\cdot|^{2}}\Big), 
\end{align*}
where $\gamma_{n}$ is adjusted in a way such that $\|\rho^{(2)}\|_{\lebe^{1}(\R^{n})}=1$. Given $u\in\lebe_{\locc}^{1}(\Omega;\R^{n})$ and $\mu\in\mathscr{M}_{\locc}(\Omega;\rsym)$, we put
\begin{align}\label{eq:udeltaepsdef}
\begin{split}
&u_{\varepsilon}:=\rho_{\varepsilon}^{(1)}*u\;\;\;\;\;\;\;\;\;\;\;\;\text{and}\;\;\; u_{\varepsilon,\varepsilon}:=\rho_{\varepsilon}^{(2)}*u_{\varepsilon}, \\
&\mu_{\varepsilon}:=(\rho_{\varepsilon}^{(1)}*\mu) \mathscr{L}^{n}\;\;\;\text{and}\;\;\;\mu_{\varepsilon,\varepsilon}:=(\rho_{\varepsilon}^{(2)}*\mu_{\varepsilon})\mathscr{L}^{n}
\end{split}
\end{align}
for $\varepsilon>0$. Upon straightforward modification, the proof of \cite[Lem.~5.2]{AG} then implies  
\begin{lemma}\label{lem:convex}
Let $\mu\in\mathscr{M}_{\locc}(\Omega;\rsym)$ and let $x_{0}\in\Omega$, $R>0$ be such that $\ball(x_{0},R)\Subset\Omega$. Moreover, let $\varepsilon>0$ satisfy $\varepsilon<\frac{R}{2}$. Then for any convex integrand 
$f\in\hold^{2}(\rsym;\R_{\geq 0})$ with \eqref{eq:lingrowth1} the following holds:
\begin{enumerate}
\item\label{item:AG1} If $0<t_{1}<t_{2}<R-2\varepsilon$, then there exists $t\in (t_{1},t_{2})$ such that 
\begin{align*}
f[\mu_{\varepsilon,\varepsilon}](\ball(x_{0},t))-f[\mu](\ball(x_{0},t))\leq \frac{4\varepsilon}{t_{2}-t_{1}}f[\mu](\ball(x_{0},R)). 
\end{align*}
\item\label{item:AG2} If $R/2<t_{1}<t_{2}<R-2\varepsilon$ and $0<r<R/4$, then there exist $r'\in (r,2r)$ and $t'\in (t_{1},t_{2})$ such that, adopting the annulus notation of Section~\ref{sec:notation},
\begin{align*}
f[\mu_{\varepsilon,\varepsilon}](\mathcal{A}(x_{0},r',t'))-f[\mu](\mathcal{A}(x_{0},r',t'))\leq 4\varepsilon \left(\frac{1}{t_{2}-t_{1}}+\frac{1}{r}\right)f[\mu](\ball(x_{0},R)). 
\end{align*}
\end{enumerate}
\end{lemma}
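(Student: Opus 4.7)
The plan is to follow the Anzellotti--Giaquinta scheme \cite[Lem.~5.2]{AG}, adapted to the present $\bd$-setting. The key step is a pointwise Jensen-type inequality at the level of measures: setting $\eta_{\varepsilon}:=\rho_{\varepsilon}^{(2)}\ast\rho_{\varepsilon}^{(1)}$, a non-negative kernel of unit mass supported in $\overline{\ball(0,2\varepsilon)}$, the density of $\mu_{\varepsilon,\varepsilon}$ equals $\eta_{\varepsilon}\ast\mu$. Since $(\rho_{\varepsilon}^{(1)}\ast\mu)(y)=(\mu)_{y,\varepsilon}$, the Jensen inequality~\eqref{eq:Jensenmain} yields $f((\rho_{\varepsilon}^{(1)}\ast\mu)(y))\leq (\rho_{\varepsilon}^{(1)}\ast f[\mu])(y)$, and convexity of $f$ combined with the smooth averaging by $\rho_{\varepsilon}^{(2)}$ lifts this to
\[
f\big((\eta_{\varepsilon}\ast\mu)(x)\big)\leq(\eta_{\varepsilon}\ast f[\mu])(x)\quad\text{for a.e.\ }x\text{ with }\dist(x,\partial\Omega)>2\varepsilon,
\]
where on the right $f[\mu]$ is treated as a positive Radon measure. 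Integrating over a Borel $A$ with $\dist(A,\partial\Omega)>2\varepsilon$ and applying Fubini yields
\[
f[\mu_{\varepsilon,\varepsilon}](A)\leq\int_{\R^{n}}K_{A}(z)\,\dif(f[\mu])(z),\qquad K_{A}(z):=\int_{A}\eta_{\varepsilon}(x-z)\,\dif x,
\]
where $0\leq K_{A}\leq 1$ is supported in the closed $2\varepsilon$-enlargement $A^{2\varepsilon}$; hence $f[\mu_{\varepsilon,\varepsilon}](A)\leq f[\mu](A^{2\varepsilon})$.

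For~\ref{item:AG1}, specialising to $A=\ball(x_{0},t)$ and introducing the non-decreasing function $\varphi(s):=f[\mu](\ball(x_{0},s))$, the difference of interest is bounded above by $\varphi(t+2\varepsilon)-\varphi(t)$. A translation of the integration variable, splitting according to the sign of $(t_{2}-t_{1})-2\varepsilon$, yields
\[
\int_{t_{1}}^{t_{2}}[\varphi(s+2\varepsilon)-\varphi(s)]\,\dif s\leq 2\varepsilon\,f[\mu](\ball(x_{0},R)),
\]
using monotonicity of $\varphi$ and $t_{2}+2\varepsilon<R$. The mean value theorem then extracts $t\in(t_{1},t_{2})$ satisfying the desired bound after division by $t_{2}-t_{1}$; the constant $4$ in the statement is deliberately loose compared with the sharp value $2$.

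For~\ref{item:AG2}, the $2\varepsilon$-enlargement of $A=\mathcal{A}(x_{0},r',t')$ lies in $\mathcal{A}(x_{0},(r'-2\varepsilon)_{+},t'+2\varepsilon)$, so extending $\varphi$ by zero on $(-\infty,0]$ gives
\[
f[\mu_{\varepsilon,\varepsilon}](\mathcal{A}(x_{0},r',t'))-f[\mu](\mathcal{A}(x_{0},r',t'))\leq[\varphi(r')-\varphi(r'-2\varepsilon)]+[\varphi(t'+2\varepsilon)-\varphi(t')].
\]
The new twist compared to~(a) is that $r'$ and $t'$ must be chosen \emph{simultaneously}, and this I handle by integrating the right-hand side over the rectangle $(r',t')\in(r,2r)\times(t_{1},t_{2})$: the one-variable estimate of the previous paragraph bounds each of $\int_{r}^{2r}[\varphi(r')-\varphi(r'-2\varepsilon)]\,\dif r'$ and $\int_{t_{1}}^{t_{2}}[\varphi(t'+2\varepsilon)-\varphi(t')]\,\dif t'$ by $2\varepsilon\,f[\mu](\ball(x_{0},R))$, the degenerate range $r\leq 2\varepsilon$ being painlessly absorbed by the zero-extension of $\varphi$. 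Consequently the rectangle-integral of the right-hand side is at most $2\varepsilon\,f[\mu](\ball(x_{0},R))\cdot[r+(t_{2}-t_{1})]$, and a two-dimensional mean value argument delivers an admissible pair $(r',t')$ at which the bound $\leq 2\varepsilon\,f[\mu](\ball(x_{0},R))\cdot(1/r+1/(t_{2}-t_{1}))$ holds. The main substance of the argument lies in the pointwise Jensen step of the first paragraph, where the use of the ball-indicator $\rho^{(1)}$ as the inner mollifier is what licenses the direct invocation of~\eqref{eq:Jensenmain} at the level of the measure $\mu$ itself; everything else is averaging bookkeeping.
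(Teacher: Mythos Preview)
Your proof is correct and follows precisely the approach the paper itself indicates: the paper does not supply an independent argument but simply states that ``upon straightforward modification, the proof of \cite[Lem.~5.2]{AG} then implies'' the lemma, and the Anzellotti--Giaquinta scheme you carry out (pointwise Jensen via \eqref{eq:Jensenmain} for the inner ball-average mollifier, reduction to the monotone function $\varphi(s)=f[\mu](\ball(x_{0},s))$, and a one- respectively two-parameter averaging/mean-value selection) is exactly that modification. Your constants are in fact sharper than the stated ones, and the only cosmetic point is that the ``mean value theorem'' step should be read as the elementary fact that a nonnegative measurable function cannot exceed its average on a set of full measure, since $\varphi$ need not be continuous.
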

\subsection{Estimates on $V$-functions and shifted integrands}\label{sec:aux}
We now collect estimates on auxiliary $V$-functions to be dealt with later. To this end, we define for $z\in\R^{m}$ the auxiliary reference integrand 
\begin{align*}
V(z):=\sqrt{1+|z|^{2}}-1,\qquad z\in\R^{m}.
\end{align*}
The functions $V$ will help to define our excess quantity later on, and we record  
\begin{lemma}\label{lem:eest}
For every $m\in\mathbb{N}$, all $z,z'\in\R^{m}$ and $t\geq 0$ the following holds:
\begin{enumerate}
\item\label{item:eest1} $V(tz)\leq 4\max\{t,t^{2}\}V(z)$, 
\item\label{item:eest2} $V(z+z')\leq 2(V(z)+V(z'))$,
\item\label{item:eest3} $(\sqrt{2}-1)\min\{|z|,|z|^{2}\}\leq V(z)\leq \min\{|z|,|z|^{2}\}$,
\item\label{item:eest4} and for every $\ell>0$ there exists a constant $c=c(\ell)>0$ such that if $|z|\leq\ell$, then 
\begin{align*}
\tfrac{1}{c}|z|^{2}\leq V(z) \leq c|z|^{2}.
\end{align*}  
\end{enumerate}
\end{lemma}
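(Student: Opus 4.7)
My plan is to derive all four parts from the single elementary identity
\begin{align*}
V(z) = \sqrt{1+|z|^{2}}-1 = \frac{|z|^{2}}{\sqrt{1+|z|^{2}}+1}
\end{align*}
together with the convexity of $V$. The convexity will follow because $V(z)=g(|z|)$ with $g(r):=\sqrt{1+r^{2}}-1$ increasing and convex on $[0,\infty)$ with $g(0)=0$, so composition with the norm $|\cdot|$ on $\R^{m}$ preserves convexity.

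For \ref{item:eest3}, I split into the cases $|z|\leq 1$ and $|z|\geq 1$. In the first regime the denominator $\sqrt{1+|z|^{2}}+1$ lies in $[2,1+\sqrt{2}]$, whence $(\sqrt{2}-1)|z|^{2}\leq V(z)\leq \tfrac{1}{2}|z|^{2}$. In the second regime the bounds $|z|+1\leq\sqrt{1+|z|^{2}}+1\leq(1+\sqrt{2})|z|$ give $(\sqrt{2}-1)|z|\leq V(z)\leq |z|$. In each regime $\min\{|z|,|z|^{2}\}$ coincides with the exponent actually appearing, which yields \ref{item:eest3}. Part \ref{item:eest4} is then immediate from the identity alone: the ratio $V(z)/|z|^{2}=(\sqrt{1+|z|^{2}}+1)^{-1}$ is monotone decreasing in $|z|$, hence pinched between $(\sqrt{1+\ell^{2}}+1)^{-1}$ and $\tfrac{1}{2}$ whenever $|z|\leq\ell$, giving the claim with an explicit $c=c(\ell)$.

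For \ref{item:eest1} I handle $t\in[0,1]$ and $t\geq 1$ separately. When $t\in[0,1]$, convexity of $V$ and $V(0)=0$ yield $V(tz)\leq tV(z)$. When $t\geq 1$, the identity produces
\begin{align*}
\frac{V(tz)}{V(z)} = t^{2}\,\frac{\sqrt{1+|z|^{2}}+1}{\sqrt{1+t^{2}|z|^{2}}+1}\leq t^{2},
\end{align*}
since the last quotient is bounded by $1$ when $t\geq 1$. Combined, these two regimes furnish \ref{item:eest1} with the factor $4$ even to spare. Finally, \ref{item:eest2} is a consequence of \ref{item:eest1} applied with $t=2$ together with midpoint convexity $V(\tfrac{z+z'}{2})\leq \tfrac{1}{2}V(z)+\tfrac{1}{2}V(z')$:
\begin{align*}
V(z+z') = V\Big(2\cdot\tfrac{z+z'}{2}\Big) \leq 4\,V\Big(\tfrac{z+z'}{2}\Big) \leq 2(V(z)+V(z')).
\end{align*}
No genuine obstruction arises: the whole lemma is elementary bookkeeping of constants, and the resulting estimates will be invoked freely in the later excess-decay and partial-regularity arguments.
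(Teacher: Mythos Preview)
Your proof is correct and complete. The paper itself does not give a proof of this lemma: it simply refers to \cite[Sec.~2.4, Eq.~(2.4)]{GK2} and \cite[Prop.~2.5]{AG} for \ref{item:eest2}--\ref{item:eest4} and remarks that \ref{item:eest1} follows easily from these. Your argument is therefore more self-contained, and in one respect sharper: by using convexity of $V$ for $t\leq 1$ and the rationalised identity $V(z)=|z|^{2}/(\sqrt{1+|z|^{2}}+1)$ for $t\geq 1$, you in fact obtain $V(tz)\leq \max\{t,t^{2}\}V(z)$ without the factor $4$, whereas the route suggested by the paper (chaining the upper and lower bounds of \ref{item:eest3}) naturally produces the constant $1/(\sqrt{2}-1)=\sqrt{2}+1$. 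This sharper version is also what you actually use in \ref{item:eest2}, so your derivation of the constant $2$ there is consistent.
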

All assertions \ref{item:eest2}--\ref{item:eest4} are contained in \cite[Sec.~2.4, Eq.~(2.4)]{GK2}, \cite[Prop.~2.5]{AG}, easily implying \ref{item:eest1}. We conclude this preliminary section with estimates on shifted integrands. To this end, let $f\in\hold^{2}(\rsym)$ be an integrand satisfying \eqref{eq:lingrowth1}. Given $a\in\rsym$, we define the \emph{shifted} or \emph{linearised} integrands $f_{a}\colon\rsym\to\R$ by 
\begin{align}\label{eq:tildef}
f_{a}(\xi):=f(a+\xi)-f(a)-\langle f'(a),\xi\rangle,\;\;\;\xi\in \R_{\sym}^{n\times n}.
\end{align}
We state the next lemma in a form that is directly applicable to our future objectives:
\begin{lemma}\label{lem:shifted}
Let $f\in\hold^{2}(\rsym;\R_{\geq 0})$ be convex and satisfy \eqref{eq:lingrowth1}. Moreover, let $\xi_{0}\in\rsym$ and $0<\varrho_{\xi_{0}}<1$ be such that 
\begin{align}\label{eq:positivedefinitebound}
m_{\xi_{0},\varrho_{\xi_{0}}}:=\min\{\lambda(z)\;\text{smallest eigenvalue of}\;f''(z)\colon\;z\in\overline{\mathbb{B}(\xi_{0},\varrho_{\xi_{0}})}\}>0. 
\end{align}
Then for all matrices $a\in\rsym$ with $\mathbb{B}(a,\tfrac{\varrho_{\xi_{0}}}{2})\subset\mathbb{B}(\xi_{0},\varrho_{\xi_{0}})$ the following holds:
\begin{enumerate}
\item\label{item:shifted1} $f_{a}$ is convex with $f_{a}(0)=0$ and $f'_{a}(0)=0$. Moreover, $f_{a}\geq 0$. 
\item\label{item:shifted2} 
For all $\xi\in\rsym$ we have, with $c(\tfrac{\varrho_{\xi_{0}}}{2})>0$ as in Lemma~\ref{lem:eest}\ref{item:eest4}
\begin{align*}
m_{\xi_{0},\varrho_{\xi_{0}}}\Big(\frac{\varrho_{\xi_{0}}}{2}\Big)^{2}V(\xi) \leq f_{a}(\xi) \leq \Big(c(\tfrac{\varrho_{\xi_{0}}}{2})\sup_{\mathbb{B}(\xi_{0},\varrho_{\xi_{0}})}|f''| +\frac{16\Lip(f)}{(\sqrt{2}-1)\varrho_{\xi_{0}}}\Big)V(\xi).
\end{align*}
\end{enumerate}
\end{lemma}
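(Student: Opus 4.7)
The plan is to establish part (a) directly from the definition and then prove (b) by splitting into the regimes $|\xi|\leq \varrho_{\xi_{0}}/2$ and $|\xi|>\varrho_{\xi_{0}}/2$, exploiting that the constraint $\mathbb{B}(a,\tfrac{\varrho_{\xi_{0}}}{2})\subset \mathbb{B}(\xi_{0},\varrho_{\xi_{0}})$ guarantees second-order control of $f$ on a full $\tfrac{\varrho_{\xi_{0}}}{2}$-ball around $a$.

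For (a), the identities $f_{a}(0)=0$ and $f_{a}'(0)=0$ are immediate from \eqref{eq:tildef}, and convexity of $f_{a}$ follows because subtracting the affine map $\xi\mapsto f(a)+\langle f'(a),\xi\rangle$ preserves convexity. Non-negativity is then a consequence of convexity combined with $f_{a}(0)=0$ and $f_{a}'(0)=0$, since $f_{a}(\xi)\geq f_{a}(0)+\langle f_{a}'(0),\xi\rangle=0$.

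For the lower bound in (b), I would use the integral Taylor formula
\begin{align*}
f_{a}(\xi)=\int_{0}^{1}(1-t)\langle f''(a+t\xi)\xi,\xi\rangle\dif t.
\end{align*}
In the small regime $|\xi|\leq \varrho_{\xi_{0}}/2$, the segment $\{a+t\xi\colon t\in[0,1]\}$ lies in $\mathbb{B}(a,\tfrac{\varrho_{\xi_{0}}}{2})\subset\mathbb{B}(\xi_{0},\varrho_{\xi_{0}})$, so \eqref{eq:positivedefinitebound} yields $f_{a}(\xi)\geq \tfrac{m_{\xi_{0},\varrho_{\xi_{0}}}}{2}|\xi|^{2}$, and since $|\xi|\leq 1$, Lemma~\ref{lem:eest}\ref{item:eest3} gives $|\xi|^{2}\geq V(\xi)$, yielding the claimed bound (with room to spare because $(\varrho_{\xi_{0}}/2)^{2}\leq 1/4$). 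In the large regime $|\xi|>\varrho_{\xi_{0}}/2$, I would apply convexity and $f_{a}(0)=0$ to get $f_{a}(\tau\xi)\leq \tau f_{a}(\xi)$ for $\tau\in(0,1)$; choosing $\tau=\varrho_{\xi_{0}}/(2|\xi|)$ reduces to a point on the sphere $|\tau\xi|=\varrho_{\xi_{0}}/2$ already handled, and since $V(\xi)\leq |\xi|$, the prefactor $\varrho_{\xi_{0}}$ absorbs into the desired constant.

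For the upper bound in (b), the small regime follows again by Taylor expansion: $f_{a}(\xi)\leq \tfrac{1}{2}\sup_{\mathbb{B}(\xi_{0},\varrho_{\xi_{0}})}|f''|\cdot |\xi|^{2}$, and Lemma~\ref{lem:eest}\ref{item:eest4} converts $|\xi|^{2}$ into $c(\tfrac{\varrho_{\xi_{0}}}{2})V(\xi)$. In the large regime I would instead bypass $f''$ entirely and use Lipschitz continuity from Lemma~\ref{lem:boundbelow}, noting that $|f'(a)|\leq \Lip(f)$, so that $f_{a}(\xi)\leq 2\Lip(f)|\xi|$; then a case split on $|\xi|\lessgtr 1$ inside Lemma~\ref{lem:eest}\ref{item:eest3} together with $\varrho_{\xi_{0}}<1$ gives $|\xi|\leq \tfrac{C}{(\sqrt{2}-1)\varrho_{\xi_{0}}}V(\xi)$, producing the stated $\Lip(f)$-term. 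The main obstacle is purely bookkeeping: one must check that the two regimes patch together with constants depending only on $\xi_{0}$ and $\varrho_{\xi_{0}}$ (and not on $a$), which is precisely the role played by the nesting assumption $\mathbb{B}(a,\tfrac{\varrho_{\xi_{0}}}{2})\subset\mathbb{B}(\xi_{0},\varrho_{\xi_{0}})$ and the uniform eigenvalue bound $m_{\xi_{0},\varrho_{\xi_{0}}}$.
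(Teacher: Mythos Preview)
Your proof is correct and follows essentially the same route as the paper: split into the regimes $|\xi|\leq \varrho_{\xi_{0}}/2$ and $|\xi|>\varrho_{\xi_{0}}/2$, use the second-order Taylor expansion in the small regime, and for the upper bound in the large regime invoke the Lipschitz bound $f_{a}(\xi)\leq 2\Lip(f)|\xi|$ together with Lemma~\ref{lem:eest}. The one place where you deviate is the lower bound for large $|\xi|$: the paper writes $f_{a}$ as a double integral of $f''$ and truncates the integration domain (using positive semidefiniteness of $f''$ from convexity to discard the excess), whereas you exploit convexity via the scaling inequality $f_{a}(\tau\xi)\leq\tau f_{a}(\xi)$ with $\tau=\varrho_{\xi_{0}}/(2|\xi|)$ to bootstrap directly from the small-regime estimate. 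Your variant is slightly cleaner and avoids any bookkeeping with the precise Taylor remainder; both approaches land comfortably inside the stated constant.
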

The elementary proof of the preceding lemma is deferred to the appendix, Section~\ref{sec:proofshifted}.

\section{Examples of integrands and limitations}\label{sec:exampleintegrands}
In this quick intermediate section we present and discuss several sample integrands that underline the applicability and limitations of the main results of the present paper. Here, a scale of integrands $(\Phi_{a})_{1<a<\infty}$ is given by 
\begin{align*}
\Phi_{a}(\xi):=\int_{0}^{|\xi|}\int_{0}^{s}\frac{\dif t}{(1+t^{2})^{\frac{a}{2}}}\dif s,\qquad \xi\in\rsym. 
\end{align*}
Then, essentially by \cite[Ex.~3.9 and 4.17]{Bi2}, $\Phi_{a}$ is $a$-elliptic and \emph{not} $b$-elliptic for any $1<b<a$. Such integrands are covered by Theorem~\ref{thm:W11reg} for if $1<a<1+\frac{2}{n}$, and by Theorem~\ref{thm:PR} for all $1<a<\infty$. The latter theorem particularly includes the example of the area integrand $E(\xi):=\sqrt{1+|\xi|^{2}}$, being $3$-elliptic; recall that we dispense with $3$-elliptic integrands in the framework of Theorem~\ref{thm:W11reg} as we do not have justification of generalised minima belonging to $\lebe_{\locc}^{\infty}$ for such integrands -- a condition which is usually required for Sobolev regularity in the full gradient situation, too. 

An intermediate class of integrands is given by $(\M_{p})_{1\leq p < \infty}$ defined by 
\begin{align*}
\M_{p}(\xi):=\big(1+(1+|\xi|^{2})^{\frac{p}{2}}\big)^{\frac{1}{p}},\qquad\xi\in\rsym. 
\end{align*}
These integrands are $a=3$-elliptic for if $p=1$, and $a=p+1$-elliptic for if $p>1$, cf. \cite[Sec.~3.1]{BS1}. However, integrands that indeed fall \emph{outside} the scope of the paper are the linear growth integrands $(\m_{p})_{1<p<\infty}$ given by $\m_{p}(\xi):=(1+|\xi|^{p})^{\frac{1}{p}}$ unless $p=2$; cf. \textsc{Schmidt} \cite{Schmidt1}, \cite[p.~7]{Schmidt2} for the proof. In fact, if $1<p<2$, then $\m''_{p}(z)$ blows up as $|z|\searrow 0$ and if $2<p<\infty$, then $\m''_{p}(0)=0$. In these situations, Theorem~\ref{thm:PR} applies only if $u\in\gm(F;u_{0})$ satisfies $\inf_{\Omega}|\sg(u)|>0$. Namely, if $|z|\searrow 0$, then $\m_{p}$ exhibits the behaviour of the $p$-Dirichlet energies and, as to partial regularity, forces to employ a $p$-harmonic comparison strategy. Whereas this does work well in the \emph{full gradient case} \cite{Schmidt1} following the works of \textsc{Duzaar \& Mingione} \cite{Du4,Du5}, the requisite comparison estimates in the symmetric gradient context seem to be not available at present.

\section{Local $\sobo^{1,1}$-regularity and the proof of Theorem~\ref{thm:W11reg}}\label{sec:W11reg}
In this section we establish the $\sobo_{\locc}^{1,1}$--regularity result asserted by Theorem~\ref{thm:W11reg}. Here we employ a refined version of a vanishing viscosity approach, to be set up in Section~\ref{sec:viscosity}, with the ultimate objective to obtain suitable second order estimates in Section~\ref{sec:secondorder}. In Section~\ref{sec:W11regproof} we then establish Theorem~\ref{thm:W11reg} and collect selected implications in Section~\ref{sec:cors}, thereby completing the lower three regularity assertions gathered in Figure~\ref{fig:1}. 

\subsection{Strategy and obstructions}\label{sec:obstructions}
We start by clarifying the underlying obstructions first, thereby motivating the particular setup of the proof. For $f$ is convex, the higher Sobolev regularity of Theorem~\ref{thm:W11reg} is usually  accessed through the Euler-Lagrange system satisfied by $u\in\gm(F;u_{0})$. On the other hand, as $\E u$ is a finite $\rsym$-valued Radon measure, the relevant Euler-Lagrange system needs to be understood in the sense of \textsc{Anzellotti} \cite{Anzellotti}, containing the gradient of the positively homogeneous recession function $f^{\infty}$. Note that $f^{\infty}$ essentially ignores the specific ellipticity of $f$ (e.g., with the integrands $\m_{p}$ from the previous section, $\m_{p}^{\infty}=|\cdot|$ for \emph{all} $1 \leq p <\infty$), and hence it is difficult to extract the relevant higher integrability as long as the presence of $\E^{s}u$ is not ruled out per se. Equally, this also explains why directly working on the minima is in fact a useful device for the partial regularity to be addressed in Section~\ref{sec:PR}; we here essentially restrict ourselves to neighbourhoods of points where $\E^{s}u$ is assumed to vanish, cf. Theorem~\ref{thm:PR}.

To overcome this issue in view of higher Sobolev regularity, one is led to consider good minimising sequences, usually obtained by a vanishing viscosity approach, and derive the requisite compactness estimates. As it is common in the case of degenerate $p$-growth functionals with $1<p<2$, the original functionals are stabilised by adding quadratic Dirichlet energies $\frac{1}{j}\|\nabla v\|_{\lebe^{2}}^{2}$. The minima $v_{j}$ of the correspondingly perturbed functionals then are proven to converge (up to a subsequence) to a minimiser $v$ of the original functional, and uniform regularity restimates on the $v_{j}$'s carry over to $v$. When the $p$-growth integrand $f$ (for $p>1$) is strongly convex, hence strictly convex, minima are unique and so this method in particular leads to the requisite regularity estimates for \emph{all} minima. In the linear growth setting, cf.~\eqref{eq:lingrowth1}, the recession function $f^{\infty}$ is positively $1$-homogeneous and thus \emph{never strictly convex} despite possible $a$-ellipticity (and hereafter strict convexity) of $f$. Since, by the representation \eqref{eq:relaxed} of the weak*-relaxed functional $\overline{F}_{u_{0}}[-;\Omega]$, the recession function acts on the singular part $\E^{s}u$ of $u\in\gm(F;u_{0})$ exclusively, even strict convexity of $f$ does \emph{not} imply uniqueness of generalised minima -- which cannot be expected in general anyway, compare the  counterexamples in the more classical $\bv$-case \cite{Finn,Santi}. A vanishing viscosity approach as outlined above thus \emph{is only able to yield Sobolev regularity for at most one generalised minimiser as long as generalised minima are not known to belong to $\ld_{(\locc)}$}.

In particular, based on this approach,  we cannot rule out the existence of other, more irregular generalised minima. A similar issue has been encountered by \textsc{Beck \& Schmidt} \cite{BS1} in the $\bv$-setting and by \textsc{Kristensen} and the author \cite{GK1}. To circumvent this issue, we adapt and extend the modified vanishing viscosity approaches outlined in \cite{BS1,GK1}. Effectively, we start from \emph{an arbitrary} given generalised minimiser $u\in\gm(F;u_{0})$ and construct a suitable minimising sequence $(v_{j})$ that converges to $u$ in the weak*-sense in $\bd(\Omega)$. To do so, we consider an extension of a perturbed version of $F$ to a suitable negative Sobolev space for whose topology the perturbed functional turns out lower semicontinuous. Then \textsc{Ekeland}'s variational principle provides us with an 'almost minimiser' of the perturbed functional, cf. Section~\ref{sec:viscosity}. Such almost minimisers satisfy Euler-Lagrange \emph{differential inequalities} which make elliptic estimates available.  Finally, these almost minimisers are shown to converge in the weak*-sense to the given generalised minimiser, and uniform regularity estimates will eventually inherit to the latter. 

In the setting of functionals on $\bv$ as considered in \cite{BS1}, perturbations in $\sobo^{-1,1}$ are sufficient. This is due to the fact that the full gradients of generalised minima are a priori known to exist as finite Radon measures. As discussed at length in \cite{GK1,G1}, the implementation of the underlying difference quotient approach in the setup of functionals \eqref{eq:varprin} leads to terms of the form 
\begin{align}\label{eq:Tdef}
T=\int_{\Omega}\rho^{2}\frac{|\Delta_{s,h}u_{j}|^{2}}{(1+|\sg(u_{j})|^{2})^{\frac{1}{2}}}\dif x, 
\end{align} 
where $\rho\colon\Omega\to [0,1]$ is a localisation function and, given $v\colon\R^{n}\to\R^{n}$, $h\neq 0$ and $s\in\{1,...,n\}$, 
\begin{align*}
\Delta_{s,h}v(x):=\frac{1}{h}(v(x+he_{s})-v(x))
\end{align*}
denotes the difference quotient of $v$. Here, $(u_{j})$ is a suitable minimising sequence converging to $u$ in the weak*-sense. In the $\bv$-setting (in which case the symmetric gradients in the definition of $T$ are replaced by the full ones), the term $T$ can be controlled by $\|\nabla u_{j}\|_{\lebe^{1}}$. As $T$ is a priori not controllable by \textsc{Ornstein}'s Non-Inequality in the $\bd$-situation, \textsc{Kristensen} and the author \cite{GK1} employ fractional estimates in order to avoid the appearance of $T$, simultaneously perturbing in the space $(\sobo_{0}^{1,\infty})^{*}(\Omega;\R^{n})$. The latter method, being based on the embedding $\bd(\Omega)\hookrightarrow\sobo^{s,n/(n-1+s)}(\Omega;\R^{n})$ for $0<s<1$ then yields weighted Nikolski\u{\i} estimates (and thus $\sobo^{\alpha,1}$-estimates for some suitable $0<\alpha<1$) for the symmetric gradients of generalised minima. However, this only yields the smaller range of ellipticities $1<a<1+\frac{1}{n}$. Still, since $f\in\hold^{2}(\rsym)$, generalised minima should be expected to satisfy a differentiable Euler-Lagrange equation and hence the use of fractional methods does not give the expected optimal ellipticity range $1<a<1+\frac{2}{n}$. In order to obtain the latter, it seems that we are bound to obtain \emph{uniform weighted second order estimates} in the spirit of \textsc{Bildhauer} \cite[Lem.~4.19]{Bi2} or \textsc{Beck \& Schmidt} \cite[Lem.~5.2]{BS1}. Unlike the full gradient case, the requisite second estimates do not come out by a plain difference quotient approach but a fine analysis of the identities provided by suitably weakly perturbed Euler-Lagrange systems, see Theorem~\ref{thm:regdual} below. In view of this aim, it turns out that the suitable perturbation space is $\sobo^{-2,1}(\Omega;\R^{n})$ (cf.~Section~\ref{sec:prelimsauxiliaryFS}), and we can now turn to the precise implementation of the approximation argument.
\vspace{-0.25cm}
\subsection{Viscosity approximations}\label{sec:viscosity}
We now set up the Ekeland-type viscosity approximation scheme, and hereafter suppose that  $f\in\hold(\rsym)$ is convex with \eqref{eq:lingrowth1} and $u_{0}\in\ld(\Omega)$. For ease of notation, we write $\overline{F}:=\overline{F}_{u_{0}}[-;\Omega]$ in the sequel. Let $u\in\gm(F;u_{0})$ be arbitrary. By smooth approximation in the (symmetric) area-strict topology, Lemma~\ref{lem:smooth}, we find a sequence $(u_{j})\subset \mathscr{D}_{u_{0}}:=u_{0}+\ld_{0}(\Omega)$ such that 
\begin{align}\label{eq:approximateconvergencesviscosity}
\begin{split}
& u_{j}\to u\;\;\text{in}\;\lebe^{1}(\Omega;\R^{n}),\\ & \sqrt{1+|\!\E u_{j}|^{2}}(\Omega) \to \sqrt{1+|\!\E u|^{2}}(\Omega) + \int_{\partial\Omega}|\trace_{\partial\Omega}(u_{0}-u)\odot\nu_{\partial\Omega}|\dif\mathscr{H}^{n-1}.
\end{split}
\end{align}
By Theorem~\ref{lem:reshetnyak}~ff. and hereafter continuity of $w\mapsto f[\E w](\Omega)$ for the symmetric area-strict metric, $(u_{j})$ is a minimising sequence for $\overline{F}$, and we have $F[u_{j}]=\overline{F}[u_{j}]\to \overline{F}[u]=\min \overline{F}[\bd(\Omega)]$. Passing to a non-relabeled subsequence, we may thus assume
\begin{align}\label{eq:almostoptimal1}
\min \overline{F}[\bd(\Omega)] \leq F[u_{j}] \leq \min \overline{F}[\bd(\Omega)] + \frac{1}{8j^{2}}\qquad\text{for all $j\in\mathbb{N}$}. 
\end{align}
Since the trace operator $\trace\colon\ld(\R^{n}\setminus\overline{\Omega})\to \lebe^{1}(\partial\Omega;\R^{n})$ is surjective, we find a compactly supported extension $\overline{u}_{0}\in\ld(\R^{n})$ of $u_{0}$. After a routine mollification of $\overline{u}_{0}$, we obtain $u_{j}^{\partial\Omega}\in\sobo^{1,2}(\Omega;\R^{n})$ such that 
\begin{align}\label{eq:boundarybound}
\|u_{j}^{\partial\Omega}-u_{0}\|_{\ld(\Omega)}\leq \frac{1}{8\Lip(f)j^{2}}, 
\end{align}
where $\Lip(f)$ is the Lipschitz constant of $f$ (cf. Lemma~\ref{lem:boundbelow}). We then put $\mathscr{D}_{j}:=u_{j}^{\partial\Omega}+\sobo_{0}^{1,2}(\Omega;\R^{n})\subset\sobo^{1,2}(\Omega;\R^{n})$. Since $u_{j}-u_{0}\in\ld_{0}(\Omega)$, we find $\widetilde{u}_{j}\in \mathscr{D}_{j}$ such that 
\begin{align*}
\|u_{j}-u_{0}-(\widetilde{u}_{j}-u_{j}^{\partial\Omega})\|_{\ld(\Omega)}\leq \frac{1}{8\Lip(f)j^{2}}, 
\end{align*}
from where it follows that 
\begin{align}\label{eq:almostoptimal2}
\|\widetilde{u}_{j}-u_{j}\|_{\ld(\Omega)}\leq \|u_{j}-u_{0}-(\widetilde{u}_{j}-u_{j}^{\partial\Omega})\|_{\ld(\Omega)} + \|u_{0}-u_{j}^{\partial\Omega}\|_{\ld(\Omega)} \leq \frac{1}{4\Lip(f)j^{2}}. 
\end{align}
Since $\sobo_{0}^{1,2}(\Omega;\R^{n})\subset\ld_{0}(\Omega)$, we find for arbitrary $\varphi\in\sobo_{0}^{1,2}(\Omega;\R^{n})$:
\begin{align*}
\inf F[\mathscr{D}_{u_{0}}]  & \leq F[u_{0}+\varphi] \\
& = F[u_{0}+\varphi]-F[u_{j}^{\partial\Omega}+\varphi] + F[u_{j}^{\partial\Omega}+\varphi]\\
& \leq \Lip(f)\|\sg(u_{0}-u_{j}^{\partial\Omega})\|_{\lebe^{1}(\Omega;\rsym)} + F[u_{j}^{\partial\Omega}+\varphi]\\
&\stackrel{\eqref{eq:boundarybound}}{\leq} \frac{1}{8j^{2}} + F[u_{j}^{\partial\Omega}+\varphi]. 
\end{align*}
At this stage, we infimise the previous overall inequality over all $\varphi\in\sobo_{0}^{1,2}(\Omega;\R^{n})$ to obtain 
\begin{align}\label{eq:almostoptimal3}
\inf F[\mathscr{D}_{u_{0}}] \leq \frac{1}{8j^{2}} + \inf F[\mathscr{D}_{j}]. 
\end{align}
Then, since $\min \overline{F}[\bd(\Omega)]=\inf F[\mathscr{D}_{u_{0}}]$, we deduce that
\begin{align}\label{eq:almostoptimal3.0}
\begin{split}
F[\widetilde{u}_{j}] & \;\leq F[\widetilde{u}_{j}]-F[u_{j}] + F[u_{j}] \\ & \;\leq \Lip(f)\|\sg(\widetilde{u}_{j})-\sg(u_{j})\|_{\lebe^{1}(\Omega;\rsym)} + F[u_{j}]\\ &  \stackrel{\eqref{eq:almostoptimal2}}{\leq} \frac{1}{4j^{2}} + F[u_{j}] \stackrel{\eqref{eq:almostoptimal1}}{\leq} \frac{3}{8j^{2}} + \inf F[\mathscr{D}_{u_{0}}]\\
& \stackrel{\eqref{eq:almostoptimal3}}{\leq} \frac{1}{2j^{2}} + \inf F[\mathscr{D}_{j}]. 
\end{split}
\end{align}
We consequently introduce the quantities $A_{j}$ and the integrands $f_{j}\colon\rsym\to\R$ via 
\begin{align}\label{eq:Ajdef}
A_{j}:=1+\int_{\Omega}(1+|\sg(\widetilde{u}_{j})|^{2})\dif x\;\;\;\text{and}\;\;\;f_{j}(\xi):=f(\xi)+\frac{1}{2A_{j}j^{2}}(1+|\xi|^{2})
\end{align}
for $\xi\in\rsym$. In order to employ the Ekeland variational principle with respect to sufficiently weak perturbations, we extend the integral functionals corresponding to $f_{j}$ to $\sobo^{-2,1}(\Omega;\R^{n})$ by 
\begin{align}\label{eq:dualextension}
F_{j}[w]:=\begin{cases} \displaystyle \int_{\Omega}f_{j}(\sg(w))\dif x &\;\text{if}\;w\in\mathscr{D}_{j},\\
+\infty&\;\text{if}\;w\in \sobo^{-2,1}(\Omega;\R^{n})\setminus\mathscr{D}_{j}.
\end{cases}
\end{align}
For each $j\in\mathbb{N}$, the functional $F_{j}$ is not identically $+\infty$ on $\sobo^{-2,1}(\Omega;\R^{n})$. The latter space is Banach by Lemma~\ref{lem:negative}~\ref{item:negative1} and, by Lemma~\ref{lem:EkelandLSC} with $\mathfrak{f}=f_{j}$, $q=2$ and $k=2$, $F_{j}$ is lower semicontinuous with respect to the norm topology on $\sobo^{-2,1}(\Omega;\R^{n})$. Moreover, we record
\begin{align*}
F_{j}[\widetilde{u}_{j}] \leq F[\widetilde{u}_{j}]+\frac{1}{2j^{2}} \stackrel{\eqref{eq:almostoptimal3.0}}{\leq} \frac{1}{j^{2}}+\inf F[\mathscr{D}_{j}] \leq \frac{1}{j^{2}}+\inf F_{j}[\sobo^{-2,1}(\Omega;\R^{n})], 
\end{align*}
having used the very definition of $F_{j}$ in the ultimate step. Therefore, Ekeland's variational principle, Proposition~\ref{prop:Ekeland}, provides us with $v_{j}\in\sobo^{-2,1}(\Omega;\R^{n})$ such that 
\begin{align}\label{eq:almostoptimal4}
\begin{split}
&\|v_{j}-\widetilde{u}_{j}\|_{\sobo^{-2,1}(\Omega;\R^{n})}\leq \frac{1}{j},\\ 
&F_{j}[v_{j}] \leq F_{j}[w] + \frac{1}{j}\|v_{j}-w\|_{\sobo^{-2,1}(\Omega;\R^{n})}\qquad\text{for all}\;w\in \sobo^{-2,1}(\Omega;\R^{n}).
\end{split}
\end{align}
We extract from \eqref{eq:almostoptimal4} some routine information by testing with $w=\widetilde{u}_{j}$: 
\begin{align}\label{eq:usefulfjbounds}
\begin{split}
F_{j}[v_{j}] & \stackrel{\eqref{eq:almostoptimal4}_{2}}{\leq} F_{j}[\widetilde{u}_{j}]+\frac{1}{j}\|v_{j}-\widetilde{u}_{j}\|_{\sobo^{-2,1}(\Omega;\R^{n})}\\ 
& \stackrel{\eqref{eq:almostoptimal4}_{1}}{\leq} F[\widetilde{u}_{j}] + \frac{1}{2A_{j}j^{2}}\int_{\Omega}(1+|\sg(\widetilde{u}_{j})|^{2})\dif x +\frac{1}{j^{2}} \\
& \;\,\stackrel{\eqref{eq:almostoptimal3.0}}{\leq} \inf F[\mathscr{D}_{u_{0}}]+\frac{2}{j^{2}}. 
\end{split}
\end{align}
The latter quantity is finite and so, by the very definition of $F_{j}$, $v_{j}\in\mathscr{D}_{j}\subset\sobo^{1,2}(\Omega;\R^{n})$. Moreover, as $v_{j}-u_{j}^{\partial\Omega}\in\sobo_{0}^{1,2}(\Omega;\R^{n})\subset\ld_{0}(\Omega)$,
\begin{align}\label{eq:infimalestimate}
\begin{split}
\inf F[\mathscr{D}_{u_{0}}] & \leq F[u_{0}+(v_{j}-u_{j}^{\partial\Omega})] - F[v_{j}] + F[v_{j}] \\ 
& \leq \Lip(f)\|u_{0}-u_{j}^{\partial\Omega}\|_{\ld(\Omega)} + F[v_{j}] \\ & \!\!\stackrel{\eqref{eq:boundarybound}}{\leq} \frac{1}{8j^{2}} + F_{j}[v_{j}] \stackrel{\eqref{eq:usefulfjbounds}}{\leq} \frac{3}{j^{2}}+\inf F[\mathscr{D}_{u_{0}}]. 
\end{split}
\end{align}
For latter purposes, we record the \emph{perturbed Euler-Lagrange equation}
\begin{align}\label{eq:EL1}
\left\vert \int_{\Omega}\langle f'_{j}(\sg(v_{j})),\sg(\varphi)\rangle\dif x\right\vert \leq \frac{1}{j}\|\varphi\|_{\sobo^{-2,1}(\Omega;\R^{n})}\qquad\text{for all}\;\varphi\in\sobo_{c}^{1,2}(\Omega;\R^{n}). 
\end{align}
This inequality can be obtained by testing $\eqref{eq:almostoptimal4}_{2}$ with $w=v_{j}\pm\theta\varphi$ for $\theta>0$, $\varphi\in\sobo_{c}^{1,2}(\Omega;\R^{n})$, dividing the resulting inequalities by $\theta$ and then sending $\theta\searrow 0$. Moreover, by the linear growth hypothesis \eqref{eq:lingrowth1} and $\tfrac{1}{c}=\min\{\frac{1}{2},c_{1}\}$, we infer from \eqref{eq:usefulfjbounds} that 
\begin{align}\label{eq:uniformboundL1new}
\int_{\Omega}|\sg(v_{j})|\dif x + \frac{1}{A_{j}j^{2}}\int_{\Omega}(1+|\sg(v_{j})|^{2})\dif x \leq c\Big(\inf F[\mathscr{D}_{u_{0}}] + \gamma\mathscr{L}^{n}(\Omega)+\frac{2}{j^{2}}\Big) 
\end{align}
holds for all $j\in\mathbb{N}$. Finally, we note that due to Poincar\'{e}'s inequality on $\ld_{0}(\Omega)$ we obtain
\begin{align}\label{eq:uniformboundL1new1}
\begin{split}
\sup_{j\in\mathbb{N}}\int_{\Omega}|v_{j}|\dif x & \leq \sup_{j\in\mathbb{N}}\Big[ \int_{\Omega}|v_{j}-u_{j}^{\partial\Omega}|\dif x +\int_{\Omega}|u_{j}^{\partial\Omega}|\dif x\Big]  \\ & \leq C \sup_{j\in\mathbb{N}}\Big[ \int_{\Omega}|\sg(v_{j})|\dif x + \|u_{j}^{\partial\Omega}\|_{\ld(\Omega)}\Big] \stackrel{\eqref{eq:boundarybound},\,\eqref{eq:uniformboundL1new}}{<} \infty, 
\end{split}
\end{align}
where $C>0$ is the constant appearing in the requisite Poincar\'{e} inequality. We finally record 
\begin{lemma}\label{lem:convergeinBD}
The sequence $(v_{j})$ as constructed in \eqref{eq:almostoptimal4} possesses a subsequence $(v_{j(l)})\subset (v_{j})$ such that 
\begin{align*}
v_{j(l)}\stackrel{*}{\rightharpoonup} u\qquad\text{in}\;\bd(\Omega)\;\text{as}\;l\to\infty,  
\end{align*}
where $u\in\gm(F;u_{0})$ is the generalised minimiser fixed in the beginning of the section.
\end{lemma}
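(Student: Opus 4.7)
The plan is to first extract a weak*-convergent subsequence from $(v_j)$ using uniform boundedness in $\bd(\Omega)$, and then identify its limit with the given generalised minimiser $u$ by exploiting the $\sobo^{-2,1}$-closeness built into the Ekeland construction.

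First, I would verify uniform $\bd(\Omega)$-boundedness. Each $v_j$ lies in $\mathscr{D}_j\subset\sobo^{1,2}(\Omega;\R^n)\subset\ld(\Omega)\subset\bd(\Omega)$, so $\E v_j=\sg(v_j)\mathscr{L}^n$, and the bounds \eqref{eq:uniformboundL1new} and \eqref{eq:uniformboundL1new1} immediately yield $\sup_{j}\|v_j\|_{\bd(\Omega)}<\infty$. Standard weak*-sequential compactness in $\bd(\Omega)$, which rests on the compact embedding $\bd(\Omega)\hookrightarrow\lebe^{1}(\Omega;\R^n)$, then furnishes a subsequence $(v_{j(l)})$ and some $\overline{u}\in\bd(\Omega)$ such that $v_{j(l)}\stackrel{*}{\rightharpoonup}\overline{u}$ in $\bd(\Omega)$; in particular $v_{j(l)}\to\overline{u}$ strongly in $\lebe^{1}(\Omega;\R^n)$.

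It remains to identify $\overline{u}=u$. Combining \eqref{eq:almostoptimal2} with $\eqref{eq:approximateconvergencesviscosity}_1$ via the triangle inequality gives $\widetilde{u}_j\to u$ in $\lebe^{1}(\Omega;\R^n)$. By Lemma~\ref{lem:negative}\ref{item:negative2}, applied with $\beta=0$ and $k=2$ (equivalently, by using the trivial representation $T_0=u$, $T_\alpha=0$ for $|\alpha|>0$ in the definition of $\|\cdot\|_{\sobo^{-2,1}}$), the canonical embedding $\lebe^{1}(\Omega;\R^n)\hookrightarrow\sobo^{-2,1}(\Omega;\R^n)$ is continuous, so $\widetilde{u}_j\to u$ also in $\sobo^{-2,1}(\Omega;\R^n)$. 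Since $\eqref{eq:almostoptimal4}_1$ gives $\|v_j-\widetilde{u}_j\|_{\sobo^{-2,1}(\Omega;\R^n)}\leq\tfrac{1}{j}\to 0$, the triangle inequality yields $v_j\to u$ in $\sobo^{-2,1}(\Omega;\R^n)$ for the \emph{entire} sequence. On the other hand, along the chosen subsequence, $v_{j(l)}\to\overline{u}$ in $\lebe^{1}(\Omega;\R^n)$, and hence in $\sobo^{-2,1}(\Omega;\R^n)$. Uniqueness of limits in the Banach space $\sobo^{-2,1}(\Omega;\R^n)$ (Lemma~\ref{lem:negative}\ref{item:negative1}) then forces $\overline{u}=u$, completing the proof.

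The only conceptually non-trivial point is the identification step: because generalised minima are not unique, the weak*-limit in $\bd(\Omega)$ need not a priori coincide with the particular $u$ fixed at the beginning of the section. It is precisely the $\sobo^{-2,1}$-tracking $\|v_j-\widetilde{u}_j\|_{\sobo^{-2,1}}\leq 1/j$ provided by the Ekeland construction, together with the elementary embedding $\lebe^{1}\hookrightarrow\sobo^{-2,1}$, that transports the $\lebe^{1}$-convergence $\widetilde{u}_j\to u$ onto $(v_j)$ and rules out any other candidate limit. The boundedness and compactness steps are otherwise routine.
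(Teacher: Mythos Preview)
Your proof is correct and follows essentially the same approach as the paper: extract a weak*-convergent subsequence in $\bd(\Omega)$ from the uniform bounds \eqref{eq:uniformboundL1new}--\eqref{eq:uniformboundL1new1}, then identify the limit with $u$ by passing through the embedding $\lebe^{1}(\Omega;\R^{n})\hookrightarrow\sobo^{-2,1}(\Omega;\R^{n})$ and invoking the Ekeland bound $\eqref{eq:almostoptimal4}_{1}$ together with $\widetilde{u}_{j}\to u$ in $\lebe^{1}$. Your write-up is slightly more detailed (you make explicit that the whole sequence converges in $\sobo^{-2,1}$), but the argument is the same.
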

\begin{proof}
By \eqref{eq:uniformboundL1new} and \eqref{eq:uniformboundL1new1} we conclude that $(v_{j})$ is uniformly bounded in $\bd(\Omega)$, and thus possesses a subsequence $(v_{j(l)})\subset (v_{j})$ such that $v_{j(l)}\stackrel{*}{\rightharpoonup} v$ in $\bd(\Omega)$ as $l\to\infty$ for some $v\in\bd(\Omega)$. Since $\lebe^{1}(\Omega;\R^{n})\hookrightarrow\sobo^{-2,1}(\Omega;\R^{n})$ by Lemma~\ref{lem:negative}\ref{item:negative2}, $v_{j(l)}\to v$ in $\sobo^{-2,1}(\Omega;\R^{n})$. On the other hand, \eqref{eq:approximateconvergencesviscosity}, \eqref{eq:almostoptimal2} and \eqref{eq:almostoptimal4} imply that $v_{j(l)}\to u$ in $\sobo^{-2,1}(\Omega;\R^{n})$. Hence $u=v$, and the proof is complete.  
\end{proof}
\subsection{Preliminary regularity estimates}
To justify the manipulations on the perturbed Euler-Lagrange equations satisfied by the $v_{j}$'s, we now derive \emph{non-uniform} regularity estimates. Since \eqref{eq:EL1} do not display elliptic differential equations (but differential inequalities), the corresponding higher differentiability assertions need to be approached slightly more carefully than for plain viscosity methods:
\begin{lemma}\label{lem:higherregularityapproximate}
Let $f\in\hold^{2}(\rsym)$ satisfy \eqref{eq:lingrowth1} and, for some $\Lambda\in (0,\infty)$, the bound 
\begin{align}\label{eq:generalisedaelliptic}
0<\langle f''(z)\xi,\xi\rangle \leq \Lambda\frac{|\xi|^{2}}{(1+|z|^{2})^{\frac{1}{2}}}\qquad\text{for all}\;z,\xi\in\rsym. 
\end{align}
Define $v_{j}$ for $j\in\mathbb{N}$ by \eqref{eq:almostoptimal4}. Then there holds $v_{j}\in \sobo_{\locc}^{2,2}(\Omega;\R^{n})$. 
\end{lemma}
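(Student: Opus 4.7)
The plan is to apply the classical Nirenberg difference-quotient method to the almost-Euler-Lagrange inequality \eqref{eq:EL1}, exploiting the fact that the regularised integrand $f_{j}$ is uniformly (in $z$, though $j$-dependently) quadratically elliptic: writing $f_{j}''=f''+(A_{j}j^{2})^{-1}I$ and invoking \eqref{eq:generalisedaelliptic} yields
\begin{align*}
(A_{j}j^{2})^{-1}|\xi|^{2} \leq \langle f_{j}''(z)\xi,\xi\rangle \leq \Big(\Lambda + (A_{j}j^{2})^{-1}\Big)|\xi|^{2}\qquad\text{for all}\;z,\xi\in\rsym,
\end{align*}
with $v_{j}\in\sobo^{1,2}(\Omega;\R^{n})$ since $v_{j}\in\mathscr{D}_{j}$. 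Once the $\sobo^{-2,1}$-error on the right of \eqref{eq:EL1} is handled, this should be a textbook $\sobo^{2,2}$-regularity setting.

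Concretely, I would fix an interior ball $\ball(x_{0},R)\Subset\Omega$ and $\eta\in\hold_{c}^{\infty}(\ball(x_{0},R);[0,1])$ and test \eqref{eq:EL1} against $\varphi:=\Delta_{s,-h}(\eta^{2}\Delta_{s,h}v_{j})\in\sobo_{c}^{1,2}(\Omega;\R^{n})$ for $s\in\{1,\ldots,n\}$ and $0<|h|\ll 1$. Discrete integration by parts combined with the mean-value identity $\Delta_{s,h}f_{j}'(\sg(v_{j}))=A_{j}^{h}\Delta_{s,h}\sg(v_{j})$ for $A_{j}^{h}(x):=\int_{0}^{1}f_{j}''\big(\sg(v_{j})(x)+th\Delta_{s,h}\sg(v_{j})(x)\big)\dif t$ (which inherits the ellipticity bounds above) transforms the left of \eqref{eq:EL1} into a coercive quadratic term plus a commutator, and Young's inequality with absorption parameter tuned to $(A_{j}j^{2})^{-1}$ gives
\begin{align*}
\frac{1}{2A_{j}j^{2}}\int_{\Omega}\eta^{2}|\Delta_{s,h}\sg(v_{j})|^{2}\dif x \leq C(j,\eta)\int_{\spt(\nabla\eta)}|\Delta_{s,h}v_{j}|^{2}\dif x + \frac{1}{j}\|\varphi\|_{\sobo^{-2,1}(\Omega;\R^{n})},
\end{align*}
with the first summand controlled uniformly in $h$ by the $\sobo^{1,2}$-regularity of $v_{j}$.

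The main obstacle will be the uniform-in-$h$ bound on $\|\varphi\|_{\sobo^{-2,1}}$, where duality is unavailable since $\sobo^{-2,1}$ is not a dual space. The trick is the distributional identity $\Delta_{s,-h}g = \partial_{s}(\Psi_{h}g)$ with $(\Psi_{h}g)(x):=h^{-1}\int_{0}^{h}g(x-te_{s})\dif t$, which satisfies $\|\Psi_{h}g\|_{\lebe^{1}}\leq\|g\|_{\lebe^{1}}$ by Fubini--Tonelli. Applied to $g=\eta^{2}\Delta_{s,h}v_{j}$, whose $\lebe^{1}$-norm is uniformly bounded in $h$ thanks to $v_{j}\in\sobo^{1,2}$, one obtains $\varphi=\partial_{s}(\Psi_{h}g)$ with $\Psi_{h}g$ uniformly bounded in $\lebe^{1}$, whence by Lemma~\ref{lem:negative}\ref{item:negative2} (with $|\beta|=1$, $k=2$) and the trivial inclusion of $\lebe^{1}$-functions into $\sobo^{-1,1}$ via the zero-order representation,
\begin{align*}
\|\varphi\|_{\sobo^{-2,1}(\Omega;\R^{n})} \leq \|\Psi_{h}g\|_{\sobo^{-1,1}(\Omega;\R^{n})} \leq \|\Psi_{h}g\|_{\lebe^{1}(\Omega;\R^{n})} \leq C,
\end{align*}
uniformly in $h$.

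Combining these estimates yields $\sup_{|h|\ll 1}\|\eta\Delta_{s,h}\sg(v_{j})\|_{\lebe^{2}(\Omega;\rsym)}<\infty$ for each $s\in\{1,\ldots,n\}$, and hence $\sg(v_{j})\in\sobo^{1,2}_{\locc}(\Omega;\rsym)$ by the standard $\lebe^{2}$-difference-quotient criterion. To close the proof, I would invoke the pointwise (distributional) identity
\begin{align*}
\partial_{r}\partial_{t}v_{j}^{k} = \partial_{r}\sg_{tk}(v_{j}) + \partial_{t}\sg_{kr}(v_{j}) - \partial_{k}\sg_{rt}(v_{j}),
\end{align*}
which promotes the first-order regularity of $\sg(v_{j})$ to second-order regularity of the individual components of $v_{j}$, thereby giving $v_{j}\in\sobo^{2,2}_{\locc}(\Omega;\R^{n})$ as required.
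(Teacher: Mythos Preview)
Your proposal is correct and follows the same Nirenberg difference-quotient scheme as the paper, but diverges from it in two technical points, both of which are in fact simplifications.

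First, for the perturbation term $\tfrac{1}{j}\|\varphi\|_{\sobo^{-2,1}}$, the paper does not use your averaging identity $\Delta_{s,-h}g=\partial_{s}(\Psi_{h}g)$. Instead it invokes the embedding $\lebe^{q}\hookrightarrow\sobo^{-2,1}$ for some $1<q<2$, bounds $\|\partial_{s}(\rho^{2}\Delta_{s,h}v_{j})\|_{\lebe^{q}}$ via Korn's inequality in $\sobo_{0}^{1,q}$, and is then left with an extra term $\|\rho^{2}\sg(\Delta_{s,h}v_{j})\|_{\lebe^{q}}$ that must be absorbed into the left-hand side by a further application of Young's inequality. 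Your route via $\Psi_{h}$ and Lemma~\ref{lem:negative}\ref{item:negative2} circumvents this entirely: the $\sobo^{-2,1}$-norm is controlled directly by $\|g\|_{\lebe^{1}}$, which is uniformly bounded without any term to re-absorb. This is cleaner.

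Second, for the passage from $\sg(v_{j})\in\sobo^{1,2}_{\locc}$ to $v_{j}\in\sobo^{2,2}_{\locc}$, the paper applies Korn's inequality in $\lebe^{2}$ to $\partial_{s}v_{j}$ (whose symmetric gradient $\partial_{s}\sg(v_{j})$ has just been shown to lie in $\lebe^{2}_{\locc}$). Your Saint-Venant-type identity $\partial_{r}\partial_{t}v_{j}^{k}=\partial_{r}\sg_{tk}(v_{j})+\partial_{t}\sg_{kr}(v_{j})-\partial_{k}\sg_{rt}(v_{j})$ achieves the same conclusion by pure algebra, avoiding Korn altogether. Either argument is valid; the paper's choice is perhaps more in keeping with its symmetric-gradient theme, while yours is more elementary.
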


\begin{proof}
Let $x_{0}\in\Omega$ and $0<r<R<\dista(x_{0},\partial\Omega)$. Also, let $s\in\{1,...,n\}$,  $0<h<\frac{1}{2}(\dista(x_{0},\partial\Omega)-R)$ and pick $\rho\in\hold_{c}^{\infty}(\Omega;[0,1])$ be such that $\mathbbm{1}_{\ball(x_{0},r)}\leq \rho\leq \mathbbm{1}_{\ball(x_{0},R)}$. 
We test the perturbed Euler-Lagrange equation \eqref{eq:EL1} with $\varphi:=\Delta_{s,-h}(\rho^{2}\Delta_{s,h}v_{j})\in \sobo_{c}^{1,2}(\Omega;\R^{n})$. In consequence, integration by parts for difference quotients yields 
\begin{align}\label{eq:approximateelliptic}
\left\vert\int_{\Omega}\langle \Delta_{s,h}f'_{j}(\sg(v_{j})),\sg(\rho^{2}\Delta_{s,h}v_{j})\rangle\dif x \right\vert \leq \frac{1}{j}\|\Delta_{s,-h}(\rho^{2}\Delta_{s,h}v_{j})\|_{\sobo^{-2,1}(\Omega;\R^{n})}. 
\end{align}
We define for $\mathscr{L}^{n}$-a.e. $x\in\ball(x_{0},R)$ bilinear forms $\mathscr{B}_{j,s,h}(x)\colon\rsym\times\rsym\to\R$ by
\begin{align*}
\mathscr{B}_{j,s,h}(x)[\eta,\xi] & := \int_{0}^{1}\langle f''_{j}(\sg(v_{j})(x)+th\Delta_{s,h}\sg(v_{j})(x))\eta,\xi\rangle\dif t,\qquad \eta,\xi\in\rsym.
\end{align*}
Then we note that, because of \eqref{eq:generalisedaelliptic} and the definition of $f_{j}$, 
\begin{align}\label{eq:ellipticityboundapproximate}
(j^{2}A_{j})^{-1}|\xi|^{2}\leq \mathscr{B}_{j,s,h}(x)[\xi,\xi]\leq (\Lambda + (j^{2}A_{j})^{-1})|\xi|^{2} =: C_{j}|\xi|^{2}
\end{align}
for all $\xi\in\rsym$, independently from $s,h$ and $x$. Thus each $\mathscr{B}_{j,s,h}(x)$ is an elliptic bilinear form itself and a suitable version of Young's inequality is available. With this notation, we infer from \eqref{eq:approximateelliptic} by expanding the terms on the left and regrouping
\begin{align*}
\mathbf{I}:=\int_{\Omega}\mathscr{B}_{j,s,h}(x)[\rho\sg(\Delta_{s,h}v_{j}),\rho&\sg(\Delta_{s,h}v_{j})]\dif x  \leq \int_{\Omega}\mathscr{B}_{j,s,h}(x)[\rho\sg(\Delta_{s,h}v_{j}),2\nabla\rho\odot \Delta_{s,h}v_{j}]\dif x \\
& + \frac{1}{j}\|\Delta_{s,-h}(\rho^{2}\Delta_{s,h}v_{j})\|_{\sobo^{-2,1}(\Omega;\R^{n})}\\
& \leq \frac{1}{2}\int_{\Omega}\mathscr{B}_{j,s,h}(x)[\rho\sg(\Delta_{s,h}v_{j}),\rho\sg(\Delta_{s,h}v_{j})]\dif x \\ 
& + \frac{1}{2}\int_{\Omega}\mathscr{B}_{j,s,h}(x)[2\nabla\rho\odot\Delta_{s,h}v_{j},2\nabla\rho\odot\Delta_{s,h}v_{j}]\dif x\\
& + \frac{1}{j}\|\Delta_{s,-h}(\rho^{2}\Delta_{s,h}v_{j})\|_{\sobo^{-2,1}(\Omega;\R^{n})} =: \mathbf{II}+\mathbf{III}+\mathbf{IV}. 
\end{align*}
Absorbing term $\mathbf{II}$ into $\mathbf{I}$, we obtain 
\begin{align}\label{eq:absorbconveniently}
\frac{1}{2j^{2}A_{j}}\int_{\Omega}|\rho\sg(\Delta_{s,h}v_{j})|^{2}\dif x \stackrel{\eqref{eq:ellipticityboundapproximate}}{\leq}\frac{1}{2}\mathbf{I}=\mathbf{I}-\mathbf{II} \leq \mathbf{III}+\mathbf{IV}
\end{align}
and thus need to give bounds on $\mathbf{III}$ and $\mathbf{IV}$. As a consequence of \eqref{eq:ellipticityboundapproximate}, we immediately obtain 
\begin{align*}
\mathbf{III} & \leq 4C_{j}\sup_{\Omega}|\nabla\rho|^{2}\int_{\ball(x_{0},R)}|\Delta_{s,h}v_{j}|^{2}\dif x \leq 4C_{j}(\sup_{\Omega}|\nabla\rho|^{2})\|v_{j}\|_{\sobo^{1,2}(\Omega;\R^{n})}^{2}
\end{align*} 
which is finite due to $v_{j}\in\sobo^{1,2}(\Omega;\R^{n})$. As to term $\mathbf{IV}$, we use Lemma~\ref{lem:negative}\ref{item:negative2} to find by $\lebe^{q}(\Omega;\R^{n})\hookrightarrow\sobo^{-2,1}(\Omega;\R^{n})$ for some $1<q<2$:
\begin{align}\label{eq:IIIestimate}
\begin{split}
\mathbf{IV} & \leq \frac{c(\Omega,q)}{j}\|\partial_{s}(\rho^{2}\Delta_{s,h}v_{j})\|_{\lebe^{q}(\Omega;\R^{n})} \leq \frac{c(\Omega,q,n)}{j}\|\sg(\rho^{2}\Delta_{s,h}v_{j})\|_{\lebe^{q}(\Omega;\rsym)} \\ & \leq \frac{c(\Omega,q,n)}{j}\|\nabla\rho\odot\Delta_{s,h}v_{j}\|_{\lebe^{q}(\Omega;\rsym)} + \frac{c(\Omega,q,n)}{j}\|\rho^{2}\sg(\Delta_{s,h}v_{j})\|_{\lebe^{q}(\Omega;\rsym)}\\
& \leq \frac{c(\Omega,q,n)}{j}(\sup_{\Omega}|\nabla\rho|)\|\nabla v_{j}\|_{\lebe^{2}(\Omega;\R^{n})} + \Big(\frac{1}{4A_{j}j^{2}}\int_{\Omega}|\rho\sg(\Delta_{s,h}v_{j})|^{2}\dif x + c(\Omega,n,j,q) \Big)^{\frac{1}{q}}\\
& \leq \frac{c(\Omega,q,n)}{j}(\sup_{\Omega}|\nabla\rho|)\|\nabla v_{j}\|_{\lebe^{2}(\Omega;\R^{n})} + \frac{1}{4A_{j}j^{2}}\int_{\Omega}|\rho\sg(\Delta_{s,h}v_{j})|^{2}\dif x + c(\Omega,n,j,q)
\end{split}
\end{align}
where $c(\Omega,n,j,q)\geq 1$ and $c(\Omega,q)>0$ are constants. Here we used Korn's inequality in $\sobo_{0}^{1,q}(\Omega;\R^{n})$ in the second and Young's inequality in the penultimate step. The second term on the very right hand side of inequality \eqref{eq:IIIestimate} consequently is absorbed into the very left hand side of \eqref{eq:absorbconveniently}, and then we obtain $\sup_{|h|<\frac{1}{2}(\dista(x_{0},\partial\Omega)-R)}\mathbf{I}<\infty$. Thus, $(\Delta_{s,h}\sg(v_{j}))_{h}$ is uniformly bounded in $\lebe^{2}(\ball(x_{0},r);\rsym)$ and hence $\partial_{s}\sg(v_{j})$ exists in $\lebe^{2}(\ball(x_{0},r);\rsym)$ for each $s\in\{1,...,n\}$. As a consequence, $\partial_{s}v_{j}\in\sobo^{1,2}(\ball(x_{0},r);\R^{n})$ by Korn's inequality. By arbitrariness of $s\in\{1,...,n\}$, $x_{0}\in\Omega$ and $R>0$ sufficiently small, we thus obtain $v_{j}\in\sobo_{\locc}^{2,2}(\Omega;\R^{n})$. The proof is complete. 
\end{proof}

\subsection{Uniform second order estimates}\label{sec:secondorder}
We now turn to uniform estimates (in $j\in\mathbb{N}$) for the viscosity approximating sequence $(v_{j})$. The following result is a key ingredient in the proof of Theorem~\ref{thm:W11reg}, and we single it out as a theorem on its own right:
\begin{theorem}\label{thm:regdual}
Let $f\in\hold^{2}(\rsym)$ satisfy \eqref{eq:lingrowth1}. Moreover, suppose that for some $\Lambda>0$ there holds 
\begin{align}\label{eq:generalisedelliptic}
0<\langle f''(z)\xi,\xi\rangle \leq \Lambda \frac{|\xi|^{2}}{(1+|z|^{2})^{\frac{1}{2}}}\qquad\text{for all}\;z,\xi\in\rsym.
\end{align}
Moreover, let $(v_{j})$ be the viscosity approximation sequence constructed in the previous subsection. Then there exists a constant $c=c(\Lambda,c_{1},c_{2},\gamma,n)>0$ such that for every $x_{0}\in\Omega$, $0<r<1$ with $\ball(x_{0},2r)\Subset\Omega$ and all $j\in\mathbb{N}$ there holds 
\begin{align}\label{eq:dualauxest}
\begin{split}
\sum_{k=1}^{n}&\int_{\ball(x_{0},r)}\langle f''_{j}(\sg(v_{j}))\partial_{k}\sg(v_{j}),\partial_{k}\sg(v_{j})\rangle\dif x  \leq \frac{c}{r^{2}}\int_{\ball(x_{0},2r)}|\sg(v_{j})|\dif x  \\ & +  \frac{c}{A_{j}j^{2}r^{3}} \int_{\ball(x_{0},2r)}(1+|\sg(v_{j})|^{2})\dif x + \frac{c}{jr} \Big(\inf F[\mathscr{D}_{u_{0}}] + \gamma\mathscr{L}^{n}(\Omega)+\frac{2}{j^{2}}\Big).
\end{split}
\end{align}
\end{theorem}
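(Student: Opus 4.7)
The plan is to run a difference-quotient argument directly on the perturbed Euler--Lagrange inequality \eqref{eq:EL1}, with the $\sobo_{\locc}^{2,2}$-regularity from Lemma~\ref{lem:higherregularityapproximate} guaranteeing admissible test functions and the weaker perturbation norm $\sobo^{-2,1}$ providing room for two derivatives to be absorbed onto the cut-off. Concretely, I fix $\rho\in\hold_{c}^{\infty}(\ball(x_{0},2r);[0,1])$ with $\mathbbm{1}_{\ball(x_{0},r)}\leq\rho\leq\mathbbm{1}_{\ball(x_{0},2r)}$ and $|\nabla^{\alpha}\rho|\leq cr^{-|\alpha|}$ for $|\alpha|\leq 2$, and for $s\in\{1,\ldots,n\}$ and sufficiently small $h\neq 0$ I test \eqref{eq:EL1} with $\varphi:=\Delta_{s,-h}(\rho^{2}\Delta_{s,h}v_{j})\in\sobo_{c}^{1,2}(\Omega;\R^{n})$. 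Summation by parts for difference quotients, the factorisation $\Delta_{s,h}f'_{j}(\sg(v_{j}))=\mathscr{B}_{j,s,h}\sg(\Delta_{s,h}v_{j})$ appearing in the proof of Lemma~\ref{lem:higherregularityapproximate}, and Young's inequality on the positive semi-definite form $\mathscr{B}_{j,s,h}$ together deliver
\[
\int_{\Omega}\rho^{2}\mathscr{B}_{j,s,h}[\sg(\Delta_{s,h}v_{j}),\sg(\Delta_{s,h}v_{j})]\dx \leq c\int_{\Omega}\mathscr{B}_{j,s,h}[\nabla\rho\odot\Delta_{s,h}v_{j},\nabla\rho\odot\Delta_{s,h}v_{j}]\dx+\frac{c}{j}\|\varphi\|_{\sobo^{-2,1}(\Omega;\R^{n})}.
\]
Passing $h\to 0$ by dominated convergence, legitimate by $v_{j}\in\sobo_{\locc}^{2,2}$ and continuity of $f''_{j}$, turns $\sg(\Delta_{s,h}v_{j})$ into $\partial_{s}\sg(v_{j})$, and summing over $s\in\{1,\ldots,n\}$ recovers the left-hand side of \eqref{eq:dualauxest}.

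For the perturbation term, Lemma~\ref{lem:negative}\ref{item:negative2} applied twice peels the two difference quotients onto $\rho^{2}v_{j}$ and its cut-off commutators, producing $\|\varphi\|_{\sobo^{-2,1}(\Omega;\R^{n})}\leq (c/r)\|v_{j}\|_{\lebe^{1}(\ball(x_{0},2r);\R^{n})}$ uniformly in $h$, up to lower-order contributions with $\nabla^{2}\rho$ that are dominated by the same quantity because $r<1$. The global bound \eqref{eq:uniformboundL1new1}, which in turn rests on \eqref{eq:uniformboundL1new} and the Poincar\'{e} inequality on $\ld_{0}(\Omega)$, then gives $\|v_{j}\|_{\lebe^{1}(\Omega;\R^{n})}\leq c(\inf F[\mathscr{D}_{u_{0}}]+\gamma\mathscr{L}^{n}(\Omega)+2/j^{2})$, yielding the third right-hand side term of \eqref{eq:dualauxest}.

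The decisive and technically delicate part is to bound the commutator $\int\mathscr{B}_{j,s,h}[\nabla\rho\odot\Delta_{s,h}v_{j},\nabla\rho\odot\Delta_{s,h}v_{j}]\dx$ by $\sg(v_{j})$-quantities rather than by $\nabla v_{j}$-quantities. The ellipticity upper bound \eqref{eq:generalisedelliptic} gives pointwise domination by $\Lambda|\nabla\rho|^{2}|\Delta_{s,h}v_{j}|^{2}/\sqrt{1+|\sg(v_{j})|^{2}}+|\nabla\rho|^{2}|\Delta_{s,h}v_{j}|^{2}/(A_{j}j^{2})$, and $|\Delta_{s,h}v_{j}|$ is a genuine full-gradient object that Ornstein's Non-Inequality forbids from being controlled in $\lebe^{1}$ by $\int|\sg(v_{j})|$ alone. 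The splitting strategy inspired by \cite{Seregin4,FS1} here consists in decomposing $\nabla\rho\odot\Delta_{s,h}v_{j}=\sg(\rho\Delta_{s,h}v_{j})-\rho\sg(\Delta_{s,h}v_{j})$; the $\rho\sg(\Delta_{s,h}v_{j})$-contribution is absorbed back into the left-hand side, while the $\sg(\rho\Delta_{s,h}v_{j})$-contribution is re-treated by a second insertion into \eqref{eq:EL1}, now of a suitable rigid-deformation correction of $\rho\Delta_{s,h}v_{j}$ afforded by Lemma~\ref{rem:stability}. Combining this with the $\lebe^{2}$-Korn inequality from Lemma~\ref{lem:Cianchi}\ref{item:Cianchi0} applied to the corrected field, and with the trivial pointwise bound $|\sg(v_{j})|/\sqrt{1+|\sg(v_{j})|^{2}}\leq 1$, converts the full-gradient content of $|\Delta_{s,h}v_{j}|^{2}$ into symmetric-gradient content of $v_{j}$, producing the first right-hand side term $(c/r^{2})\int_{\ball(x_{0},2r)}|\sg(v_{j})|\dx$ and the stabilising term $(c/(A_{j}j^{2}r^{3}))\int_{\ball(x_{0},2r)}(1+|\sg(v_{j})|^{2})\dx$; the additional factor $1/r$ in the latter stems from the extra cut-off gradient introduced by the second insertion. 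This second insertion is only affordable because the Ekeland perturbation sits in $\sobo^{-2,1}$ rather than in $\sobo^{-1,1}$ as in \cite{BS1,GK1}, which is the conceptual point unlocking the optimal ellipticity range $1<a<1+\tfrac{2}{n}$.
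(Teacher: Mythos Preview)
Your proposal contains a genuine gap at the decisive step, and the suggested ``splitting'' does not accomplish what you claim.

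The identity $\nabla\rho\odot\Delta_{s,h}v_{j}=\sg(\rho\Delta_{s,h}v_{j})-\rho\,\sg(\Delta_{s,h}v_{j})$ is merely the product rule for the symmetric gradient; it does not eliminate any full-gradient content. Expanding the commutator $\int\mathscr{B}_{j,s,h}[\nabla\rho\odot\Delta_{s,h}v_{j},\nabla\rho\odot\Delta_{s,h}v_{j}]\dif x$ via this identity produces, among other terms, the contribution $\int\rho^{2}\mathscr{B}_{j,s,h}[\sg(\Delta_{s,h}v_{j}),\sg(\Delta_{s,h}v_{j})]\dif x$ with coefficient $+1$. This is \emph{exactly} the quantity you are trying to bound, and since the commutator enters the right-hand side with a fixed multiplicative constant (coming from Young's inequality), the resulting inequality is circular and cannot be closed by absorption. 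The remaining contribution $\int\mathscr{B}_{j,s,h}[\sg(\rho\Delta_{s,h}v_{j}),\sg(\rho\Delta_{s,h}v_{j})]\dif x$ is a quadratic form in $f''_{j}$; a ``second insertion into \eqref{eq:EL1}'' yields only the first-order quantity $\int\langle f'_{j}(\sg(v_{j})),\sg(\psi)\rangle\dif x$ and gives no handle whatsoever on this second-order expression. Neither does the $\lebe^{2}$-Korn inequality help here, because there is no a priori $\lebe^{2}$-bound on $\sg(\rho\Delta_{s,h}v_{j})$ independent of the full gradient. In effect, your argument reproduces the obstruction term \eqref{eq:Tdef} that the paper explicitly identifies as uncontrollable by Ornstein's Non-Inequality.

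The paper's route is structurally different. It passes to the \emph{differentiated} Euler--Lagrange inequality (Lemma~\ref{lem:ELbetter}), tests with $\rho^{2}\partial_{k}(v_{j}-a_{j})$ for a rigid deformation $a_{j}$, and then analyses the resulting commutator terms in index form. The critical term $\sum_{k,i,m}\int(\partial_{k}\sigma_{j}^{im})(\partial_{i}\rho^{2})\partial_{m}w_{j}^{(k)}\dif x$, which carries the full-gradient $\partial_{m}w_{j}^{(k)}$, is handled by a \emph{double integration by parts} (see \eqref{eq:VIrewriteest}) that exploits the symmetry $\sigma_{j}^{im}=\sigma_{j}^{mi}$ to decouple the indices. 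After this manipulation, the only derivatives of $w_{j}$ that survive appear as the divergence $\sum_{k}\partial_{k}w_{j}^{(k)}$, which is pointwise dominated by $|\sg(v_{j})|$, and zeroth-order terms in $w_{j}$, which are controlled by the symmetric Poincar\'{e} inequality \eqref{eq:poincareproof} after the choice of $a_{j}$. The terms involving $\di(\sigma_{j})$ are then fed back into \eqref{eq:EL1}, and it is \emph{here} that the $\sobo^{-2,1}$-perturbation is essential. This algebraic reduction of full gradients to divergences is the actual ``splitting strategy'' of \textsc{Seregin}; it is not captured by your product-rule rewriting.
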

In the following, it is customary to introduce the shorthand notation 
\begin{align*}
\sigma_{j}:=f'_{j}(\sg(v_{j}))\;\;\;\text{and}\;\;\;\mathcal{A}_{j}[\nu;\xi,\eta]:=\langle f''_{j}(\nu)\xi,\eta\rangle,\qquad\nu,\xi,\eta\in\rsym. 
\end{align*}
We begin by collecting the properties of $\sigma_{j}$: 
\begin{lemma}\label{lem:ELbetter}
Let the integrand $f\in\hold^{2}(\rsym)$ satisfy \eqref{eq:lingrowth1} and \eqref{eq:generalisedelliptic} and define $v_{j}$ by \eqref{eq:almostoptimal4}. Then for all $\ell\in\{1,...,n\}$ and $\varphi\in\sobo_{c}^{1,2}(\Omega;\R^{n})$ there holds
\begin{align}\label{eq:betterEL1}
\left\vert\int_{\Omega}\langle\partial_{\ell}\sigma_{j},\sg(\varphi)\rangle\dif x \right\vert \leq \frac{1}{j}\|\varphi\|_{\sobo^{-1,1}(\Omega;\R^{n})}. 
\end{align}
\end{lemma}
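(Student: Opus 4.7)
The plan is to differentiate the perturbed Euler--Lagrange inequality \eqref{eq:EL1} in the direction $e_\ell$ by testing with $\partial_\ell \psi$ for $\psi$ sufficiently smooth, thereby shifting one derivative from $\psi$ onto $\sigma_j$ via integration by parts and paying for this move through the monotonicity of the negative Sobolev scale (Lemma~\ref{lem:negative}\ref{item:negative2}). First I would verify that the ingredients for this manipulation are already in place: Lemma~\ref{lem:higherregularityapproximate} grants $v_j\in\sobo_{\locc}^{2,2}(\Omega;\R^n)$, while \eqref{eq:generalisedelliptic} together with \eqref{eq:Ajdef} imply $|f''_j|\leq \Lambda+(A_jj^2)^{-1}$ uniformly on $\rsym$. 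The chain rule then yields $\sigma_j=f'_j(\sg(v_j))\in\sobo_{\locc}^{1,2}(\Omega;\rsym)$ with $\partial_\ell\sigma_j=f''_j(\sg(v_j))\partial_\ell\sg(v_j)\in\lebe_{\locc}^2(\Omega;\rsym)$ pointwise a.e.

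Next I would establish the inequality on the denser, smoother subspace $\sobo_c^{2,2}(\Omega;\R^n)$. For $\psi$ in that space, $\partial_\ell\psi\in\sobo_c^{1,2}(\Omega;\R^n)$ is an admissible test function in \eqref{eq:EL1}. Since both factors of the scalar product $\langle\sigma_j,\sg(\psi)\rangle$ lie in $\sobo^{1,2}$ with one of them compactly supported, the product itself lies in $\sobo^{1,1}(\Omega)$ with compact support, so classical integration by parts yields
\begin{align*}
\int_\Omega\langle\sigma_j,\sg(\partial_\ell\psi)\rangle\dif x = \int_\Omega\langle\sigma_j,\partial_\ell\sg(\psi)\rangle\dif x = -\int_\Omega\langle\partial_\ell\sigma_j,\sg(\psi)\rangle\dif x.
\end{align*}
Combined with Lemma~\ref{lem:negative}\ref{item:negative2} applied to $\partial_\ell\psi$ (with $\beta=e_\ell$ and $k=2$), this produces
\begin{align*}
\Big|\int_\Omega\langle\partial_\ell\sigma_j,\sg(\psi)\rangle\dif x\Big|\leq\tfrac{1}{j}\|\partial_\ell\psi\|_{\sobo^{-2,1}(\Omega;\R^n)}\leq\tfrac{1}{j}\|\psi\|_{\sobo^{-1,1}(\Omega;\R^n)}.
\end{align*}

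Finally, I would extend the estimate to an arbitrary $\varphi\in\sobo_c^{1,2}(\Omega;\R^n)$ by a density argument. Picking a ball $B\Subset\Omega$ containing $\supp\varphi$, the standard density of $\hold_c^\infty(B;\R^n)$ in $\sobo_0^{1,2}(B;\R^n)$ yields a sequence $\varphi_m\in\hold_c^\infty(B;\R^n)\subset\sobo_c^{2,2}(\Omega;\R^n)$ with $\varphi_m\to\varphi$ in $\sobo^{1,2}(\Omega;\R^n)$. Since $\partial_\ell\sigma_j\in\lebe^2(B;\rsym)$ while $\sg(\varphi_m)\to\sg(\varphi)$ in $\lebe^2(B;\rsym)$, the left-hand side of the inequality for $\varphi_m$ passes to the limit; the right-hand side does so too, because $\varphi_m\to\varphi$ in $\lebe^1(\Omega;\R^n)\hookrightarrow\sobo^{-1,1}(\Omega;\R^n)$ (Lemma~\ref{lem:negative}\ref{item:negative2} with $\beta=0$). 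I would not expect any step here to be genuinely delicate: the substantive point is that the $\sobo^{-2,1}$-bound in \eqref{eq:EL1} is tight enough to absorb a single spatial differentiation, and the regularity afforded by Lemma~\ref{lem:higherregularityapproximate} is precisely what legitimises the integration by parts that effects the transfer.
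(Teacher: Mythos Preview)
Your proposal is correct and follows essentially the same route as the paper: establish $\sigma_j\in\sobo_{\locc}^{1,2}$ from Lemma~\ref{lem:higherregularityapproximate} and the boundedness of $f''_j$, test \eqref{eq:EL1} with $\partial_\ell$ of a smooth compactly supported map, integrate by parts, invoke Lemma~\ref{lem:negative}\ref{item:negative2} to trade $\|\partial_\ell\psi\|_{\sobo^{-2,1}}$ for $\|\psi\|_{\sobo^{-1,1}}$, and then pass to general $\varphi\in\sobo_c^{1,2}$ by density. The only cosmetic difference is that the paper works directly with $\varphi\in\hold_c^\infty$ rather than the intermediate space $\sobo_c^{2,2}$, and compresses the approximation step into a single sentence.
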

\begin{proof}
By Lemma~\ref{lem:higherregularityapproximate}, $v_{j}\in\sobo_{\locc}^{2,2}(\Omega;\R^{n})$. We note that $\partial_{\ell}\sigma_{j} = f''_{j}(\sg(v_{j}))\partial_{\ell}\sg(v_{j})$, and since $\sup_{z\in\rsym}|f''_{j}(z)|<\infty$, $\sigma_{j}\in\sobo_{\locc}^{1,2}(\Omega;\rsym)$. Let $\varphi\in\hold_{c}^{\infty}(\Omega;\R^{n})$. Then $\partial_{\ell}\varphi$ is an admissible competitor in \eqref{eq:EL1} and so, since $\sigma_{j}\in\sobo_{\locc}^{1,2}(\Omega;\rsym)$, 
\begin{align*}
\left\vert\int_{\Omega}\langle \partial_{\ell}\sigma_{j},\sg(\varphi)\rangle\dif x \right\vert = \left\vert\int_{\Omega}\langle \sigma_{j},\sg(\partial_{\ell}\varphi)\rangle\dif x \right\vert \stackrel{\eqref{eq:EL1}}{\leq} \frac{1}{j}\|\partial_{\ell}\varphi\|_{\sobo^{-2,1}(\Omega;\R^{n})} \leq \frac{1}{j}\|\varphi\|_{\sobo^{-1,1}(\Omega;\R^{n})}. 
\end{align*}
Here, the last estimate is valid by Lemma~\ref{lem:negative}~\ref{item:negative2}. Then the case of general $\sobo_{c}^{1,2}(\Omega;\R^{n})$-maps $\varphi$ follows by routine smooth approximation and $\sobo^{1,2}(\Omega;\R^{n})\hookrightarrow\sobo^{-1,1}(\Omega;\R^{n})$. 
\end{proof}
We now come to the 
\begin{proof}[Proof of Theorem~\ref{thm:regdual}] We divide the proof into three steps, and fix $j\in\mathbb{N}$ throughout. 

\emph{Step $1$. Modified perturbed Euler-Lagrange equations.} To establish \eqref{eq:dualauxest}, we shall use the weak Euler--Lagrange equation \eqref{eq:betterEL1} from Lemma~\ref{lem:ELbetter} satisfied by $\sigma_{j}$. Let $k\in\{1,...,n\}$ and let $x_{0}\in\Omega,\,0<r<1$ be such that $\ball(x_{0},2r)\Subset\Omega$. We choose a cut-off function $\rho\in\hold_{c}^{\infty}(\Omega;[0,1])$ such that $\mathbbm{1}_{\ball(x_{0},r)}\leq\rho\leq\mathbbm{1}_{\ball(x_{0},2r)}$ and $|\nabla^{k}\rho|\leq \big(\frac{2}{r}\big)^{k}$ for $k\in\{1,2,3\}$.  Without loss of generality, the interior $\ball'$ of $\spt(\rho)$ is a ball, too. 

Then, since $v_{j}\in\sobo_{\locc}^{2,2}(\Omega;\R^{n})$ by Lemma~\ref{lem:higherregularityapproximate}, we obtain that $\varphi:=\rho^{2}\partial_{k}(v_{j}-a_{j})=:\rho^{2}\partial_{k}w_{j}$ belongs to $\sobo_{c}^{1,2}(\Omega;\R^{n})$ and hence qualifies as a competitor map in \eqref{eq:betterEL1}. Here, $a_{j}\in\mathscr{R}(\Omega)$ is a rigid deformation to be specified later on, and $w_{j}$ is defined in the obvious manner. We write $A=(A^{im})_{i,m=1}^{n}$ for an $(n\times n)$--matrix $A$ and denote the $l$--th component of a vector $u\in\R^{n}$ by $u^{(l)}$. Then applying \eqref{eq:betterEL1} to $\ell=k$ and summing over $k\in\{1,...,n\}$ yields by virtue of Lemma~\ref{lem:negative}~\ref{item:negative2} 
\begin{align}\label{eq:crucialidentity}
\begin{split}
\sum_{k,i,m}\int_{\Omega}(\partial_{k}\sigma_{j}^{im})\sg^{im}(\rho^{2}\partial_{k}w_{j})\dif x & \leq \frac{1}{j}\sum_{k=1}^{n}\|\rho^{2}\partial_{k}w_{j}\|_{\sobo^{-1,1}(\Omega;\R^{n})} \\
&\leq \frac{1}{j}\sum_{k=1}^{n}\|\partial_{k}(\rho^{2}w_{j})-2(\rho\partial_{k}\rho) w_{j}\|_{\sobo^{-1,1}(\Omega;\R^{n})} \\
& \leq \frac{c(n)}{jr}\|w_{j}\|_{\lebe^{1}(\ball';\R^{n})},
\end{split}
\end{align}
where the left-hand sum is taken over all indices $k,i,m\in\{1,...,n\}$. Towards~\eqref{eq:dualauxest}, we note that 
\begin{align}\label{eq:mainrewrite}
\begin{split}
\sum_{k=1}^{n}\int_{\ball(x_{0},r)}\langle f''_{j}(\sg(v_{j}))\partial_{k}\sg(v_{j}),\partial_{k}\sg(v_{j})\rangle\dif x & \leq \sum_{k=1}^{n}\int_{\Omega}\mathcal{A}_{j}[\sg(v_{j});\rho\partial_{k}\sg(v_{j}),\rho\partial_{k}\sg(v_{j})]\dif x \\ & = \sum_{k,i,m}\int_{\Omega}(\partial_{k}\sigma_{j}^{im})\rho^{2}\partial_{k}\sg^{im}(v_{j})\dif x, 
\end{split}
\end{align}
whereby it suffices to estimate the right hand side in view of \eqref{eq:crucialidentity}. 
From \eqref{eq:crucialidentity} we deduce
\begin{align}\label{eq:thescrutinizer}
\begin{split}
2\sum_{k,i,m}\int_{\Omega}(\partial_{k}\sigma_{j}^{im})(\rho^{2}\sg^{im}(\partial_{k}v_{j}))\dif x &  \leq - \sum_{k,i,m} \int_{\Omega} (\partial_{k}\sigma_{j}^{im})((\partial_{i}\rho^{2})\partial_{k}w_{j}^{(m)} +(\partial_{m}\rho^{2})\partial_{k}w_{j}^{(i)})\dif x \\
& + \frac{c(n)}{jr}\|w_{j}\|_{\lebe^{1}(\ball';\R^{n})}\\ 
& = -\sum_{k,i,m} \int_{\Omega} (\partial_{k}\sigma_{j}^{im})((\partial_{i}\rho^{2})\partial_{k}w_{j}^{(m)} + (\partial_{i}\rho^{2})\partial_{m}w_{j}^{(k)})\dif x \\
& +  \sum_{k,i,m}\int_{\Omega}(\partial_{k}\sigma_{j}^{im})((\partial_{i}\rho^{2})\partial_{m}w_{j}^{(k)}+ (\partial_{m}\rho^{2})\partial_{i}w_{j}^{(k)})\dif x\\
& - \sum_{k,i,m}\int_{\Omega}(\partial_{k}\sigma_{j}^{im})( (\partial_{m}\rho^{2})\partial_{k}w_{j}^{(i)}+(\partial_{m}\rho^{2})\partial_{i}w_{j}^{(k)})\dif x \\
& +  \frac{c(n)}{jr}\|w_{j}\|_{\lebe^{1}(\ball';\R^{n})} =: \mathbf{I}+\mathbf{II}+\mathbf{III} +  \frac{c(n)}{jr}\|w_{j}\|_{\lebe^{1}(\ball';\R^{n})}.
\end{split}
\end{align}
\emph{Step $2$. Estimating the terms $\mathbf{I},\mathbf{II}$ and $\mathbf{III}$.} Ad~$\mathbf{I}$ and $\mathbf{III}$. Let us note that, since the indices $i,m$ run over all numbers $1,...,n$ and $\sigma_{j}(x)\in\rsym$ for $\mathscr{L}^{n}$-a.e. $x\in\Omega$, we have $\mathbf{I}=\mathbf{III}$. Moreover, we note that the artificial terms leading to the appearance of $\mathbf{II}$ are just introduced to have the symmetric gradient appearing, that is, terms which are conveniently controllable. In consequence, defining ${}_{j}\Theta_{k} := ({}_{j}\Theta_{k}^{im})_{i,m=1}^{n}$ and ${}_{j}\widetilde{\Theta}_{k} := ({}_{j}\widetilde{\Theta}_{k}^{im})_{i,m=1}^{n}$ with 
\begin{align*}
& {}_{j}\Theta_{k}^{im}:=(\partial_{m}\rho^{2})\sg^{ik}(w_{j}),\\
& {}_{j}\widetilde{\Theta}_{k}^{im}:=2(\partial_{m}\rho)\sg^{ik}(w_{j}),\qquad k,i,m\in\{1,...,n\},
\end{align*}
we find by $\sigma_{j}(x)\in\rsym$ for $\mathscr{L}^{n}$-a.e. $x\in\Omega$ and the definition of the Frobenius inner product on $\R^{n\times n}$ 
\begin{align*}
|\mathbf{I}+\mathbf{III}|  \leq 2|\mathbf{III}| &  \leq 4\left\vert\sum_{k,i,m}\int_{\Omega}(\partial_{k}\sigma_{j}^{im})(\partial_{m}\rho^{2})\sg^{ik}(w_{j})\dif x\right\vert \\ & = 4\left\vert \sum_{k=1}^{n}\int_{\Omega}\langle \partial_{k}\sigma_{j},{}_{j}\Theta_{k}^{\sym}\rangle\dif x \right\vert  =: \mathbf{IV}. 
\end{align*}
We now employ the definition of $\sigma_{j}$ and $\mathcal{A}_{j}[\sg(v_{j});\cdot,\cdot]$. Then we obtain, applying the Cauchy-Schwarz inequality to the bilinear forms $\mathcal{A}_{j}[\sg(v_{j});\cdot,\cdot]$:
\begin{align*}
\mathbf{IV} & \leq \frac{1}{2}\sum_{k=1}^{n}\int_{\Omega}\mathcal{A}_{j}[\sg(v_{j});\rho\partial_{k}\sg(v_{j}),\rho\partial_{k}\sg(v_{j})]\dif x + 8\sum_{k=1}^{n}\int_{\Omega}\mathcal{A}_{j}[\sg(v_{j});{}_{j}\widetilde{\Theta}_{k}^{\sym},{}_{j}\widetilde{\Theta}_{k}^{\sym}]\dif x =: \mathbf{IV}'.
\end{align*}
Appealing to \eqref{eq:generalisedelliptic} and recalling $|\nabla\rho|\leq\frac{2}{r}$, we then further estimate 
\begin{align}\label{eq:IVfinalestimate}
\begin{split}
\mathbf{IV}'& \leq \frac{1}{2}\sum_{k=1}^{n}\int_{\Omega}\mathcal{A}_{j}[\sg(v_{j});\rho\partial_{k}\sg(v_{j}),\rho\partial_{k}\sg(v_{j})]\dif x \\ & + \frac{128\Lambda n^{2}}{r^{2}}\int_{\ball(x_{0},2r)}|\sg(v_{j})|\dif x + \frac{128n^{2}}{r^{2}A_{j}j^{2}}\int_{\ball(x_{0},2r)}|\sg(v_{j})|^{2}\dif x\\
& =: \mathbf{V}_{1}+\mathbf{V}_{2}+\mathbf{V}_{3}. 
\end{split}
\end{align}
Ad $\mathbf{II}$. By symmetry of $\sigma_{j}$, i.e., $\sigma_{j}^{im}(x)=\sigma_{j}^{mi}(x)$ for all $i,m\in\{1,...,n\}$, $j\in\mathbb{N}$ and for $\mathscr{L}^{n}$-a.e. $x\in\Omega$, and a permutation of indices, it suffices to estimate the term 
\begin{align}\label{eq:IIrewriteest}
2|\mathbf{VI}|:= 2\left\vert\sum_{k,i,m}\int_{\Omega}(\partial_{k}\sigma_{j}^{im})(\partial_{i}\rho^{2})(\partial_{m}w_{j}^{(k)})\dif x\right\vert
\end{align}
with an obvious definition of $\mathbf{VI}$.  Integrating by parts twice yields
\begin{align}\label{eq:VIrewriteest}
\begin{split}
\mathbf{VI} = \sum_{k,i,m}\int_{\Omega}&\partial_{k}\sigma_{j}^{im}(\partial_{i}\rho^{2})\partial_{m}w_{j}^{(k)}\dif x  = -\sum_{k,i,m}\int_{\Omega}\sigma_{j}^{im}\partial_{k}((\partial_{i}\rho^{2})\partial_{m}w_{j}^{(k)})\dif x\\
& = -\sum_{k,i,m}\int_{\Omega}\sigma_{j}^{im}((\partial_{ik}\rho^{2})\partial_{m}w_{j}^{(k)}+(\partial_{i}\rho^{2})\partial_{mk}w_{j}^{(k)})\dif x\\
& = \sum_{k,i,m}\int_{\Omega}\partial_{m}(\sigma_{j}^{im}\partial_{ik}\rho^{2})w_{j}^{(k)}+\partial_{m}(\sigma_{j}^{im}\partial_{i}\rho^{2})\partial_{k}w_{j}^{(k)}\dif x \\
& = \sum_{k,i,m}\int_{\Omega}(\partial_{m}\sigma_{j}^{im})(\partial_{ik}\rho^{2})w_{j}^{(k)}+\sigma_{j}^{im}(\partial_{ikm}\rho^{2})w_{j}^{(k)}\dif x \\
& + \sum_{k,i,m}\int_{\Omega}(\partial_{m}\sigma_{j}^{im})(\partial_{i}\rho^{2})\partial_{k}w_{j}^{(k)} + \sigma_{j}^{im}(\partial_{im}\rho^{2})\partial_{k}w_{j}^{(k)}\dif x \\ & =: \mathbf{VI}_{1}+...+\mathbf{VI}_{4}, 
\end{split}
\end{align}
where $\mathbf{VI}_{1},...,\mathbf{VI}_{4}$ are defined in the obvious manner. Note that, by the $\sobo_{\locc}^{2,2}$--regularity of $v_{j}$ and the $\sobo_{\locc}^{1,2}$-regularity of $\sigma_{j}$, this is a valid computation. The crucial point in this calculation is that the only derivatives that apply to $w_{j}$ appear in the form $\partial_{k}w_{j}^{(k)}$ (and are decoupled from the $(i,m)$--components), and summation over $k\in\{1,...,n\}$ corresponds to taking the divergence of $w_{j}$. We define $\psi_{j,k}:=(\psi_{j,k}^{(i)})_{i=1}^{n}:=((\partial_{ik}\rho^{2})w_{j}^{(k)})_{i=1}^{n}\in\sobo_{c}^{1,2}(\Omega;\R^{n})$. Then, with $\di(\sigma_{j})$ denoting the row-wise divergence, we obtain 
\begin{align}\label{eq:VI1est}
\begin{split}
|\mathbf{VI}_{1}| & = \left\vert\sum_{k,i}\int_{\Omega}\di(\sigma_{j}^{(i)})(\partial_{ik}\rho^{2})w_{j}^{(k)}\dif x \right\vert = \left\vert \sum_{k}\int_{\Omega}\langle\di(\sigma_{j}), \psi_{j,k}\rangle\dif x\right\vert \\ 
& = \left\vert \sum_{k=1}^{n}\int_{\Omega}\langle\sigma_{j},\sg(\psi_{j,k})\rangle\dif x\right\vert \stackrel{\eqref{eq:EL1}}{\leq } \frac{1}{j}\sum_{k=1}^{n} \|\psi_{j,k}\|_{\sobo^{-2,1}(\Omega;\R^{n})}\\
&  \leq \frac{n}{j}\|\,|\nabla^{2}\rho^{2}|\,w_{j}\|_{\lebe^{1}(\Omega;\R^{n})} \leq \frac{c(n)}{jr^{2}}\|w_{j}\|_{\lebe^{1}(\ball';\R^{n})} =: \mathbf{VII}.
\end{split}
\end{align}
Here we used Lemma~\ref{lem:negative}\ref{item:negative2} in the penultimate inequality. The term $\mathbf{VI}_{3}$ is treated similarly, now defining $\widetilde{\psi}_{j,k} := ((\partial_{i}\rho^{2})\partial_{k}w_{j}^{(k)})_{i=1}^{n}\in\sobo_{c}^{1,2}(\Omega;\R^{n})$ as $w_{j}\in\sobo_{\locc}^{2,2}(\Omega;\R^{n})$ by Lemma~\ref{lem:higherregularityapproximate}. Then we estimate analogously
\begin{align*}
|\mathbf{VI}_{3}| & \leq \left\vert \sum_{k}\int_{\Omega}\langle\di(\sigma_{j}),\widetilde{\psi}_{j,k}\rangle\dif x \right\vert  \\ &  \leq \frac{1}{j}\sum_{k=1}^{n}\|((\partial_{i}\rho^{2})\partial_{k}w_{j}^{(k)})_{i=1}^{n}\|_{\sobo^{-2,1}(\Omega;\R^{n})} \\
& \leq \frac{1}{j}\sum_{k,i=1}^{n}\|(\partial_{i}\rho^{2})\partial_{k}w_{j}^{(k)}\|_{\sobo^{-2,1}(\Omega)} = \frac{1}{j}\sum_{k,i=1}^{n}\|\partial_{k}((\partial_{i}\rho^{2})w_{j}^{(k)}) - (\partial_{ik}\rho^{2})w_{j}^{(k)}\|_{\sobo^{-2,1}(\Omega)}. 
\end{align*}
At this stage, note that by repeated use of Lemma~\ref{lem:negative}\ref{item:negative2}, 
\begin{align*}
\|\partial_{k}((\partial_{i}\rho^{2})w_{j}^{(k)}) - (\partial_{ik}\rho^{2})w_{j}^{(k)}\|_{\sobo^{-2,1}(\Omega;\R)} & \leq \|(\partial_{i}\rho^{2})w_{j}^{(k)}\|_{\lebe^{1}(\Omega)} + \|(\partial_{ik}\rho^{2})w_{j}^{(k)}\|_{\lebe^{1}(\Omega)}. 
\end{align*}
Hence, we obtain (by possibly enlarging the constant $c(n)>0$ from the estimation of $|\mathbf{VI}_{1}|$)  
\begin{align}\label{eq:VI2est}
|\mathbf{VI}_{3}| \leq \frac{c(n)}{jr^{2}}\|w_{j}\|_{\lebe^{1}(\ball';\R^{n})} = \mathbf{VII}.
\end{align}
We turn to the estimation of $\mathbf{VI}_{2}$ and $\mathbf{VI}_{4}$. We recall that we still have the freedom to choose the rigid deformations $a_{j}$ as they appear in the definition of $w_{j}$. As $\spt(\rho)=\ball'$ is ball\footnote{In view of Poincar\'{e}'s inequality, it would be sufficient to assume that $\spt(\rho)$ is a connected Lipschitz domain.}, we find a constant $C(\ball')>0$ such that for every $v\in\sobo^{1,2}(\ball';\R^{n})$  there exists $a\in\mathscr{R}(\R^{n})$ such that 
\begin{align}\label{eq:poincareproof}
\int_{\ball'}|v-a|\dif x \leq c_{n}r\int_{\ball'}|\sg(v)|\dif x\;\;\text{and}\;\;\int_{\ball'}|v-a|^{2}\dif x \leq c_{n}r^{2}\int_{\ball'}|\sg(v)|^{2}\dif x. 
\end{align}
It is important that for each such $v$ we can choose \emph{one} rigid deformation $a$ to make both inequalities work, and by Lemma~\ref{rem:stability}, this is in fact possible. Accordingly, we choose for each $j\in\mathbb{N}$ some $a_{j}\in\mathscr{R}(\R^{n})$ such that inequality \eqref{eq:poincareproof} holds with $v$ being replaced by $v_{j}$ and with $a$ being replaced by $a_{j}$. Turning to $\mathbf{VI}_{2}$, we go back to the definition of $\sigma_{j}$ and thereby obtain by virtue of Young's inequality and the above Poincar\'{e} inequalities \eqref{eq:poincareproof} that 
\begin{align}\label{eq:VI3est}
\begin{split}
|\mathbf{VI}_{2}| & \;\;\;\leq \sum_{k,i,m}\int_{\Omega}(|f'(\sg(v_{j}))|+\frac{1}{A_{j}j^{2}}|\sg(v_{j})|)\,|(\partial_{ikm}\rho^{2})|\,|w_{j}^{(k)}|\dif x \\
& \;\;\; \leq \frac{c(n)}{r^{3}}\Big(\Lip(f) \int_{\ball'}|w_{j}|\dif x + \frac{1}{2A_{j}j^{2}}\Big(\int_{\ball'}|w_{j}|^{2}\dif x + \int_{\ball'}|\sg(w_{j})|^{2}\dif x\Big)\Big)\\
& \stackrel{0<r<1}{\leq} \frac{c(n)\max\{\Lip(f),1\}}{r^{3}}\Big( r\int_{\ball'}|\sg(v_{j})|\dif x + \frac{1+r^{2}}{2A_{j}j^{2}}\int_{\ball'}|\sg(v_{j})|^{2}\dif x\Big).
\end{split}
\end{align}
As to $\mathbf{VI}_{4}$, we note that since $(\sg(v_{j}))$ is uniformly bounded in $\lebe^{1}(\Omega;\rsym)$ by \eqref{eq:uniformboundL1new}, so is $(\di(v_{j}))$ in $\lebe^{1}(\Omega)$. We then estimate, using the pointwise bound\footnote{Note that rigid deformations have zero divergence.} $|\di(w_{j})|\leq |\sg(v_{j})|$ and \eqref{eq:uniformboundL1new},  
\begin{align}\label{eq:VI4est}
\begin{split}
\mathbf{VI}_{4} & \leq \frac{c(n)}{r^{2}}\int_{\Omega}|\sigma_{j}|\,|\di(w_{j})|\dif x \\ & \leq \frac{c(n)\max\{\Lip(f),1\}}{r^{2}}\Big(\int_{\ball'}|\sg(v_{j})|+\frac{1}{A_{j}j^{2}}|\sg(v_{j})|^{2}\dif x\Big).
\end{split}
\end{align}
By our choice of $a_{j}$ and \eqref{eq:uniformboundL1new}, $\mathbf{VII}$ can now be estimated by 
\begin{align}\label{eq:VIIest}
\mathbf{VII} \leq \frac{c(n)}{jr^{2}}\|w_{j}\|_{\lebe^{1}(\ball';\R^{n})} \leq \frac{c(n)}{jr} \Big(\inf F[\mathscr{D}_{u_{0}}] + \gamma\mathscr{L}^{n}(\Omega)+\frac{2}{j^{2}}\Big). 
\end{align}
\emph{Step $3$. Conclusion.} We now gather estimates and start from \eqref{eq:mainrewrite} to find 
\begin{align*}
\sum_{k=1}^{n}\int_{\Omega}&\mathcal{A}_{j}[\sg(v_{j});\rho\partial_{k}\sg(v_{j}),\rho\partial_{k}\sg(v_{j})]\dif x  \stackrel{\eqref{eq:mainrewrite}}{\leq} \mathbf{I} + \mathbf{II} + \mathbf{III} + \frac{c(n)}{jr}\|w_{j}\|_{\lebe^{1}(\ball';\R^{n})} \\
& \stackrel{\eqref{eq:IVfinalestimate},\,\eqref{eq:VIrewriteest}}{\leq} (\mathbf{V}_{1}+\mathbf{V}_{2}+\mathbf{V}_{3}) + \mathbf{VI}_{1} + ... + \mathbf{VI}_{4} + \frac{c(n)}{j}\|\sg(w_{j})\|_{\lebe^{1}(\ball';\R^{n})}, 
\end{align*}
whence we absorb $\mathbf{V}_{1}$ into the left side of the previous inequality. We then succesively combine \eqref{eq:IVfinalestimate}, \eqref{eq:VI1est}--\eqref{eq:VIIest} with \eqref{eq:uniformboundL1new} to obtain via $0<r<1$
\begin{align*}
\sum_{k=1}^{n}\int_{\Omega}& \mathcal{A}_{j}[\sg(v_{j});\rho\partial_{k}\sg(v_{j}),\rho\partial_{k}\sg(v_{j})]\dif x \leq \frac{c}{r^{2}}\int_{\ball(x_{0},2r)}|\sg(v_{j})|\dif x  +  \frac{c}{A_{j}j^{2}r^{3}} \times \\ & \times \int_{\ball(x_{0},2r)}(1+|\sg(v_{j})|^{2})\dif x + \frac{c}{jr} \Big(\inf F[\mathscr{D}_{u_{0}}] + \gamma\mathscr{L}^{n}(\Omega)+\frac{2}{j^{2}}\Big), 
\end{align*}
where we track constants to find that $c=c(\Lip(f),\Lambda,n,\gamma,c_{1})>0$. Since $\Lip(f)$ only depends on $c_{2},\gamma$ by Lemma~\ref{lem:boundbelow}, this immediately gives  \eqref{eq:dualauxest} by \eqref{eq:mainrewrite}, and the proof is hereby complete. 
\end{proof}

\subsection{Proof of Theorem~\ref{thm:W11reg}}\label{sec:W11regproof}
Based on Theorem~\ref{thm:regdual}, we can proceed to the proof of Theorem~\ref{thm:W11reg}. It needs to be noted that the \emph{second} order estimate given in \eqref{eq:dualauxest} is the decisive ingredient which we lacked in \cite{GK1}, and in the following we demonstrate how \eqref{eq:dualauxest} leads to a Sobolev regularity improvement. Here, we are led by the ideas exposed in \cite{Bi1,BS1} for the gradient case.

\begin{proof}[Proof of Theorem~\ref{thm:W11reg}]
Let $u\in\gm(F;u_{0})$ be given and let $\ball(x_{0},5r)\subset\Omega$ be an open ball. In this situation, $u$ is a local generalised minimiser, which in particular implies that $\overline{F}_{u}[u;\ball(x_{0},5r)]\leq\overline{F}_{u}[v;\ball(x_{0},5r)]$ for all $v\in\bd(\ball(x_{0},5r))$. We now denote $(v_{j})$ the specific Ekeland viscosity approximation sequence as constructed in \eqref{eq:almostoptimal4}ff., with $\Omega$ being replaced by $\ball(x_{0},5r)$ and $u_{0}$  being replaced by $u|_{\ball(x_{0},5r)}$. Lemma~\ref{lem:convergeinBD} then implies that there exists a subsequence $(v_{j(l)})\subset (v_{j})$ such that $v_{j(l)}\stackrel{*}{\rightharpoonup} u$ in $\bd(\ball(x_{0},5r))$ as $l\to\infty$. 

We begin with $n\geq 3$. Since in particular $1<a<2$ in the present situation, we introduce the auxiliary convex function $
V_{a}(\xi):=(1+|\xi|^{2})^{\frac{2-a}{4}}$, $\xi\in\rsym$. Recalling $(v_{j(l)})\subset\sobo_{\locc}^{2,2}(\ball(x_{0},5r);\R^{n})$ from Lemma~\ref{lem:higherregularityapproximate} and differentiating $V_{a}(\sg(v_{j(l)}))$, we obtain for all $k\in\{1,...,n\}$
\begin{align*}
|\partial_{k}V_{a}(\sg(v_{j(l)}))|^{2} & \leq \Big(\frac{2-a}{2}\Big)^{2}|\partial_{k}\sg(v_{j(l)})|^{2}\,|\sg(v_{j(l)})|^{2}(1+|\sg(v_{j(l)})|^{2})^{\frac{-2-a}{2}} \\ & \leq c(a)\frac{|\partial_{k}\sg(v_{j(l)})|^{2}}{(1+|\sg(v_{j(l)})|^{2})^{\frac{a}{2}}}. 
\end{align*}
Therefore, we find by the previous inequality, the lower bound in \eqref{eq:ellipticity} and Theorem~\ref{thm:regdual}:
\begin{align}\label{eq:elscrutinho}
\begin{split}
\|V_{a}(\sg(v_{j(l)}))\|_{\lebe^{\frac{2n}{n-2}}(\ball(x_{0},r))}^{2} & \leq c(n)\Big(\|\nabla (V_{a}(\sg(v_{j(l)})))\|_{\lebe^{2}(\ball(x_{0},r))}^{2}\Big. \\ & \Big. \;\;\;\;\;\;\;\;\;\;\;\;\;\;\;\;\;\;\;\;\;\;\;\;+ \frac{1}{r^{2}}\int_{\ball(x_{0},r)}|V_{a}(\sg(v_{j(l)}))|^{2}\dif x\Big) \\ 
& \leq c(n,a)\Big(\int_{\ball(x_{0},r)}\frac{|\nabla(\sg(v_{j(l)}))|^{2}}{(1+|\sg(v_{j(l)})|^{2})^{\frac{a}{2}}}\dif x \Big. \\ & \;\;\;\; \;\;\;\; \;\;\;\; \;\;\;\; \;\;\;\; \Big.+\frac{1}{r^{2}} \int_{\ball(x_{0},r)}(1+|\sg(v_{j(l)})|^{2})^{\frac{1}{2}}\dif x\Big) \\
& \!\!\!\!\!\!\!\!\!\!\!\stackrel{\eqref{eq:dualauxest},\,\eqref{eq:uniformboundL1new}}{\leq} \frac{c}{r^{2}}\int_{\ball(x_{0},2r)}|\sg(v_{j(l)})|\dif x \\ & + \frac{c}{A_{j(l)}j(l)^{2}r^{3}}\times \int_{\ball(x_{0},2r)}(1+|\sg(v_{j(l)})|^{2})\dif x \\ &+\frac{c}{j(l)r} \Big(\overline{F}_{u}[u;\ball(x_{0},5r)] + \gamma\mathscr{L}^{n}(\ball(x_{0},5r))+\frac{2}{j(l)^{2}}\Big) \\ & + cr^{n-2} =: \mathbf{I}_{l} + ... + \mathbf{IV}_{l},
\end{split}
\end{align}
where $c=c(n,a,\lambda,\Lambda,\gamma,c_{1},c_{2})>0$. Here, the first estimate is valid by the scaled Sobolev inequality, whereas we have used $\sqrt{1+t^{2}}\leq 1+t$ for the ultimate one. As a consequence of the definition of $V_{a}$, we find 
\begin{align}\label{eq:trivialVaest}
|z|^{\frac{2-a}{2}}\leq V_{a}(z)\qquad\text{for all}\;z\in\rsym. 
\end{align}
This yields local uniform boundedness of $(\sg(v_{j(i)}))$ in $\lebe^{q}(\ball(x_{0},r);\rsym)$ for $q=\frac{2-a}{n-2}n$, and the latter number satisfies $q>1$ if and only if $1<a<1+\frac{2}{n}$, which is the standing assumption of Theorem~\ref{thm:W11reg}. Let us note in advance that \eqref{eq:infimalestimate} implies that $\lim_{l\to\infty}\mathbf{II}_{l}=0$, whereas $\lim_{l\to\infty}\mathbf{III}_{l}=0$ holds trivially. Now consider the function $\Psi_{q}(t):=|t|^{q}$ for $q>1$. Then $\Psi_{q}^{\infty}(t)=\infty$ for if $|t|>0$.  Since $\sg(v_{j(l)})\mathscr{L}^{n}\mres\ball(x_{0},r)\stackrel{*}{\rightharpoonup}\E u\mres\ball(x_{0},r)$, we obtain as a consequence of \textsc{Reshetnyak}'s theorem, Lemma \ref{lem:reshetnyak}, and the notation adopted in \eqref{eq:perspectivefunction} afterwards with $\nu=(\mathscr{L}^{n},\E u)$, 
\begin{align}\label{eq:qestimate}
\begin{split}
\int_{\ball(x_{0},r)}\Psi_{q}(\mathscr{E}u)\dif x + \int_{\ball(x_{0},r)}\Psi_{q}^{\infty}&\Big(\frac{\dif\E^{s}u}{\dif |\!\E^{s} u|}\Big)\dif |\!\E^{s} u| = \int_{\ball(x_{0},r)}\Psi_{q}^{\#}\Big(\frac{\dif\nu}{\dif|\nu|}\Big)\dif|\nu| \\ & \leq \liminf_{l\to\infty}\int_{\ball(x_{0},r)}\Psi_{q}^{\#}(1,\sg(v_{j(l)}))\dif x \\ & \!\!\!\!
\stackrel{\eqref{eq:elscrutinho}}{\leq} c\Big(\frac{|\!\E u|(\ball(x_{0},5r))}{r^{2}}+r^{n-2} \Big)^{\frac{n}{n-2}}.
\end{split}
\end{align}
Since the very right hand side is finite, we conclude that $\E^{s}u$ vanishes on $\ball(x_{0},r)$. By arbitrariness of $\ball(x_{0},r)$, we moreover infer that $\E^{s}u\equiv 0$ in $\Omega$ and so $u\in\ld(\Omega)$ together with $\sg(u)=\mathscr{E}u$. Moreover, by Korn's inequality, $\nabla u\in \lebe^{q}(\ball(x_{0},r);\R^{n\times n})$. To obtain the precise form of  \eqref{eq:Kornmain}, we choose a rigid deformation $\pi_{u}\in\mathscr{R}(\ball(x_{0},r))$ such that 
\begin{align*}
\|\nabla u\|_{\lebe^{q}(\ball(x_{0},r);\R^{n\times n})}& \;\,\leq \|\nabla (u-\pi_{u})\|_{\lebe^{q}(\ball(x_{0},r);\R^{n\times n})} + \|\nabla \pi_{u}\|_{\lebe^{q}(\ball(x_{0},r);\R^{n\times n})}\\ 
& \stackrel{\eqref{eq:rigidscaling}}{\leq} c\Big( \|\sg(u)\|_{\lebe^{q}(\ball(x_{0},r);\rsym)} + r^{\frac{n}{q}-1}\dashint_{\ball(x_{0},r)}|\pi_{u}|\dif x \Big)\\
& \stackrel{\eqref{eq:Lpstability}}{\leq} c\Big( \|\sg(u)\|_{\lebe^{q}(\ball(x_{0},r);\rsym)} + r^{\frac{n}{q}-1}\dashint_{\ball(x_{0},r)}|u|\dif x\Big)\\
& \!\stackrel{\eqref{eq:qestimate}}{\leq} c\Big(\Big(\frac{|\!\E u|(\ball(x_{0},5r))}{r^{2}}+r^{n-2}\Big)^{\frac{1}{2-a}} + r^{\frac{n}{q}-1} \dashint_{\ball(x_{0},r)}|u|\dif x \Big).
\end{align*}
Dividing the previous inequality by $r^{\frac{n}{q}}=r^{\frac{n-2}{2-a}}$, we obtain 
\begin{align*}
\Big(\dashint_{\ball(x_{0},r)}|\nabla u|^{q}\dif x \Big)^{\frac{1}{q}} \leq c\Big(\Big(1+\dashint_{\ball(x_{0},5r)}|\!\E u|\Big)^{\frac{1}{2-a}} + \frac{1}{r} \dashint_{\ball(x_{0},r)}|u|\dif x \Big).
\end{align*}
This is \eqref{eq:Kornmain} and the proof is complete for if $n\geq 3$. Now let $n=2$. As above, $(V_{a}(\sg(v_{j(l)})))$ is locally uniformly bounded in $\sobo^{1,2}(\ball(x_{0},5r);\R^{n})$ and thus, using \textsc{Trudinger}'s embedding $\sobo^{1,n}(\Omega)\hookrightarrow\exp\lebe^{\frac{n}{n-1}}(\Omega)$, \eqref{eq:elscrutinho} equally yields 
\begin{align*}
\|V_{a}(\sg(v_{j(l)}))\|_{\exp\lebe^{\frac{n}{n-1}}(\ball(x_{0},r))} \leq \sqrt{\mathbf{I}_{l}+...+\mathbf{IV}_{l}}.
\end{align*}
Working with $\Psi(t)=\exp(t^{\frac{n}{n-1}\frac{2-a}{2}})=\exp(t^{2-a})$ instead of $\Psi_{q}$ from above, we similarly conclude that $u\in E^{1}\exp\lebe^{2-a}(\ball(x_{0},r))$. We then employ \textsc{Cianchi}'s inequality from Lemma~\ref{lem:Cianchi}\ref{item:Cianchi1} with $\beta=2-a(>0)$ and hereafter $\frac{\beta}{\beta+1}=\frac{2-a}{3-a}$. In consequence, 
\begin{align*}
\|\nabla u\|_{\exp\lebe^{\frac{2-a}{3-a}}(\ball(x_{0},r))} & \leq \|\nabla (u-\pi_{u})\|_{\exp\lebe^{\frac{2-a}{3-a}}(\ball(x_{0},r))} + \|\nabla\pi_{u}\|_{\exp\lebe^{\frac{2-a}{3-a}}(\ball(x_{0},r);\R^{n\times n})}\\
& \leq c\Big(\|\sg(u)\|_{\exp\lebe^{2-a}(\ball(x_{0},r);\rsym)}+ \frac{1}{r}\dashint_{\ball(x_{0},r)}|u|\dif x\Big) \\ 
& \leq c\Big(\Big(1+\dashint_{\ball(x_{0},5r)}|\!\E u|\Big)^{\frac{1}{2-a}} + \frac{1}{r}\dashint_{\ball(x_{0},r)}|u|\dif x\Big),
\end{align*}
and the proof is complete. 
\end{proof}

\subsection{Selected implications}\label{sec:cors}
We now collect some consequences of the results established above and particularly improve the results from \cite{GK1}. We begin by strengthening \cite[Cor.~3.8]{GK1}, justifying the second arrow from below in Figure~\ref{fig:1}.
\begin{corollary}[Existence of second derivatives]\label{cor:2ndorder} Let $n\geq 2$ and suppose that $f\in\hold^{2}(\rsym)$ satisfies \eqref{eq:lingrowth1} and \eqref{eq:ellipticity} for some $1<a<\frac{n}{n-1}$. Then there holds $\gm_{\locc}(F)\subset\sobo_{\locc}^{2,q}(\Omega;\R^{n})$ for any $1<q<n\frac{2-a}{n-a}$. 
\end{corollary}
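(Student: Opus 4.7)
Given a local generalised minimiser $u\in\gm_{\locc}(F)$ and a ball $\ball(x_{0},5r)\Subset\Omega$, I would localise by restricting attention to $\ball(x_{0},5r)$ and set up on this ball the Ekeland viscosity approximation sequence $(v_{j})$ from Section~\ref{sec:viscosity} with $u_{0}$ replaced by $u|_{\ball(x_{0},5r)}$. By Lemma~\ref{lem:higherregularityapproximate}, $v_{j}\in\sobo_{\locc}^{2,2}(\ball(x_{0},5r);\R^{n})$, and Lemma~\ref{lem:convergeinBD} delivers a subsequence $v_{j(l)}\stackrel{*}{\rightharpoonup}u$ in $\bd(\ball(x_{0},5r))$. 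The strategy is then to derive, via an interpolation argument, a uniform-in-$j$ $\lebe^{q}$-bound for $\nabla\sg(v_{j})$ on $\ball(x_{0},r)$ and pass to the weak limit.

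The crucial calculation is H\"{o}lder's inequality applied with conjugate exponents $2/q$ and $2/(2-q)$: for $1<q<2$,
\begin{align*}
\int_{\ball(x_{0},r)}|\nabla\sg(v_{j})|^{q}\dif x \leq \Big(\int_{\ball(x_{0},r)}\frac{|\nabla\sg(v_{j})|^{2}}{(1+|\sg(v_{j})|^{2})^{\frac{a}{2}}}\dif x\Big)^{\frac{q}{2}}\Big(\int_{\ball(x_{0},r)}(1+|\sg(v_{j})|^{2})^{\frac{aq}{2(2-q)}}\dif x\Big)^{\frac{2-q}{2}}.
\end{align*}
The first factor is uniformly bounded in $j$ by Theorem~\ref{thm:regdual}, using \eqref{eq:ellipticity} to bound $\mathcal{A}_{j}[\sg(v_{j});\partial_{k}\sg(v_{j}),\partial_{k}\sg(v_{j})]$ from below by $\lambda|\partial_{k}\sg(v_{j})|^{2}/(1+|\sg(v_{j})|^{2})^{a/2}$, and \eqref{eq:uniformboundL1new} to handle the $(A_{j}j^{2})^{-1}$-term. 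The second factor is controlled by the local uniform bound $\|V_{a}(\sg(v_{j(l)}))\|_{\lebe^{2n/(n-2)}(\ball(x_{0},r))}\leq C$ extracted from \eqref{eq:elscrutinho} in the proof of Theorem~\ref{thm:W11reg}; this gives $(1+|\sg(v_{j})|^{2})^{1/2}$ uniformly bounded in $\lebe^{n(2-a)/(n-2)}(\ball(x_{0},r))$, so the second factor is uniformly bounded provided $\tfrac{aq}{2-q}\leq \tfrac{n(2-a)}{n-2}$. An elementary rearrangement shows that this is equivalent to $q\leq \tfrac{n(2-a)}{n-a}$, and the strict inequality $q>1$ is compatible with this threshold exceeding $1$ exactly when $a<\tfrac{n}{n-1}$ -- precisely the hypothesis of the corollary. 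For $n=2$, one uses the exponential integrability variant from Theorem~\ref{thm:W11reg}(a) in place of $V_{a}\in \lebe^{2n/(n-2)}$, which yields arbitrary polynomial integrability of $|\sg(v_{j})|$ locally and so suffices for any $q<2=\tfrac{n(2-a)}{n-a}\big|_{n=2}$.

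With $(\nabla\sg(v_{j(l)}))$ uniformly bounded in $\lebe^{q}(\ball(x_{0},r);\R^{n\times n\times n})$ and $q>1$, a non-relabeled subsequence converges weakly in $\lebe^{q}$ to some tensor $W$. Since $v_{j(l)}\to u$ in $\lebe^{1}(\ball(x_{0},r);\R^{n})$, the distributional derivatives $\nabla\sg(v_{j(l)})$ converge to $\nabla\sg(u)$ in $\mathscr{D}'$, and identification of limits forces $W=\nabla\sg(u)\in\lebe^{q}(\ball(x_{0},r);\R^{n\times n\times n})$. Consequently, each component $\partial_{k}u$ lies in $\sobo^{1,q}$ locally (here we use Theorem~\ref{thm:W11reg} to know $u\in\sobo_{\locc}^{1,q}$, so the componentwise symmetric gradients are well-defined $\lebe^{q}_{\locc}$-maps), and Korn's inequality in $\lebe^{q}$ (Lemma~\ref{lem:Cianchi}\ref{item:Cianchi0}) applied component-wise to $\partial_{k}u$ upgrades $\nabla\sg(u)\in\lebe^{q}_{\locc}$ to $\nabla^{2}u\in\lebe^{q}_{\locc}$. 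The main obstacle is the careful balancing of exponents in the H\"{o}lder step together with verifying that the second factor is genuinely uniformly bounded in $j$; this hinges on the fact that the right-hand side of \eqref{eq:elscrutinho} collects exactly those $r$-dependent terms already controlled uniformly in $j$ via \eqref{eq:uniformboundL1new} and \eqref{eq:infimalestimate}.
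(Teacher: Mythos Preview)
Your proposal is correct and follows essentially the same approach as the paper: split $|\nabla\sg(v_{j})|^{q}$ into a weighted second-order factor (controlled by Theorem~\ref{thm:regdual}) and a weight factor (controlled by the $\lebe^{\frac{2n}{n-2}}$-bound on $V_{a}(\sg(v_{j}))$ from the proof of Theorem~\ref{thm:W11reg}), then pass to the limit and invoke Korn's inequality. The only cosmetic difference is that you use H\"{o}lder's inequality whereas the paper uses Young's inequality with the same exponents $\tfrac{2}{q}$ and $\tfrac{2}{2-q}$; both lead to the identical exponent condition $\tfrac{aq}{2-q}<\tfrac{(2-a)n}{n-2}$, i.e.\ $q<\tfrac{n(2-a)}{n-a}$.
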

\begin{proof}
Let $u\in\gm_{\locc}(F)$ and let $\ball(x_{0},r)\Subset\Omega$ be an open ball. By Theorem~\ref{thm:W11reg} and its proof, we have $\gm_{\locc}(F)\subset\sobo_{\locc}^{1,p}(\Omega;\R^{n})$ for any $1<p<\infty$ if $n=2$ and any $1<p<\frac{2-a}{n-2}n$ if $n\geq 3$. Let $(v_{j})$ be the Ekeland viscosity approximation sequence constructed in \eqref{eq:almostoptimal4} with $\Omega$ being replaced by $\ball(x_{0},r)$ and $u_{0}$ being replaced by $u|_{\ball(x_{0},r)}$. Then we record, using Young's inequality with exponents $\frac{2}{q}$ and $\frac{2}{2-q}$ for some $1\leq q<2$,
\begin{align*}
\int_{\ball(x_{0},r)}|\nabla\sg(v_{j})|^{q}\dif x & \leq \frac{q}{2}\int_{\ball(x_{0},r)}\frac{|\nabla\sg(v_{j})|^{2}}{(1+|\sg(v_{j})|^{2})^{\frac{a}{2}}}\dif x +\frac{2-q}{2}\int_{\ball(x_{0},r)}(1+|\sg(v_{j})|^{2})^{\frac{aq}{4}\frac{2}{2-q}}\dif x. 
\end{align*}
The first term is uniformly controlled by Theorem~\ref{thm:regdual}. If $n=2$, then the second term is uniformly bounded in $j\in\mathbb{N}$ regardless of $1\leq q <2$ as $\sup_{j\in\mathbb{N}}\|\sg(v_{j})\|_{\lebe^{p}(\ball(x_{0},r);\rsym)}<\infty$ for all $1<p<\infty$ (see the proof of Theorem~\ref{thm:W11reg}). If $n\geq 3$ and $1<a<\frac{n}{n-1}$, then $1<a<1+\frac{2}{n}$, and again by the proof of Theorem~\ref{thm:W11reg}, the second term is uniformly bounded in $j\in\mathbb{N}$ if  
\begin{align}\label{eq:adetermine}
a\frac{q}{2-q} < \frac{2-a}{n-2}n\;\;\;\text{that is,}\;\;\;q<n\frac{2-a}{n-a}=:\widetilde{q}(n).
\end{align}
Note that $\widetilde{q}(n)> 1$ if and only if $1< a < \frac{n}{n-1}$. Hence, $(v_{j})$ is locally uniformly bounded in $\sobo^{2,q}$ for $1<q<\frac{n(2-a)}{n-a}$. From here the result follows in the same way as in the proof of Theorem~\ref{thm:W11reg}, again using Korn's inequality. 
\end{proof}
Compared with \cite{GK1}, we have now established that for the ellipticity regime $1<a<\frac{n}{n-1}$, \emph{all} generalised minima possess second derivatives in some $\lebe_{\locc}^{q}$, $q>1$. An easy application of the measure density lemma \cite[Prop.~2.7]{Giusti} yields the following
\begin{corollary}[Singular set bounds]\label{cor:dimbound}
Let $f\in\hold^{2}(\rsym)$ satisfy \eqref{eq:lingrowth1} and \eqref{eq:ellipticity} for some $1<a<1+\frac{2}{n}$. For a given map $v\in\bd_{\locc}(\Omega)$, put 
\begin{align*}
\Sigma_{v}:=\left\{x\in\Omega\colon \limsup_{R\searrow 0}\left[\dashint_{\ball(x,R)}|\mathscr{E}v-z|\dif\mathscr{L}^{n}+\frac{|\E^{s}v|(\ball(x,R))}{\mathscr{L}^{n}(\ball(x,R))}\right]>0\;\text{for all}\;z\in\rsym \right\}.
\end{align*} 
Then the following holds:
\begin{enumerate}
\item If $n=2$ and $1<a<2$, then \emph{any} $u\in\gm_{\locc}(F)$ satisfies $\dim_{\mathscr{H}}(\Sigma_{u})=0$. 
\item If $n\geq 3$ and $1<a<\frac{n}{n-1}$, then \emph{any} $u\in\gm_{\locc}(F)$ satisfies $\dim_{\mathscr{H}}(\Sigma_{u})\leq n\frac{n-2}{n-a}$.
\end{enumerate}
\end{corollary}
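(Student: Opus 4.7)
The plan is to reduce $\Sigma_{u}$ to the non-Lebesgue set of $\sg(u)$, then feed the higher Sobolev regularity of $\sg(u)$ into the measure-density lemma. Since $1 < a < 1 + \tfrac{2}{n}$, Theorem~\ref{thm:W11reg} gives $u \in \ld_{\locc}(\Omega)$, so that $\E^{s}u \equiv 0$ and $\mathscr{E} u = \sg(u)$. The singular summand $|\E^{s}u|(\ball(x,R))/\mathscr{L}^{n}(\ball(x,R))$ in the definition of $\Sigma_{u}$ therefore vanishes identically, and
$$
\Sigma_{u} = \Bigl\{x \in \Omega : \limsup_{R\searrow 0} \dashint_{\ball(x,R)} |\sg(u) - z| \dif x > 0 \text{ for all } z \in \rsym \Bigr\}
$$
coincides with the complement of the Lebesgue set of $\sg(u)$.

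I next invoke Corollary~\ref{cor:2ndorder}. The assumption $1 < a < \tfrac{n}{n-1}$ (which for $n = 2$ reads $1 < a < 2$) gives $u \in \sobo^{2,q}_{\locc}(\Omega;\R^{n})$ for every $1 < q < \widetilde{q}(n) := n\tfrac{2-a}{n-a}$; equivalently, $\sg(u) \in \sobo^{1,q}_{\locc}(\Omega;\rsym)$ for each such $q$. Note that $\widetilde{q}(n) \leq n$ in all dimensions $n \geq 2$, so the function $\sg(u)$ is in general not continuous and a Lebesgue-set analysis is actually required.

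The last step combines the Poincar\'e inequality with the measure-density lemma \cite[Prop.~2.7]{Giusti}. By the scaled Poincar\'e inequality on balls, if $x$ fails to be a Lebesgue point of $\sg(u)$, then the measure $\mu := |\nabla\sg(u)|^{q}\mathscr{L}^{n}$ satisfies $\limsup_{R\searrow 0} R^{q-n}\mu(\ball(x,R)) > 0$, because otherwise
$$
\dashint_{\ball(x,R)} |\sg(u) - (\sg(u))_{x,R}|\dif y \;\leq\; c R \Bigl(\dashint_{\ball(x,R)} |\nabla\sg(u)|^{q}\dif y\Bigr)^{1/q} \;=\; c\, \bigl(R^{q-n}\mu(\ball(x,R))\bigr)^{1/q}
$$
would force $x$ into the Lebesgue set. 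The measure-density lemma therefore yields $\dim_{\mathscr{H}}(\Sigma_{u}) \leq n - q$, and sending $q \nearrow \widetilde{q}(n)$ produces the bound
$$
\dim_{\mathscr{H}}(\Sigma_{u}) \;\leq\; n - \widetilde{q}(n) \;=\; n\,\frac{n-2}{n-a},
$$
which collapses to $0$ when $n = 2$. There is no genuine obstacle here: Theorem~\ref{thm:W11reg} and Corollary~\ref{cor:2ndorder} carry all the analytic weight, and the final step is the textbook Sobolev-capacity estimate on the size of the non-Lebesgue set of a $\sobo^{1,q}$-map.
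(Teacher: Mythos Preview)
Your argument is correct and is exactly the approach the paper intends: the paper merely states that the corollary is ``an easy application of the measure density lemma \cite[Prop.~2.7]{Giusti}'' after Corollary~\ref{cor:2ndorder}, and you have filled in precisely those details (reduction to the non-Lebesgue set via Theorem~\ref{thm:W11reg}, the $\sobo^{1,q}$-regularity of $\sg(u)$ from Corollary~\ref{cor:2ndorder}, and the Poincar\'e/density combination giving $\dim_{\mathscr{H}}(\Sigma_u)\leq n-q\to n\tfrac{n-2}{n-a}$).
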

We conclude this section by describing the structure of $\gm(F;u_{0})$ and begin with 
\begin{corollary}[Uniqueness modulo elements of $\mathscr{R}(\Omega)$]\label{cor:uniqueness0}
Let $\Omega\subset\R^{n}$ be an open, bounded and connected set with Lipschitz boundary and $u_{0}\in\ld(\Omega)$. In the situation of Theorem~\ref{thm:W11reg}, generalised minimisers are unique up to rigid deformations, that is, 
\begin{align*}
u,v\in\gm(F;u_{0})\Longrightarrow \exists\pi\in\mathscr{R}(\Omega)\colon\;u=v+\pi. 
\end{align*}
\end{corollary}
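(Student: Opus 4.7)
My plan is to exploit strict convexity of $f$ at the midpoint $w=\tfrac{1}{2}(u+v)$ of two generalised minimisers, after Theorem~\ref{thm:W11reg} has eliminated the singular parts of $\E u$ and $\E v$.

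First, I would invoke Theorem~\ref{thm:W11reg} (which applies since $1<a<1+\tfrac{2}{n}$) to both $u,v\in\gm(F;u_{0})$. This yields $u,v\in\ld(\Omega)$; in particular, $\E^{s}u=\E^{s}v=0$. Consequently the relaxed functional \eqref{eq:relaxed} evaluates on these maps as
\begin{align*}
\overline{F}_{u_{0}}[z]=\int_{\Omega}f(\sg(z))\dx+\int_{\partial\Omega}f^{\infty}\big(\trace_{\partial\Omega}(u_{0}-z)\odot\nu_{\partial\Omega}\big)\dif\mathscr{H}^{n-1},\qquad z\in\{u,v,w\},
\end{align*}
so that only an absolutely continuous interior term together with a boundary defect remain.

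Next I would set $w:=\tfrac{1}{2}(u+v)\in\ld(\Omega)$ and carry out a pointwise comparison. Since $u,v\in\ld(\Omega)$, we have $\sg(w)=\tfrac{1}{2}(\sg(u)+\sg(v))$ $\mathscr{L}^{n}$-a.e., and analogously $\trace_{\partial\Omega}(u_{0}-w)=\tfrac{1}{2}(\trace_{\partial\Omega}(u_{0}-u)+\trace_{\partial\Omega}(u_{0}-v))$ $\mathscr{H}^{n-1}$-a.e. on $\partial\Omega$. Convexity of $f$ and sublinearity of $f^{\infty}$ (cf.~Section~\ref{sec:funofmeas}) then give the midpoint inequalities for the interior and boundary integrands pointwise almost everywhere. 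After integration, adding them yields
\begin{align*}
\overline{F}_{u_{0}}[w]\leq \tfrac{1}{2}\big(\overline{F}_{u_{0}}[u]+\overline{F}_{u_{0}}[v]\big)=\min\overline{F}_{u_{0}}[\bd(\Omega)],
\end{align*}
so $w$ itself is a generalised minimiser and each of the two midpoint inequalities must collapse to an equality a.e.

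Finally I would use the ellipticity lower bound in \eqref{eq:ellipticity}, which makes $f''(z)$ positive definite at every $z\in\rsym$ and hence renders $f$ strictly convex, to upgrade equality in the interior midpoint inequality to $\sg(u)=\sg(v)$ $\mathscr{L}^{n}$-a.e.\ on $\Omega$. Consequently $\E(u-v)=(\sg(u)-\sg(v))\mathscr{L}^{n}=0$ as an element of $\mathscr{M}(\Omega;\rsym)$, and since $\Omega$ is open, bounded and connected, the kernel of $\E$ on $\bd(\Omega)$ equals $\mathscr{R}(\Omega)$; hence $u-v\in\mathscr{R}(\Omega)$. The main conceptual input is Theorem~\ref{thm:W11reg}: its role is precisely to discard the singular part from both minima, for otherwise strict convexity of $f$ alone cannot force agreement, as $f^{\infty}$ on the singular parts is only positively $1$-homogeneous and convex. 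Once the singular parts vanish, no further information about the boundary integrand beyond convexity is needed and the argument proceeds by the standard midpoint trick.
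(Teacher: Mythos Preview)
Your proposal is correct and follows essentially the same approach as the paper: invoke Theorem~\ref{thm:W11reg} to place all generalised minimisers in $\ld(\Omega)$, then run the midpoint convexity argument using strict convexity of $f$ (from the lower bound in \eqref{eq:ellipticity}) on the interior term and mere convexity of $f^{\infty}$ on the boundary term to force $\sg(u)=\sg(v)$ $\mathscr{L}^{n}$-a.e., and conclude via connectedness of $\Omega$. The paper phrases the midpoint step as a contradiction (strict inequality if $\sg(u)\neq\sg(v)$ on a set of positive measure) whereas you phrase it as equality-in-midpoint, but this is the same argument.
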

\begin{proof}
By Theorem~\ref{thm:W11reg}, $\gm(F;u_{0})\subset\ld(\Omega)$. Now suppose that $u,v\in\gm(F;u_{0})$ are two generalised minima such that $\sg(u)\neq\sg(v)$ $\mathscr{L}^{n}$-a.e.. Then, by strict convexity of $f$ and convexity of $f^{\infty}$, 
\begin{align*}
\overline{F}_{u_{0}}\Big[\frac{u+v}{2}\Big] & < \sum_{w\in\{u,v\}}\frac{1}{2}\int_{\Omega}f(\sg(w))\dif x + \frac{1}{2}\int_{\partial\Omega}f^{\infty}(\trace(u_{0}-w)\odot\nu_{\partial\Omega})\dif\mathscr{H}^{n-1}. 
\end{align*}
For $u,v\in\gm(F;u_{0})$, the right-hand side equals $\min \overline{F}_{u_{0}}[\bd(\Omega)]$ which consequently yields a contradiction. Hence, $\sg(u-v)=0$ $\mathscr{L}^{n}$-a.e., and since $\Omega$ is connected, this implies that $u=v+\pi$ for some $\pi\in\mathscr{R}(\Omega)$.
\end{proof}
As in the $\bv$-case, Corollary~\ref{cor:uniqueness0} cannot be improved to yield full uniqueness. 
To this end, it is important to require a suitable variant of strict convexity on the recession function $f^{\infty}$; note that $f^{\infty}$ is positively $1$-homogeneous and hence not strictly convex. In this respect, the relevant concept is as follows (also see \cite[Sec.~4.5]{Schmidt2}): We say that a function $g\colon \rsym\to\R$ has \emph{strictly convex sublevel sets} provided for each $t\in\R$ the set $\Gamma_{t}(g):=\{z\in\rsym\colon\;g(z)<t\}$ is bounded, convex and if $z_{1},z_{2}\in \partial\Gamma_{t}(g)$, then $\lambda z_{1}+(1-\lambda)z_{2}\notin\partial\Gamma_{t}(g)$ for any $0<\lambda<1$. 
\begin{corollary}[Uniqueness and structure of $\gm(F;u_{0})$]\label{cor:uniqueness}
Let $\Omega\subset\R^{n}$ be an open, bounded, connected set with Lipschitz boundary such that for any fixed $a\in\R$ there holds 
\begin{align*}
\mathscr{H}^{n-1}(\{x\in\partial\Omega\colon\;x_{i}=a\})= 0\qquad\text{for all}\;i\in\{1,...,n\}.
\end{align*}
In the situation of Corollary~\ref{cor:uniqueness0}, suppose that the map $f_{\nu}^{\infty}\colon\R^{n}\ni z\mapsto f^{\infty}(z\odot\nu)$ has strictly convex sublevel sets for all $\nu\in\R^{n}\setminus\{0\}$. Then there exists a generalised minimiser $u\in\gm(F;u_{0})$ and a rigid deformation $\pi\in\mathscr{R}(\Omega)$ such that 
\begin{align}\label{eq:repGM}
\gm(F;u_{0})=\big\{ u + \lambda\pi\colon\;\lambda\in[-1,1]\}. 
\end{align}
Finally, if there exists a generalised minimiser $u$  which attains the boundary values $\trace_{\partial\Omega}(u_{0})$ $\mathscr{H}^{n-1}$-a.e. on $\partial\Omega$, then $\gm(F;u_{0})=\{u\}$.  
\end{corollary}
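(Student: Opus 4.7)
The plan is to reduce Corollary~\ref{cor:uniqueness} to a finite-dimensional convex minimisation problem on $\mathscr{R}(\Omega)$ and to extract the desired structure from the strict convexity of the sublevel sets of $f_{\nu}^{\infty}$. Pick $u\in\gm(F;u_{0})$. By Corollary~\ref{cor:uniqueness0}, any other generalised minimiser differs from $u$ by a rigid deformation, so setting $\mathcal{P}:=\{\pi\in\mathscr{R}(\Omega)\colon u+\pi\in\gm(F;u_{0})\}$, we have $\gm(F;u_{0})=u+\mathcal{P}$. Since $\sg(\pi)=0$ for $\pi\in\mathscr{R}(\Omega)$ and $u\in\ld(\Omega)$ by Theorem~\ref{thm:W11reg}, the interior term in $\overline{F}_{u_{0}}[u+\pi]$ is independent of $\pi$. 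Hence $\pi\in\mathcal{P}$ if and only if $\pi$ minimises the convex functional
\[
J(\pi):=\int_{\partial\Omega} f^{\infty}\bigl(\trace_{\partial\Omega}(u_{0}-u-\pi)\odot\nu_{\partial\Omega}\bigr)\dif\mathscr{H}^{n-1}
\]
over $\mathscr{R}(\Omega)$; in particular $\mathcal{P}$ is a convex subset of the finite-dimensional space $\mathscr{R}(\Omega)$.

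\textbf{Dimensional reduction via strict convexity.} Let $\pi_{1},\pi_{2}\in\mathcal{P}$ and put $v:=\trace_{\partial\Omega}(u_{0}-u)$. Convexity of $\mathcal{P}$ gives $J\bigl(\tfrac{\pi_{1}+\pi_{2}}{2}\bigr)=\tfrac{1}{2}(J(\pi_{1})+J(\pi_{2}))$, and together with pointwise convexity of $f^{\infty}$ this forces equality in the convexity inequality at $\mathscr{H}^{n-1}$-a.e.\ $x\in\partial\Omega$; i.e.\ $f^{\infty}_{\nu(x)}$ is affine on the segment joining $(v-\pi_{1})(x)$ to $(v-\pi_{2})(x)$. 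The strict convexity of the sublevel sets of $f^{\infty}_{\nu(x)}$, combined with its positive $1$-homogeneity, then forces these two vectors to lie on a common ray in $\R^{n}$ (or one of them to vanish). Consequently the rigid deformation $\pi_{2}-\pi_{1}$ is, at $\mathscr{H}^{n-1}$-a.e.\ boundary point, parallel to a fixed $\R^{n}$-valued function determined by $v$. The non-degeneracy hypothesis $\mathscr{H}^{n-1}(\{x\in\partial\Omega\colon x_{i}=a\})=0$ is then precisely what one needs to translate this pointwise-parallel condition into a linear constraint that cuts the admissible $\pi_{2}-\pi_{1}$ down to an at most one-dimensional subspace of $\mathscr{R}(\Omega)$. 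Combining this with boundedness of $\mathcal{P}$ (which follows from the asymptotic behaviour of $J$ in the directions where strict convexity acts) yields, after a suitable translation of the base point $u$ and a normalisation of a distinguished direction $\pi_{\star}$, the segment representation~\eqref{eq:repGM}.

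\textbf{Full boundary attainment and main obstacle.} For the final claim, if some $u\in\gm(F;u_{0})$ attains $\trace_{\partial\Omega}(u_{0})$ $\mathscr{H}^{n-1}$-a.e.\ on $\partial\Omega$, then $v=0$ on $\partial\Omega$, so $J(0)=0$ and $J\equiv 0$ on $\mathcal{P}$. The strict convexity and boundedness of the sublevel sets of $f^{\infty}_{\nu(x)}$ give $f^{\infty}_{\nu(x)}(z)=0\Leftrightarrow z=0$; since $z\odot\nu(x)=0$ forces $z=0$ for a unit vector $\nu(x)$, every $\pi\in\mathcal{P}$ must satisfy $\pi(x)=0$ at $\mathscr{H}^{n-1}$-a.e.\ $x\in\partial\Omega$. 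As $\pi$ is affine and $\partial\Omega$ is not contained in any hyperplane (guaranteed \emph{a fortiori} by the coordinate-hyperplane hypothesis), this forces $\pi\equiv 0$ and thus $\gm(F;u_{0})=\{u\}$. I expect the chief technical obstacle to lie in the dimensional reduction step: transforming the pointwise-parallel condition on $\pi_{2}-\pi_{1}$ into a truly one-dimensional condition on $\mathscr{R}(\Omega)$ requires a careful exploitation of how the affine structure of rigid deformations interacts with the outer normal $\nu_{\partial\Omega}$, and the coordinate-hyperplane hypothesis enters precisely to rule out degenerate affine relations. This is consistent with the author's decision to defer the full details to Section~\ref{sec:uniquenessappendix}.
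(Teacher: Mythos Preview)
Your proposal is correct and follows the paper's strategy: reduce to minimising the boundary functional $J$ over the finite-dimensional space $\mathscr{R}(\Omega)$, use strict convexity of the sublevel sets of $f^{\infty}_{\nu}$ to extract a pointwise collinearity relation on $\partial\Omega$, and then combine the affine structure of rigid deformations with the coordinate-hyperplane hypothesis to force $\mathcal{P}$ into a segment. One tactical sharpening in the paper is worth noting: rather than comparing two generic $\pi_{1},\pi_{2}\in\mathcal{P}$, the paper compares each $\pi$ with $0\in\mathcal{P}$ (i.e.\ $u+\pi$ with $u$ itself), which yields the cleaner relation $\trace_{\partial\Omega}(u_{0}-u)=\beta\,\pi$ for a measurable $\beta\colon\partial\Omega\to\R\setminus(0,1)$; for two linearly independent $\pi_{1},\pi_{2}$ this gives $\beta_{1}\pi_{1}=\beta_{2}\pi_{2}$ $\mathscr{H}^{n-1}$-a.e.\ on $\partial\Omega$, and the paper then performs an explicit case analysis on the skew-symmetric parts (written out for $n=2$) where the hypothesis $\mathscr{H}^{n-1}(\{x_{i}=a\})=0$ eliminates the degenerate cases. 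Also, boundedness of $\mathcal{P}$ comes directly from the linear-growth lower bound $f^{\infty}(z)\geq c_{1}|z|$ rather than from strict convexity, and the final uniqueness claim does not actually require the coordinate-hyperplane hypothesis---only that $\partial\Omega$ is not contained in an affine hyperplane, which holds for any open bounded set.
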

The condition on $f_{\nu}^{\infty}$ to have strictly convex sublevel sets is satisfied if, e.g., $f$ is spherically symmetric, ruling out that $(f^{\infty})^{-1}(\{1\})$ contains any line segments of positive length. Corollary~\ref{cor:uniqueness} follows from Corollary~\ref{cor:uniqueness0} similarly as in the $\bv$-case, cf.~\cite[Thm.~1.16]{BS1}, but is technically more demanding; for the reader's convenience, the appendix A, Section~\ref{sec:uniquenessappendix}, includes the precise reasoning with emphasis on the two-dimensional case. 

\section{A family of convolution-type Poincar\'{e} inequalities}\label{sec:poincare} 
Approaching Theorem~\ref{thm:PR}, we pause to provide a family of convolution inequalities to instrumentally enter the partial regularity proof below. We believe that the result might be of independent interest, and thus state selected versions thereof in the end of the section.
\begin{proposition}\label{prop:convest}
Let $\lambda>1$ and let $V(z):=\sqrt{1+|z|^{2}}-1$ be the auxiliary reference integrand as usual. Then there exists a constant $c=c(n,\lambda)>0$ such that the following holds: For every open and bounded Lipschitz domain $\Omega\subset\R^{n}$, $u\in\bd_{\locc}(\R^{n})$ and numbers $\varepsilon,L>0$ there holds 
\begin{align}\label{eq:Poincaremain}
\int_{\Omega}V(L(u-\rho_{\varepsilon}*u))\dif x \leq c\max\{(L\varepsilon),(L\varepsilon)^{2}\} \int_{\overline{\Omega+\ball(0,\lambda\sqrt{n}\varepsilon)}}V(\E u), 
\end{align}
where $\rho\colon \R^{n}\to \R_{\geq 0}$ is an arbitrary standard mollifier in the sense of Section~\ref{sec:funofmeas}.
\end{proposition}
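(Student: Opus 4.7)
The strategy is a localisation-plus-Reshetnyak argument exploiting the non-homogeneous structure of $V$. By area-strict density of smooth maps in $\bd$ (cf.~Lemma~\ref{lem:smooth} applied on a bounded Lipschitz neighbourhood of $\overline{\Omega+\ball(0,\lambda\sqrt{n}\varepsilon)}$) and continuity of $w\mapsto\int V(\E w)$ along symmetric area-strict sequences, it suffices to treat the case where $u$ is smooth on such a neighbourhood, so $\E u=\sg(u)\mathscr{L}^{n}$. For each fixed $x\in\Omega$, let $Q_{x}$ be the cube centred at $x$ of sidelength $2\varepsilon$; since $\lambda>1$, one has $\ball(x,\varepsilon)\subset Q_{x}\subset\Omega+\ball(0,\lambda\sqrt{n}\varepsilon)$.

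Apply Lemma~\ref{lem:Reshrep} on $Q_{x}$ to write $u=\widetilde{\Pi}_{Q_{x}}u+T_{Q_{x}}[\sg u]$ with $\widetilde{\Pi}_{Q_{x}}u\in\mathscr{R}(Q_{x})$. Since $\rho$ is radial, $\rho_{\varepsilon}$ has vanishing first moment and therefore preserves every affine map wherever the convolution is defined. Hence at $x$ the rigid-deformation part cancels and
\begin{align*}
(u-\rho_{\varepsilon}\ast u)(x) = T_{Q_{x}}[\sg u](x)-\rho_{\varepsilon}\ast T_{Q_{x}}[\sg u](x) = \int\rho_{\varepsilon}(h)\bigl(T_{Q_{x}}[\sg u](x)-T_{Q_{x}}[\sg u](x-h)\bigr)\dif h.
\end{align*}
Jensen's inequality applied to the convex function $V$ and the probability density $\rho_{\varepsilon}$ then gives $V(L(u-\rho_{\varepsilon}\ast u)(x))\leq \int\rho_{\varepsilon}(h)V(L(T_{Q_{x}}[\sg u](x)-T_{Q_{x}}[\sg u](x-h)))\dif h$, crucially exploiting the cancellation of the rigid-deformation piece (a direct application of Jensen to the original convolution would lose this cancellation).

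Combining the kernel bound $|R_{Q_{x}}(x,y)|\leq c/|x-y|^{n-1}$ from Lemma~\ref{lem:Reshrep} with the first-order Calder\'{o}n--Zygmund estimate $|R_{Q_{x}}(x,y)-R_{Q_{x}}(x',y)|\leq c|x-x'|/|x-y|^{n}$ valid for $|x-x'|\leq\tfrac{1}{2}|x-y|$, and splitting the defining integral of $T_{Q_{x}}[\sg u](x)-T_{Q_{x}}[\sg u](x-h)$ into the regions $\{|y-x|\leq 2|h|\}$ and $\{|y-x|>2|h|\}$ followed by dyadic annular summation, one obtains
\begin{align*}
|T_{Q_{x}}[\sg u](x)-T_{Q_{x}}[\sg u](x-h)|\leq c|h|\,\Phi(x),\qquad|h|\leq\varepsilon,
\end{align*}
where $\Phi(x)\leq c\M(|\sg u|\mathbbm{1}_{\Omega+\ball(0,\lambda\sqrt{n}\varepsilon)})(x)$ is controlled by the Hardy--Littlewood maximal function. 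Lemma~\ref{lem:eest}\ref{item:eest1} then delivers $V(L|h|\Phi(x))\leq 4\max\{L\varepsilon,(L\varepsilon)^{2}\}V(\Phi(x))$ uniformly in $|h|\leq\varepsilon$. Integrating over $x\in\Omega$ and invoking boundedness of $\M$ on the Orlicz space $\lebe^{V}$---valid because $V$ is doubling, $V(2t)\leq 4V(t)$---then yields \eqref{eq:Poincaremain}.

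The principal obstacle is the first-order regularity of the Reshetnyak kernel $R_{Q_{x}}$, since Lemma~\ref{lem:Reshrep} only records the zeroth-order size bound. One either extracts the Lipschitz-in-$x$ estimate from Reshetnyak's explicit construction---the kernel is assembled from first-order Riesz-type kernels, which enjoy the standard Calder\'{o}n--Zygmund smoothness off the diagonal---or bypasses the cube machinery entirely by applying a partition of unity and invoking the Smith representation \eqref{eq:Smith}, whose convolution kernels $\partial_{i}K_{jk}$ with $K_{jk}(x)=x_{j}x_{k}/|x|^{n}$ are manifestly CZ on $\R^{n}$. In both routes, Lemma~\ref{lem:eest}\ref{item:eest1} is precisely the mechanism producing the non-homogeneous factor $\max\{L\varepsilon,(L\varepsilon)^{2}\}$ that matches the two asymptotic regimes of $V$ near $0$ and infinity; a naive Jensen-plus-$L^{2}$-Korn approach would decouple $L$ from $\varepsilon$ and produce the strictly weaker bound $\max\{L,L^{2}\}\max\{\varepsilon,\varepsilon^{2}\}$.
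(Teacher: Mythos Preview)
Your argument has a genuine gap at the maximal-function step. You invoke boundedness of the Hardy--Littlewood maximal operator $\M$ on the Orlicz space $\lebe^{V}$, justifying this by the doubling property $V(2t)\leq 4V(t)$. But doubling ($\Delta_{2}$) is not the relevant condition: boundedness of $\M$ on $\lebe^{\Phi}$ requires $\Phi$ to satisfy a $\nabla_{2}$-type condition, essentially that $\Phi(t)/t$ be unbounded as $t\to\infty$. Since $V(t)=\sqrt{1+t^{2}}-1\sim |t|$ at infinity, $V$ fails $\nabla_{2}$, and $\M$ is \emph{not} bounded on $\lebe^{V}$---for the same reason it is not bounded on $\lebe^{1}$. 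This is precisely why the paper warns (after \eqref{eq:betaestimate}) that crude estimates lose the sharp exponent, and why Ornstein's non-inequality blocks any route through $\lebe^{1}$-type bounds on full-gradient quantities such as $\M(|\sg u|)$. The secondary gap you already flag---that Lemma~\ref{lem:Reshrep} records only the size bound $|R_{Q}(x,y)|\leq c|x-y|^{1-n}$ and not the first-order Calder\'on--Zygmund smoothness you need---is real but less decisive; even granting that estimate, the maximal-function step fails.

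The paper's proof avoids both obstacles by a different mechanism. It works on a \emph{fixed} lattice of cubes (not moving with $x$) and, on each cube $Q$, proves the nonlinear Poincar\'e inequality $\int_{Q}V(L(u-\widetilde{\Pi}_{Q}u))\leq c\max\{L\ell(Q),(L\ell(Q))^{2}\}\int_{Q}V(\sg u)$ using only the zeroth-order kernel bound: the point is to view $T_{Q}[\sg u](x)$ as an average of $\sg u$ against the probability measure $\dif\mu_{x}/\mu_{x}(Q)$ with $\dif\mu_{x}=C_{R}|x-y|^{1-n}\dif y$, apply Jensen to $V$, and then Fubini. This produces $\int_{Q}V(\sg u)$ directly, with no maximal function and no derivative of the kernel. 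Globalisation then proceeds by the finite-overlap covering, exploiting (as you correctly note) that radial mollifiers fix rigid deformations. The moral is that the Riesz-potential structure of $T_{Q}$ should be used through Jensen rather than through pointwise CZ-plus-maximal bounds, which is exactly what the linear-growth regime of $V$ forbids.
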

Before passing to the proof of the preceding proposition, let us remark that \eqref{eq:Poincaremain} cannot be established as in the full gradient case, cf.~\cite[Lemma 5.3]{AG}. Namely, if we wish to obtain \eqref{eq:Poincaremain} for $u\in\bv(\R^{n};\R^{N})$ with the symmetric gradient on the right-hand side being replaced by the full gradient, it suffices to invoke the fundamental theorem of calculus in conjunction with Jensen's inequality. In view of \eqref{eq:Poincaremain}, \textsc{Ornstein}'s Non-Inequality forces us to avoid the appearance of the full gradient on the right-hand side. Upon localisation, a slightly weaker result can be readily obtained as follows: Invoking the \textsc{Smith} representation formula \eqref{eq:Smith} and then arguing as in the full gradient case, we may conclude\footnote{Namely, express the difference $u-\rho_{\varepsilon}*u$ by the convolution integrals emerging from \eqref{eq:Smith} and then use the embedding of $\bd(\R^{n})$ into $\sobo_{\locc}^{\alpha,1}(\R^{n};\R^{n})$ or $\sobo^{s,(n-1+s)}(\R^{n};\R^{n})$ for $0<s<1$ (cf.~\cite{GK2}). Since suitable fractional potentials of $\sobo^{\alpha,1}$-maps can be controlled conveniently, this allows to arrive at the claimed estimate for all $0<\beta<1$.} that for any $0<\beta<1$ (but \emph{not} for $\beta=1$) there exists $C=C(\beta,\text{diam}(\Omega))>0$ with
\begin{align}\label{eq:betaestimate}
\|V(L(u-\rho_{\varepsilon}))\|_{\lebe^{1}(\Omega)}\leq C\min\{L\varepsilon^{\beta},L^{2}\varepsilon^{2\beta}\}|V(\E u)|(\overline{\Omega+\ball(0,\varepsilon)})
\end{align}
for all $u\in\bd(\Omega)$ and $L>0$. However, this is neither optimal nor good enough for deriving the requisite decay estimate in Section~\ref{sec:PR}; see the proof of Proposition~\ref{prop:main1} and Remark~\ref{rem:discussexponent} afterwards. 
\begin{proof}[Proof of Proposition~\ref{prop:convest}] The proof consists in four main steps. After giving the geometric setup in a first step, we establish a preliminary Poincar\'{e}-type inequality involving the reference integrand $V$ in the second step. Then we globalise by a covering argument with respect to cubes having edgepoints contained in a certain lattice, depending on the parameters $\varepsilon$ and $\lambda$. Lastly, we smoothly approximate to conclude the full claim.
\begin{figure}
\centering
\begin{tikzpicture}
\draw [<->] (1.4, -2.5) -- (2.1,-2.5);
\node at (1.75,-2.9) {$\varepsilon_{\lambda}$};
\node at (-2.0,-2.6) {$\Gamma_{\varepsilon_{\lambda}}:=\varepsilon_{\lambda}\mathbb{Z}^{n}$};
\draw [-] (-0.7, -0.0) -- (-0.7,-0.7);
\draw [-] (-0.7, -0.7) -- (-0.0,-0.7);
\draw [-] (-0.0, -0.7) -- (-0.0,-0.0);
\draw [-] (-0.7, -0.0) -- (-0.0,-0.0);
\draw [-] (-2.1, -2.1) -- (-2.1,1.4);
\draw [-] (-2.1, 1.4) -- (1.4,1.4);
\draw [-] (1.4,1.4) -- (1.4,-2.1);
\draw [-] (-2.1,0.7) -- (-1.4,0.7);
\draw [-] (-1.4,0.7) -- (-1.4,1.4);
\draw [-] (-1.4,0.7) -- (-0.7,0.7);
\draw [-] (-0.7,0.7) -- (-0.7,1.4);
\draw [-] (-0.7,0.7) -- (-0.0,0.7);
\draw [-] (-0.0,0.7) -- (-0.0,1.4);
\draw [-] (1.4,-2.1) -- (-2.1,-2.1);
\draw [-] (-1.4, 0.7) -- (-1.4,-0.7);
\draw [-] (-2.1, -0.7) -- (-1.4,-0.7);
\draw [-] (-2.1, -0.0) -- (-1.4,-0.0);
\node [color=gray] at (0.3,1.0) {\textbullet};
\node [color=gray] at (0.6,1.0) {\textbullet};
\node [color=gray] at (0.9,1.0) {\textbullet};
\node [color=gray] at (-1.75,-1.0) {\textbullet};
\node [color=gray] at (-1.75,-1.3) {\textbullet};
\node [color=gray] at (-1.75,-1.6) {\textbullet};
\node [black] at (-1.75,2.0) {$Q^{(1)}$};
\node [black] at (-1.05,2.0) {$Q^{(2)}$};
\node [black] at (-0.35,2.0) {$Q^{(3)}$};
\draw [->,dotted] (-2.6,0.4) -- (-0.35,-0.35);
\draw [->,dotted] (-1.75,1.8) -- (-1.75,1.2);
\draw [->,dotted] (-1.05,1.8) -- (-1.05,1.2);
\draw [->,dotted] (-0.35,1.8) -- (-0.35,1.2);

\draw[step=.7cm,gray,dotted] (-2.5,-2.5) grid (2.5,2.5);
\node [black] at (-2.9,0.5) {$Q$};
\node [black] at (-2.9,-1.7) {$\widetilde{Q}$};
\node [green!50!black] at (2.4,0.9) {$N_{\varepsilon}(Q)$};
\draw [->,dotted] (1.8,0.85) -- (0.4,0.2);
\draw [<->] (0.7,-0.7) -- (0.7,-2.1);
\node at (0.9,-1.4) {$\varepsilon$};
\draw [<->] (3,1.4) -- (3,-2.1);
\node at (4,-0.35) {$(2\ell+1)\varepsilon_{\lambda}$};
\filldraw [fill=green!20!white, draw=green!50!black, opacity=0.3](1.4,0,0) arc (0:90:1.4) -- (0,0,0) -- cycle;
\filldraw [fill=green!20!white, draw=green!50!black, opacity=0.3](-0.7,1.4,0) arc (90:180:1.4) -- (-0.7,0,0) -- cycle;
\filldraw [fill=green!20!white, draw=green!50!black, opacity=0.3](-2.1,-0.7,0) arc (180:270:1.4) -- (-0.7,-0.7) -- cycle;
\filldraw [fill=green!20!white, draw=green!50!black, opacity=0.3](0,-2.1,0) arc (270:360:1.4) -- (0,-0.7,0) -- cycle;
\filldraw [fill=green!20!white, draw=green!50!black, opacity=0.3](-0.7,-2.1) rectangle (0,1.4) ;
\filldraw [fill=green!20!white, draw=green!50!black, opacity=0.3](-2.1,-0.7) rectangle (1.4,0) ;

\end{tikzpicture}
\caption{Neighbouring cube notation.}
\label{fig:cubegeometry}
\end{figure}
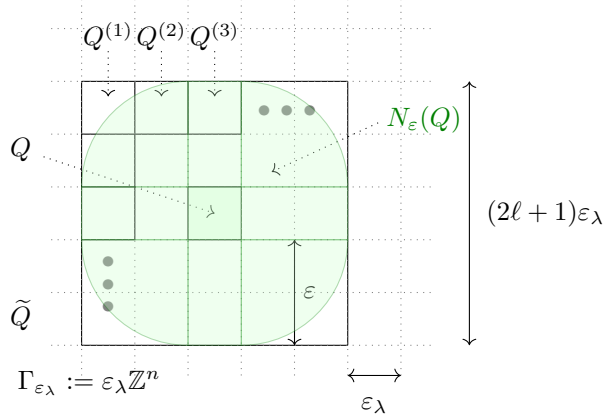

\emph{Step 1. Preliminaries.} Let $\lambda>1$ be given. Let $\Omega$ be as in the proposition and denote, for $t>0$, $N_{t}(\Omega):=\{x\in \R^{n}\colon\;\dista(x,\Omega)<t\}$ the $t$-neighbourhood of $\Omega$. We put $\ell:=\lceil \frac{1}{\lambda-1}\rceil+1\in\mathbb{N}$ so that $\frac{1}{\lambda-1}<\ell$, and define $\varepsilon_{\lambda}:=\frac{\varepsilon}{\ell}$.

We now consider the lattice $\Gamma_{\varepsilon_{\lambda}}:=\varepsilon_{\lambda}\mathbb{Z}^{n}$ and denote $\mathcal{Q}_{\varepsilon_{\lambda}}$ the collection of all open cubes of sidelength $\varepsilon_{\lambda}$ and edge points contained in $\Gamma_{\varepsilon_{\lambda}}$. Given $Q\in\mathcal{Q}_{\varepsilon_{\lambda}}$, we denote $\widetilde{Q}$ the cube which has the same center as $Q$ and sides parallel to those of $Q$ but sidelength $(2\ell+1)\varepsilon_{\lambda}$. Then $\widetilde{Q}$ has all its edge points equally contained in $\Gamma_{\varepsilon_{\lambda}}$,  $N_{\varepsilon}(Q)=Q+\ball(0,\varepsilon)\subset\widetilde{Q}$, and can be written as the union of $\mathscr{N}=\mathscr{N}(\lambda,n)\in\mathbb{N}$ cubes from $\mathcal{Q}_{\varepsilon_{\lambda}}$; for notational convenience, we denote these cubes $Q^{(i)}$, $i=1,...,\mathscr{N}$, and arrange that for all $Q\in\mathcal{Q}_{\varepsilon_{\lambda}}$, the relative positioning of $Q^{(i)}$ to $Q$ is the same -- see Figure~\ref{fig:cubegeometry} for this setup. Moreover, if $Q\in\mathcal{Q}_{\varepsilon_{\lambda}}$ satisfies $Q\cap \Omega\neq\emptyset$, then we have $\widetilde{Q}\subset N_{\lambda\sqrt{n}\varepsilon}(\Omega)$. In fact, in this case there exists $x_{0}\in Q\cap \Omega$ and thus for any $z\in \widetilde{Q}$ we have $\dista(z,\Omega) \leq |x_{0}-z|$. By the geometry of $\widetilde{Q}$ (see Figure~\ref{fig:cubegeometry}), it is clear that $|x_{0}-z|$ does not exceed 
\begin{align*}
\sqrt{n}\varepsilon_{\lambda}+\sqrt{n}\varepsilon = \sqrt{n}\varepsilon\big(\frac{1}{\ell}+1\big)<\lambda \sqrt{n}\varepsilon
\end{align*}
and hence $\dist(z,\Omega)<\lambda\sqrt{n}\varepsilon$ so that $z\in N_{\lambda\sqrt{n}\varepsilon}(\Omega)$. Summarising, for every $Q\in\mathcal{Q}_{\varepsilon_{\lambda}}$ with $Q\cap \Omega\neq\emptyset$, we have $\widetilde{Q}=\bigcup_{i=1}^{\mathscr{N}}Q^{(j)}\subset N_{\lambda\sqrt{n}\varepsilon}(\Omega)$.

\emph{Step 2. A Poincar\'{e}-type inequality for the reference integrand $V$.}
In a second step, we claim that there exists a constant $c=c(n)>0$ such that for every open cube $Q\subset\R^{n}$, every $L>0$ and every $u\in\hold^{\infty}(\R^{n};\R^{n})$ there holds
\begin{align}\label{eq:nonlinearPoincare}
\int_{Q}V(L(u-\widetilde{\Pi}_{Q}u))\dif x \leq C\max\{L\ell(Q),(L\ell(Q))^{2}\}\int_{Q}V(\sg(u))\dif x.
\end{align}
Here, $\widetilde{\Pi}_{Q}u$ denotes the rigid deformation determined by Proposition~\ref{lem:Reshrep}. It is crucial for this inequality to be available in this very form, and so we provide the details. Thus let $u\in \hold^{\infty}(\R^{n};\R^{n})$ and employ the representation from Lemma~\ref{lem:Reshrep}: There exists $\widetilde{\Pi}_{Q}u\in\mathscr{R}(Q)$ such that for all $x\in Q$ there holds 
\begin{align*}
u(x)=\widetilde{\Pi}_{Q}u(x)+T_{Q}[\sg(u)](x)= \widetilde{\Pi}_{Q}u(x) + \int_{Q}R_{Q}(x,y)\sg(u)(y)\dif y, 
\end{align*}
where $|R_{Q}(x,y)|\leq C_{R}|x-y|^{1-n}$ for all $x,y\in Q$, $x\neq y$, with a constant $C_{R}=C_{R}(n)>0$. 
\begin{figure}
\centering
\begin{tikzpicture}
\draw [<->] (3.6, -4.0) -- (4.05,-4.0);
\node at (3.9,-4.3) {$\varepsilon_{\lambda}$};
\draw [<->] (4.85, -2.7) -- (4.85,-3.15);
\node at (5.3,-3.0) {$\varepsilon_{\lambda}$};
\node at (-3.8,-4.0) {$\Gamma_{\varepsilon_{\lambda}}:=\varepsilon_{\lambda}\mathbb{Z}^{n}$};
\node at (0.6,-0.3) {$\Omega$};
\node at (-1.13,-0.68) {$Q$};
\node at (-0.5,-0) {$\widetilde{Q}$};
\draw [-] (-1.359, -0.44) -- (-1.359,-0.9);
\draw [-] (-1.359,-0.9) -- (-0.899,-0.9);
\draw [-] (-0.899,-0.9) -- (-0.899,-0.44);
\draw [-] (-0.899,-0.44) -- (-1.359, -0.44);
\draw [-] (-2.279,-1.82) -- (-2.279, 0.48);
\draw [-] (-2.279, 0.48) -- (0.021, 0.48);
\draw [-] (0.021, 0.48) -- (0.021, -1.82);
\draw [-] (-2.279, -1.82) -- (0.021,-1.82);
\draw [<->] (-1.6, -2.6) -- (-1.91,-3.65);
\node [ColorPink] at (-1,1.1) {\textbullet};
\node [ColorPink] at (-0.5,1.25) {\textbullet};
\node [ColorPink] at (-0.0,1.30) {\textbullet};
\node [ColorPink] at (0.5,1.25) {\textbullet};
\node [ColorPink] at (1,1.1) {\textbullet};
\node [ColorPink] at (1.4,0.9) {\textbullet};
\node [ColorPink] at (-1.2,-2.4) {\textbullet};
\node [ColorPink] at (-0.93,-2.45) {\textbullet};
\node [ColorPink] at (-0.7,-2.4) {\textbullet};
\node [ColorPink] at (1.4,-2.4) {\textbullet};
\node [ColorPink] at (1.7,-2.33) {\textbullet};
\node [ColorPink] at (2.0,-2.26) {\textbullet};
\node [ColorPink] at (2.3,-2.19) {\textbullet};
\node [ColorPink] at (2.6,-2.05) {\textbullet};
\node [ColorPink] at (2.8,-1.92) {\textbullet};
\draw[step=.45cm,gray,dotted] (-5.0,-4.0) grid (5.0,3.0);
\shadedraw[color=blue, opacity=0.3] plot[smooth cycle,thick] coordinates {
    (0:2.0)
    (25:1.9)
    (50:2.0)
    (75:2.1)
    (90:2.0)
    (100:1.9)
    (110:1.7)
    (120:1.6) 
    (130:1.7)
    (140:2.0)  
    (150:2.2)
    (160:2.4)
    (170:2.5)
    (180:3.0)
    (190:3.2)
    (200:3.7)
    (210:3.5)
    (220:3.5)
    (240:3.0)
    (245:2.9)
    (250:2.8)
    (260:2.7) 
    (270:2.5)
    (275:2.4)
    (280:2.0)
    (290:2.4)
    (300:2.5)
    (305:3.1)
    (310:3.2)
    (320:3.0)    
    (330:3.3)
    (340:3.8)
    (350:4.0)    
  };
\node [color=black] at (-2.3,-2.9) {$\lambda\sqrt{n}\varepsilon$};
\node [black] at (2.15,1.2) {$S_{\lambda\sqrt{n}\varepsilon}$};
\draw[color=black, opacity=0.5] plot[smooth cycle,thick] coordinates {
    (0:2.0)
    (25:1.9)
    (50:2.0)
    (75:2.1)
    (90:2.0)
    (100:1.9)
    (110:1.7)
    (120:1.6) 
    (130:1.7)
    (140:2.0)  
    (150:2.2)
    (160:2.4)
    (170:2.5)
    (180:3.0)
    (190:3.2)
    (200:3.7)
    (210:3.5)
    (220:3.5)
    (240:3.0)
    (245:2.9)
    (250:2.8)
    (260:2.7) 
    (270:2.5)
    (275:2.4)
    (280:2.0)
    (290:2.4)
    (300:2.5)
    (305:3.1)
    (310:3.2)
    (320:3.0)    
    (330:3.3)
    (340:3.8)
    (350:4.0)    
  };
\draw[line width=65pt, color=ColorPink, opacity=0.6] plot [smooth, tension=0.2] coordinates{
    (135:1.8)
    (140:2.0)
    (150:2.2)
    (160:2.4)
    (170:2.5)
    (180:3.0)
    (190:3.2)
    (200:3.7)
    (210:3.5)
    (220:3.5)
    (240:3.0)};
\draw[line width=65pt, color=ColorPink, opacity=0.6] plot [smooth, tension=0.2] coordinates{
    (330:3.3)
    (340:3.8)
    (350:4.0)
    (0:2.0)
    (25:1.9)};
\draw[line width=65pt, color=ColorPink, opacity=0.6] plot [smooth, tension=0.2] coordinates{
   (260:2.6)
   (265:2.5)
   (270:2.5)
   (275:2.4)
   (280:2.0)
   (290:2.4)
   (295:2.45)};
\end{tikzpicture}
\caption{Not-to-scale construction in the proof of Proposition \ref{prop:convest}. In step 1, the lattice parameter $\varepsilon_{\lambda}$ must be adjusted in a way such that for any $Q\in\mathcal{Q}_{\varepsilon_{\lambda}}$, $\widetilde{Q}\subset N_{\lambda\sqrt{n}\varepsilon}(\Omega)=\Omega\cup S_{\lambda\sqrt{n}\varepsilon}$. Note that the correcting rigid deformations required for the nonlinear Poincar\'{e} inequality of step 2 are taken over the enlarged cubes $\widetilde{Q}$.}
\end{figure}
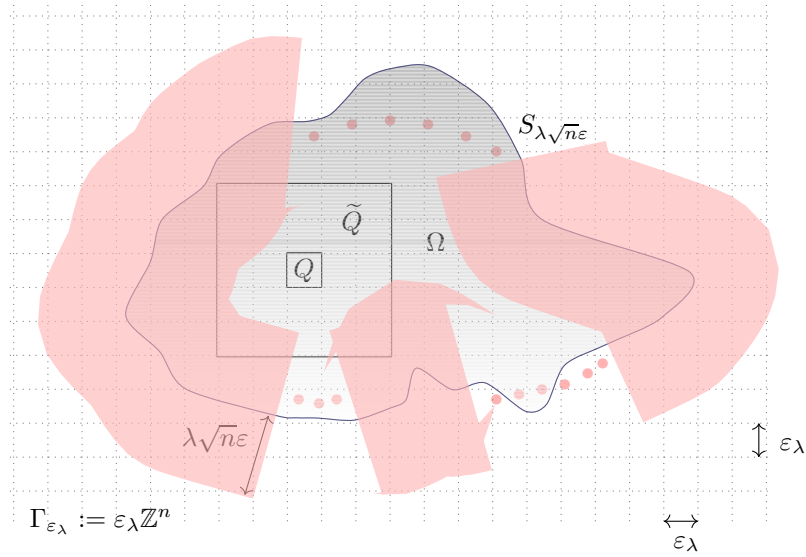
Let $x\in Q$. We define a measure $\mu_{x}\colon \mathscr{B}(Q)\to\R_{\geq 0}$ by putting $\mu_{x}(A):=\int_{A}C_{R}|x-y|^{1-n}\dif y$ for $A\in\mathscr{B}(Q)$. Since $|x-y|<\sqrt{n}\ell(Q)$ for all $x,y\in Q$, 
\begin{align*}
\mu_{x}(Q) = C_{R}\int_{Q}\frac{\dif y}{|x-y|^{n-1}}  \geq \frac{C_{R}}{\sqrt{n}^{n-1}}\ell(Q).
\end{align*}
We also need a remark on the upper bound. Namely, if $x\in Q$, then $Q\subset \ball(x,\sqrt{n}\ell(Q))$ independently of $x$. Thus, with $\omega_{n}=\mathscr{L}^{n}(\ball(0,1))$,
\begin{align*}
\mu_{x}(Q) & \leq C_{R} \int_{\ball(x,\sqrt{n}\ell(Q))}\frac{\dif y}{|x-y|^{n-1}} = C_{R}\int_{\ball(0,\sqrt{n}\ell(Q))}\frac{\dif y}{|y|^{n-1}}\leq C_{R}\omega_{n}n\sqrt{n}\ell(Q). 
\end{align*}
In conclusion, there exists $c=c(n)>0$ such that 
\begin{align}\label{eq:muxbound}
\frac{1}{c}\ell(Q)\leq \mu_{x}(Q) \leq c\ell(Q)
\end{align}
for all cubes $Q$ and $x\in Q$. Now, $\mu_{x}/\mu_{x}(Q)$ is a probability measure on $\mathscr{B}(Q)$ for every $x\in Q$. In consequence, as $|u-\widetilde{\Pi}_{Q}u|\leq |T_{Q}[\sg(u)]|$ pointwisely in $Q$ and $V\colon\R_{\geq 0}\to\R_{\geq 0}$ is monotone, we estimate by Jensen's inequality
\vspace{-0.5cm}
\begin{align*}
\int_{Q}V(L(u-\widetilde{\Pi}_{Q}u))\dif x & \leq \int_{Q}V\left( L\int_{Q}R_{Q}(x,y)\sg(u)(y)\dif y\right)\dif x \\
& \leq \int_{Q}V\left(LC_{R}\mu_{x}(Q)\int_{Q}\frac{|\sg(u)(y)|}{|x-y|^{n-1}}\frac{\dif y}{\mu_{x}(Q)} \right)\dif x \\
& \!\!\!\!\!\!\!\!\!\!\!\!\!\!\!\!\!\!\!\stackrel{\text{Lemma~\ref{lem:eest}\ref{item:eest1},\,\eqref{eq:muxbound}}}{\leq} c\max\{(L\ell(Q)), (L\ell(Q))^{2}\} \times \\ 
& \;\;\;\;\;\;\;\;\;\;\;\;\;\;\;\;\;\;\;\; \times \int_{Q}V\left(\int_{Q}\frac{|\sg(u)(y)|}{|x-y|^{n-1}}\frac{C_{R}\dif y}{\mu_{x}(Q)} \right)\dif x \\
&  \!\!\!\!\stackrel{\text{Jensen}}{\leq} c\max\{(L\ell(Q)), (L\ell(Q))^{2}\}\times \int_{Q}\int_{Q}V(\sg(u)(y))\frac{\dif \mu_{x}(y)}{\mu_{x}(Q)} \dif x \\
& \!\!\!\stackrel{\eqref{eq:muxbound}}{\leq} c\max\{(L\ell(Q)), (L\ell(Q))^{2}\}\frac{1}{\ell(Q)}\int_{Q}\int_{Q}\frac{V(\sg(u)(y))}{|x-y|^{n-1}}\dif y\dif x \\
&\!\leq c\max\{(L\ell(Q)), (L\ell(Q))^{2}\}\frac{1}{\ell(Q)}\int_{Q}V(\sg(u)(y))\mu_{y}(Q)\dif y  \\
&  \!\!\!\stackrel{\eqref{eq:muxbound}}{\leq} c\max\{(L\ell(Q)), (L\ell(Q))^{2}\}\int_{Q}V(\sg(u)(y))\dif y.
\end{align*}
Tracking the dependencies of constants, $c=c(n)>0$, thereby establishing \eqref{eq:nonlinearPoincare}.

\emph{Step 3. Inequality~\eqref{eq:Poincaremain} for $\hold^{\infty}$-maps.} As a main feature of the symmetric gradient operator, let us note that as first order polynomials, all elements $\pi\in \mathscr{R}(\R^{n})$ of its nullspace are \emph{harmonic}. Thus they satisfy the mean value property and, as a consequence, convolution with standard mollifiers locally turns out to be the identity on the rigid deformations, cf. \cite[Chpt.~2.2.3, Thm.~6]{Evans}. For any $Q\in\mathcal{Q}_{\varepsilon_{\lambda}}$, we recall the definition of the cube $\widetilde{Q}$ from step 1. Then \eqref{eq:nonlinearPoincare} holds true with $Q$ and $\widetilde{\Pi}_{Q}u$ being replaced by $\widetilde{Q}$ and $\widetilde{\Pi}_{\widetilde{Q}}u$, respectively. We then obtain, using Lemma~\ref{lem:eest}(i) in the third step
\begin{align*}
\int_{\Omega}V(L(u-\rho_{\varepsilon}*u))\dif x & \leq  \sum_{\substack{Q\in\mathcal{Q}_{\varepsilon_{\lambda}}\\ Q\cap \Omega \neq \emptyset}}\int_{Q}V(L(u-\rho_{\varepsilon}*u))\dif x \\ 
& \leq  \sum_{\substack{Q\in\mathcal{Q}_{\varepsilon_{\lambda}}\\ Q\cap \Omega \neq \emptyset}}\int_{Q}V(L(u-\widetilde{\Pi}_{\widetilde{Q}}u) - \rho_{\varepsilon}*L(u-\widetilde{\Pi}_{\widetilde{Q}}u))\dif x\;\;\\
& \!\!\!\!\!\!\!\!\!\!\!\!\!\!\stackrel{\text{Lemma~\ref{lem:eest}\ref{item:eest2}}}{\leq} 2\sum_{\substack{Q\in\mathcal{Q}_{\varepsilon_{\lambda}}\\ Q\cap \Omega \neq \emptyset}}\int_{Q}V(L(u-\widetilde{\Pi}_{\widetilde{Q}}u)) + V(\rho_{\varepsilon}*L(u-\widetilde{\Pi}_{\widetilde{Q}}u))\dif x. 
\end{align*}
At this stage, we use Jensen's and Young's inequalities to conclude that for any $Q\in\mathcal{Q}_{\varepsilon_{\lambda}}$ there holds
\begin{align}\label{eq:helpineq}
\begin{split}
\int_{Q}V(\rho_{\varepsilon}*L(u-\widetilde{\Pi}_{\widetilde{Q}}u))\dif x & \leq \int_{Q}\rho_{\varepsilon}*V(|u-\widetilde{\Pi}_{\widetilde{Q}}u|)\dif x \\  & \leq \int_{N_{\varepsilon}(Q)}V(|L(u-\widetilde{\Pi}_{\widetilde{Q}}u)|)\dif x \\ & \!\!\!\!\!\!\!\!\!\stackrel{N_{\varepsilon}(Q)\subset\widetilde{Q}}{\leq} \int_{\widetilde{Q}}V(|L(u-\widetilde{\Pi}_{\widetilde{Q}}u)|)\dif x \\ & \!\!\!\! \stackrel{\eqref{eq:nonlinearPoincare}}{\leq} c\max\{L\ell(\widetilde{Q}),(L\ell(\widetilde{Q}))^{2}\}\int_{\widetilde{Q}}V(\sg(u))\dif x \\ & \!\!\!\!\!\!\!\!\!\!\!\!\stackrel{\ell(\widetilde{Q})=\frac{2\ell+1}{\ell}\varepsilon}{\leq} c\max\{L\varepsilon,(L\varepsilon)^{2}\}\sum_{j=1}^{\mathscr{N}}\int_{Q^{(j)}}V(\sg(u))\dif x, 
\end{split}
\end{align}
where $c=c(\lambda,n)$ (note that $\ell$ only depends on $\lambda$). Note that, for any fixed $j\in\{1,...,\mathscr{N}\}$ and all $Q,Q'\in\mathcal{Q}_{\varepsilon_{\lambda}}$ with $Q\neq Q'$, $Q^{(j)}\cap Q'^{(j)}=\emptyset$. On the other hand, by step 1, for any fixed $j\in\{1,...,n\}$, $\bigcup_{Q\in\mathcal{Q}_{\varepsilon_{\lambda}},\;Q\cap\Omega\neq\emptyset}Q^{(j)}\subset \bigcup_{Q\in\mathcal{Q}_{\varepsilon_{\lambda}},\,Q\cap\Omega\neq\emptyset}\widetilde{Q}\subset N_{\lambda\sqrt{n}\varepsilon}(\Omega)$. Therefore, 
\begin{align}\label{eq:helpineq1}
\sum_{\substack{Q\in\mathcal{Q}_{\varepsilon_{\lambda}}\\ Q\cap\Omega\neq\emptyset}}\int_{Q^{(j)}}V(\sg(u))\dif x\leq \int_{N_{\lambda\sqrt{n}\varepsilon}(\Omega)}V(\sg(u))\dif x.
\end{align}
Consequently, we obtain by $Q\subset \widetilde{Q}$ and \eqref{eq:helpineq} in the first step: 
\begin{align*}
\Big(\sum_{\substack{Q\in\mathcal{Q}_{\varepsilon_{\lambda}}\\ Q\cap \Omega \neq \emptyset}}\int_{Q}V(L(u-\widetilde{\Pi}_{\widetilde{Q}}u)))\Big) & + \Big( \sum_{\substack{Q\in\mathcal{Q}_{\varepsilon_{\lambda}}\\ Q\cap \Omega \neq \emptyset}} \int_{Q}V(\rho_{\varepsilon}*L(u-\widetilde{\Pi}_{\widetilde{Q}}))\dif x\Big) \\
& \!\!\!\stackrel{\eqref{eq:helpineq}}{\leq} c\max\{(L\varepsilon),(L\varepsilon)^{2}\}\sum_{\substack{Q\in\mathcal{Q}_{\varepsilon_{\lambda}}\\ Q\cap \Omega \neq \emptyset}}\sum_{j=1}^{\mathscr{N}}\int_{Q^{(j)}}V(\sg(u))\dif x\\
& \!=c\max\{(L\varepsilon),(L\varepsilon)^{2}\}\sum_{j=1}^{\mathscr{N}}\sum_{\substack{Q\in\mathcal{Q}_{\varepsilon_{\lambda}}\\ Q\cap \Omega \neq \emptyset}}\int_{Q^{(j)}}V(\sg(u))\dif x \\
& \!\!\!\stackrel{\eqref{eq:helpineq1}}{\leq} c\max\{(L\varepsilon),(L\varepsilon)^{2}\}\int_{N_{\lambda\sqrt{n}\varepsilon}(\Omega)}V(\sg(u))\dif x. 
\end{align*}
Since $\mathscr{N}=\mathscr{N}(\lambda,n)$, $c=c(\lambda,n)$ in the previous estimation, and \eqref{eq:Poincaremain} follows for $u\in\hold^{\infty}(\R^{n};\R^{n})$. 

\emph{Step 4. Passage to the general case.} Let $u\in\bd_{\locc}(\R^{n})$. By localisation, it is no loss of generality to assume $u\in\bd(\R^{n})$. Let $\eta\in\hold_{c}^{\infty}(\ball(0,1);[0,1])$ be a standard mollifier. We put $u_{k}:=\eta_{1/k}*u$, so that, by passing to a non-relabeled subsequence, $u_{k}\to u$ $\mathscr{L}^{n}$-a.e. in $\R^{n}$.  This yields by Fatou's lemma for all $\varepsilon>0$
\begin{align*}
\int_{\Omega}V(L(u-\rho_{\varepsilon}*u))\dif x & \leq \liminf_{k\to\infty} \int_{\Omega}V(L(u_{k}-\rho_{\varepsilon}*u_{k}))\dif x \\ & \leq c\max\{(L\varepsilon), (L\varepsilon)^{2}\} \liminf_{k\to\infty}\int_{N_{\lambda\sqrt{n}\varepsilon}(\Omega)}V(\sg(u_{k}))\dif y\\
& \leq c\max\{(L\varepsilon), (L\varepsilon)^{2}\} \liminf_{k\to\infty}\int_{N_{\lambda\sqrt{n}\varepsilon+\frac{1}{k}}(\Omega)}V(\E u)\\ 
& \leq c\max\{(L\varepsilon), (L\varepsilon)^{2}\} \int_{\overline{N_{\lambda\sqrt{n}\varepsilon}(\Omega)}}V(\E u), 
\end{align*}
where we used inequality \eqref{eq:Poincaremain} for smooth maps in the second and Jensen's inequality in the third step. This is the inequality claimed in the proposition and the proof of \eqref{eq:Poincaremain} for $u\in\bd_{\locc}(\R^{n})$ is hereby complete. 
\end{proof}
For consistency, let us note that if the right hand side of \eqref{eq:Poincaremain} is zero, then $u$ must coincide with a rigid deformation on each of the connected components of $U+\ball(0,\lambda\sqrt{n}\varepsilon)$ and so on those of $U$; in consequence, it must coincide with its mollification on each of these connected components and hence the left hand side is zero indeed. 

%Other inequalities of this sort are also possible, some of which we enumerate now: First, if one uses \textsc{Strauss}' embedding \cite{Strauss} $\bd(\R^{n})\hookrightarrow\lebe^{\frac{n}{n-1}}(\R^{n};\R^{n})$, then an analogous approach yields the inequality 
%\begin{align*}
%\left(\int_{\R^{n}}|u-\rho_{\varepsilon}*u|^{\frac{n}{n-1}}\dif x\right)^{\frac{n-1}{n}} \leq C\varepsilon |\E u|(\R^{n}),\qquad\text{for}\;u\in\bd(\R^{n}) 
%\end{align*}
%with $C=C(n)>0$ being a constant. 
\begin{corollary}\label{cor:convolutioncontrol}
Let $\lambda>1$ and let $V(z):=\sqrt{1+|z|^{2}}-1$ be the auxiliary reference integrand as usual. Then there exists a constant $c=c(n,\lambda)>0$ such that the following holds: For every open and bounded Lipschitz domain $\Omega\subset\R^{n}$, $u\in\bd_{\locc}(\R^{n})$ and numbers $\varepsilon,L>0$ there holds 
\begin{align}\label{eq:Poincaremain1}
\int_{\Omega}V(L(u-\eta_{\varepsilon}*(\rho_{\varepsilon}*u)))\dif x \leq c\max\{(L\varepsilon),(L\varepsilon)^{2}\} \int_{\overline{\Omega+\ball(0,2\lambda\sqrt{n}\varepsilon)}}V(\E u), 
\end{align}
where $\rho,\eta\colon \R^{n}\to \R_{\geq 0}$ are arbitrary standard mollifiers in the sense of Section~\ref{sec:funofmeas}.
\end{corollary}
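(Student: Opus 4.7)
The plan is to reduce the double-mollification estimate to two applications of Proposition~\ref{prop:convest}, connected by the triangle-type inequality for $V$ recorded in Lemma~\ref{lem:eest}\ref{item:eest2}. First I would split
\begin{align*}
u - \eta_{\varepsilon}*(\rho_{\varepsilon}*u) = \big(u - \rho_{\varepsilon}*u\big) + \big(\rho_{\varepsilon}*u - \eta_{\varepsilon}*(\rho_{\varepsilon}*u)\big)
\end{align*}
and apply Lemma~\ref{lem:eest}\ref{item:eest2} pointwise to obtain, after integration,
\begin{align*}
\int_{\Omega}V(L(u-\eta_{\varepsilon}*(\rho_{\varepsilon}*u)))\dif x
\leq 2 \int_{\Omega}V(L(u-\rho_{\varepsilon}*u))\dif x + 2\int_{\Omega}V(L(w-\eta_{\varepsilon}*w))\dif x,
\end{align*}
where $w:=\rho_{\varepsilon}*u$.

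The first term is controlled directly by Proposition~\ref{prop:convest}, yielding the bound
\begin{align*}
c\max\{(L\varepsilon),(L\varepsilon)^{2}\} \int_{\overline{\Omega+\ball(0,\lambda\sqrt{n}\varepsilon)}}V(\E u).
\end{align*}
For the second term I would note that $w\in\hold^{\infty}(\R^{n};\R^{n})\cap \bd_{\locc}(\R^{n})$ with $\sg(w)=\rho_{\varepsilon}*\E u$ (understood as a smooth function), and then apply Proposition~\ref{prop:convest} once more, this time with $u$ replaced by $w$ and the standard mollifier $\rho$ replaced by $\eta$. This gives
\begin{align*}
\int_{\Omega}V(L(w-\eta_{\varepsilon}*w))\dif x \leq c\max\{(L\varepsilon),(L\varepsilon)^{2}\}\int_{\overline{\Omega+\ball(0,\lambda\sqrt{n}\varepsilon)}}V(\sg(w))\dif x.
\end{align*}

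The remaining step is to re-express the $V(\sg(w))$-integral in terms of $\E u$. Recalling from Section~\ref{sec:funofmeas} that $V$ is convex with $V(0)=0$, Jensen's inequality for convolutions with the probability kernel $\rho_{\varepsilon}$ yields the pointwise bound $V(\rho_{\varepsilon}*\E u)\leq \rho_{\varepsilon}*V[\E u]$ on $\R^{n}$, and integration over the $(\lambda\sqrt{n}\varepsilon)$-enlargement of $\Omega$ gives
\begin{align*}
\int_{\overline{\Omega+\ball(0,\lambda\sqrt{n}\varepsilon)}}V(\sg(w))\dif x \leq \int_{\overline{\Omega+\ball(0,\lambda\sqrt{n}\varepsilon)+\ball(0,\varepsilon)}}V(\E u).
\end{align*}
Since $\lambda\sqrt{n}\geq \lambda>1$, one has $\lambda\sqrt{n}\varepsilon+\varepsilon \leq 2\lambda\sqrt{n}\varepsilon$, so the right-hand side is dominated by the integral of $V(\E u)$ over $\overline{\Omega+\ball(0,2\lambda\sqrt{n}\varepsilon)}$. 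Collecting the two estimates (and enlarging the single $\rho_{\varepsilon}$ neighbourhood from $\lambda\sqrt{n}\varepsilon$ to $2\lambda\sqrt{n}\varepsilon$ as well) and adjusting $c$ to absorb the factor $2$ yields the claimed inequality \eqref{eq:Poincaremain1}.

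There is essentially no serious obstacle here; the only point requiring mild care is the Jensen step that transfers $V$ through $\rho_{\varepsilon}*(\cdot)$ applied to the \emph{measure} $\E u$, which however is exactly the form of Jensen's inequality for convex functions of Radon measures noted in Section~\ref{sec:funofmeas}.
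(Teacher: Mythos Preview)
Your proof is correct and follows essentially the same route as the paper: split $u-\eta_{\varepsilon}*(\rho_{\varepsilon}*u)$, apply Lemma~\ref{lem:eest}\ref{item:eest2}, invoke Proposition~\ref{prop:convest} for each piece, and use Jensen's inequality together with $\lambda\sqrt{n}>1$ to pass from $V(\sg(\rho_{\varepsilon}*u))$ back to $V(\E u)$ on the $2\lambda\sqrt{n}\varepsilon$-enlarged domain. The paper's argument is more compressed but identical in substance.
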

\begin{proof}
Denote the left-hand side of \eqref{eq:Poincaremain1} by $\mathbf{I}$. We use Lemma~\ref{lem:eest}\ref{item:eest2} and Jensen's inequality to obtain
\begin{align*}
\mathbf{I} & \leq 2\int_{\Omega}V(L(u-(\rho_{\varepsilon}*u)))\dif x + 2\int_{\Omega}V(L(\rho_{\varepsilon}*u-\eta_{\varepsilon}*(\rho_{\varepsilon}*u)))\dif x \\ 
& \!\!\!\!\!\!\!\stackrel{\text{Prop.}~\ref{prop:convest}}{\leq} c\max\{(L\varepsilon),(L\varepsilon)^{2}\}\Big(\int_{\overline{\Omega+\ball(0,\lambda\sqrt{n}\varepsilon)}}V(\E u) + \int_{\overline{\Omega+\ball(0,\lambda\sqrt{n}\varepsilon)}}V(\E\, (\rho_{\varepsilon}*u))\Big)\\
& \!\!\!\!\!\stackrel{\lambda\sqrt{n}>1}{\leq} c\max\{(L\varepsilon),(L\varepsilon)^{2}\}\int_{\overline{\Omega+\ball(0,2\lambda\sqrt{n}\varepsilon)}}V(\E u), 
\end{align*}
where again $c=c(\lambda,n)$. The proof is complete. 
\end{proof}
We conclude this section by discussing a particular borderline case in the spirit of \eqref{eq:Poincaremain}, for simplicity stated on the entire $\R^{n}$:
\begin{corollary}[Sobolev-Poincar\'{e} inequality in convolution form]
For any $1\leq p \leq \frac{n}{n-1}$ there exists a constant $c=c(n,p)>0$ with the following property: For every $u\in\bd(\R^{n})$ and $\varepsilon>0$ there holds 
\begin{align}\label{eq:Poincaremain0000}
\Big(\int_{\R^{n}}|u-\rho_{\varepsilon}*u|^{p}\dif x\Big)^{\frac{1}{p}} \leq c\,\varepsilon^{1-n+\frac{n}{p}} \int_{\R^{n}}|\!\E u|, 
\end{align}
where $\rho\colon \R^{n}\to \R_{\geq 0}$ is an arbitrary standard mollifier in the sense of Section~\ref{sec:funofmeas}.
\end{corollary}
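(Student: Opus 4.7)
The plan is to reduce \eqref{eq:Poincaremain0000} to a Young-type convolution estimate on a \textsc{Smith}-type kernel, handling the critical exponent separately. For $u\in\hold_{c}^{\infty}(\R^{n};\R^{n})$, the representation \eqref{eq:Smith}, combined with integration by parts and the \textsc{Saint-Venant} compatibility relation $\partial_{i}\partial_{j}u^{k}=\partial_{i}\sg_{jk}(u)+\partial_{j}\sg_{ik}(u)-\partial_{k}\sg_{ij}(u)$, produces an identity of the form $u=\Phi*\sg(u)$, where $\Phi$ is a matrix-valued kernel built from the $K_{ij}(x)=x_{i}x_{j}/|x|^{n}$ and obeys the homogeneity-type bounds
\[
|\Phi(x)|\leq c|x|^{1-n}, \qquad |\nabla^{\ell}\Phi(x)|\leq c|x|^{1-n-\ell}\quad(\ell=1,2,\; x\neq 0).
\]
Convolving both sides of the representation with $\rho_{\varepsilon}$ yields the key identity
\[
u-\rho_{\varepsilon}*u=K_{\varepsilon}*\E u, \qquad K_{\varepsilon}:=\Phi-\rho_{\varepsilon}*\Phi,
\]
so that Young's inequality for convolutions with a finite measure, $\|K_{\varepsilon}*\mu\|_{\lebe^{p}}\leq \|K_{\varepsilon}\|_{\lebe^{p}}\,|\mu|(\R^{n})$, reduces the task to proving $\|K_{\varepsilon}\|_{\lebe^{p}(\R^{n})}\leq c\varepsilon^{1-n+n/p}$ in the strictly subcritical range $1\leq p<n/(n-1)$.

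To get this kernel estimate, I would split $\R^{n}=\{|w|<2\varepsilon\}\cup\{|w|\geq 2\varepsilon\}$. On the small region the direct bounds $|\Phi(w)|\leq c|w|^{1-n}$ and $|\rho_{\varepsilon}*\Phi(w)|\leq c\varepsilon^{1-n}$ combine to $|K_{\varepsilon}(w)|\leq c(|w|^{1-n}+\varepsilon^{1-n})$. On the far region, the vanishing first moment $\int y\,\rho_{\varepsilon}(y)\dif y=0$ (recall $\rho$ is radial) allows a second-order Taylor expansion of $\Phi$ around $w$ to give
\[
|K_{\varepsilon}(w)|=\Big|\tfrac{1}{2}\int\rho_{\varepsilon}(y)\langle \nabla^{2}\Phi(\xi_{w,y})\,y,y\rangle\dif y\Big|\leq c\varepsilon^{2}|w|^{-n-1}.
\]
Integrating in polar coordinates then produces
\[
\int_{\R^{n}}|K_{\varepsilon}|^{p}\dif w\leq c\int_{0}^{2\varepsilon}\!\big(r^{1-n}+\varepsilon^{1-n}\big)^{p}r^{n-1}\dif r+c\,\varepsilon^{2p}\!\int_{2\varepsilon}^{\infty}\!r^{-p(n+1)+n-1}\dif r\leq c\varepsilon^{p(1-n)+n},
\]
and taking $p$-th roots yields $\|K_{\varepsilon}\|_{\lebe^{p}}\leq c\varepsilon^{1-n+n/p}$, both integrals converging precisely in the range $1\leq p<n/(n-1)$.

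At the endpoint $p=n/(n-1)$ the small-region contribution diverges logarithmically and the Young scheme breaks down. Here I would instead invoke the classical $\bd$-Sobolev embedding $\|v\|_{\lebe^{n/(n-1)}(\R^{n};\R^{n})}\leq c|\E v|(\R^{n})$ applied to the (compactly supported, when $u\in\hold_{c}^{\infty}$) map $v:=u-\rho_{\varepsilon}*u$; since $\E v=\E u-\rho_{\varepsilon}*\E u$ has total variation at most $2|\E u|(\R^{n})$, this delivers the desired estimate with $\varepsilon^{0}=\varepsilon^{1-n+n/p}$. The extension from $\hold_{c}^{\infty}(\R^{n};\R^{n})$ to arbitrary $u\in\bd(\R^{n})$ then proceeds by the standard mollification-and-Fatou argument used elsewhere in Section~\ref{sec:poincare}. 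The main obstacle is precisely the logarithmic failure of $K_{\varepsilon}\in\lebe^{n/(n-1)}$ at the critical exponent, which forces the separate endpoint treatment via the $\bd$-Sobolev embedding rather than a uniform Young convolution bound; all other steps are, by contrast, routine consequences of the $\textsc{Smith}$ representation.
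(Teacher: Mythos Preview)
Your proof is correct but follows a genuinely different route from the paper's. The paper does not invoke a global convolution identity at all; instead it recycles the cube-covering machinery of Proposition~\ref{prop:convest}: on each lattice cube $\widetilde{Q}$ of scale comparable to $\varepsilon$ it subtracts the rigid deformation $\widetilde{\Pi}_{\widetilde{Q}}u$ (which mollification fixes, by the mean-value property), applies the local Sobolev--Poincar\'{e} inequality $\|u-\widetilde{\Pi}_{Q}u\|_{\lebe^{p}(Q)}\leq c\,\ell(Q)^{1-n+n/p}\|\sg(u)\|_{\lebe^{1}(Q)}$ coming from the \textsc{Strauss} embedding and Lemma~\ref{rem:stability}, and then sums over the lattice via $\ell^{1}(\mathbb{Z}^{n})\hookrightarrow\ell^{p}(\mathbb{Z}^{n})$. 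This handles the full range $1\leq p\leq n/(n-1)$ in one stroke, the endpoint included.

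Your approach, by contrast, converts the problem into a pure kernel estimate $\|K_{\varepsilon}\|_{\lebe^{p}}\leq c\,\varepsilon^{1-n+n/p}$ via Young's inequality for the convolution $K_{\varepsilon}*\E u$, which is more classical and avoids any tiling or $\ell^{1}\hookrightarrow\ell^{p}$ trick. The price is that the near-origin contribution to $\|K_{\varepsilon}\|_{\lebe^{p}}$ diverges logarithmically at $p=n/(n-1)$, forcing a separate endpoint treatment through the global $\bd$-Sobolev embedding. Note also that your second-order Taylor step on the far region genuinely needs the vanishing first moment of $\rho$; this is legitimate because the paper's definition of standard mollifier requires radiality, and the paper's own argument uses the same feature (to ensure $\rho_{\varepsilon}*\pi=\pi$ for rigid deformations). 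In short: the paper's route is the natural corollary of the machinery already assembled in Section~\ref{sec:poincare} and is uniform in $p$, whereas yours is a self-contained harmonic-analysis argument that trades the cube decomposition for a case split at the critical exponent.
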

\begin{proof}
Let $u\in\ld(\R^{n})$. By the \textsc{Strauss} inequality \cite{Strauss} and Poincar\'{e}'s Inequality, Lemma~\ref{rem:stability}, there exists a constant $c=c(n,p)>0$ such that $\|u-\widetilde{\Pi}_{Q}u\|_{\lebe^{p}(Q;\R^{n})}\leq c\ell(Q)^{\frac{p-pn+n}{p}}\|\sg(u)\|_{\lebe^{1}(Q;\rsym)}$ for all $u\in\ld(\R^{n})$ and cubes $Q\subset\R^{n}$. We argue  as in the proof of Proposition~\ref{prop:convest}, but now work with the lattice $\Gamma_{\varepsilon}=\varepsilon\mathbb{Z}^{n}$ and, for $Q\in\mathcal{Q}_{\varepsilon}$, define $\widetilde{Q}$ to be the cube with the same center as $Q$ but $(2n+1)$-times its sidelength. This yields
\begin{align*}
\int_{\R^{n}}|u-\rho_{\varepsilon}*u|^{p}\dif x & \leq c\sum_{j=1}^{\mathscr{N}(n)}\sum_{Q\in\mathcal{Q}_{\varepsilon}}\ell(\widetilde{Q})^{p-pn+n}\Big(\int_{Q^{(j)}}|\sg(u)|\dif x\Big)^{p}\\ 
& \!\!\!\!\!\!\!\!\!\!\!\!\!\stackrel{\ell^{1}(\mathbb{Z}^{n})\hookrightarrow \ell^{p}(\mathbb{Z}^{n})}{\leq} c\varepsilon^{p-pn+n}\sum_{j=1}^{\mathscr{N}(n)}\Big(\sum_{Q\in\mathcal{Q}_{\varepsilon}}\int_{Q^{(j)}}|\sg(u)|\dif x \Big)^{p} \\ 
& \,\leq c \varepsilon^{p-pn+n}\Big(\int_{\R^{n}}|\sg(u)|\dif x\Big)^{p}, 
\end{align*}
and from here the conclusion follows by smooth approximation as above.  
\end{proof}
Following the scheme of proof, other inequalities can equally be obtained, so, e.g., by replacing the $\lebe^{p}$-norm on the left-hand side of \eqref{eq:Poincaremain0000} by Sobolev-Slobodeckji\u{\i} (use $\bd(\R^{n})\hookrightarrow\sobo^{s,\frac{n}{n-1+s}}(\R^{n};\R^{n})$, $0<s<1$, cf.~\cite{GK1}) or Triebel-Lizorkin seminorms. 
\section{Partial $\hold^{1,\alpha}$-regularity and the proof of Theorem~\ref{thm:PR}}\label{sec:PR}
In this section we provide the proof of the second main result of this paper, Theorem~\ref{thm:PR}, allowing for possibly very degenerate ellipticities.
\subsection{Outline of the proof and setup}\label{sec:PRintro}
In order to reach the full degenerate elliptic regime which Theorem~\ref{thm:PR} applies to, we employ a direct comparison strategy that uses mollifications of generalised minima as comparison maps. A direct strategy here is suggested by both the very weak compactness properties of $\bd$ and the general lack of higher integrability of generalised minima in the very degenerate ellipticity regime (e.g., if $1+\frac{2}{n}\leq a <\infty$). Comparison methods of this type, originally employed in \cite{AG} for the full gradient case, consequently require to control $V$-function-type distances of generalised minima to their mollifications. This is where the convolution-type Poincar\'{e} inequalities of the previous section enter crucially. More precisely, we proceed as follows: 
\begin{itemize}
\item[(i)]\label{item:step1} Section~\ref{sec:estcomparison}: \emph{Estimates for comparison maps.} By linearisation, Proposition~\ref{prop:1} establishes that if a $\hold^{1,\alpha}$-H\"{o}lder continuous function satisfies a certain smallness condition and has symmetric gradient close to some carefully chosen reference point, then it almost enjoys the typical decay for linear systems. For the linearised integrands, full gradient estimates are available by \textsc{Korn}'s inequality in $\lebe^{2}$. 
\item[(ii)]\label{item:step2} Section~\ref{sec:radiiselect}: \emph{Smoothing and selection of good radii.} To construct the requisite $\hold^{1,\alpha}$-comparison maps for step (i), we carefully mollify the given generalised minimiser and demonstrate that, under suitable smallness assumptions, the mollification parameters can be chosen such that the comparison estimates from (i) become available, cf. Lemma~\ref{lem:adjust} and Corollary~\ref{cor:adjust}. 
\item[(iii)]\label{item:step3} Section~\ref{sec:comparison}: \emph{Comparison estimates and decay.} Here we give the aforementioned comparison argument and employ minimality to deduce a preliminary decay estimate for generalised minima, cf.~Proposition~\ref{prop:main1}. To control the emerging terms, the comparison will be essentially reduced to \emph{good} annuli where the relevant differences can be dealt with conveniently. The construction of such annuli hinges on Lemma~\ref{lem:convex}, giving control over the symmetric gradients, whereas Corollary~\ref{cor:convolutioncontrol} allows to suitably bound lower order terms.
\end{itemize}
These steps lead to an $\varepsilon$-regularity result, Corollary~\ref{cor:epsreg}, finally implying Theorem~\ref{thm:PR}; cf.~Section~\ref{sec:proofPR}. 
We now introduce the requisite terminology for the proof below: Given $x_{0}\in\Omega$ and $R>0$ such that $\ball(x_{0},R)\Subset\Omega$, we define for $u\in\bd_{\locc}(\Omega)$ two excess quantities by 
\begin{align}\label{eq:definitionexcessquantities}
\excenew(u;x_{0},R):=\int_{\ball(x_{0},R)}V(\E u-(\E u)_{x_{0},R})\;\;\;\text{and}\;\;\;\excesso(u;x_{0},R):=\frac{\excenew(u;x_{0},R)}{\mathscr{L}^{n}(\ball(x_{0},R))},
\end{align}
where the mean values in the definition of $\excenew,\widetilde{\Phi}$ are taken with respect to $\mathscr{L}^{n}$, cf.~\eqref{eq:meanvaluemeasures}. 
\subsection{Preliminary decay estimates}\label{sec:prelimdecay}
After the preparations of the previous section, we now carry out the steps (i), (ii) and (iii) as outlined in Section~\ref{sec:PRintro} above.
\subsubsection{Estimates for comparison maps}\label{sec:estcomparison}
Let $f\in\hold^{2}(\rsym)$ satisfy \eqref{eq:lingrowth1} and let $0<\alpha<1$. Throughout this paragraph, we fix $\xi_{0}\in\R_{\sym}^{n\times n}$, a radius $0<\varrho_{\xi_{0}}<1$ and assume that $f\in \hold^{2}(\R_{\sym}^{n\times n})$ satisfies 
\begin{align}\label{eq:ellipticcomparison}
\lambda|\xi|^{2}\leq \langle f''(\xi_{0})\xi,\xi\rangle \leq\Lambda|\xi|^{2}\qquad\text{for all}\;\xi\in\rsym
\end{align}
for some $0<\lambda\leq\Lambda<\infty$. Moreover, we suppose that there exists a bounded and non-decreasing function $\omega_{\xi_{0},\varrho_{\xi_{0}}}\colon \R_{\geq 0}\to\R_{\geq 0}$ with $\lim_{t\searrow 0}\omega_{\xi_{0},\varrho_{\xi_{0}}}(t)=0$ such that  
\begin{align}\label{eq:modcon}
|f''(\xi)-f''(\xi_{0})|\leq\omega_{\xi_{0},\varrho_{\xi_{0}}}(|\xi-\xi_{0}|)\qquad\text{for all $\xi\in\mathbb{B}(\xi_{0},\varrho_{\xi_{0}})$}. 
\end{align}
Finally, for $0<r<R$ and $x_{0}\in\Omega$ with $\ball(x_{0},R)\Subset\Omega$ and $v\in \hold^{1,\alpha}(\overline{\ball(x_{0},r)};\R^{n})$ we put 
\begin{align*}
&\devi(v;x_{0},r):=\int_{\ball(x_{0},r)}f(\sg(v))\dif x-\inf\left\{\int_{\ball(x_{0},r)}f(\sg(w))\dif x\colon \!\!\!\begin{array}{c}
w\in \hold^{1,\alpha}(\overline{\ball(x_{0},r)};\R^{n}) \\ w = v\;\text{on}\;\partial\!\ball(x_{0},r)
\end{array}\!\!\!\right\},\\
&\mathbf{t}_{\alpha,\xi_{0}}(v;x_{0},r):=\sup_{\ball(x_{0},r)}|\sg(v)-\xi_{0}|+2^{\alpha}r^{\alpha}[\sg(v)]_{\hold^{0,\alpha}(\overline{\ball(x_{0},r)};\rsym)}
\end{align*}
The deviation $\devi$ captures how far $v$ is away from minimising $F$ on $\hold^{1,\alpha}(\overline{\ball(x_{0},r)};\R^{n})$ for its own boundary values. Conversely, $\mathbf{t}_{\alpha,\xi_{0}}$ will prove instrumental to find the mentioned smallness condition which is necessary to infer the decay estimate of the H\"{o}lder continuous comparison maps. We have
\begin{proposition}\label{prop:1}
Let $f,\alpha,\xi_{0},\varrho_{\xi_{0}},\lambda,\Lambda$ and $\omega_{\xi_{0},\varrho_{\xi_{0}}}$ be as above. Then there exists $1\leq c_{\comp}=c_{\comp}(n,\lambda,\Lambda)<\infty$ such that the following holds: If $v\in \hold^{1,\alpha}(\overline{\ball(x_{0},R/2)};\R^{n})$ satisfies $\mathbf{t}_{\alpha,\xi_{0}}(v;x_{0},R/2)<\varrho_{\xi_{0}}/c_{\comp}$, then there exists a bounded, non-decreasing function $\vartheta\colon\R_{\geq 0}\to\R_{\geq 0}$ with $\lim_{t\searrow 0}\vartheta(t)=0$, only depending on $n,\lambda,\Lambda$ and $\omega_{\xi_{0},\varrho_{\xi_{0}}}$, and a constant $c=c(n,\lambda,\Lambda)>0$ such that for all $0<r<R/2$ we have 
\begin{align}\label{eq:smoothes}
\begin{split}
\int_{\ball(x_{0},r)}|\sg(v)-(\sg(v))_{x_{0},r}|^{2}  \dif x & \leq c \Big(\left(\frac{r}{R}\right)^{n+2}\int_{\ball(x_{0},R/2)}|\sg(v)-(\sg(v))_{x_{0},R/2}|^{2}\dif x\Big. \\
& \Big.+\vartheta(\mathbf{t}_{\alpha,\xi_{0}}(v;x_{0},R/2))\int_{\ball(x_{0},R/2)}|\sg(v)-\xi_{0}|^{2}\dif x\Big.  \\ & \Big.+ \devi(v;x_{0},R/2)\Big).
\end{split}
\end{align}
\end{proposition}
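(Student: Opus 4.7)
\medskip

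\noindent\emph{Proof proposal.} The plan is to linearise $f$ around $\xi_{0}$ and compare $v$ with the solution of the resulting constant coefficient linear elliptic system sharing the same boundary values on $\partial\ball(x_{0},R/2)$. Writing $A:=f''(\xi_{0})$, the Taylor expansion
\begin{align*}
f(\xi) = f(\xi_{0}) + \langle f'(\xi_{0}),\xi-\xi_{0}\rangle + \tfrac{1}{2}\langle A(\xi-\xi_{0}),\xi-\xi_{0}\rangle + R_{\xi_{0}}(\xi)
\end{align*}
holds for $\xi\in\mathbb{B}(\xi_{0},\varrho_{\xi_{0}})$ with $|R_{\xi_{0}}(\xi)|\leq \omega_{\xi_{0},\varrho_{\xi_{0}}}(|\xi-\xi_{0}|)\,|\xi-\xi_{0}|^{2}$ by \eqref{eq:modcon}. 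Choosing $c_{\comp}=c_{\comp}(n,\lambda,\Lambda)$ sufficiently large ensures $\mathbf{t}_{\alpha,\xi_{0}}(v;x_{0},R/2)<\varrho_{\xi_{0}}/c_{\comp}$ forces $\sup_{\ball(x_{0},R/2)}|\sg(v)-\xi_{0}|\leq \varrho_{\xi_{0}}/2$, so the expansion applies pointwise to $\sg(v)$. I then let $h\in v+\sobo_{0}^{1,2}(\ball(x_{0},R/2);\R^{n})$ be the unique minimiser of $w\mapsto \tfrac{1}{2}\int_{\ball(x_{0},R/2)}\langle A(\sg(w)-\xi_{0}),\sg(w)-\xi_{0}\rangle\dif x$ subject to $w=v$ on $\partial\!\ball(x_{0},R/2)$; equivalently, $h$ weakly solves $\operatorname{div}(A\sg(h))=0$ with $h=v$ on the boundary. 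By classical Schauder theory for constant coefficient systems, $h\in\hold^{\infty}\cap\hold^{1,\alpha}(\overline{\ball(x_{0},R/2)};\R^{n})$ and is thus admissible in the infimum defining $\devi(v;x_{0},R/2)$.

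For the decay on $h$, the system differentiated once again solves a constant coefficient homogeneous system, and Korn's inequality in $\lebe^{2}$ (Lemma~\ref{lem:Cianchi}\ref{item:Cianchi0}) combined with standard Campanato-type estimates for $A$-harmonic maps yields, for $0<r<R/2$,
\begin{align*}
\int_{\ball(x_{0},r)}|\sg(h)-(\sg(h))_{x_{0},r}|^{2}\dif x \leq c\Big(\tfrac{r}{R}\Big)^{n+2}\int_{\ball(x_{0},R/2)}|\sg(h)-(\sg(h))_{x_{0},R/2}|^{2}\dif x,
\end{align*}
with $c=c(n,\lambda,\Lambda)$.

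For the comparison estimate, I exploit the fact that $v-h\in\sobo_{0}^{1,2}(\ball(x_{0},R/2);\R^{n})$ kills the affine terms: the linear contribution $\int \langle f'(\xi_{0}),\sg(v-h)\rangle\dif x$ vanishes and, by the weak Euler-Lagrange equation for $h$, so does $\int \langle A(\sg(h)-\xi_{0}),\sg(v-h)\rangle\dif x$. The quadratic expansion together with admissibility of $h$ in $\devi$ and ellipticity of $A$ therefore give
\begin{align*}
\tfrac{\lambda}{2}\int_{\ball(x_{0},R/2)}|\sg(v-h)|^{2}\dif x \leq \devi(v;x_{0},R/2) + \int_{\ball(x_{0},R/2)}\big(|R_{\xi_{0}}(\sg(v))|+|R_{\xi_{0}}(\sg(h))|\big)\dif x.
\end{align*}
The remainder on $v$ is bounded by $\omega_{\xi_{0},\varrho_{\xi_{0}}}(\mathbf{t}_{\alpha,\xi_{0}}(v;x_{0},R/2))\int|\sg(v)-\xi_{0}|^{2}\dif x$ directly from the pointwise definition of $\mathbf{t}_{\alpha,\xi_{0}}$. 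For the remainder on $h$, comparing $h$ against $v$ in the quadratic functional yields $\int|\sg(h)-\xi_{0}|^{2}\leq\tfrac{\Lambda}{\lambda}\int|\sg(v)-\xi_{0}|^{2}$, and interior $\lebe^{\infty}$-estimates for the constant coefficient system, together with the smallness of $\mathbf{t}_{\alpha,\xi_{0}}$, give $\sup|\sg(h)-\xi_{0}|\leq c(n,\lambda,\Lambda)\,\mathbf{t}_{\alpha,\xi_{0}}(v;x_{0},R/2)$; these combine to yield $\int|R_{\xi_{0}}(\sg(h))|\leq c\,\omega_{\xi_{0},\varrho_{\xi_{0}}}(c\,\mathbf{t}_{\alpha,\xi_{0}}(v;x_{0},R/2))\int|\sg(v)-\xi_{0}|^{2}\dif x$ after possibly further enlarging $c_{\comp}$. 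Defining $\vartheta(t):=c\,\omega_{\xi_{0},\varrho_{\xi_{0}}}(c\,t)$ (bounded by $\omega_{\xi_{0},\varrho_{\xi_{0}}}$'s boundedness and vanishing at $0$) completes the comparison bound.

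To conclude, I apply the triangle inequality in $\lebe^{2}$ to $\sg(v)=\sg(h)+\sg(v-h)$ using the characterisation $\int_{\ball(x_{0},r)}|w-(w)_{x_{0},r}|^{2}\dif x=\inf_{c\in\rsym}\int_{\ball(x_{0},r)}|w-c|^{2}\dif x$, whereby
\begin{align*}
\int_{\ball(x_{0},r)}|\sg(v)-(\sg(v))_{x_{0},r}|^{2}\dif x \leq 2\int_{\ball(x_{0},r)}|\sg(h)-(\sg(h))_{x_{0},r}|^{2}\dif x + 2\int_{\ball(x_{0},R/2)}|\sg(v-h)|^{2}\dif x.
\end{align*}
Inserting the decay estimate and the comparison bound yields \eqref{eq:smoothes}. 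The main obstacle is controlling the Taylor remainder on the comparison map $h$: one must convert the hypothesis on $v$ into pointwise (or at least $\lebe^{\infty}$-type) smallness of $|\sg(h)-\xi_{0}|$, which requires combining the variational energy estimate for $h$ with $\lebe^{\infty}$-bounds from constant coefficient elliptic theory and the judicious choice of the universal constant $c_{\comp}$.
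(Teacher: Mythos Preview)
Your proof is correct and follows the same route as the paper: linearise $f$ at $\xi_{0}$, let $h$ solve the resulting constant-coefficient problem with $v$'s boundary data, exploit the identity $\tfrac{1}{2}\int\langle A\,\sg(v-h),\sg(v-h)\rangle=\int g(\sg(v))-g(\sg(h))$ to split the comparison into $\devi$ plus two Taylor remainders, and combine with the linear decay for $h$. One wording fix: where you invoke ``interior $\lebe^{\infty}$-estimates'' to control $\sup|\sg(h)-\xi_{0}|$, you actually need a bound on the \emph{closed} ball $\overline{\ball(x_{0},R/2)}$ (the remainder $\int|R_{\xi_{0}}(\sg(h))|$ is taken over the full ball); the paper obtains this via the global Schauder estimate $[\sg(h)]_{\hold^{0,\alpha}(\overline{\ball(x_{0},R/2)})}\leq c[\sg(v)]_{\hold^{0,\alpha}(\overline{\ball(x_{0},R/2)})}$ (Lemma~\ref{lem:lineardecayestimates}\ref{item:linearcomparison2}) combined with the $\lebe^{2}$-minimality bound---precisely the mechanism you flag as the main obstacle.
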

The preceding proposition essentially follows by reduction to the full gradient case as a consequence of \textsc{Korn}'s inequality. For the reader's convenience, it is established in the Appendix, Section~\ref{sec:LCE}, together with the requisite estimates for linear systems.
\subsubsection{Smoothing and selection of good radii}\label{sec:radiiselect}
In this section we concentrate on step (ii) and establish the required adjusting of the smoothing parameters. The following lemma and its corollary closely follow \cite[Lem.~4.2]{AG} but with a slight change in the relevant constants. Here and in all of what follows, we choose and fix a constant $\lambda_{\con}>1$ for latter application of the convolution inequality from Proposition~\ref{prop:convest}; for instance, $\lambda_{\con}:=1+\frac{1}{1000}$ will do. 
\begin{lemma}\label{lem:adjust}
Let $u\in \bd_{\locc}(\R^{n})$, $x_{0}\in\R^{n}$, $r>0$ and put $\xi_{0}:=(\E u)_{x_{0},r}$. Moreover, 
suppose that $\excesso(u;x_{0},r)<1$, where $\excesso$ is defined by \eqref{eq:definitionexcessquantities}. Then for each $0<\alpha<1$ there exists $c=c(n,\alpha)>0$ such that if 
\begin{align}\label{eq:deltaeps}
\varepsilon = \frac{1}{48\sqrt{n}\lambda_{\con}}r\excesso(u;x_{0},r)^{\frac{1}{n+4\alpha}}, 
\end{align}
then the mollification $u_{\varepsilon,\varepsilon}$ of $u$ (cf.~\eqref{eq:udeltaepsdef}) satisfies
\begin{align}\label{eq:adjustclaimequation}
\mathbf{t}_{\alpha,\xi_{0}}(u_{\varepsilon,\varepsilon};x_{0},\tfrac{r}{2})\leq c(n,\alpha) \excesso(u;x_{0},r)^{\frac{\alpha}{n+4\alpha}}.
\end{align}
\end{lemma}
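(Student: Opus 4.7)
Set $\nu:=\E u-\xi_{0}\mathscr{L}^{n}$ and $\psi_{\varepsilon}:=\rho_{\varepsilon}^{(2)}*\rho_{\varepsilon}^{(1)}$, a smooth mollifier supported in $\overline{\ball(0,2\varepsilon)}$ with $\|\psi_{\varepsilon}\|_{\infty}\leq c(n)\varepsilon^{-n}$ and $\|\nabla\psi_{\varepsilon}\|_{\infty}\leq c(n)\varepsilon^{-n-1}$; this is immediate from the definitions in~\eqref{eq:udeltaepsdef}, since one of the factors can be taken as $\rho^{(2)}_{\varepsilon}\in\hold_c^{\infty}$. Then $\sg(u_{\varepsilon,\varepsilon})-\xi_{0}=\psi_{\varepsilon}*\nu$ pointwise on $\ball(x_{0},r/2)$. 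Using the hypothesis $\excesso(u;x_{0},r)<1$ and $\lambda_{\con}\sqrt{n}\geq 1$, the choice~\eqref{eq:deltaeps} yields $\varepsilon\leq r/48$; hence for every $x\in \ball(x_{0},r/2)$ we have $\ball(x,2\varepsilon)\subset \ball(x_{0},r)$, which will allow us to control every local integral by $\excenew(u;x_{0},r)=c_{n}r^{n}\excesso(u;x_{0},r)$.

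\textbf{Key local mass estimate.} The heart of the proof is the inequality
\begin{equation}\label{eq:plan-key}
|\nu|(\ball(x,2\varepsilon))\leq c(n)\bigl[\varepsilon^{n/2}\,\excenew(u;x_{0},r)^{1/2}+\excenew(u;x_{0},r)\bigr]\qquad \forall\, x\in \ball(x_{0},r/2).
\end{equation}
I will obtain this by splitting the absolutely continuous part $|\mathscr{E}u-\xi_{0}|\mathscr{L}^{n}$ over $\{|\mathscr{E}u-\xi_{0}|\leq 1\}$ and its complement, applying Lemma~\ref{lem:eest}\ref{item:eest3} to replace $|\cdot|^{2}$ (resp.~$|\cdot|$) by $V(\cdot)$ up to constants, and using Cauchy--Schwarz on the low-value set; the singular part is absorbed since $V^{\infty}=|\cdot|$. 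The main point of~\eqref{eq:plan-key} is the gain of the factor $\varepsilon^{n/2}$ compared with the trivial bound $|\nu|(\ball(x,2\varepsilon))\leq c(\varepsilon^{n/2}+1)\excenew^{1/2}$ obtained by Cauchy--Schwarz on the whole ball, without which the target exponent $\tfrac{\alpha}{n+4\alpha}$ is unreachable. This is what I expect to be the main technical point.

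\textbf{Sup bound.} From $\sg(u_{\varepsilon,\varepsilon})-\xi_{0}=\psi_{\varepsilon}*\nu$ and $\|\psi_{\varepsilon}\|_{\infty}\leq c\varepsilon^{-n}$, one gets
\begin{align*}
\sup_{\ball(x_{0},r/2)}|\sg(u_{\varepsilon,\varepsilon})-\xi_{0}|\leq c(n)\bigl[(r/\varepsilon)^{n/2}\excesso^{1/2}+(r/\varepsilon)^{n}\excesso\bigr]
\end{align*}
by~\eqref{eq:plan-key} and $\excenew=c_{n}r^{n}\excesso$. Inserting $\varepsilon/r=c\,\excesso^{1/(n+4\alpha)}$ from~\eqref{eq:deltaeps} turns the two terms into $c\,\excesso^{2\alpha/(n+4\alpha)}$ and $c\,\excesso^{4\alpha/(n+4\alpha)}$, both dominated by $c\,\excesso^{\alpha/(n+4\alpha)}$ because $\excesso<1$.

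\textbf{H\"older seminorm.} Differentiating the convolution and combining with~\eqref{eq:plan-key} gives
\begin{equation*}
\sup_{\ball(x_{0},r/2)}|\nabla\sg(u_{\varepsilon,\varepsilon})|\leq c(n)\varepsilon^{-n-1}\bigl[\varepsilon^{n/2}\excenew^{1/2}+\excenew\bigr].
\end{equation*}
For $x,y\in \ball(x_{0},r/2)$ with $|x-y|\leq\varepsilon$ I use the mean value theorem, yielding $|\sg(u_{\varepsilon,\varepsilon})(x)-\sg(u_{\varepsilon,\varepsilon})(y)|\leq \|\nabla\sg(u_{\varepsilon,\varepsilon})\|_{\infty}|x-y|^{1-\alpha}|x-y|^{\alpha}$; for $|x-y|>\varepsilon$ the sup bound with $|x-y|^{-\alpha}<\varepsilon^{-\alpha}$ gives the same kind of quotient. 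Both regimes produce the identical bound
\begin{equation*}
[\sg(u_{\varepsilon,\varepsilon})]_{\hold^{0,\alpha}(\overline{\ball(x_{0},r/2)})}\leq c(n)\bigl[\varepsilon^{-n/2-\alpha}\excenew^{1/2}+\varepsilon^{-n-\alpha}\excenew\bigr],
\end{equation*}
so that after multiplication by $2^{\alpha}r^{\alpha}$ and substitution of~\eqref{eq:deltaeps} a direct exponent computation yields $c\,\excesso^{\alpha/(n+4\alpha)}+c\,\excesso^{3\alpha/(n+4\alpha)}\leq c\,\excesso^{\alpha/(n+4\alpha)}$. Summing the sup and seminorm bounds gives~\eqref{eq:adjustclaimequation}; tracking constants, $c=c(n,\alpha)$ as claimed.
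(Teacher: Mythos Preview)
Your argument is correct and complete; the exponent bookkeeping checks out in every case. However, the route is genuinely different from the paper's.

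The paper exploits the \emph{two-step} structure of the mollification $u_{\varepsilon,\varepsilon}=\rho_{\varepsilon}^{(2)}*u_{\varepsilon}$. First it applies Jensen's inequality directly to the convex function $V$ and the averaging kernel $\rho_{\varepsilon}^{(1)}$: for $x\in\ball(x_{0},r/2+\varepsilon)$,
\[
V(\sg(u_{\varepsilon})(x)-\xi_{0})\leq \dashint_{\ball(x,\varepsilon)}V(\E u-\xi_{0})\leq (r/\varepsilon)^{n}\excesso \leq c(n),
\]
so that Lemma~\ref{lem:eest}\ref{item:eest4} inverts $V$ to give $\sup|\sg(u_{\varepsilon})-\xi_{0}|\leq c\,\excesso^{2\alpha/(n+4\alpha)}$. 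Then the second mollification converts this sup bound into a H\"older bound at the cost of only the factor $(r/\varepsilon)^{\alpha}$ via the elementary estimate $[\eta_{\varepsilon}*g]_{\hold^{0,\alpha}}\leq c\varepsilon^{-\alpha}\sup|g-\xi_{0}|$; no case split on $|x-y|$ is needed. By contrast, you collapse both mollifiers into the single kernel $\psi_{\varepsilon}$ and replace the Jensen step by the local mass estimate~\eqref{eq:plan-key}, obtained via Cauchy--Schwarz on $\{|\mathscr{E}u-\xi_{0}|\leq 1\}$ and the linear behaviour of $V$ elsewhere. Your approach is slightly longer but more robust: it does not use convexity of $V$, only the two-sided equivalence $V\sim\min(|\cdot|,|\cdot|^{2})$, and it would work equally well for a single (smooth) mollifier rather than a doubly iterated one. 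The paper's approach is shorter precisely because it leverages the two-stage mollification and the convexity of $V$.
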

\begin{proof}
First observe that, as a consequence of elementary estimates for convolutions, we obtain with a constant $c=c(n)>0$
\begin{align}\label{eq:step}
\mathbf{t}_{\alpha,\xi_{0}}\Big(u_{\varepsilon,\varepsilon};x_{0},\frac{r}{2}\Big) & \leq c\,\left(1+\left(\frac{r}{\varepsilon}\right)^{\alpha}\right)\sup_{\ball(x_{0},\frac{r}{2}+\varepsilon)}|\sg(u_{\varepsilon})-\xi_{0}|.
\end{align}
In fact, for $x\in\ball(x_{0},\frac{r}{2})$ we have $|\sg(u_{\varepsilon,\varepsilon})(x)-\xi_{0}| = |\rho_{\varepsilon}^{(2)}*(\sg(u_{\varepsilon})-\xi_{0})(x)|$ and thus 
\begin{align}\label{eq:step1}
\sup_{x\in\ball(x_{0},r/2)}|\sg(u_{\varepsilon,\varepsilon})(x)-\xi_{0}|\leq \sup_{x\in\ball(x_{0},r/2+\varepsilon)}|\sg(u_{\varepsilon})(x)-\xi_{0}|.
\end{align}
On the other hand, for any radially symmetric standard mollifier $\eta\colon\ball(0,1)\to[0,1]$ there exists a constant $c_{\eta}>0$ such that for all $g\in\lebe^{1}(\R^{n};\R_{\sym}^{n\times n})$ and $\delta>0$ there holds 
\begin{align}\label{eq:step3}
[\eta_{\delta}*g]_{\hold^{0,\alpha}(\overline{\ball(x_{0},r/2)};\rsym)}\leq \frac{c_{\eta}}{\delta^{\alpha}}\sup_{\ball(x_{0},r/2+\delta)}|g-\xi|\qquad\text{for all}\;\xi\in\R_{\sym}^{n\times n},
\end{align}
which can be established by straightforward computation. Therefore, with $c=c(n)>0$,
\begin{align}\label{eq:step2}
r^{\alpha}[\sg(u_{\varepsilon,\varepsilon})]_{\hold^{0,\alpha}(\ball(x_{0},r/2);\rsym)}\leq c \left(\frac{r}{\varepsilon}\right)^{\alpha}\sup_{\ball(x_{0},r/2+\varepsilon)}|\sg(u_{\varepsilon})-\xi_{0}|. 
\end{align}
In consequence, adding \eqref{eq:step1} and \eqref{eq:step2} yields \eqref{eq:step}, and in order to arrive at the claimed estimate, we must give an estimate for $\sup_{\ball(x_{0},r/2+\varepsilon)}|\sg(u_{\varepsilon})-\xi_{0}|$. As $\varepsilon$ is adjusted by \eqref{eq:deltaeps} and thus $\ball(x,\varepsilon)\subset\ball(x_{0},r)$ for all $x\in\ball(x_{0},\frac{r}{2}+\varepsilon)$, we obtain by Jensen's inequality
\begin{align}\label{eq:Kepschoose}
\begin{split}
V(\sg(u_{\varepsilon})(x)-\xi_{0}) & \leq \dashint_{\ball(x,\varepsilon)}V(\E u-\xi_{0}) \\ & \leq \left(\frac{r}{\varepsilon}\right)^{n}\excesso(u;x_{0},r)\leq \widetilde{\ell} \excesso(u;x_{0},r)^{\frac{4\alpha}{n+4\alpha}} \leq \widetilde{\ell}
\end{split}
\end{align}
for all such $x$, where $\widetilde{\ell}=(48\sqrt{n}\lambda_{\con})^{n}$. Here, the ultimate estimate is due to our assumption $\excesso(u;x_{0},r)<1$. By Lemma~\ref{lem:eest}\ref{item:eest4} with $\ell=\sqrt{\widetilde{\ell}^{2}+2\widetilde{\ell}}$ and using \eqref{eq:Kepschoose}, we obtain for all $x\in\ball(x_{0},\frac{r}{2}+\varepsilon)$ with a constant $c(n)>0$ (as our choice of $\ell$ only depends on $n$)
\begin{align}\label{eq:intermediateadjust}
|\sg(u_{\varepsilon})(x)-\xi_{0}|^{2} \leq c(n)V(\sg(u_{\varepsilon})(x)-\xi_{0})\leq c(n) \excesso(u;x_{0},r)^{\frac{4\alpha}{n+4\alpha}}. 
\end{align}
Now, by \eqref{eq:step}, the specific choice of $\varepsilon$ by \eqref{eq:deltaeps}, \eqref{eq:intermediateadjust} and since $\excesso(u;x_{0},r)<1$, 
\begin{align*}
\mathbf{t}_{\alpha,\xi_{0}}(u_{\varepsilon,\varepsilon};x_{0},r/2) & \leq c(n,\alpha)\left(1+\left(\excesso(u;x_{0},r)\right)^{-\frac{\alpha}{n+4\alpha}}\right)\excesso(u;x_{0},r)^{\frac{2\alpha}{n+4\alpha}} \\ 
& \leq c(n,\alpha) \excesso(u;x_{0},r)^{\frac{\alpha}{n+4\alpha}}.
\end{align*}
This is \eqref{eq:adjustclaimequation}, and the proof is complete. 
\end{proof}
Working from \eqref{eq:adjustclaimequation}, Jensen's inequality in conjunction with Lemma~\ref{lem:eest}\ref{item:eest4} then yields
\begin{corollary}\label{cor:adjust}
In the situation and adopting the terminology of Lemma \ref{lem:adjust}, we have 
\begin{align}\label{eq:adjustcoreq}
\begin{split}
&\int_{\ball(x_{0},r/2)}|\sg(u_{\varepsilon,\varepsilon})-\xi_{0}|^{2}\dif x \leq c(n,\alpha) \excenew(u;x_{0},r),\;\;\;\text{and}\\&\int_{\ball(x_{0},r/2)}|\sg(u_{\varepsilon,\varepsilon})-(\sg(u_{\varepsilon,\varepsilon}))_{x_{0},r/2}|^{2}\dif x \leq c(n,\alpha) \excenew(u;x_{0},r).
\end{split}
\end{align}
\end{corollary}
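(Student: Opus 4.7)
The plan is to deduce both estimates from Lemma~\ref{lem:adjust} combined with a Jensen-type convolution estimate for the convex function $V$. I will proceed in three short steps.

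First, observe that by the hypothesis $\excesso(u;x_{0},r)<1$ and by Lemma~\ref{lem:adjust}, we have
\[
\mathbf{t}_{\alpha,\xi_{0}}(u_{\varepsilon,\varepsilon};x_{0},r/2)\leq c(n,\alpha)\excesso(u;x_{0},r)^{\frac{\alpha}{n+4\alpha}}\leq c(n,\alpha).
\]
In particular, $|\sg(u_{\varepsilon,\varepsilon})-\xi_{0}|\leq \ell$ on $\ball(x_{0},r/2)$ for some $\ell=\ell(n,\alpha)>0$, and Lemma~\ref{lem:eest}\ref{item:eest4} applies to give
\[
|\sg(u_{\varepsilon,\varepsilon})-\xi_{0}|^{2}\leq c(n,\alpha) V(\sg(u_{\varepsilon,\varepsilon})-\xi_{0})\qquad\text{pointwise on }\ball(x_{0},r/2).
\]

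Second, I would transfer the pointwise bound into an $\lebe^{1}$-bound by unwinding the mollifications and invoking Jensen's inequality for the convex function $V$. Writing $\sg(u_{\varepsilon,\varepsilon})=\rho_{\varepsilon}^{(2)}*\rho_{\varepsilon}^{(1)}*(\E u)$ (understood in the sense of \eqref{eq:udeltaepsdef}) and noting that $\xi_{0}$ is obtained by averaging against $\rho_{\varepsilon}^{(2)}*\rho_{\varepsilon}^{(1)}$ (which is a probability density), Jensen's inequality yields
\[
V(\sg(u_{\varepsilon,\varepsilon})(x)-\xi_{0})\leq (\rho_{\varepsilon}^{(2)}*\rho_{\varepsilon}^{(1)}*V[\E u-\xi_{0}])(x).
\]
Integrating over $\ball(x_{0},r/2)$ and applying Fubini (using that $\rho_{\varepsilon}^{(2)}*\rho_{\varepsilon}^{(1)}$ has total mass one and support in $\ball(0,2\varepsilon)$), we obtain
\[
\int_{\ball(x_{0},r/2)}V(\sg(u_{\varepsilon,\varepsilon})-\xi_{0})\dif x\leq V[\E u-\xi_{0}](\ball(x_{0},r/2+2\varepsilon)).
\]
The choice of $\varepsilon$ in \eqref{eq:deltaeps} ensures $2\varepsilon<r/2$ (since $\excesso(u;x_{0},r)<1$ and $\lambda_{\con}>1$), so $\ball(x_{0},r/2+2\varepsilon)\subset\ball(x_{0},r)$ and the right-hand side is bounded by $\excenew(u;x_{0},r)$. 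Combining with the first step gives the first claim.

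Finally, the second estimate follows immediately from the first by the elementary variance inequality
\[
\int_{\ball(x_{0},r/2)}|\sg(u_{\varepsilon,\varepsilon})-(\sg(u_{\varepsilon,\varepsilon}))_{x_{0},r/2}|^{2}\dif x\leq \int_{\ball(x_{0},r/2)}|\sg(u_{\varepsilon,\varepsilon})-\xi_{0}|^{2}\dif x,
\]
which holds for any constant $\xi_{0}\in\rsym$ since the mean minimises the $\lebe^{2}$-distance. No step in this argument presents a real obstacle; the only thing to be careful about is the smallness of $\varepsilon$ relative to $r$, which is guaranteed by \eqref{eq:deltaeps} together with $\excesso(u;x_{0},r)<1$ and $\lambda_{\con}>1$.
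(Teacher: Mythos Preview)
Your proof is correct and follows precisely the route the paper indicates in its one-line justification (``Working from \eqref{eq:adjustclaimequation}, Jensen's inequality in conjunction with Lemma~\ref{lem:eest}\ref{item:eest4} then yields''): you use Lemma~\ref{lem:adjust} to get a uniform bound, invoke Lemma~\ref{lem:eest}\ref{item:eest4} to pass from $|\cdot|^{2}$ to $V(\cdot)$, apply Jensen through the double mollification, and then the variance inequality for the second estimate. The only remark is that your phrase ``$\xi_{0}$ is obtained by averaging against $\rho_{\varepsilon}^{(2)}*\rho_{\varepsilon}^{(1)}$'' is slightly misleading---what you actually use (and what makes the Jensen step work) is that $\xi_{0}$ is a constant, hence invariant under convolution with a probability density, so $\sg(u_{\varepsilon,\varepsilon})-\xi_{0}=(\rho_{\varepsilon}^{(2)}*\rho_{\varepsilon}^{(1)})*(\E u-\xi_{0})$.
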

%\begin{proof}
%As assumed in Lemma~\ref{lem:adjust}, $\xi_{0}=(\E u)_{x_{0},r}$. Estimate \eqref{eq:adjustclaimequation} yields that 
%\begin{align*}
%|\sg(u_{\delta,\varepsilon})(x)-\xi_{0}|\leq \mathbf{t}_{\alpha,\xi_{0}}(u_{\delta,\varepsilon};x_{0},r/2)\leq c(n,\alpha)
%\end{align*}
%holds for all $x\in\ball(x_{0},r/2)$. Again as a consequence of Lemma~\ref{lem:eest}, we find $c=c(n,\alpha)>0$ such that $|\sg(u_{\delta,\varepsilon})(x)-\xi_{0}|^{2}\leq cV(\sg(u_{\delta,\varepsilon})-\xi_{0})$ for all $x\in\ball(x_{0},r/2)$. We conclude by Jensen's inequality that
%\begin{align*}
%\int_{\ball(x_{0},r/2)}|\sg(u_{\delta,\varepsilon})-\xi_{0}|^{2}\dif x & \leq c (n,\alpha)\int_{\ball(x_{0},r/2)}V(\sg(u_{\delta,\varepsilon})-\xi_{0})\dif x \\ &  \leq c(n,\alpha)\int_{\ball(x_{0},r/2)}(V(\sg(u)-m))_{\delta,\varepsilon}\dif x \leq c\, \excenew(u;x_{0},r), 
%\end{align*}
%the last estimate being valid due to $\frac{r}{2}+\delta+\varepsilon<r$ by assumption. The second inequality directly follows from this and hence the proof of the corollary is complete. 
%\end{proof}

\subsubsection{Comparison estimates and decay}\label{sec:comparison}
In this section, we let $u\in\gm(F;u_{0})$ be a generalised minimiser, where $f$ satisfies the requirements of Theorem~\ref{thm:PR}. Throughout, let $x_{0}\in\Omega$ and $R>0$ with $\ball(x_{0},R)\Subset\Omega$ be given. We put $\xi_{0}:=(\E u)_{x_{0},R}$ and let $\varrho_{\xi_{0}}>0$. For $a\in\mathbb{B}(\xi_{0},\varrho_{\xi_{0}})$ we recall from \eqref{eq:tildef} the shifted integrand $f_{a}\colon\rsym\to\R$ defined by 
\begin{align*}
f_{a}(\xi):=f(a+\xi)-f(a)-\langle f'(a),\xi\rangle,\;\;\;\xi\in \R_{\sym}^{n\times n}.
\end{align*}
Given a map $w\colon \ball(x_{0},R)\to \R^{n}$, we then define $\widetilde{w}_{a}\colon\ball(x_{0},R)\to \R^{n}$ by 
\begin{align}\label{eq:tildew}
\widetilde{w}_{a}(x):=w(x)-A_{x_{0}}(x):= w(x)-a(x-x_{0}).
\end{align} 
%We note that if $u,v\in\bd(\ball(x_{0},R))$ satisfy $\trace(u)=\trace(v)$ $\mathscr{H}^{n-1}$-a.e. on $\partial\!\ball(x_{0},R)$, then Gau\ss ' theorem (cf.~\eqref{eq:GaussGreen}) yields
%\begin{align}\label{lem:passingtotilde}
%\int_{\ball(x_{0},R)}(f(\E u)-f(\E v)) = \int_{\ball(x_{0},R)}(f_{a}(\E\widetilde{u}_{a})-f_{a}(\E\widetilde{v}_{a})).
%\end{align}
%\begin{proof}
%Writing out the definitions, we obtain for $u,v\in\ld(\ball(z,R))$ 
%\begin{align*}
%\int_{\ball(z,R)}\widetilde{f}(\sg(\widetilde{u})) & -\widetilde{f}(\sg(\widetilde{v}))\dif x  = \int_{\ball(z,R)}f(\mathbf{m}'+\sg(\widetilde{u}))- f(\mathbf{m}'+\sg(\widetilde{v}))\dif x \\
%& - \int_{\ball(z,R)} f'(\mathbf{m}')\sg(\widetilde{u})-f'(\mathbf{m}')\sg(\widetilde{v})\dif x \\
%& = \int_{\ball(z,R)}f(\sg(u))- f(\sg(v))\dif x - \int_{\ball(z,R)} f'(\mathbf{m}')\sg(\widetilde{u})- f'(\mathbf{m}')\sg(\widetilde{v})\dif x. 
%\end{align*}
%Now notice that $\sg(\widetilde{u})-\sg(\widetilde{v})=\sg(u)-\sg(v)$ and so, by the Gau\ss -Green theorem on $\bd$ (cf.~\eqref{eq:GaussGreen}), 
%\begin{align*}
%\int_{\ball(z,R)} f'(\mathbf{m}')(\sg(\tilde{u})-\sg(\tilde{v}))\dif x & = \int_{\partial\!\ball(z,R)}\langle f'(\mathbf{m}'),\trace(u-v)\odot\nu)\dif\mathscr{H}^{n-1} \\
%& - \int_{\ball(z,R)}\di((f'(\mathbf{m}')))(u-v)\dif x = 0
%\end{align*}
%because $\di((f'(\mathbf{m}')))=0$ and, by our assumptions on $u$ and $v$, $\trace(u)=\trace(v)$ $\mathscr{H}^{n-1}$--a.e. on $\partial\!\ball(z,R)$. The claim follows. 
%\end{proof}
\begin{proposition}[Preliminary decay estimate]\label{prop:main1}
Let $f\in\hold^{2}(\rsym)$ be a convex function with \eqref{eq:lingrowth1}. Also, suppose that $\xi_{0}\in\rsym$, $0<\varrho_{\xi_{0}}<1$ are such that the following hold:
\begin{enumerate}
\item There exists a bounded and non-decreasing function $\omega_{\xi_{0},\varrho_{\xi_{0}}}\colon\R_{\geq 0}\to\R_{\geq 0}$ with
\begin{align}\label{eq:mainlemmacontinuity}
\begin{split}
& \lim_{t\searrow 0}\omega_{\xi_{0},\varrho_{\xi_{0}}}(t)=0,\\
& |f''(\xi)-f''(\xi_{0})|\leq\omega_{\xi_{0},\varrho_{\xi_{0}}}(|\xi-\xi_{0}|)\;\;\;\text{for all}\;\xi\in\mathbb{B}(\xi_{0},\varrho_{\xi_{0}}).
\end{split}
\end{align}
\item $m_{\xi_{0},\varrho_{\xi_{0}}}:=\min\{\lambda(z)\;\text{smallest eigenvalue of}\;f''(z)\colon\;z\in\overline{\mathbb{B}(\xi_{0},\varrho_{\xi_{0}})}\}>0.$
\end{enumerate}
Then there exist constants $\Theta=\Theta(\varrho_{\xi_{0}},n,\alpha)\in (0,1)$ and 
\begin{align*}
c=c(n,\lambda_{\con},\varrho_{\xi_{0}},m_{\xi_{0},\varrho_{\xi_{0}}},\Lip(f),\sup_{\mathbb{B}(\xi_{0},\varrho_{\xi_{0}})}|f''|)>0
\end{align*}
such that 
\begin{align}\label{eq:preliminarysmallnessTheta}
(\E u)_{x_{0},R}=\xi_{0}\;\;\;\text{and}\;\;\;\excesso(u;x_{0},R)<\Theta
\end{align}
imply that
\begin{align}\label{eq:partial}
\excenew(u;x_{0},r) \leq c\Big(\excenew(v;x_{0},2r)+ \Big(1+\left(\frac{R}{r}\right)^{n+1}\Big)(\excesso(u;x_{0},R))^{\frac{1}{2n+8\alpha}}\excenew(u;x_{0},R)\Big)
\end{align}
holds for all $0<r<R/4$. Here we have set $v:=u_{\varepsilon,\varepsilon}$ (cf.~\eqref{eq:udeltaepsdef}) where 
\begin{align}\label{eq:epsilonchoosemain}
\varepsilon:=\frac{1}{48\sqrt{n}\lambda_{\con}}R\excesso(u;x_{0},R)^{\frac{1}{n+4\alpha}}.
\end{align}
\end{proposition}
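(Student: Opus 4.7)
The plan is to realise the three-step scheme laid out in Section~\ref{sec:PRintro}: apply Proposition~\ref{prop:1} to the smoothed comparison map $v:=u_{\varepsilon,\varepsilon}$, transfer the decay back to $u$ by means of the convex Poincar\'{e}-type inequalities of Section~\ref{sec:poincare} together with the good-annulus selection of Lemma~\ref{lem:convex}, and control the deviation $\devi(v;x_{0},R/2)$ by exploiting local generalised minimality of $u$. With $\varepsilon$ as in \eqref{eq:epsilonchoosemain} and $\Theta$ so small that $c(n,\alpha)\Theta^{\alpha/(n+4\alpha)}<\varrho_{\xi_{0}}/c_{\comp}$, the assumption $\excesso(u;x_{0},R)<\Theta$ together with Lemma~\ref{lem:adjust} yields $\mathbf{t}_{\alpha,\xi_{0}}(v;x_{0},R/2)<\varrho_{\xi_{0}}/c_{\comp}$, while Corollary~\ref{cor:adjust} gives $\int_{\ball(x_{0},R/2)}|\sg(v)-\xi_{0}|^{2}\dif x\leq c\,\excenew(u;x_{0},R)$. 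Proposition~\ref{prop:1} on $\ball(x_{0},R/2)$ then yields for every $0<r<R/4$
\begin{align*}
\int_{\ball(x_{0},2r)}|\sg(v)-(\sg(v))_{x_{0},2r}|^{2}\dif x\leq c\bigl((r/R)^{n+2}+\vartheta(\mathbf{t}_{\alpha,\xi_{0}}(v;x_{0},R/2))\bigr)\excenew(u;x_{0},R)+c\,\devi(v;x_{0},R/2),
\end{align*}
and, since $\sg(v)$ is uniformly close to $\xi_{0}$ on $\ball(x_{0},R/2)$, Lemma~\ref{lem:eest}\ref{item:eest4} converts this $L^{2}$-excess into $\excenew(v;x_{0},2r)$ up to dimensional constants.

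Passing from $v$ back to $u$ is done via the nonlinear comparison provided by Lemma~\ref{lem:convex}\ref{item:AG1} applied to the convex, non-negative, linear-growth integrand $V(\cdot-c)$ (for $c$ the constant mean $(\sg(v))_{x_{0},2r}$ or $(\E u)_{x_{0},r}$): this controls the $V$-excess of one of $\E u,\,\sg(v)\mathscr{L}^{n}$ on a sub-ball by the analogous quantity for the other at the cost of a factor $c\varepsilon/r$ times the corresponding full-ball total, while the subadditivity of $V$ from Lemma~\ref{lem:eest}\ref{item:eest2} permits exchanging the two reference matrices at the cost of mean-value overheads. Substituting $\varepsilon\sim R\,\excesso(u;x_{0},R)^{1/(n+4\alpha)}$ and using $\excesso(u;x_{0},R)<1$ absorbs these overheads into the contribution $(R/r)^{n+1}\excesso(u;x_{0},R)^{1/(2n+8\alpha)}\excenew(u;x_{0},R)$ announced in \eqref{eq:partial}, leaving only $\devi(v;x_{0},R/2)$ to be estimated.

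To bound the deviation, I build a cut-off competitor $w:=\chi u+(1-\chi)v$ with $\chi\in\hold_{c}^{\infty}(\ball(x_{0},t');[0,1])$ satisfying $\chi\equiv 1$ on $\ball(x_{0},r')$ and $|\nabla\chi|\leq c/(t'-r')$ for radii $R/4<r'<t'<R/2$ selected via Lemma~\ref{lem:convex}\ref{item:AG2}. Convexity of $f$ together with the Lipschitz bound of Lemma~\ref{lem:boundbelow} gives the pointwise estimate $f(\sg(w))\leq\chi f(\sg(u))+(1-\chi)f(\sg(v))+c\,|\nabla\chi|\,|u-v|$ so that, after smoothing $w$ into $\hold^{1,\alpha}(\overline{\ball(x_{0},R/2)};\R^{n})$,
\begin{align*}
\devi(v;x_{0},R/2)\leq\int_{\ball(x_{0},t')}\bigl(f(\sg(v))-f(\sg(u))\bigr)\dif x+\tfrac{c}{t'-r'}\int_{\mathcal{A}(x_{0};r',t')}|u-v|\dif x.
\end{align*}
The first term is controlled by $c\,|\E^{s}u|(\ball(x_{0},t'))$ plus a trace overhead via the local generalised minimality $\overline{F}_{u}[u;\ball(x_{0},t')]\leq\overline{F}_{u}[v;\ball(x_{0},t')]$, and is absorbed into the singular-part contribution already present in $\excenew(u;x_{0},R)$; the cross-term is bounded via Corollary~\ref{cor:convolutioncontrol} with $L=1/(t'-r')$, producing the factor $\max\{\varepsilon/(t'-r'),\varepsilon^{2}/(t'-r')^{2}\}$, and choosing $t'-r'\sim R$ recovers the announced exponent.

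The principal technical obstacle is this last step: local generalised minimality of $u$ against the competitor $v$ cannot avoid the appearance of $\E^{s}u$ on the transition annulus, which forces the careful choice of a good annulus via Lemma~\ref{lem:convex}\ref{item:AG2}, while the cross-term $|\nabla\chi|\,|u-v|$ naively produces only the weaker exponent $\tfrac{1}{n+4\alpha}$; a Cauchy--Schwarz splitting between the convolution factor $\max\{L\varepsilon,(L\varepsilon)^{2}\}^{1/2}$ of Corollary~\ref{cor:convolutioncontrol} and the bound $|V(\E u)|(\ball(x_{0},R))\leq c\,\excenew(u;x_{0},R)$ is needed to recover the sharper exponent $\tfrac{1}{2n+8\alpha}$ of \eqref{eq:partial}.
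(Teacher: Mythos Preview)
Your plan misidentifies the content of the proposition. The target inequality \eqref{eq:partial} keeps $\excenew(v;x_{0},2r)$ as a term on the right-hand side; neither Proposition~\ref{prop:1} nor any bound on $\devi(v;x_{0},R/2)$ is required here --- those enter only later, in Proposition~\ref{prop:propdecay} and Corollary~\ref{cor:epsreg}. The sole task is the comparison $\excenew(u;x_{0},r)\leq c\,\excenew(v;x_{0},2r)+\text{error}$, i.e.\ to bound the excess of the \emph{original} measure $\E u$ by that of its \emph{mollification} $\sg(v)$ on a slightly larger ball. Your second paragraph attempts this via Lemma~\ref{lem:convex} alone, but that lemma is a Jensen-type statement and only yields the opposite inequality $f[\mu_{\varepsilon,\varepsilon}](\ball)\leq f[\mu](\ball)+\text{small}$; it cannot bound $f[\E u]$ by $f[\sg(v)\mathscr{L}^{n}]$. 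That direction genuinely requires minimality.

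The paper obtains it from generalised local minimality of $u$ with a competitor $\psi:=\widetilde{v}_{a}+\rho(\widetilde{u}_{a}-\widetilde{v}_{a})$ that agrees with $\widetilde{u}_{a}$ on the \emph{outer} boundary $\partial\ball(x_{0},s_{k'})$ and with $\widetilde{v}_{a}$ on the inner ball $\ball(x_{0},t_{k'})$ --- the opposite orientation of your cut-off $w=\chi u+(1-\chi)v$, which has the boundary values of $v$ and is therefore admissible for the $\devi$-problem of Proposition~\ref{prop:propdecay}, not for the minimality of $u$ needed here. One works throughout with the shifted integrand $f_{a}$, $a=(\sg(v))_{x_{0},r}$, whose two-sided bound $f_{a}\sim V$ from Lemma~\ref{lem:shifted} provides the translation between $\overline{F}_{a}$ and $V$-excesses. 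Finally, the exponent $\tfrac{1}{2n+8\alpha}$ does not arise from a Cauchy--Schwarz splitting: it comes from a pigeonhole argument in which $N\sim\excesso(u;x_{0},R)^{-1/(2n+8\alpha)}$ disjoint annuli of width $\sim R\,\excesso(u;x_{0},R)^{1/(2n+8\alpha)}$ are laid down in $\ball(x_{0},R)$ and one is selected carrying at most $N^{-1}$ of the total $f_{a}$-mass. On that good annulus both the Jensen error from Lemma~\ref{lem:convex}\ref{item:AG2} and the convolution error from Corollary~\ref{cor:convolutioncontrol} (with $L=(s_{k'}-t_{k'})^{-1}$, so that $L\varepsilon\sim\excesso(u;x_{0},R)^{1/(2n+8\alpha)}$) already carry the announced power.
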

\begin{proof}
The comparison argument underlying the proof consists of three ingredients: Lemma~\ref{lem:convex} and Proposition~\ref{prop:convest}, both expressing properties of (the symmetric gradients of) generic $\bd_{\locc}$-maps, and generalised local minimality of $u$. 

\emph{Step 1. Preliminaries.} Let $0<r<R/4$ and put $a:=(\sg(v))_{x_{0},r}$. By \eqref{eq:epsilonchoosemain}   and since $t\mapsto \mathbf{t}_{\alpha,\xi_{0}}(v;x_{0},t)$ is non-decreasing, we have by Lemma~\ref{lem:adjust} and $\xi_{0}=(\E u)_{x_{0},R}$
\begin{align*}
\mathbf{t}_{\alpha,\xi_{0}}(v;x_{0},r) & \leq \mathbf{t}_{\alpha,\xi_{0}}(v;x_{0},\tfrac{R}{2})\leq c(n,\alpha)\excesso(u;x_{0},R)^{\frac{n}{n+4\alpha}},
\end{align*}
where we can assume without loss of generality that $c(n,\alpha)>1$. From here we deduce
\begin{align}\label{eq:distaxi0}
|\xi_{0}-a| & \leq \dashint_{\ball(x_{0},r)}|\sg(v)-\xi_{0}|\dif x \leq \mathbf{t}_{\alpha,\xi_{0}}(v;x_{0},r) \leq c(n,\alpha)\excesso(u;x_{0},R)^{\frac{n}{n+4\alpha}}, 
\end{align}
and put, with $c(n,\alpha)>0$ as in the preceding inequality,
\begin{align}\label{eq:achoose}
\Theta:=\left(\frac{\varrho_{\xi_{0}}}{4c(n,\alpha)}\right)^{1+\frac{4\alpha}{n}}\left(\frac{1}{10}\right)^{2(n+4\alpha)}.
\end{align}
With this choice of $\Theta$, $\excesso(u;x_{0},R)<\Theta$ implies $|\xi_{0}-a|<\varrho_{\xi_{0}}/2$ by virtue of \eqref{eq:distaxi0}. Hence $\mathbb{B}(a,\frac{\varrho_{\xi_{0}}}{2})\subset\mathbb{B}(\xi_{0},\varrho_{\xi_{0}})$, and so   $\eqref{eq:mainlemmacontinuity}_{2}$ continues to hold in $\mathbb{B}(a,\frac{\varrho_{\xi_{0}}}{2})$. Lemma~\ref{lem:shifted}\ref{item:shifted2} moreover implies that there exists $c_{0}=c_{0}(\xi_{0},\varrho_{\xi_{0}},m_{\xi_{0},\varrho_{\xi_{0}}},\Lip(f),\sup_{\mathbb{B}(\xi_{0},\varrho_{\xi_{0}})}|f''|))>1$ such that 
\begin{align}\label{eq:loweruppermain}
\tfrac{1}{c_{0}}V(\xi) \leq f_{a}(\xi)\leq c_{0}V(\xi)\qquad\text{for all}\;\xi\in\rsym.
\end{align}
\emph{Step 2. Selection of good radii.} For this proof, we put for $w\in\bd_{\locc}(\Omega)$ with slight abuse of notation
\begin{align*}
\overline{F}_{a}[w;\omega]:=\int_{\omega}f_{a}(\E w)
\end{align*}
whenever $\omega\Subset\Omega$ has Lipschitz boundary $\partial\omega$. By Lemma~\ref{lem:shifted}\ref{item:shifted1}, $f_{a}\geq 0$ and so $\overline{F}_{a}\geq 0$. To employ the comparison argument in step 3 from below, we require a suitable bound on the difference $\overline{F}_{a}[\widetilde{v}_{a};\mathcal{A}]-\overline{F}_{a}[\widetilde{u}_{a};\mathcal{A}]$ in terms of the excess $\excesso(u;x_{0},R)$, $\mathcal{A}\subset\ball(x_{0},R)$ denoting an annulus. This task can, in general, only be achieved on certain annuli $\mathcal{A}$, and we proceed by constructing the latter. We define an exit index 
\begin{align}\label{eq:exit}
N:=\left\lfloor \frac{125}{8(\excesso(u;x_{0},R))^{\frac{1}{2n+8\alpha}}}\right\rfloor. 
\end{align}
Then, by \eqref{eq:preliminarysmallnessTheta} and \eqref{eq:achoose}, $N\geq 15/(\excesso(u;x_{0},R))^{\frac{1}{2n+8\alpha}}$. We then put, for $k\in\{1,...,8N\}$,
\begin{align}\label{eq:akdef}
\kappa_{k}:=\frac{5}{8}R+k\frac{R}{500}(\excesso(u;x_{0},R))^{\frac{1}{2n+8\alpha}}
\end{align}
so that $\kappa_{k}\in [\frac{5}{8}R,\frac{7}{8}R]$. By our choice \eqref{eq:epsilonchoosemain} of $\varepsilon$, we have $R-2\varepsilon>\tfrac{7}{8}R$. Also, by Lemma~\ref{lem:shifted}\ref{item:shifted2}, $f_{a}\in\hold^{2}(\rsym)$ is of linear growth. Since moreover $r<\tfrac{R}{4}$, Lemma \ref{lem:convex}\ref{item:AG2} is applicable and yields that for any $k\in\{1,...,N\}$ there exist $t_{k}\in(\kappa_{8k-1},\kappa_{8k})$ and $r_{k}\in(r,2r)$ with 
\begin{align}\label{eq:rtestimate}
\begin{split}
\int_{\mathcal{A}(x_{0};r_{k},t_{k})}f_{a}(\E\widetilde{v}_{a})-f_{a}(\E\widetilde{u}_{a}) & \stackrel{\text{Lemma~\ref{lem:convex}}}{\leq}  4\varepsilon\Big(\frac{1}{\kappa_{8k}-\kappa_{8k-1}}+\frac{1}{r} \Big)\int_{\ball(x_{0},R)}f_{a}(\E\widetilde{u}_{a}) \\ 
& \stackrel{\eqref{eq:akdef},\,\eqref{eq:epsilonchoosemain}}{\leq} 
\frac{50}{\lambda_{\con}\sqrt{n}}\Big((\excesso(u;x_{0},R))^{\frac{1}{2n+8\alpha}}\Big. \\ & \Big. \;\;\;\;\;\;\;\;\;\;\;\;\;\;+(\excesso(u;x_{0},R))^{\frac{1}{n+4\alpha}}\Big(\frac{R}{r}\Big) \Big)\int_{\ball(x_{0},R)}f_{a}(\E\widetilde{u}_{a}). 
\end{split}
\end{align}
Now, recalling the choice \eqref{eq:epsilonchoosemain} of $\varepsilon$,  for $k=1,...,N$ the annuli 
\begin{align}\label{eq:annulidefine}
\begin{split}
\mathcal{A}_{k} & :=\mathcal{A}(x_{0};t_{k}-2\lambda_{\con}\sqrt{n}\varepsilon,s_{k}+2\lambda_{\con}\sqrt{n}\varepsilon),\;s_{k}:=t_{k}+\tfrac{R}{500}(\excesso(u;x_{0},R))^{\frac{1}{2n+8\alpha}}
\end{split}
\end{align}
are pairwise disjoint and contained in $\ball(x_{0},R)$. Let us address this point in detail: By our choice of $\varepsilon$, disjointness of $\mathcal{A}_{k}$ and $\mathcal{A}_{k+1}$ is equivalent to 
\begin{align}\label{eq:radiuscompute}
\begin{split}
\mathcal{A}_{k+1}\cap\mathcal{A}_{k}=\emptyset & \stackrel{\eqref{eq:annulidefine}}{\Leftrightarrow} t_{k+1}-2\lambda_{\con}\sqrt{n}\varepsilon > s_{k}+2\lambda_{\con}\sqrt{n}\varepsilon \\ 
& \!\!\!\!\!\!\!\stackrel{\eqref{eq:annulidefine},\eqref{eq:epsilonchoosemain}}{\Leftrightarrow} t_{k+1}-t_{k} > \frac{R}{500}(\excesso(u;x_{0},R))^{\frac{1}{2n+8\alpha}}+ \frac{R}{12}(\excesso(u;x_{0},R))^{\frac{1}{n+4\alpha}}. 
\end{split}
\end{align}
Now note that by construction, $t_{k+1}-t_{k}>\kappa_{8k+7}-\kappa_{8k}=\frac{7}{500}R(\excesso(u;x_{0},R))^{\frac{1}{2n+8\alpha}}$, and so the last inequality of \eqref{eq:radiuscompute} is certainly satisfied provided $\frac{72}{500}>(\excesso(u;x_{0},R))^{\frac{1}{2n+8\alpha}}$, which in turn follows from \eqref{eq:achoose}. Now, succesively employing \eqref{eq:annulidefine}, $t_{N}\leq \kappa_{8N}\leq \tfrac{7}{8}R$, \eqref{eq:epsilonchoosemain} and \eqref{eq:achoose}, we similarly arrive at $s_{N}+2\lambda_{\con}\sqrt{n}\varepsilon<R$. Thus, $\mathcal{A}_{k}\subset\ball(x_{0},R)$ for all $k\in\{1,...,N\}$. 

By pairwise disjointness of the $\mathcal{A}_{k}$'s and $\mathcal{A}_{k}\subset\ball(x_{0},R)$, we can therefore conclude that there exists $k'\in\{1,...,N\}$ such that 
\begin{align*}
N\int_{\mathcal{A}_{k'}}f_{a}(\E\widetilde{u}_{a})\leq \int_{\mathcal{A}_{1}}f_{a}(\E\widetilde{u}_{a}) +...+\int_{\mathcal{A}_{N}}f_{a}(\E\widetilde{u}_{a}) \leq \int_{\ball(x_{0},R)}f_{a}(\E\widetilde{u}_{a}).
\end{align*}
To extract information from this estimate, we employ the lower bound on $N$, cf. \eqref{eq:exit}ff., to obtain
\begin{align}\label{eq:annulusest2}
\int_{\mathcal{A}_{k'}}f_{a}(\E\widetilde{u}_{a})\leq  \frac{1}{15} (\excesso(u;x_{0},R))^{\frac{1}{2n+8\alpha}}\int_{\ball(x_{0},R)}f_{a}(\E\widetilde{u}_{a}),
\end{align}
For future purposes, let us particularly remark that 
\begin{align*}
\frac{\varepsilon}{s_{k'}-t_{k'}} = \frac{500}{R(\excesso(u;x_{0},R))^{\frac{1}{2n+8\alpha}}}\frac{R(\excesso(u;x_{0},R))^{\frac{1}{n+4\alpha}}}{48\lambda_{\con}\sqrt{n}} \stackrel{\eqref{eq:achoose}}{\leq} \frac{1}{\lambda_{\con}\sqrt{n}}\frac{11}{10} < 1 
\end{align*}
because of $\sqrt{n}\geq\sqrt{2}$ and hence
\begin{align}\label{eq:smallerthanone}
\max\left\{\left(\frac{\varepsilon}{s_{k'}-t_{k'}}\right),\left(\frac{\varepsilon}{s_{k'}-t_{k'}}\right)^{2}\right\} \leq 10(\excesso(u;x_{0},R))^{\frac{1}{2n+8\alpha}}.
\end{align}

\emph{Step 3. Comparison estimates.} Let now $r_{k'},t_{k'},s_{k'}$ be defined as in step 2 so that $r<r_{k'}<\tfrac{R}{2}<t_{k'}<s_{k'}<R$. We define a Lipschitz function $\rho\colon\ball(x_{0},R)\to[0,1]$ by 
\begin{align}\label{eq:cutoff}
\rho(x):=\frac{2}{s_{k'}-t_{k'}}(|x|-t_{k'})\mathbbm{1}_{\{t_{k'}\leq |x|\leq (s_{k'}+t_{k'})/2\}}(x)+\mathbbm{1}_{\{|x|>(s_{k'}+t_{k'})/2\}}(x)
\end{align}
for $x\in\ball(x_{0},R)$. Then we have $\psi:=\widetilde{v}_{a}+\rho(\widetilde{u}_{a}-\widetilde{v}_{a})\in \bd(\ball(x_{0},R))$ and, in particular, $\psi|_{\partial\!\ball(x_{0},s_{k'})}=\widetilde{u}_{a}|_{\partial\!\ball(x_{0},s_{k'})}$. Since thus $u|_{\partial\!\ball(x_{0},s_{k'})}=(\psi+A_{x_{0}})|_{\partial\!\ball(x_{0},s_{k'})}$ $\mathscr{H}^{n-1}$-a.e. on $\partial\!\ball(x_{0},s_{k'})$, generalised local minimality of $u$ for $F$ implies by virtue of the integration by parts formula \eqref{eq:GaussGreen}
\begin{align*}
\int_{\ball(x_{0},s_{k'})}f_{a}(\E\widetilde{u}_{a}) & =\int_{\ball(x_{0},s_{k'})}f(\E u) - f(a) - \langle f'(a),\E\widetilde{u}_{a}\rangle \\
& \leq \int_{\ball(x_{0},s_{k'})}f(\E\,(\psi+A_{x_{0}})) - f(a) - \langle f'(a),\E\psi\rangle = \int_{\ball(x_{0},s_{k'})}f_{a}(\E\psi). 
\end{align*}
Splitting $\ball(x_{0},s_{k'})$ according to the definition of $\rho$, we consequently arrive at
\begin{align*}
\overline{F}_{a}[\widetilde{u}_{a};\ball(x_{0},r_{k'})] & + \overline{F}_{a}[\widetilde{u}_{a};\ball(x_{0},t_{k'})\setminus\ball(x_{0},r_{k'})]  + \overline{F}_{a}[\widetilde{u}_{a};\ball(x_{0},s_{k'})\setminus\ball(x_{0},t_{k'})] \\ & \leq \overline{F}_{a}[\widetilde{v}_{a};\ball(x_{0},r_{k'})]+\overline{F}_{a}[\widetilde{v}_{a};\ball(x_{0},t_{k'})\setminus\ball(x_{0},r_{k'})] \\ & +\overline{F}_{a}[(\widetilde{v}_{a}+\rho(\widetilde{u}_{a}-\widetilde{v}_{a}));\ball(x_{0},s_{k'})\setminus\ball(x_{0},t_{k'})]. 
\end{align*}
Regrouping terms and employing \eqref{eq:usmall}, we consequently arrive at
\begin{align*}
\mathbf{I}:=\overline{F}_{a}&[\widetilde{u}_{a};\ball(x_{0},r_{k'})] \leq \Big[\overline{F}_{a}[\widetilde{v}_{a};\ball(x_{0},r_{k'})] \Big. \\ &\Big.+ \Big(\overline{F}_{a}[\widetilde{v}_{a};\ball(x_{0},t_{k'})\setminus\ball(x_{0},r_{k'})]-\overline{F}_{a}[\widetilde{u}_{a};\ball(x_{0},t_{k'})\setminus\ball(x_{0},r_{k'})]\Big) \Big. \\ & + \Big(\overline{F}_{a}[(\widetilde{v}_{a}+\rho(\widetilde{u}_{a}-\widetilde{v}_{a}));\ball(x_{0},s_{k'})\setminus\ball(x_{0},t_{k'})]-\overline{F}_{a}[\widetilde{u}_{a};\ball(x_{0},s_{k'})\setminus\ball(x_{0},t_{k'})\Big) \Big]\\ 
& =:\mathbf{II}+\mathbf{III}+\mathbf{IV}.
\end{align*}
\emph{Ad $\mathbf{I}$}. By Jensen's inequality and Lemma~\ref{lem:eest}\ref{item:eest2} in the first and \eqref{eq:loweruppermain}, $r\leq r_{k'}$ in the second step, we find
\begin{align}\label{eq:usmall}
\begin{split}
\excenew(u;x_{0},r) & \leq 4 \int_{\ball(x_{0},r)}V(\E u-a) \leq 4c_{0} \overline{F}_{a}[\widetilde{u}_{a};\ball(x_{0},r_{k'})]= 4c_{0}\mathbf{I}. 
\end{split}
\end{align}
\emph{Ad $\mathbf{II}$}. In a similar vein as in the estimation of \eqref{eq:usmall}, we recall $a=(\sg(v))_{x_{0},r}$ to obtain
\begin{align}
\mathbf{II} = \int_{\ball(x_{0},r_{k'})}f_{a}(\sg(v)-a)\dif x \leq c_{0}\,\excenew(v;x_{0},2r). 
\end{align}
\emph{Ad $\mathbf{III}$.} By our choice of $r_{k'},t_{k'}$ in step 2, cf.~\eqref{eq:rtestimate},  we use $\excenew(u;x_{0},R)<1$ to bound $\mathbf{III}$ by
\begin{align}
\mathbf{III} \leq \frac{50}{\lambda_{\con}\sqrt{n}}\Big(1+\frac{R}{r}\Big)(\excesso(u;x_{0},R))^{\frac{1}{2n+8\alpha}}\int_{\ball(x_{0},R)}f_{a}(\E\widetilde{u}_{a}).
\end{align}
\emph{Ad $\mathbf{IV}$.} This step of the proof crucially utilises the convolution inequality from Section~\ref{sec:poincare}, and to this end, we employ Corollary~\ref{cor:convolutioncontrol} with $L=\tfrac{1}{s_{k'}-t_{k'}}$. In combination with \eqref{eq:smallerthanone}, we hereafter obtain 
\begin{align}\label{eq:convolutionestrewritten}
\begin{split}
\int_{\mathcal{A}(x_{0};t_{k'},s_{k'})}V\Big(\frac{\widetilde{u}_{a}-\widetilde{v}_{a}}{s_{k'}-t_{k'}}\Big)\dif x & \leq c(n,\lambda_{\con})\excesso(u;x_{0},R)^{\frac{1}{2n+8\alpha}}\times \\ 
& \;\;\;\; \times\int_{\overline{\mathcal{A}(x_{0};t_{k'}-2\lambda_{\con}\sqrt{n}\varepsilon,s_{k'}+2\lambda_{\con}\sqrt{n}\varepsilon)}}V(\E\widetilde{u}_{a}).
\end{split}
\end{align}
We then arrive at the following string of inequalities: 
\begin{align*}
\mathbf{IV} &\stackrel{\text{Lemma}~\ref{lem:eest}\ref{item:eest2},\,\eqref{eq:loweruppermain}}{\leq} 4c_{0}\Big(\int_{\mathcal{A}(x_{0};t_{k'},s_{k'})}V(\E\widetilde{v}_{a})+V(\E\widetilde{u}_{a})+\int_{\mathcal{A}(x_{0};t_{k'},s_{k'})}V\Big(\frac{\widetilde{u}_{a}-\widetilde{v}_{a}}{s_{k'}-t_{k'}}\Big)\dif x\Big)\\
& \;\;\;\;\;\;\;\;\;\;\;\;\,\leq 8c_{0}\Big(\int_{\overline{\mathcal{A}(x_{0};t_{k'}-2\varepsilon,s_{k'}+2\varepsilon)}}V(\E\widetilde{u}_{a})+\int_{\mathcal{A}(x_{0};t_{k'},s_{k'})}V\Big(\frac{\widetilde{u}_{a}-\widetilde{v}_{a}}{s_{k'}-t_{k'}}\Big)\dif x\Big)\\
& \;\;\;\;\;\;\;\;\;\;\,\stackrel{\eqref{eq:convolutionestrewritten}}{\leq} 8c_{0}\Big(\int_{\overline{\mathcal{A}(x_{0};t_{k'}-2\varepsilon,s_{k'}+2\varepsilon)}}V(\E\widetilde{u}_{a})\Big. \\ & \Big. \;\;\;\;\;\;\;\;\;\;\;\;\;\;\;\;\;\;\;\;+c(n,\lambda_{\con})\excesso(u;x_{0},R)^{\frac{1}{2n+8\alpha}}\int_{\overline{\mathcal{A}(x_{0};t_{k'}-2\lambda_{\con}\sqrt{n}\varepsilon,s_{k'}+2\lambda_{\con}\sqrt{n}\varepsilon)}}V(\E\widetilde{u}_{a})\Big)\\
& \;\;\;\;\;\;\;\;\;\;\,\stackrel{\eqref{eq:loweruppermain}}{\leq} 8c_{0}^{2}\Big(\int_{\mathcal{A}(x_{0};t_{k'}-2\lambda_{\con}\sqrt{n}\varepsilon,s_{k'}+2\lambda_{\con}\sqrt{n}\varepsilon)}f_{a}(\E\widetilde{u}_{a}) \Big. \\ & \Big. \;\;\;\;\;\;\;\;\;\;\;\;\;\;\;\;\;\;\;\;\;\;\;\;\;\;\;\;\;\;\;\;\;\;\;\;\;\;\;\;\;\;\;\;\;\;\;\;\;\;\;\;\;\;\;\;+c(n,\lambda_{\con})\excesso(u;x_{0},R)^{\frac{1}{2n+8\alpha}}\int_{\ball(x_{0},R)}f_{a}(\E\widetilde{u}_{a})\Big)\\
&\;\;\;\;\;\;\;\;\;\;\,\stackrel{\eqref{eq:annulusest2}}{\leq} c(n,\lambda_{\con},c_{0})\excesso(u;x_{0},R)^{\frac{1}{2n+8\alpha}}\int_{\ball(x_{0},R)}f_{a}(\E\widetilde{u}_{a})
\end{align*}
where, in the final two steps, we have used that $
s_{k'}+2\lambda_{\con}\sqrt{n}\varepsilon < R$ as established in step 2. We may now gather the estimates for $\mathbf{I},...,\mathbf{IV}$ to obtain with a constant $c=c(n,\lambda_{\con},c_{0})>0$ 
\begin{align}\label{eq:almostherePR}
\excenew(u;x_{0},r) \leq c\Big(\excenew(v;x_{0},2r)+ \Big(1+\left(\frac{R}{r}\right) \Big)(\excesso(u;x_{0},R))^{\frac{1}{2n+8\alpha}}\int_{\ball(x_{0},R)}f_{a}(\E\widetilde{u}_{a})\Big).
\end{align}

\emph{Step 4. Conclusion.} In order to arrive at the requisite form of the preliminary decay estimate \eqref{eq:partial}, we estimate by succesive application of Jensen's inequality and \eqref{eq:loweruppermain}:
\begin{align}\label{eq:conclusionestimate1}
\begin{split}
\int_{\ball(x_{0},R)}f_{a}(\E\, & \widetilde{u}_{a}) \leq c_{0}\int_{\ball(x_{0},R)} V(\E u-a) \\ & \leq 2c_{0}\Big(\int_{\ball(x_{0},R)}V(\E u-\xi_{0})+\mathscr{L}^{n}(\ball(x_{0},R))V(\xi_{0}-a)\Big)\\ 
&\leq c(n,c_{0}) \Big(\int_{\ball(x_{0},R)}V(\E u-\xi_{0})+R^{n}\dashint_{\ball(x_{0},r)}V(\sg(v)-\xi_{0})\dif x\Big)\\ 
&\leq c(n,c_{0}) \Big(\int_{\ball(x_{0},R)}V(\E u-\xi_{0})+\left(\frac{R}{r}\right)^{n}\int_{\ball(x_{0},r)}V(|\E u_{\varepsilon,\varepsilon}-\xi_{0}|)\Big)\\ 
& \leq c(n,c_{0}) \Big(1+\left(\frac{R}{r}\right)^{n}\Big)\excenew(u;x_{0},R), 
\end{split}
\end{align}
the ultimate estimate being valid due to our choice $\xi_{0}=(\E u)_{x_{0},R}$ and $r+2\varepsilon<R$. Combining this estimate with \eqref{eq:almostherePR}, we obtain \eqref{eq:partial}, and the proof is complete. 
\end{proof}

\begin{proposition}\label{prop:propdecay}
In the situation of Proposition \ref{prop:main1} we have 
\begin{align}\label{eq:deviationestimate}
\devi(v;x_{0},\tfrac{R}{2})\leq c\Big(1+\Big(\frac{R}{r}\Big)^{n}\Big)\excesso(u;x_{0},R)^{\frac{1}{2n+8\alpha}}\excenew(u;x_{0},R). 
\end{align} 
\end{proposition}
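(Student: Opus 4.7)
My plan is to repeat Step~3 of the proof of Proposition~\ref{prop:main1} with an arbitrary $\hold^{1,\alpha}$ competitor $w$ to $v$ playing the role previously occupied by $\widetilde v_a$. First, fix $w \in \hold^{1,\alpha}(\overline{\ball(x_0, R/2)};\R^n)$ with $w = v$ on $\partial\ball(x_0, R/2)$ and extend $w$ by $v$ to $\ball(x_0, R)$; the matching trace rules out any singular jump across $\partial\ball(x_0,R/2)$, so the extension lies in $\bd(\ball(x_0,R))$ with $\E w = \sg(w)\mathscr{L}^n$. A standard integration-by-parts argument via~\eqref{eq:GaussGreen} shows that $\devi(v; x_0, R/2)$ is unaffected when $f$ is replaced by the shifted integrand $f_a$ with $a := (\sg(v))_{x_0, r}$ and $w$ by $\widetilde{w}_a := w - A_{x_0}$, so throughout I work with the shifted objects.

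Reusing the good radii $r_{k'} < t_{k'} < s_{k'}$ and the cut-off $\rho$ from Steps~2--3 of the proof of Proposition~\ref{prop:main1}, I define the $\bd$-competitor $\psi := \widetilde{w}_a + \rho(\widetilde{u}_a - \widetilde{w}_a)$ on $\ball(x_0, R)$; by the definition of $\rho$, this equals $\widetilde{w}_a$ on $\ball(x_0, t_{k'})$ and $\widetilde{u}_a$ outside $\ball(x_0, s_{k'})$, so its trace matches that of $\widetilde{u}_a$ on $\partial\ball(x_0, R)$. Generalised local minimality of $u$ then gives $\overline{F}_a[\widetilde{u}_a; \ball(x_0, R)] \leq \overline{F}_a[\psi; \ball(x_0, R)]$; cancelling the common outer region and splitting $\ball(x_0, t_{k'})$ at the radius $R/2$ (where $\psi = \widetilde{w}_a$ equals $\widetilde{v}_a$ on $\mathcal{A}(x_0;R/2,t_{k'})$ and $\widetilde{w}_a$ on $\ball(x_0,R/2)$), rearrangement yields
\begin{align*}
\int_{\ball(x_0, R/2)} f_a(\sg(v)-a)\dx \leq \int_{\ball(x_0, R/2)} f_a(\sg(w)-a)\dx + E_1 + E_2,
\end{align*}
where $E_2 := \overline{F}_a[\psi; \mathcal{A}(x_0; t_{k'}, s_{k'})]$ is the interpolation defect and $E_1 := \int_{\ball(x_0, t_{k'})} f_a(\sg(v) - a)\dx - \overline{F}_a[\widetilde{u}_a; \ball(x_0, s_{k'})]$ is a mollification-versus-minimality mismatch.

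The term $E_2$ is controlled verbatim as $\mathbf{IV}$ in Step~3 of the proof of Proposition~\ref{prop:main1}: the combination of~\eqref{eq:annulusest2},~\eqref{eq:convolutionestrewritten} and~\eqref{eq:smallerthanone} delivers $E_2 \leq c\,\excesso(u; x_0, R)^{1/(2n+8\alpha)}\overline{F}_a[\widetilde{u}_a; \ball(x_0, R)]$. The main obstacle is $E_1$: Lemma~\ref{lem:convex}\ref{item:AG1} compares mollified to non-mollified ball integrals only at some unspecified radius within a chosen interval, and the specific radius $t_{k'}$ is not guaranteed. I therefore apply the lemma with $\mu = \E\widetilde{u}_a$, integrand $f_a$, and the narrow interval $(t_{k'}, t_{k'}+\eta)$ for $\eta := \tfrac{R}{1000}\excesso(u; x_0, R)^{1/(2n+8\alpha)}$; this is strictly shorter than $s_{k'}-t_{k'}$ by~\eqref{eq:annulidefine}, so the extracted radius $t$ lies in $(t_{k'}, s_{k'})$. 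The resulting estimate $\int_{\ball(x_0, t)} f_a(\sg(v)-a) \leq \overline{F}_a[\widetilde{u}_a; \ball(x_0, t)] + \tfrac{4\varepsilon}{\eta}\overline{F}_a[\widetilde{u}_a; \ball(x_0, R)]$, together with the monotonicity of both ball integrals in the radius (sandwiching $t_{k'} < t < s_{k'}$), gives $E_1 \leq c\,\excesso(u; x_0, R)^{1/(2n+8\alpha)}\overline{F}_a[\widetilde{u}_a; \ball(x_0, R)]$; the choice of $\eta$ is engineered so that $4\varepsilon/\eta \lesssim \excesso^{1/(2n+8\alpha)}$, using the explicit form~\eqref{eq:epsilonchoosemain} of $\varepsilon$.

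Taking the infimum over $w$ and converting $\overline{F}_a[\widetilde{u}_a; \ball(x_0, R)]$ into $c(1 + (R/r)^n)\excenew(u; x_0, R)$ via~\eqref{eq:conclusionestimate1} then produces~\eqref{eq:deviationestimate}.
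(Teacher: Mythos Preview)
Your argument is correct and takes a genuinely different route from the paper's. The paper does not reuse the radii from Proposition~\ref{prop:main1}; instead it restarts the radius construction, invoking Lemma~\ref{lem:convex}\ref{item:AG1} (rather than~\ref{item:AG2}) from the outset to produce new $t_{k'},s_{k'}$, and then introduces auxiliary near-minimisers $\varphi_{1}\in\mathcal{C}_{1}$, $\varphi_{2}\in\mathcal{C}_{2}$ over $\ball(x_{0},t_{k'})$ and the annulus $\mathcal{A}(x_{0};t_{k'},s_{k'})$, glued to a Lipschitz competitor $\varphi_{3}$. The deviation is then passed from radius $R/2$ to $t_{k'}$ by monotonicity and split as $\mathbf{V}+\mathbf{VI}+\theta$; the comparison map for $\mathbf{VI}$ is $\varphi_{3}+\rho(\widetilde{u}_{a}-\widetilde{v}_{a})$, after which one sends $\theta\searrow 0$.

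Your version is more direct: you keep the radii and the pigeonhole annulus~\eqref{eq:annulusest2} from Proposition~\ref{prop:main1}, test minimality once against $\psi=\widetilde{w}_{a}+\rho(\widetilde{u}_{a}-\widetilde{w}_{a})$ for an \emph{arbitrary} $\hold^{1,\alpha}$-competitor $w$ extended by $v$, and recognise that on the transition annulus $\widetilde{w}_{a}=\widetilde{v}_{a}$ so that $E_{2}$ is literally the term already handled as $\mathbf{IV}$. The remaining mollification defect $E_{1}$ you dispatch by a separate application of Lemma~\ref{lem:convex}\ref{item:AG1} on the narrow window $(t_{k'},t_{k'}+\eta)$, with $\eta$ chosen so that $4\varepsilon/\eta\lesssim\excesso^{1/(2n+8\alpha)}$ and $t_{k'}+\eta<s_{k'}$; monotonicity in the radius of both $\int_{\ball}f_{a}(\sg(\widetilde{v}_{a}))$ and $\overline{F}_{a}[\widetilde{u}_{a};\ball]$ then sandwiches the intermediate radius $t$ between $t_{k'}$ and $s_{k'}$. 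Since neither $E_{1}$ nor $E_{2}$ depends on $w$, taking the infimum is legitimate. This avoids the auxiliary classes $\mathcal{C}_{1},\mathcal{C}_{2}$ and the $\theta$-limit entirely; the trade-off is that you invoke Lemma~\ref{lem:convex} twice (parts~\ref{item:AG2} and~\ref{item:AG1}) on overlapping data rather than once with purpose-built radii.
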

\begin{proof}
Adopting the terminology of step 1 of the previous proof, we leave the setting unchanged up to formula \eqref{eq:akdef}. Instead of Lemma~\ref{lem:convex}\ref{item:AG2} we use Lemma~\ref{lem:convex}\ref{item:AG1} to find, for each $k\in\{1,...,N\}$, a number $t_{k}\in (\kappa_{8k-1},\kappa_{8k})$ such that 
\begin{align}\label{eq:newest11}
\int_{\ball(x_{0},t_{k'})}f_{a}(\E\widetilde{v}_{a})-\int_{\ball(x_{0},t_{k'})}f_{a}(\E\widetilde{u}_{a}) \leq \frac{50}{\lambda_{\con}\sqrt{n}}\widetilde{\Phi}(u;x_{0},R)^{\frac{1}{2n+8\alpha}}\int_{\ball(x_{0},R)}f_{a}(\E\widetilde{u}_{a}),
\end{align}
providing the requisite substitute for formula \eqref{eq:rtestimate}. Equally, we find $s_{k'}=t_{k'}+\frac{R}{500}\widetilde{\Phi}(u;x_{0},R)^{\frac{1}{2n+8\alpha}}$ such that  
\begin{align}\label{eq:newest12}
\int_{\mathcal{A}_{k'}}f_{a}(\E\widetilde{u}_{a}) \leq \frac{1}{15}(\widetilde{\Phi}(u;x_{0},R))^{\frac{1}{2n+8\alpha}}\int_{\ball(x_{0},R)}f_{a}(\E\widetilde{u}_{a}), 
\end{align}
the annulus $\mathcal{A}_{k'}$ now being defined as in \eqref{eq:annulidefine} with the obvious change of $t_{k'}$ and $s_{k'}$. Let $\theta>0$ be arbitrary. We then put 
\begin{align*}
&\mathcal{C}_{1}:=\{\varphi\in \sobo^{1,\infty}(\ball(x_{0},t_{k'});\R^{n})\colon\; \varphi = \widetilde{v}_{a}\;\text{on}\;\partial\!\ball(x_{0},t_{k'})\}\\
&\mathcal{C}_{2}:=\{\varphi\in \sobo^{1,\infty}(\mathcal{A}(x_{0};t_{k'},s_{k'});\R^{n})\colon\; \varphi = \widetilde{v}_{a}\;\text{on}\;\partial\!\mathcal{A}(x_{0};t_{k'},s_{k'})\}
\end{align*}
and find $\varphi_{1}\in\mathcal{C}_{1}$, $\varphi_{2}\in\mathcal{C}_{2}$ such that 
\begin{align}\label{eq:almostminimalPR}
\begin{split}
&\int_{\ball(x_{0},t_{k'})}f_{a}(\sg(\varphi_{1}))\dif x \leq \inf_{\varphi\in\mathcal{C}_{1}}\int_{\ball(x_{0},t_{k'})}f_{a}(\sg(\varphi))\dif x + \frac{\theta}{2} \\
&\int_{\mathcal{A}(x_{0};t_{k'},s_{k'})}f_{a}(\sg(\varphi_{2}))\dif x \leq \inf_{\varphi\in\mathcal{C}_{2}}\int_{\mathcal{A}(x_{0};t_{k'},s_{k'})}f_{a}(\sg(\varphi))\dif x + \frac{\theta}{2}. 
\end{split}
\end{align}
Let us note that, employing an integration by parts, for all $\varphi\in\mathcal{C}_{1}$ there holds 
\begin{align*}
\int_{\ball(x_{0},t_{k'})}f_{a}(\sg(\widetilde{v}_{a}))-f_{a}(\sg(\varphi))\dif x = \int_{\ball(x_{0},t_{k'})}f(\sg( v))-f(\sg(\varphi) + a)\dif x.
\end{align*}
By definition of $f_{a}$, we then obtain 
\begin{align}\label{eq:devirewrite}
\begin{split}
\devi(v;x_{0},t_{k'}) & =\int_{\ball(x_{0},t_{k'})}f_{a}(\sg(\widetilde{v}_{a}))\dif x \\ & -\inf\left\{\int_{\ball(x_{0},t_{k'})}f_{a}(\sg(\psi))\dif x\colon\begin{array}{c}
\psi\in \hold^{1,\alpha}(\overline{\ball(x_{0},t_{k'})};\R^{n}) \\ \psi = \widetilde{v}_{a}\;\text{on}\;\partial\!\ball(x_{0},t_{k'})
\end{array}\right\}.
\end{split}
\end{align}
Since $\varphi_{1},\varphi_{2}$ are Lipschitz and coincide on $\partial\!\ball(x_{0},t_{k'})$, we deduce that the glued map $\varphi_{3}:=\mathbbm{1}_{\overline{\ball(x_{0},t_{k'})}}\varphi_{1}+\mathbbm{1}_{\mathcal{A}(x_{0};t_{k'},s_{k'})}\varphi_{2}$ belongs to $\sobo^{1,\infty}(\ball(x_{0},s_{k'});\R^{n})$. We then obtain, using that $t\mapsto\devi(v;x_{0},t)$ is non-decreasing in the first inequality,
\begin{align*}
\devi(v;x_{0},\tfrac{R}{2})&\;\;\;\;\;\;\;\;\;\;\;\;\;\;\;\,\leq \devi(v;x_{0},t_{k'}) \\ & \stackrel{\eqref{eq:devirewrite},\,\hold^{1,\alpha}\subset\sobo^{1,\infty},\,\eqref{eq:almostminimalPR}_{1}}{\leq} \int_{\ball(x_{0},t_{k'})}f_{a}(\sg(\widetilde{v}_{a}))\dif x - \int_{\ball(x_{0},t_{k'})}f_{a}(\sg(\varphi_{1}))\dif x + \frac{\theta}{2} \\
&\;\;\;\;\;\;\;\;\;\;\;\;\;\stackrel{\eqref{eq:almostminimalPR}_{2}}{\leq} \int_{\ball(x_{0},s_{k'})}f_{a}(\sg(\widetilde{v}_{a}))\dif x - \int_{\ball(x_{0},s_{k'})}f_{a}(\sg(\varphi_{3}))\dif x + \theta\\
&\;\;\;\;\;\;\;\;\;\;\;\;\;\;\;\;= \Big(\int_{\ball(x_{0},s_{k'})}\big(f_{a}(\sg(\widetilde{v}_{a})\mathscr{L}^{n})-f_{a}(\E\widetilde{u}_{a})\big)\Big) \\ &\;\;\;\;\;\;\;\;\;\;\;\;\;\;\;\;\;\;\;\;\;\;\;\; + \Big(\int_{\ball(x_{0},s_{k'})}\big(f_{a}(\E\widetilde{u}_{a})-f_{a}(\sg(\varphi_{3})\mathscr{L}^{n})\big)\Big) + \theta \\ & \;\;\;\;\;\;\;\;\;\;\;\;\;\;\;\;=: \mathbf{V}+\mathbf{VI}+\theta.
\end{align*}
\emph{Ad $\mathbf{V}$.} Splitting $\ball(x_{0},s_{k'})=\ball(x_{0},t_{k'})\cup\mathcal{A}(x_{0};s_{k'},t_{k'})$ and employing \eqref{eq:newest12}, we obtain 
\begin{align*}
\mathbf{V} & \stackrel{\eqref{eq:newest12}}{\leq} \Big( \int_{\mathcal{A}(x_{0};t_{k'},s_{k'})}f_{a}(\E\widetilde{v}_{a})-f_{a}(\E\widetilde{u}_{a})\Big) + \frac{1}{15}\widetilde{\Phi}(u;x_{0},R)^{\frac{1}{2n+8\alpha}}\int_{\ball(x_{0},R)}f_{a}(\E\widetilde{u}_{a}) \\ 
& \stackrel{\eqref{eq:newest11}}{\leq} c(c_{0},n,\lambda_{\con})\widetilde{\Phi}(u;x_{0},R)^{\frac{1}{2n+8\alpha}}\int_{\ball(x_{0},R)}f_{a}(\E\widetilde{u}_{a}).
\end{align*}
\emph{Ad $\mathbf{VI}$.} Different from step 3 of the proof of Proposition~\ref{prop:main1}, we now use the comparison map $\overline{\psi}:=\varphi_{3}+\rho(\widetilde{u}_{a}-\widetilde{v}_{a})$, $\rho$ still being defined by \eqref{eq:cutoff} but now with the new choices of $t_{k'}$ and $s_{k'}$. In advance, we note that $\overline{\psi}=\widetilde{u}_{a}$ $\mathscr{H}^{n-1}$-a.e. on $\partial\!\ball(x_{0},s_{k'})$. Since 
\begin{align}\label{eq:newest13}
\begin{split}
\int_{\mathcal{A}(x_{0};t_{k'},s_{k'})}& V(\sg(\varphi_{3})\mathscr{L}^{n}) \stackrel{\eqref{eq:loweruppermain}}{\leq} c_{0}\int_{\mathcal{A}(x_{0};t_{k'},s_{k'})}f_{a}(\E\varphi_{3})\dif x \\ & \!\!\!\!\!\!\!\stackrel{\eqref{eq:almostminimalPR}_{2}}{\leq} c_{0}\Big(\int_{\mathcal{A}(x_{0};t_{k'},s_{k'})}f_{a}(\E\widetilde{v}_{a})\dif x + \frac{\theta}{2}\Big) \stackrel{\text{Jensen}}{\leq} c_{0}\Big(\int_{\mathcal{A}_{k'}}f_{a}(\E\widetilde{u}_{a}) + \frac{\theta}{2}\Big)
\end{split}  
\end{align}
Generalised local minimality of $u$ for $F$ and $\rho|_{\ball(x_{0},t_{k'})}=0$ yields 
\begin{align*}
\mathbf{VI} & \leq \int_{\mathcal{A}(x_{0},t_{k'},s_{k'})}f_{a}(\E\,(\varphi_{3}+\rho(\widetilde{u}_{a}-\widetilde{v}_{a})))-f_{a}(\sg(\varphi_{3})\mathscr{L}^{n}) \\
& \leq 4c_{0}\Big( \int_{\mathcal{A}(x_{0},t_{k'},s_{k'})}V(\sg(\varphi_{3})\mathscr{L}^{n})+V(\E\widetilde{u}_{a})+V(\sg(\widetilde{v}_{a})\mathscr{L}^{n})\Big) \\ & \;\;\;\;\;\;\;\;\;\;\;\;\;\;\;\;\;\;\;\;\;\;\;\; + 4c_{0}\int_{\mathcal{A}(x_{0},t_{k'},s_{k'})}V\left(\frac{\widetilde{u}_{a}-\widetilde{v}_{a}}{s_{k'}-t_{k'}}\right)\dif x \\ 
& \!\!\!\!\!\!\!\!\!\!\!\!\!\!\!\!\!\!\!\!\!\stackrel{\eqref{eq:newest13},\,\text{Cor.}~\ref{cor:convolutioncontrol},\,\eqref{eq:newest12}}{\leq} c(c_{0},n,\lambda_{\con})\Big(\int_{\mathcal{A}_{k'}}f_{a}(\E\widetilde{u}_{a}) + \widetilde{\Phi}(u;x_{0},R)^{\frac{1}{2n+8\alpha}}\int_{\mathcal{A}_{k'}}f_{a}(\E\widetilde{u}_{a})+\frac{\theta}{2} \Big) \\
& \!\!\!\!\stackrel{\eqref{eq:newest12}}{\leq} c(c_{0},n,\lambda_{\con})\Big((\widetilde{\Phi}(u;x_{0},R))^{\frac{1}{2n+8\alpha}}\int_{\ball(x_{0},R)}f_{a}(\E\widetilde{u}_{a}) + \frac{\theta}{2} \Big)
\end{align*}
Combining the estimates for $\mathbf{V}$ and $\mathbf{VI}$ yields 
\begin{align*}
\devi(v;x_{0},\frac{R}{2}) \leq c(c_{0},n,\lambda_{\con})\Big((\widetilde{\Phi}(u;x_{0},R))^{\frac{1}{2n+8\alpha}}\int_{\ball(x_{0},R)}f_{a}(\E\widetilde{u}_{a}) + \theta \Big). 
\end{align*}
Now we employ \eqref{eq:conclusionestimate1} and send $\theta\searrow 0$ to conclude. The proof is complete.  
\end{proof}
\begin{remark}\label{rem:discussexponent}
As mentioned after Proposition~\ref{prop:convest}, the crude but easier obtainable estimate \eqref{eq:betaestimate} is \emph{not sufficient} for applications in the proof of Propositions~\ref{prop:main1} and \ref{prop:propdecay}. In fact, by the above proof we are bound to set $L=\frac{1}{s_{k'}-t_{k'}}$. With the particular choice of $\varepsilon$ by \eqref{eq:epsilonchoosemain}, we then find that $L\varepsilon^{\beta}$ cannot be suitably bounded to still arrive at the requisite decay estimate. On the other hand, one might redefine $\varepsilon$, but then estimates of the remaining proof cannot be obtained in the requisite form and the decay estimate cannot be inferred. 
\end{remark}
\begin{corollary}[$\varepsilon$-regularity]\label{cor:epsreg}
Let $f\in\hold^{2}(\rsym)$ be a convex integrand with \eqref{eq:lingrowth1} and suppose that there exist $z_{0}\in\rsym$ and $\varrho_{z_{0}}>0$ such that the following hold:
\begin{enumerate}[label={(C\arabic{*})}]
\item\label{C1} $m_{z_{0},\varrho_{z_{0}}}>0$, 
\item\label{C2} For all $\xi,\xi'\in\mathbb{B}(z_{0},\varrho_{z_{0}})$ there holds  $|f''(\xi)-f''(\xi')|\leq \omega(|\xi-\xi'|)$ with a bounded and non-decreasing function $\omega\colon\R_{\geq 0}\to\R_{\geq 0}$ such that $\lim_{t\searrow 0}\omega(t)=0$.  
\end{enumerate}
For any $\alpha\in (0,1)$ there exist $\varepsilon_{0}\in (0,1]$ and $\sigma\in(0,1)$ such that the following holds for all $u\in\gm_{\locc}(F)$: If $x\in\Omega$ and $R>0$ are such that $\ball(x,R)\Subset\Omega$ and 
\begin{enumerate}
\item\label{item:smallness1} $|(\E u)_{x,R}-z_{0}|<\varrho_{z_{0}}/3$, 
\item\label{item:smallness2} $\widetilde{\Phi}(u;x,R)<\varepsilon_{0}$, 
\end{enumerate}
then there holds $\widetilde{\Phi}(u;x,\sigma^{j}R)\leq \sigma^{2\alpha j}\widetilde{\Phi}(u;x,R)$ for all $j\in\mathbb{N}_{0}$. In particular, $\varepsilon_{0}$ and $\sigma$ only depend on $n,\lambda_{\con},\varrho_{z_{0}},m_{z_{0},\varrho_{z_{0}}},\omega,c_{1},c_{2}$ and $\sup_{\mathbb{B}(z_{0},\varrho_{z_{0}})}|f''|$. 
\end{corollary}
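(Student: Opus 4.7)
The corollary is an $\varepsilon$-regularity statement obtained by iterating the one-step decay that comes from combining Propositions~\ref{prop:main1}, \ref{prop:propdecay} with Proposition~\ref{prop:1}. First I would fix a reference scale $\varrho:=\varrho_{z_0}/2$ and observe that any matrix $a\in\rsym$ with $|a-z_0|<\varrho_{z_0}/2$ satisfies $\mathbb{B}(a,\varrho)\subset\mathbb{B}(z_0,\varrho_{z_0})$; hence by \ref{C1}--\ref{C2} the hypotheses of Proposition~\ref{prop:main1} hold uniformly for $\xi_0=a$, $\varrho_{\xi_0}=\varrho$, with a single continuity modulus $\omega$ and a uniform lower bound $m_{z_0,\varrho_{z_0}}$. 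This in particular freezes all constants appearing in Propositions~\ref{prop:main1}, \ref{prop:propdecay}, in Proposition~\ref{prop:1} and in Lemma~\ref{lem:adjust} so that they depend only on the quantities listed in the corollary.

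Next I would establish the basic one-step decay. Fix a ball $\ball(x,R)\Subset\Omega$, set $\xi_0:=(\E u)_{x,R}$ and assume $|\xi_0-z_0|<\varrho_{z_0}/2$ together with $\widetilde{\Phi}(u;x,R)<\varepsilon_0$ for some $\varepsilon_0\leq\Theta$ (with $\Theta$ from Proposition~\ref{prop:main1}). Let $v:=u_{\varepsilon,\varepsilon}$ with $\varepsilon$ chosen as in \eqref{eq:epsilonchoosemain}, so that Lemma~\ref{lem:adjust} yields $\mathbf{t}_{\alpha,\xi_0}(v;x,R/2)\leq c(n,\alpha)\widetilde{\Phi}(u;x,R)^{\alpha/(n+4\alpha)}$, which by choosing $\varepsilon_0$ small enough ensures that the smallness hypothesis in Proposition~\ref{prop:1} is satisfied. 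For any $\sigma\in(0,1/8)$ and $r=\sigma R$, applying Proposition~\ref{prop:1} on the ball $\ball(x,R/2)$ with radius $2r$, using Corollary~\ref{cor:adjust} to control the two $\lebe^2$-terms by $c\,\excenew(u;x,R)$, using Lemma~\ref{lem:eest}\ref{item:eest4} together with the $\lebe^\infty$-bound on $\sg(v)$ to pass from $\lebe^2$-oscillation to $\excenew(v;x,2r)$, and applying Proposition~\ref{prop:propdecay} to bound $\devi(v;x,R/2)$, I obtain
\[
\excenew(v;x,2r)\leq c\bigl[\sigma^{n+2}+\vartheta\bigl(c\widetilde{\Phi}(u;x,R)^{\tfrac{\alpha}{n+4\alpha}}\bigr)+\sigma^{-n}\widetilde{\Phi}(u;x,R)^{\tfrac{1}{2n+8\alpha}}\bigr]\excenew(u;x,R).
\]
Feeding this into Proposition~\ref{prop:main1} and dividing through by $\omega_n(\sigma R)^n$ yields
\[
\widetilde{\Phi}(u;x,\sigma R)\leq c\bigl[\sigma^{2}+\sigma^{-n}\vartheta\bigl(c\widetilde{\Phi}(u;x,R)^{\tfrac{\alpha}{n+4\alpha}}\bigr)+\sigma^{-2n-1}\widetilde{\Phi}(u;x,R)^{\tfrac{1}{2n+8\alpha}}\bigr]\widetilde{\Phi}(u;x,R).
\]

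Now I would fix $\sigma\in(0,1/8)$ by requiring $c\sigma^{2}\leq \tfrac{1}{3}\sigma^{2\alpha}$, which is possible since $2\alpha<2$; this determines $\sigma=\sigma(n,\lambda_{\con},\varrho_{z_0},m_{z_0,\varrho_{z_0}},\omega,c_1,c_2,\alpha,\sup_{\mathbb{B}(z_0,\varrho_{z_0})}|f''|)\in(0,1)$. Thereafter I would choose $\varepsilon_0\in(0,\Theta]$ small enough that the remaining two terms are each $\leq\tfrac{1}{3}\sigma^{2\alpha}$; this is possible because $\vartheta(t)\to 0$ as $t\searrow 0$ and $t\mapsto t^{1/(2n+8\alpha)}\to 0$ as $t\searrow 0$. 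This produces the single-scale estimate $\widetilde{\Phi}(u;x,\sigma R)\leq \sigma^{2\alpha}\widetilde{\Phi}(u;x,R)$ whenever the two hypotheses of the corollary hold at scale $R$.

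The remaining point---and the genuinely subtle one---is to propagate the hypotheses along the iteration. The smallness hypothesis propagates automatically from the decay, so the work lies in controlling the drift of the averages $\xi_0^{(j)}:=(\E u)_{x,\sigma^j R}$. Using Lemma~\ref{lem:eest}\ref{item:eest3} in the form $|z|\leq c(V(z)+V(z)^{1/2})$, Jensen's inequality for measures \eqref{eq:Jensenmain} applied to $V$, and the trivial bound $|\xi_0^{(j+1)}-\xi_0^{(j)}|\leq \sigma^{-n}\dashint_{\ball(x,\sigma^j R)}|\E u-\xi_0^{(j)}|$, one obtains $|\xi_0^{(j+1)}-\xi_0^{(j)}|\leq c\sigma^{-n}(\widetilde{\Phi}(u;x,\sigma^j R)^{1/2}+\widetilde{\Phi}(u;x,\sigma^j R))$. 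Assuming inductively that the decay has held up to step $j$, telescoping and summing the geometric series gives $|\xi_0^{(j)}-\xi_0|\leq \widetilde{c}(\sigma,\alpha,n)\,\varepsilon_0^{1/2}$, uniformly in $j$. By possibly shrinking $\varepsilon_0$ one last time so that $\widetilde{c}\,\varepsilon_0^{1/2}<\varrho_{z_0}/6$, condition (i) combined with $|\xi_0-z_0|<\varrho_{z_0}/3$ ensures $|\xi_0^{(j)}-z_0|<\varrho_{z_0}/2$ for all $j$, so the one-step estimate can be invoked at every scale. A straightforward induction on $j$ then yields $\widetilde{\Phi}(u;x,\sigma^j R)\leq \sigma^{2\alpha j}\widetilde{\Phi}(u;x,R)$ for all $j\in\mathbb{N}_0$, which is the claimed conclusion. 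The main technical obstacle is not any single estimate but the careful bookkeeping of how the three smallness thresholds ($\sigma$, then $\varepsilon_0$ for $\vartheta$, then $\varepsilon_0$ for the drift) are chosen in a mutually compatible way so that all constants depend only on the data listed in the statement.
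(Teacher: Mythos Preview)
Your proposal is correct and follows essentially the same approach as the paper's proof: combine Proposition~\ref{prop:1}, Corollary~\ref{cor:adjust}, Proposition~\ref{prop:propdecay} and Lemma~\ref{lem:adjust} to bound $\excenew(v;x,2r)$, feed this into Proposition~\ref{prop:main1} to obtain a one-step decay for $\widetilde{\Phi}$, choose $\sigma$ and then $\varepsilon_0$ so that the decay reads $\widetilde{\Phi}(u;x,\sigma R)\leq\sigma^{2\alpha}\widetilde{\Phi}(u;x,R)$, and finally iterate while controlling the drift of the averages $\xi_0^{(j)}$. The only cosmetic differences are that the paper takes $\varrho_{\xi_0}=\varrho_{z_0}/3$ (you take $\varrho_{z_0}/2$), packages the error terms into a single function $H(t)=\vartheta(c\,t^{\alpha/(n+4\alpha)})+t^{1/(2n+8\alpha)}$ and writes the iteration as the joint statement $(\mathrm{Dec}_j)$, and controls the drift by first bounding $V(|\xi_0^{(j+1)}-\xi_0^{(j)}|)$ via Jensen and then inverting (rather than your pointwise bound $|z|\leq c(V(z)+V(z)^{1/2})$); all of these lead to the same conclusion with the same dependencies.
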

\begin{proof}
Let $\alpha\in (0,1)$, $x\in\Omega$ and $R>0$ be such that $\ball(x,R)\subset\Omega$. Note that, if $\xi_{0}\in\mathbb{B}(z_{0},2\varrho_{\xi_{0}})$ with $\varrho_{\xi_{0}}:=\varrho_{z_{0}}/3$, then $m_{\xi_{0},\varrho_{\xi_{0}}}>0$ and $|f''(\xi)-f''(\xi_{0})|\leq \omega_{\xi_{0},\varrho_{\xi_{0}}}(|\xi-\xi_{0}|)$ for all $\xi\in\mathbb{B}(\xi_{0},\varrho_{\xi_{0}})$ with $\omega_{\xi_{0},\varrho_{\xi_{0}}}=\omega$. We put $\xi_{0}:=(\E u)_{x,R}$.  

We pick the constants $\Theta,c>0$ from Proposition~\ref{prop:main1} with $x_{0}=x$, fix the mollification parameter $\varepsilon$ as in \eqref{eq:epsilonchoosemain} and let $\varepsilon_{0}\in (0,\Theta)$ to be fixed later on. Thus, for all $0<r<\frac{R}{4}$, \eqref{eq:partial} is in action with $v=u_{\varepsilon,\varepsilon}$. From Lemma~\ref{lem:adjust} applied to the radius $R$, we obtain $\mathbf{t}_{\alpha,\xi_{0}}(v;x,\frac{R}{2})\leq c(n,\alpha)\widetilde{\Phi}(u;x,R)^{\frac{\alpha}{n+4\alpha}}$, $c(n,\alpha)>0$ denoting the constant from Lemma \ref{lem:adjust}. Thus, diminishing $\varepsilon_{0}$, we may assume that $c(n,\alpha)\varepsilon_{0}^{\frac{\alpha}{n+4\alpha}}<\min\{\frac{\varrho_{\xi_{0}}}{c_{\comp}},\frac{1}{2}\}$, $c_{\comp}>0$ being the constant from Proposition~\ref{prop:1}. This entails $\mathbf{t}_{\alpha,\xi_{0}}(v;x,\frac{R}{2})<\frac{1}{2}$  and thus $|\sg(v)(y)-(\sg(v))_{x,r}|\leq 1$ for all $0<r<\frac{R}{4}$ and $y\in\ball(x,r)$. Therefore, a consecutive application of Proposition~\ref{prop:1},  Lemma~\ref{lem:eest}\ref{item:eest3}, Corollary~\ref{cor:adjust} and Proposition~\ref{prop:propdecay} yields 
\begin{align*}
\Phi(v;x,2r)  & \leq c\Big(\Big(\frac{r}{R}\Big)^{n+2} + \Big(1+\Big(\frac{R}{r}\Big)^{n+1}\Big)\times \Big. \\ & \Big. \;\;\;\;\; \times\Big(\vartheta(c(n,\alpha)\widetilde{\Phi}(u;x,R)^{\frac{\alpha}{n+4\alpha}}) + \widetilde{\Phi}(u;x,R)^{\frac{1}{2n+8\alpha}}\Big)\Big)\times \Phi(u;x,R).
\end{align*}
for all $0<r<\frac{R}{4}$. In conclusion, \eqref{eq:partial} yields the existence of $\varepsilon_{0}^{(1)}\in (0,1]$ and $c_{\decay}>0$ such that there holds
\begin{align}\label{eq:iterationdecay}
\widetilde{\Phi} (u;x,r) \leq c_{\decay}\Big(\frac{r}{R}\Big)^{2}\Big(1 + \Big(\frac{R}{r}\Big)^{2n+3}H(\widetilde{\Phi}(u;x,R)) \Big)\widetilde{\Phi}(u;x,R)
\end{align}
for all $0<r<\frac{R}{4}$, the non-negative function $H\colon\R_{\geq 0}\to\R_{\geq 0}$ being given by 
\begin{align*}
H(t):= \vartheta(c(n,\alpha)t^{\frac{\alpha}{n+4\alpha}})+ t^{\frac{1}{2n+8\alpha}},  
\end{align*}
cf.~Proposition~\ref{prop:1} for the introduction of $\vartheta$. Tracking dependencies, $\varepsilon_{0}^{(1)}$, $c_{\decay}$ and $H$ only depend on $n,\lambda_{\con},\varrho_{z_{0}},m_{z_{0},\varrho_{z_{0}}},\omega,\Lip(f)$ and $\sup_{\mathbb{B}(z_{0},\varrho_{z_{0}})}|f''|$. We now define
\begin{align}\label{eq:defsigmacontrol}
\sigma := \min\left\{\sqrt[\alpha]{\tfrac{\sqrt{\sqrt{3}(\sqrt{2}-1)}}{2\sqrt{12}}},\tfrac{\varrho_{z_{0}}}{6},\sqrt[2(1-\alpha)]{\tfrac{1}{2c_{\decay}}} \right\}
\end{align}
and, using that $\lim_{t\searrow 0}H(t)=0$,  choose $\varepsilon_{0}^{(2)}>0$ such that there holds 
\begin{align}\label{eq:epsiloniterationchoose}
\varepsilon_{0}^{(2)}<\sigma^{n+2}\;\;\;\text{and}\;\;\;\sup\{H(t)\colon 0<t<\varepsilon_{0}^{(2)}\}\leq \sigma^{2n+3}. 
\end{align}
We now define $\varepsilon_{0}:=\min\{\varepsilon_{0}^{(1)},\varepsilon_{0}^{(2)}\}$ and claim that, if $x\in\Omega$ and $R>0$ are such that $\ball(x,R)\Subset\Omega$ with $|(\E u)_{x,R}-z_{0}|<\varrho_{z_{0}}/3$ and $\widetilde{\Phi}(u;x,R)<\varepsilon_{0}$, then there holds $\widetilde{\Phi}(u;x,\sigma^{j}R)\leq \sigma^{\alpha j}\widetilde{\Phi}(u;x,R)$ for all $j\in\mathbb{N}_{0}$. To conclude the proof by iteration, we put 
\begin{align*}
\xi_{0}^{(j)}:=\frac{\E u(\ball(x,\sigma^{j}R))}{\mathscr{L}^{n}(\ball(x,\sigma^{j}R))},\qquad j\in\mathbb{N}_{0},
\end{align*}
and establish validity of 
\begin{align}\label{eq:induction1}\tag{$\text{Dec}_{j}$}
\excesso(u;x,\sigma^{j} R)\leq \sigma^{2\alpha j}\excesso(u;x,R)\;\;\;\text{and}\;\;\;|z_{0}-\xi_{0}^{(j)}|\leq \frac{1}{3}\varrho_{z_{0}}\sum_{i=0}^{j}\frac{1}{2^{i}} 
\end{align}
for all $j\in\mathbb{N}_{0}$. This is trivial for $j=0$. Now assume validity of \eqref{eq:induction1} for some $j\in\mathbb{N}_{0}$, the second part of which implies $\xi_{0}^{(j)}\in\mathbb{B}(z_{0},\tfrac{2}{3}\varrho_{z_{0}})$ so that \ref{C1} and \ref{C2} continue to hold in $\mathbb{B}(\xi_{0}^{(j)},\frac{1}{3}\varrho_{z_{0}})$. Moreover, the first part of \eqref{eq:induction1} yields $\widetilde{\Phi}(u;x,\sigma^{j}R)<\varepsilon_{0}^{(1)}$. Therefore,  
\begin{align*}
\widetilde{\Phi}(u;x,\sigma^{j+1}R) & \stackrel{\eqref{eq:iterationdecay}}{\leq}  \sigma^{2\alpha}(c_{\decay}\sigma^{2-2\alpha}) \Big(1 + \frac{H(\widetilde{\Phi}(u;x,\sigma^{j}R))}{\sigma^{2n+3}} \Big)\widetilde{\Phi}(u;x,\sigma^{j}R) \\ 
& \stackrel{\eqref{eq:epsiloniterationchoose}}{\leq} \sigma^{2\alpha}(2c_{\decay}\sigma^{2-2\alpha})\widetilde{\Phi}(u;x,\sigma^{j}R) \stackrel{\eqref{eq:defsigmacontrol},\,\eqref{eq:induction1}}{\leq} \sigma^{2\alpha(j+1)}\widetilde{\Phi}(u;x,R).
\end{align*} 
Toward the second part of \eqref{eq:induction1}, it suffices to establish $|\xi_{0}^{(j+1)}-\xi_{0}^{(j)}|\leq \frac{1}{3}\varrho_{z_{0}}2^{-j-1}$. Observe that 
\begin{align}\label{eq:inductionV}
\begin{split}
V(|\xi_{0}^{(j+1)}-\xi_{0}^{(j)}|) & \leq \dashint_{\ball(x,\sigma^{j+1}R)}V(|\E u - \xi_{0}^{(j)}|) \\ & \leq \frac{1}{\sigma^{n}}\dashint_{\ball(x,\sigma^{j}R)}V(|\E u - \xi_{0}^{(j)}|) \\ & \!\!\!\!\! \stackrel{\eqref{eq:induction1}}{\leq}  \sigma^{2\alpha j-n}\excesso(u;x,R) \leq \sigma^{2\alpha j +2}\frac{\varepsilon_{0}}{\sigma^{n+2}} \stackrel{\eqref{eq:epsiloniterationchoose},\,0<\sigma<1}{<} 1, 
\end{split}
\end{align}
which, by definition of $V$, entails $|\xi_{0}^{(j+1)}-\xi_{0}^{(j)}|<\sqrt{3}$. Therefore, by Lemma~\ref{lem:eest}\ref{item:eest1},~\ref{item:eest3}, 
\begin{align*}
|\xi_{0}^{(j+1)}-\xi_{0}^{(j)}| & \leq \sqrt{\frac{12}{\sqrt{3}(\sqrt{2}-1)}}\sqrt{V(|\xi_{0}^{(j+1)}-\xi_{0}^{(j)}|)} \\ &  \!\!\!\!\stackrel{\eqref{eq:inductionV}}{\leq} \sqrt{\frac{12}{\sqrt{3}(\sqrt{2}-1)}} \sigma^{\alpha j+1} \stackrel{\eqref{eq:defsigmacontrol}}{\leq} \frac{1}{3}\rho_{z_{0}}2^{-j-1}.
\end{align*}
The proof of the corollary is thereby complete.  
\end{proof}
\subsection{Proof of Theorem~\ref{thm:PR}}\label{sec:proofPR}
We can now proceed to the 
\begin{proof}[Proof of Theorem~\ref{thm:PR}]
Let $u\in\gm(F;u_{0})$ and $(x_{0},z_{0})\in\Omega\times\rsym$ be such that $f''(z_{0})$ is positive definite and  \eqref{eq:theimportantcondition} is satisfied. Since $f''$ is continuous, there exists $\varrho_{z_{0}}>0$ such that \ref{C1} and \ref{C2} from Corollary~\ref{cor:epsreg} are satisfied. Let $\varepsilon_{0}>0$ be as in Corollary~\ref{cor:epsreg}. By \eqref{eq:theimportantcondition}, $\lim_{R\searrow 0}(|\mathscr{E}u-z_{0}|)_{x_{0},R}+|(\E^{s}u)_{x_{0},R}|=0$, and since $V(\cdot)\leq|\cdot|$, 
\begin{align*}
\widetilde{\Phi}(u;x_{0},R) & \leq  2\Big(\dashint_{\ball(x_{0},R)}|\mathscr{E}u-z_{0}|\dif\mathscr{L}^{n}+ \frac{|\E u|(\ball(x_{0},R))}{\mathscr{L}^{n}(\ball(x_{0},R))}\Big)\stackrel{\eqref{eq:theimportantcondition}}{\longrightarrow} 0,\qquad R\searrow 0.
\end{align*}
By \eqref{eq:theimportantcondition} and $\mathscr{E}u\in\lebe_{\locc}^{1}(\Omega;\rsym)$, we conclude that there exists some $R_{0}>0$ and an open neighbourhood $U_{1}$ of $x_{0}$ such that $\ball(x_{0},2R_{0})\Subset\Omega$ and
\begin{align*}
\left\vert\dashint_{\ball(x,R_{0})}\mathscr{E}u\dif\mathscr{L}^{n}-z_{0}\right\vert < \min\Big\{\frac{\varepsilon_{0}}{4},\frac{\varrho_{z_{0}}}{6}\Big\},\;\;\;\frac{|\E^{s}u|(\ball(x_{0},2R_{0}))}{\mathscr{L}^{n}(\ball(x_{0},2R_{0}))}<\frac{1}{2^{n+1}}\min\Big\{\frac{\varepsilon_{0}}{4},\frac{\varrho_{z_{0}}}{3}\Big\}
\end{align*}
hold for all $x\in U_{1}$. Diminishing $U_{1}$ if necessary, we can assume that $U_{1}\subset \ball(x_{0},R_{0})$. Let $x\in U_{1}$, so that $\ball(x,R_{0})\subset\ball(x_{0},2R_{0})$. Thus,
\begin{align*}
\left\vert \frac{\E u(\ball(x,R_{0}))}{\mathscr{L}^{n}(\ball(x,R_{0}))}-z_{0}\right\vert & \leq \left\vert\dashint_{\ball(x,R_{0})}\mathscr{E}u\dif\mathscr{L}^{n}-z_{0}\right\vert + 2^{n}\frac{|\E^{s}u|(\ball(x_{0},2R_{0}))}{\mathscr{L}^{n}(\ball(x_{0},2R_{0}))}< \frac{\varrho_{z_{0}}}{3}.
\end{align*}
On the other hand, since $V(\cdot)\leq|\cdot|$ and $V^{\infty}(\cdot)=|\cdot|$, 
\begin{align*}
\excesso(u;x,R_{0}) & \leq 2\dashint_{\ball(x,R_{0})}|\mathscr{E}u-z_{0}|\dif\mathscr{L}^{n} + 2^{n+1}\frac{|\E^{s}u|(\ball(x_{0},2R_{0}))}{\mathscr{L}^{n}(\ball(x_{0},2R_{0}))}<\varepsilon_{0}. 
\end{align*}
As a conclusion, conditions \ref{item:smallness1} and \ref{item:smallness2} from Corollary~\ref{cor:epsreg} are satisfied for all $x\in U_{1}$ and $R=R_{0}$. Therefore, there exists $C>0$ such that $\widetilde{\Phi}(u;x,r)\leq C (r/R_{0})^{2\alpha}\widetilde{\Phi}(u;x,R_{0})$ holds for all $x\in U_{1}$ and $0<r<R_{0}/4$. By definition of $\widetilde{\Phi}$, this implies $\E^{s}u\equiv 0$ in $U_{1}$ and hence $\E u\mres U_{1}= \sg(u)\mathscr{L}^{n}\mres U_{1}$. Now, for all such $x$ and $r$,
\begin{align*}
V\Big(\dashint_{\ball(x,r)}|\sg(u)-(\sg(u))_{x,r}|\dif\mathscr{L}^{n}\Big)\leq \dashint_{\ball(x,r)}V(\sg(u)-(\sg( u))_{x,r})\dif\mathscr{L}^{n} \leq C(R_{0},\varepsilon_{0})r^{2\alpha} 
\end{align*}
and so, Lemma~\ref{lem:eest}\ref{item:eest4} yields a constant $c(R_{0},\varepsilon_{0})>0$ such that  
\begin{align*}
\dashint_{\ball(x,r)}|\sg(u)-(\sg(u))_{x,r}|\dif\mathscr{L}^{n}\leq \Big(c\dashint_{\ball(x,r)}V(\sg(u)-(\sg( u))_{x,r})\dif\mathscr{L}^{n}\Big)^{\frac{1}{2}} \leq cr^{\alpha}<1.  
\end{align*}
Now, the usual Campanato-Meyers characterisation of H\"{o}lder continuity \cite[Thm.~2.9]{Giusti} implies that $\sg(u)$ is of class $\hold^{0,\alpha}$ and hence $\lebe^{2}$  in a neighbourhood $\widetilde{U}$ of $x_{0}$. Thus, by Lemma~\ref{lem:Cianchi}~\ref{item:Cianchi0}, 
\begin{align*}
\dashint_{\ball(x,r)}|\nabla u-(\nabla u)_{x,r}|^{2}\dif\mathscr{L}^{n} \leq c \dashint_{\ball(x,r)}|\sg(u)-(\sg(u))_{x,r}|^{2}\dif\mathscr{L}^{n} \leq cr^{2\alpha}. 
\end{align*}
We again invoke the Campanato-Meyers characterisation of H\"{o}lder continuity to conclude that $u$ is of class $\hold^{1,\alpha}$ in an open neighbourhood of $x_{0}$. Finally, by the Lebesgue differentiation theorem for Radon measures, cf.~\eqref{eq:Lebesgue}, $\mathscr{L}^{n}$-a.e. $x_{0}\in\Omega$ satisfies \eqref{eq:theimportantcondition}, and the proof of the theorem is complete.
\end{proof}

\section{Remarks and extensions}\label{sec:extensions}
We conclude the paper with some remarks on possible generalisations of Theorems~\ref{thm:W11reg} and \ref{thm:PR} with focus on non-autonomous problems. First, by the very nature of the proofs, Theorem~\ref{thm:W11reg} and \ref{thm:PR} straightforwardly generalise to local generalised minima. Second, in analogy with \cite[Sec.~6]{AG}, if $f\colon\Omega\times\rsym\to\R$ and $g\colon\Omega\times\R^{n}\to\R$ are such that 
\begin{align*}
\begin{cases}
z\mapsto f(x,z)\;\text{is of class $\hold^{2}$}&\; \text{for all $x\in\Omega$},\\
c_{1}|z|-\gamma\leq f(x,z) \leq c_{2}(1+|z|)&\;\text{for all}\;x\in\Omega,\;z\in\rsym,\\
|f(x_{1},z)-f(x_{2},z)|\leq c_{3}|x_{1}-x_{2}|^{\mu}(1+|z|)&\;\text{for all}\;x_{1},x_{2}\in\Omega,\;z\in\rsym,\\
|g(x_{1},y_{1})-g(x_{2},y_{2})|\leq c_{4}| |x_{1}-x_{2}|+|y_{1}-y_{2}||^{\mu}&\;\text{for all}\;x_{1},x_{2}\in\Omega,\;y_{1},y_{2}\in\R^{n}, 
\end{cases}
\end{align*}
for some $c_{1},...,c_{4}>0$, $\gamma>0$ and $0<\mu<1$, then Theorem~\ref{thm:PR} generalises to functionals 
\begin{align}\label{eq:xdep}
F[u]:=\int_{\Omega}f(x,\sg(u))\dif x + \int_{\Omega}g(x,u)\dif x.
\end{align}
More precisely, let $u\in\gm_{\locc}(F)$ and suppose that $(x_{0},z_{0})\in\Omega\times\rsym$ is such that $z_{0}$ is the Lebesgue value of $\E u$ at $x_{0}$, Moreover, assume that there exists $\lambda>0$ such that $\lambda|z|^{2}\leq \langle \D_{z}^{2}f(x,z_{0})z,z\rangle$ holds for all $z\in\rsym$ uniformly in an open neighbourhood of $x_{0}$. Then there exists an open neighbourhood $U$ of $x_{0}$ such that $u$ has H\"{o}lder continuous full gradients in $U$. Let us, however, note that a corresponding result is far from clear if the overall variational integrand $(x,y,z)\mapsto f(x,z)+g(x,y)$ does not possess the splitting structure but is of the general form $(x,y,z)\mapsto f(x,y,z)$. 

Namely, here one usually invokes Caccioppoli's inequality in conjunction with Gehring's lemma on higher integrability to conclude that minima of elliptic problems belong to some $\sobo_{\locc}^{1,r}$, $p>r$, where $p$ is the Lebesgue exponent of the natural energy space $\sobo^{1,p}$. As explained in \cite{GK2}, there exist linear growth integrands and generalised minimisers $u\in\bv\setminus\sobo^{1,1}$ which \emph{do} satisfy a Caccioppoli type inequality. This easily carries over to the $\bd$-situation, and hereby rules out any integrability boost by virtue of Gehring. On the other hand, even for semiautonomous integrands $(x,z)\mapsto f(x,z)$, a well-known counterexample due to \textsc{Bildhauer} \cite[Thm.~4.39]{Bi1} asserts that if $f\in\hold^{2}(\overline{\Omega}\times \R^{N\times n})$ satisfies a uniform variant of \eqref{eq:ellipticity} for $a>3$, then generalised minima might in fact belong to $\bv\setminus\sobo^{1,1}$. In particular, the Caccioppoli inequality itself \emph{cannot} yield higher integrability results in the linear growth setting.

On the other hand, the approach of Section~\ref{sec:PR} is robust enough \emph{to apply to integrands} $(x,y,z)\mapsto f(x,y,z)$ indeed if suitable superlinear growth in the last variable is imposed and thus the Gehring-type improvement is available (cf.~\cite[Thm.~6.1]{AG}):
\begin{remark}[$p$-growth functionals: Partial regularity] Let $1<p<\infty$, $0<\mu<1$ and let $f\colon\Omega\times\R^{n}\times\rsym\to\R$ be a variational integrand that satisfies 
\begin{align*}
\begin{cases}
z\mapsto f(x,y,z)\;\text{is of class $\hold^{2}$},\\
|\D_{z}f(x,y,z)|\leq c_{1}(1+|z|)^{p-1},\\
c_{2}|z|^{p}-\gamma\leq f(x,y,z) \leq c_{3}(1+|z|^{p}),\\
|f(x_{1},y_{1},z)-f(x_{2},y_{2},z)|\leq c_{4}|y_{1}|^{\mu}(|x_{1}-x_{2}|+|y_{1}-y_{2}|)^{\mu}(1+|z|^{p})
\end{cases}
\end{align*}
for all $x,x_{1},x_{2}\in\Omega$, $y,y_{1},y_{2}\in\R^{n}$ and $z\in\rsym$ and constants $c_{1},...,c_{4}>0$, $\gamma>0$. Let $u\in\sobo_{\locc}^{1,p}(\Omega;\R^{n})$ be a local minimiser of the variational integral corresponding to $f$. Moreover, let $(x_{0},y_{0},z_{0})\in\Omega\times\R^{n}\times\rsym$ is such that $x_{0}$ is a Lebesgue point for both $u$ and $\sg(u)$, with Lebesgue values $y_{0}$ or $z_{0}$, respectively. If there exists $\lambda>0$ such that $\lambda|z|^{2}\leq \langle \D_{z}^{2}f(x,y,z_{0})z,z\rangle$ holds for all $z\in\rsym$ uniformly in an open neighbourhood of $(x_{0},y_{0})$, then $u$ has H\"{o}lder continuous gradients in an open neighbourhood of $x_{0}$. 
\end{remark}
In view of partial regularity, we have omitted  \emph{symmetric quasiconvex} functionals throughout. In fact, at present it is not known how to modify the method exposed in Section~\ref{sec:PR} even in the \emph{full gradient case} (also see the discussion in \cite{AG,Schmidt1}). The only result available in the $\bv$-full gradient, strongly quasiconvex case is due to \textsc{Kristensen} and the author \cite{GK2}, and the case of strongly symmetric-quasiconvex functionals on $\bd$ is due to the author \cite{G3}.  If the condition of strong symmetric quasiconvexity  pro forma is introduced for \emph{convex} $\hold^{2}$-integrands, then it translates to $3$-elliptic integrands in the sense of \eqref{eq:ellipticity} and does not apply to the very degenerate ellipticity regime covered by Theorem~\ref{thm:PR}. Whereas the main obstructions in \cite{GK2,G3} stem from the weakened convexity notion, they moreover require higher regularity of the variational integrands, namely, $\hold_{\locc}^{2,\mu}$ for some $\mu>1-\frac{2}{n}$. In this sense, the results of \cite{G3} and Theorem~\ref{thm:PR} are independent. 

As to Sobolev regularity, the case of non-autonomous integrands $(x,z)\mapsto f(x,z)$ which satisfy the obvious modification of \eqref{eq:ellipticity} uniformly in $x$, however, is more intricate. Even if $f$ is of class $\hold^{2}$ in the joint variable and satisfies the estimates corresponding to \cite[Ass.~4.22]{Bi2}, it is not fully clear to arrive at the decoupling estimates that eliminate the divergence as done in the proof of Theorem~\ref{thm:regdual}. Whereas for partial regularity $\hold^{0,\alpha}$-H\"{o}lder continuous $x$-dependence of $\D_{z}f$ still suffices, the corresponding Sobolev regularity theory is \emph{far from clear} when aiming at an ellipticity regime beyond $1<a<1+\frac{1}{n}$ (also see \textsc{Baroni, Colombo \& Mingione} \cite{BaCoMi} for the related borderline case $\frac{q}{p}=1+\frac{\alpha}{n}$ in the superlinear growth regime). Namely, in this case the Euler-Lagrange equations satisfied by (generalised) minima \emph{cannot} be differentiated. In the full gradient, superlinear growth regime, this setting has been extensively studied by \textsc{Mingione} \cite{Mingione0,Mingione2,Mingione3} and  \textsc{Kristensen \& Mingione} \cite{KrMi1,KrMi2,KrMi3}. Here, Nikolski\u{\i} estimates are employed but -- as a matter of fact -- do \emph{not use} any information apart from the Euler-Lagrange equation itself and the continuity properties of $f$ with respect to its first variable.  Such a strategy has been pursued in \cite{GK1} for autonomous functionals (in the regime $1<a<1+\frac{1}{n}$). However, if we wish to amplify the ellipticity regime as is done in Theorem~\ref{thm:W11reg}, then we ought to use the instrumental identities for the minimisers that come out as a byproduct of \emph{second order estimates}, cf.~Theorem~\ref{thm:regdual}. As the latter are essentially obtained by differentiating the first variation-style perturbed Euler-Lagrange equation, the approach presented in Section~\ref{sec:W11reg} requires modification, an objective which we intend to deal with in the future.
 
Lastly, let $\A[D]$ be a first order, constant-coefficient differential operator $\A[D]$ on $\R^{n}$ between the two finite dimensional vector spaces $V,W$. Then the canonical Dirichlet problem \eqref{eq:varprin} has a relaxed minimiser in $\bv^{\A}(\Omega):=\{v\in\lebe^{1}(\Omega;V)\colon\;\A[D]u\in\mathscr{M}(\Omega;W)\}$ if $\A$ is $\mathbb{C}$-elliptic (and hence $\lebe^{1}$-traces of $\bv^{\A}$-maps are definable), cf.~\cite[Thm.~5.3]{BDG}. By means of Hilbert-Nullstellensatz-techniques \cite{Smith,GRVS}, the splitting strategy underlying Theorem~\ref{thm:regdual} -- yet being technically more demanding -- is likely to work as well. On the other hand, based on the Poincar\'{e}-type inequalities from \cite{BDG}, the partial regularity result from Theorem~\ref{thm:PR} hinges on the existence of a mollifier $\rho$ such that $\rho*\pi=\pi$ for all $\pi\in\ker(\A[D])$. This is a consequence of the Bramble-Hilbert lemma, and we shall pursue this elsewhere.

\section{Appendix A: On uniqueness and the structure of $\gm(F;u_{0})$}\label{sec:uniquenessappendix}
In Section~\ref{sec:cors} we addressed some uniqueness assertions and the structure of the set of generalised minimisers. Working from the assumption that generalised minima are unique up to rigid deformations, we here complete the proof of Corollary~\ref{cor:uniqueness} with
\begin{proposition}\label{prop:uniqueness}
Let $\Omega\subset\R^{n}$ be open and bounded with Lipschitz boundary $\partial\Omega$ and let $u_{0}\in\ld(\Omega)$. Moreover, suppose that $f\colon\rsym\to\R$ is convex integrand with \eqref{eq:lingrowth1} such that for each $\nu\in\R^{n}\setminus\{0\}$ the map $f_{\nu}^{\infty}\colon\R^{n}\ni z \mapsto f^{\infty}(z\odot\nu)$ has strictly convex sublevel sets (in the sense of Section~\ref{sec:cors}) and every two generalised minima differ by a rigid deformation. Then the following hold: 
\begin{enumerate}
\item\label{item:uniqueness1} If there exists one generalised minimiser $u\in\gm(F;u_{0})$ with $u=u_{0}$ $\mathscr{H}^{n-1}$-a.e. on $\partial\Omega$, then $\gm(F;u_{0})=\{u\}$. 
\item\label{item:uniqueness2} If $\partial\Omega$ moreover satisfies for all $a\in\R$
\begin{align}\label{eq:uniquenesscond}
\mathscr{H}^{n-1}(\{x\in\partial\Omega\colon\;x_{i}=a\})= 0\qquad\text{for all}\;i\in\{1,...,n\},
\end{align}
then there exists $u\in\gm(F;u_{0})$ and $\pi\in\mathcal{R}(\Omega)$ such that 
\begin{align}\label{eq:oneparameter}
\gm(F;u_{0})=\{u+\lambda\pi\colon\;\lambda\in[-1,1]\}. 
\end{align}
\end{enumerate}
\end{proposition}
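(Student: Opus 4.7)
For part (a), I would start from the standing hypothesis that generalised minima differ by rigid deformations: any $v\in\gm(F;u_{0})$ can be written as $v=u+\pi$ for some $\pi\in\mathscr{R}(\Omega)$. Since rigid deformations are annihilated by $\sg$, both the absolutely continuous and singular parts of $\E v$ coincide with those of $\E u$; comparing $\overline{F}_{u_{0}}[u]$ and $\overline{F}_{u_{0}}[v]$ through the representation \eqref{eq:relaxed} the entire difference is captured by the boundary term. Since $u=u_{0}$ $\mathscr{H}^{n-1}$-a.e.\ on $\partial\Omega$, the boundary contribution of $u$ vanishes, and minimality of $v$ forces
\begin{align*}
\int_{\partial\Omega}f^{\infty}(-\pi\odot\nu_{\partial\Omega})\dif\mathscr{H}^{n-1}=0.
\end{align*}
The linear-growth hypothesis \eqref{eq:lingrowth1} easily implies $f^{\infty}(z)\geq c_{1}|z|$, whence $\pi(x)\odot\nu_{\partial\Omega}(x)=0$ for $\mathscr{H}^{n-1}$-a.e. $x\in\partial\Omega$. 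An elementary computation shows that $a\odot b=0$ forces $a=0$ or $b=0$; since $|\nu_{\partial\Omega}|=1$, this yields $\pi|_{\partial\Omega}=0$ $\mathscr{H}^{n-1}$-a.e., and an affine map vanishing on a Lipschitz boundary must vanish identically. Hence $\pi\equiv 0$ on $\Omega$ and $v=u$.

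For part (b), the same reduction identifies $\gm(F;u_{0})=u+K_{0}$, where $K_{0}$ is the set of minimisers of the convex finite-dimensional functional
\begin{align*}
\Phi\colon\mathscr{R}(\Omega)\to\R,\qquad \Phi(\pi):=\overline{F}_{u_{0}}[u+\pi].
\end{align*}
Since rigid deformations have vanishing $\sg$, $\Phi(\pi)-\Phi(0)$ depends only on the boundary integrand in \eqref{eq:relaxed}; writing $g(x):=\trace_{\partial\Omega}(u_{0}-u)(x)\odot\nu_{\partial\Omega}(x)$, we obtain $\Phi(\pi)-\Phi(0)=\int_{\partial\Omega}[f^{\infty}(g-\pi\odot\nu_{\partial\Omega})-f^{\infty}(g)]\dif\mathscr{H}^{n-1}$. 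The bound $f^{\infty}\geq c_{1}|\cdot|$ together with the injectivity of the restriction map $\mathscr{R}(\Omega)\ni\pi\mapsto\pi|_{\partial\Omega}\in\lebe^{1}(\partial\Omega;\R^{n})$ (which is immediate from the Lipschitz regularity of $\partial\Omega$, condition \eqref{eq:uniquenesscond} playing a r\^ole at the level of $\pi\odot\nu_{\partial\Omega}$ rather than $\pi|_{\partial\Omega}$ itself) renders $\Phi$ coercive. Consequently $K_{0}$ is nonempty, convex and compact.

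The crux is to show that $K_{0}$ is a line segment. Given $\pi_{1},\pi_{2}\in K_{0}$ and $t\in(0,1)$, minimality of $\pi_{t}:=t\pi_{1}+(1-t)\pi_{2}\in K_{0}$ combined with convexity of $f^{\infty}$ inside the integral forces pointwise equality
\begin{align*}
f_{\nu(x)}^{\infty}(t z_{1}(x)+(1-t)z_{2}(x))=t f_{\nu(x)}^{\infty}(z_{1}(x))+(1-t)f_{\nu(x)}^{\infty}(z_{2}(x))
\end{align*}
for $\mathscr{H}^{n-1}$-a.e.\ $x\in\partial\Omega$, where $z_{i}(x):=\trace_{\partial\Omega}(u_{0}-u)(x)-\pi_{i}(x)\in\R^{n}$. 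The hypothesis of strict convexity of the sublevel sets of $f_{\nu(x)}^{\infty}$, together with $1$-homogeneity, is exactly the condition under which such equality forces $z_{1}(x)$ and $z_{2}(x)$ to lie on a common ray from the origin whenever both are nonzero. Consequently $\delta(x):=\pi_{2}(x)-\pi_{1}(x)=z_{1}(x)-z_{2}(x)$ is parallel to $z_{1}(x)$ for $\mathscr{H}^{n-1}$-a.e.\ $x\in\partial\Omega$. In the two-dimensional case, writing $\delta(x)=Ax+b$ with $A\in\rscew$, this alignment constraint -- together with condition \eqref{eq:uniquenesscond} (which rules out axis-aligned portions of $\partial\Omega$ on which the normal is constant and which could otherwise compensate for larger variations of $\delta$) -- pins $\delta$ into a single one-dimensional subspace $\R\pi_{0}$ of $\mathscr{R}(\Omega)$. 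Thus $K_{0}$ lies in a one-dimensional affine subspace of $\mathscr{R}(\Omega)$; being convex and compact therein, it is a segment. Recentring $u$ to the midpoint and rescaling $\pi_{0}$ yields \eqref{eq:repGM}. The uniqueness assertion in the situation of~(a) then falls out as the degenerate case $\pi_{0}=0$. The main technical obstacle is the last rigidity step: translating the pointwise alignment of $\delta(x)$ with $z_{1}(x)$ into a one-dimensional constraint on $\mathscr{R}(\Omega)$, which is precisely where \eqref{eq:uniquenesscond} and the planar geometry of $\partial\Omega$ enter in an essential way.
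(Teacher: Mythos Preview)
Your plan for part~(a) is correct and coincides with the paper's argument; the only thing worth spelling out is the last implication, where the paper notes that $\partial\Omega$ (for $\Omega$ open and bounded) cannot be contained in an affine hyperplane, which is the zero set of any nontrivial affine map.

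For part~(b) your outline follows the same route as the paper: pass to the compact convex set $K_{0}\subset\mathscr{R}(\Omega)$, exploit the equality case in the convexity of $f^{\infty}$ together with the strictly convex sublevel sets of $f_{\nu}^{\infty}$, and then argue in $n=2$. Two points would need tightening before the final step can be carried out. First, the alignment you record, ``$\delta(x)$ parallel to $z_{1}(x)$'', is weaker than what is actually used. The paper fixes one endpoint at $0$ (i.e.\ compares $u$ with $u+\pi$) and obtains the sharper relation $\trace_{\partial\Omega}(u_{0}-u)(x)=\beta(x)\pi(x)$ for a measurable $\beta\colon\partial\Omega\to\mathbb{R}\setminus(0,1)$; applied to two elements $\pi_{1},\pi_{2}$ this yields $\beta_{1}\pi_{1}=\beta_{2}\pi_{2}$ $\mathscr{H}^{1}$-a.e.\ on $\partial\Omega$, and \emph{this} identity between two rigid deformations (rather than one rigid deformation against a generic trace) is what the 2D case analysis runs on. Second, condition~\eqref{eq:uniquenesscond} is not invoked through the normal: it enters because several branches of the case analysis force a coordinate of $x$ to be constant on a subset of $\partial\Omega$, and \eqref{eq:uniquenesscond} says exactly that such subsets are $\mathscr{H}^{1}$-null. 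You are right that the remaining rigidity step is the heart of the matter; the paper completes it by assuming $\pi_{1},\pi_{2}$ linearly independent, writing $\pi_{i}(x)=A_{i}x+b_{i}$ with $A_{i}\in\mathbb{R}^{2\times 2}_{\mathrm{scew}}$, and exhausting the vanishing/non\-vanishing possibilities for the two components of the relation $\beta_{1}\pi_{1}=\beta_{2}\pi_{2}$ to reach a contradiction.
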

Note that, the hypotheses of Corollary~\ref{cor:uniqueness0} imply those of the preceding proposition. For the rest of this section, we tacitly assume that the hypotheses of Proposition~\ref{prop:uniqueness} are in action.

We begin with some preliminary considerations. Given $u_{0}\in\ld(\Omega)$ and a convex integrand with \eqref{eq:lingrowth1}, we start by noting that for any $u\in\gm(F;u_{0})$, the set 
\begin{align}\label{eq:ccb}
\begin{split}
\mathcal{R}_{u}:=\big\{\pi\in\mathscr{R}(\Omega)\colon\; & u+\pi\in\gm(F;u_{0})\big\}\\ & \;\;\;\;\;\text{is convex, closed and bounded in}\;\mathscr{R}(\Omega). 
\end{split}
\end{align}
Convexity of $\mathcal{R}_{u}$ is a direct consequence of convexity of $\overline{F}_{u_{0}}[-;\Omega]$ on $\bd(\Omega)$. If $(\pi_{j})\subset\mathcal{R}_{u}$ satisfies $\pi_{j}\to\pi$ in $\mathscr{R}(\Omega)$, then Lipschitz continuity\footnote{The recession function is convex and of linear growth, thus Lipschitz, too.} of $f^{\infty}$ readily implies that $\overline{F}_{u_{0}}[u+\pi;\Omega]=\lim_{j\to\infty}\overline{F}_{u_{0}}[u+\pi_{j};\Omega]=\min \overline{F}_{u_{0}}[\bd(\Omega);\Omega]$ and hence $\pi\in\mathcal{R}_{u}$, too. Lastly, if $\mathcal{R}_{u}$ were not bounded, we would find $(\pi_{j})\subset\mathcal{R}_{u}$ with $\|\pi_{j}\|\to\infty$ for an arbitrary norm $\|\cdot\|$ on $\mathscr{R}(\Omega)$. There exists a constant $c=c_{n}>0$ such that $c|a|\,|b|\leq |a\odot b|$ for all $a,b\in\R^{n}$. Since, by \eqref{eq:lingrowth1}, $c_{1}|z|\leq f^{\infty}(z)$ for all $z\in\rsym$, we find 
\begin{align*}
C\int_{\Omega}|\trace_{\partial\Omega}(u-u_{0}-\pi_{j})|\dif\mathscr{H}^{n-1} & \leq \int_{\Omega}f^{\infty}(\trace_{\partial\Omega}(u_{0}-u-\pi_{j})\odot\nu_{\partial\Omega})\dif\mathscr{H}^{n-1} \\ & \leq (\min\overline{F}_{u_{0}}[\bd(\Omega)]) - f[\E u](\Omega) < \infty,
\end{align*}
so that the triangle inequality and equivalence of all norms on $\mathscr{R}(\Omega)$ yields the contradictory $\sup_{j\in\mathbb{N}}\|\pi_{j}\|<\infty$. In consequence, \eqref{eq:ccb} follows.

 As an adaptation of \cite[Lem.~6.2]{BS1}, we now establish that whenever $\pi\in\mathscr{R}(\Omega)$ is such that $u+\pi\in\gm(F;u_{0})$, then there exists a $\mathscr{H}^{n-1}$-measurable function $\beta\colon\partial\Omega\to \R\setminus (0,1)$ such that $\trace_{\partial\Omega}(u_{0})(x)=\trace_{\partial\Omega}(u)(x)+\beta(x)\pi(x)$ for $\mathscr{H}^{n-1}$-a.e. $x\in\partial\Omega$. 
 
In fact, if $u+\pi\in\gm(F;u_{0})$, then by \eqref{eq:ccb}, $u+\beta\pi\in\gm(F;u_{0})$ for all $\beta\in [0,1]$. In particular, we find 
\begin{align*}
 2f[\E u](\Omega) & + \int_{\partial\Omega}f^{\infty}(\trace_{\partial\Omega}(u_{0}-u-\pi)\odot\nu_{\partial\Omega})\dif\mathscr{H}^{n-1}\\ & + \int_{\partial\Omega}f^{\infty}(\trace_{\partial\Omega}(u_{0}-u)\odot\nu_{\partial\Omega})\dif\mathscr{H}^{n-1} = 2 \min \overline{F}_{u_{0}}[\bd(\Omega)] \\ 
& \leq 2f[\E u](\Omega) + 2\int_{\partial\Omega}f^{\infty}\Big(\Big(\trace_{\partial\Omega}(u_{0}-u-\frac{\pi}{2})\Big)\odot\nu_{\partial\Omega}\Big)\dif\mathscr{H}^{n-1},
\end{align*}
and since 
\begin{align}\label{eq:uniquenessinequality}
\begin{split}
2f^{\infty}(\trace_{\partial\Omega}(u_{0}-u-\frac{\pi}{2}) \odot \nu_{\partial\Omega}) & \leq f^{\infty}(\trace_{\partial\Omega}(u_{0}-u)\odot\nu_{\partial\Omega}) \\ & \!\!\!\!\!\!\!\!\!\!\!\!\!\!\!\!\!\!\!\!\!\!\!\!+ f^{\infty}(\trace_{\partial\Omega}(u_{0}-u-\pi))\odot\nu_{\partial\Omega})\;\;\;\;\text{$\mathscr{H}^{n-1}$-a.e. on $\partial\Omega$}, 
\end{split}
\end{align}
we deduce that we have equality in \eqref{eq:uniquenessinequality} $\mathscr{H}^{n-1}$-a.e. on $\partial\Omega$. Because the map $z\mapsto f^{\infty}(z\odot\nu_{\partial\Omega}(x))$ has strictly convex level sets for $\mathscr{H}^{n-1}$-a.e. $x\in\partial\Omega$, by \cite[Lem.~4.8]{Schmidt2}, for $\mathscr{H}^{n-1}$-a.e. $x\in\partial\Omega$ there exists $R(x)\geq 0$ such that
\begin{align*}
\trace_{\partial\Omega}(u_{0}(x)-u(x)-\pi(x))=R(x)\trace_{\partial\Omega}(u_{0}(x)-u(x))\qquad\text{for $\mathscr{H}^{n-1}$-a.e. $x\in\partial\Omega$}. 
\end{align*}
Clearly, on $\{x\in\partial\Omega\colon\;\pi(x)=0\}$ we must have $R=1$. Conversely, on $\{x\in\partial\Omega\colon\;\pi(x)\neq 0\}$, we have $R\neq 1$, $\trace_{\partial\Omega}((1-R)(u_{0}-u)-\pi)=0$ and hence $\trace_{\partial\Omega}(u_{0}-u)=\frac{1}{1-R}\trace_{\partial\Omega}(\pi)$. We may thus define 
\begin{align*}
\beta(x):=\begin{cases} 1 &\;\text{where}\;\pi(x)=0,\\
\frac{1}{1-R(x)}&\;\text{otherwise},  
\end{cases}
\end{align*}
so that $\beta(x)\in\R\setminus (0,1)$, and it is easily seen that $\beta$ has the required properties. %By construction of $\beta$, 
%\begin{align*}
%\int_{\beta^{-1}((-\infty,0])}& f^{\infty}(\pi\odot\nu_{\partial\Omega})\dif\mathscr{H}^{n-1} - \int_{\beta^{-1}([1,\infty))}f^{\infty}(\pi\odot\nu_{\partial\Omega})\dif\mathscr{H}^{n-1}   \\ & = \int_{\{R>1\}}f^{\infty}(\pi\odot\nu_{\partial\Omega})\dif\mathscr{H}^{n-1}-\int_{\{0\leq R<1\}}f^{\infty}(\pi\odot\nu_{\partial\Omega})\dif\mathscr{H}^{n-1}\\
%& = \int_{\{R>1\}}(R-1)f^{\infty}(\trace_{\partial\Omega}(u_{0}-u)\odot\nu_{\partial\Omega})\dif\mathscr{H}^{n-1} \\ &-\int_{\{0\leq R<1\}}(1-R)f^{\infty}(\trace_{\partial\Omega}(u_{0}-u)\odot\nu_{\partial\Omega})\dif\mathscr{H}^{n-1}\\
%& = \int_{\{R\in \R_{\geq 0}\setminus\{1\}\}}f^{\infty}(\trace_{\partial\Omega}(R(u_{0}-u))\odot\nu_{\partial\Omega})\dif\mathscr{H}^{n-1}\\
%& - \int_{\{R\in \R_{\geq 0}\setminus\{1\}\}}f^{\infty}(\trace_{\partial\Omega}(u_{0}-u)\odot\nu_{\partial\Omega})\dif\mathscr{H}^{n-1}\\
%& = \int_{\partial\Omega}f^{\infty}(\trace_{\partial\Omega}(R(u_{0}-u))\odot\nu_{\partial\Omega})-f^{\infty}(\trace_{\partial\Omega}(u_{0}-u)\odot\nu_{\partial\Omega})\dif\mathscr{H}^{n-1}\\ 
%& = \int_{\partial\Omega}f^{\infty}(\trace_{\partial\Omega}(u_{0}-u-\pi)\odot\nu_{\partial\Omega})-f^{\infty}(\trace_{\partial\Omega}(u_{0}-u)\odot\nu_{\partial\Omega})\dif\mathscr{H}^{n-1}=0,
%\end{align*}
%the last equality being valid by $u,u+\pi\in\gm(F;u_{0})$. This establishes \eqref{eq:betaestimate}. 

\begin{proof}[Proof of Proposition~\ref{prop:uniqueness}~\ref{item:uniqueness1}] In \cite{GK1} this has been established for \emph{convex} domains, and we here give the general case. Suppose that $v\in\gm(F;u_{0})$ is a generalised minimiser. Then $v=u+\pi$, and generalised minimality of $v$ together with $\trace_{\partial\Omega}(u)=\trace_{\partial\Omega}(u_{0})$ $\mathscr{H}^{n-1}$-a.e. on $\partial\Omega$ yields  
\begin{align*}
\int_{\partial\Omega}f^{\infty}(\pi(x)\odot\nu_{\partial\Omega}(x))\dif\mathscr{H}^{n-1}(x)=0. 
\end{align*}
Since $f^{\infty}(a\odot b)\geq C|a|\,|b|$ for some $C>0$ and all $a,b\in\R^{n}$, $\pi=0$ $\mathscr{H}^{n-1}$-a.e. (and thus, by continuity, everywhere) on $\partial\Omega$. Write $\pi(x)=Ax+b$ with $A\in\rscew$ and $b\in\R^{n}$. Clearly, for $\Omega$ is open and bounded, $\partial\Omega$ cannot be contained in an $(n-1)$-dimensional affine hyperplane. If $\dim(\ker(A))\leq n-1$, then $\ker(A)$ is contained in an $(n-1)$-dimensional hyperplane $H$. We have, for some $x_{0}\in\partial\Omega$, $\{y\colon\;Ay=-b\}=x_{0}+\ker(A)$. Since $\partial\Omega\not\subset x_{0}+\ker(A)$, we find $x_{1}\in\partial\Omega\cap(x_{0}+\ker(A))^{c}$. Then, however, $\pi(x_{1})=0$ implies $Ax_{1}=-b$ and so $x_{1}\in x_{0}+\ker(A)$, a contradiction. In consequence, necessarily $\dim(\ker(A))=n$, in which case $A=0$ so that, because of $\pi\equiv 0$ on $\partial\Omega$, $b=0$ and hence $\pi\equiv 0$ on $\R^{n}$. In conclusion, $u=v$ and hence $\gm(F;u_{0})=\{u\}$. 
\end{proof}
We now establish Proposition~\ref{prop:uniqueness}\ref{item:uniqueness2} for $n=2$; the higher dimensional case is similar and is left to the reader. 
\begin{proof}[Proof of Proposition~\ref{prop:uniqueness}~\ref{item:uniqueness2}]
By assumption, $\gm(F;u_{0})=u+\mathcal{R}_{u}$, $\mathcal{R}_{u}$ being defined as in \eqref{eq:ccb}. Suppose that $\mathcal{R}_{u}$ contains two linearly independent elements $\pi_{1},\pi_{2}$. Then, by the above discussion, we may write $\trace_{\partial\Omega}(u_{0})=\trace_{\partial\Omega}(u)+\beta_{1}\pi_{1}=\trace_{\partial\Omega}(u)+\beta_{2}\pi_{2}$ $\mathscr{H}^{1}$-a.e. on $\partial\Omega$ for some suitable $\beta_{1,2}\colon\partial\Omega\to\R\setminus (0,1)$. Therefore, $\beta_{1}\pi_{1}-\beta_{2}\pi_{2}=0$ $\mathscr{H}^{1}$-a.e. on $\partial\Omega$. We write $\pi_{1}(x)=A_{1}x+b_{1}$, $\pi_{2}(x)=A_{2}x+b_{2}$, where 
\begin{align*}
&A_{1}=\left(\begin{matrix} 0 & \lambda \\ -\lambda & 0 \end{matrix}\right),b_{1}=\left(\begin{matrix} \mathbf{b}_{11} \\ \mathbf{b}_{12}\end{matrix} \right)\;\;\;\text{and}\;\;\; A_{2}=\left(\begin{matrix} 0 & \mu \\ -\mu & 0 \end{matrix}\right),b_{2}=\left(\begin{matrix} \mathbf{b}_{21} \\ \mathbf{b}_{22}\end{matrix} \right)
\end{align*}
for some suitable $\lambda,\mu\in\R$, $b_{1},b_{2}\in\R^{2}$; in two dimensions, every rigid deformation is of this form. Now suppose that $\beta_{1}\pi_{1}-\beta_{2}\pi_{2}=0$ $\mathscr{H}^{1}$-a.e. on $\partial\Omega$, and denote $\Theta\subset\partial\Omega$ the set where equality holds; hence, $\mathscr{H}^{1}(\partial\Omega\setminus\Theta)=0$. Then for any $x=(x_{1},x_{2})\in\Theta$, 
\begin{align}\label{eq:uniquenessrewrite1}
\beta_{1}(x)\left(\begin{matrix} \lambda x_{2} + \mathbf{b}_{11} \\ -\lambda x_{1} + \mathbf{b}_{12} \end{matrix}\right) = \beta_{2}(x)\left(\begin{matrix} \mu x_{2} + \mathbf{b}_{21} \\ -\mu x_{1} + \mathbf{b}_{22} \end{matrix} \right).
\end{align}
Denote $\Gamma:=\{x\in\Theta\colon\;\beta_{1}(x)\neq 0\}$. Our aim is to establish $\mathscr{H}^{1}(\Gamma)=0$. We split 
\begin{align*}
\Gamma & = \Gamma_{1}\cup\Gamma_{2}\cup\Gamma_{3}\cup\Gamma_{4} \\ 
& := \{x\in\Gamma\colon \mu x_{2}+\mathbf{b}_{21} = 0\;\text{and}\;-\mu x_{1}+\mathbf{b}_{22} = 0\} \\ 
& \cup \{x\in\Gamma\colon \mu x_{2}+\mathbf{b}_{21} = 0\;\text{and}\;-\mu x_{1}+\mathbf{b}_{22} \neq 0\} \\ 
& \cup \{x\in\Gamma\colon \mu x_{2}+\mathbf{b}_{21} \neq 0\;\text{and}\;-\mu x_{1}+\mathbf{b}_{22} = 0\} \\ 
& \cup \{x\in\Gamma\colon \mu x_{2}+\mathbf{b}_{21} \neq 0\;\text{and}\;-\mu x_{1}+\mathbf{b}_{22} \neq 0\}.
\end{align*}
For $\Gamma_{1}$, note that if $\mu\neq 0$, then $\Gamma_{1}$ consists at most of one single point and hence $\mathscr{H}^{1}(\Gamma_{1})=0$. If $\mu=0$ and $\mathscr{H}^{1}(\Gamma_{1})>0$, then $\Gamma_{1}\neq\emptyset$ implies $\mathbf{b}_{21}=\mathbf{b}_{22}=0$ and hence, in total, by $\mu=0$, $\pi_{2}\equiv 0$, which is ruled out by linear independence of $\pi_{1},\pi_{2}$. Hence, $\mathscr{H}^{1}(\Gamma_{1})=0$.

Now, for $x\in\Gamma$, we may put $\gamma(x):=\frac{\beta_{2}(x)}{\beta_{1}(x)}$ and obtain from \eqref{eq:uniquenessrewrite1}
\begin{align}\label{eq:uniquenessrewrite2}
\left(\begin{matrix} \lambda x_{2} + \mathbf{b}_{11} \\ -\lambda x_{1} + \mathbf{b}_{12} \end{matrix}\right) = \gamma(x)\left(\begin{matrix} \mu x_{2} + \mathbf{b}_{21} \\ -\mu x_{1} + \mathbf{b}_{22} \end{matrix} \right).
\end{align} 
\begin{itemize}
\item The treatment of $\Gamma_{2}$ and $\Gamma_{3}$ is symmetric (interchange the roles of $x_{1}$ and $x_{2}$). So suppose that $\mathscr{H}^{1}(\Gamma_{2})>0$. If $\mu\neq 0$, then $\Gamma_{2}\subset \{x\in\Gamma\colon\;x_{2}=\frac{\mathbf{b}_{21}}{\mu}\}$ and hence $\mathscr{H}^{1}(\Gamma_{2})=0$ by \eqref{eq:uniquenesscond}, a contradiction. Thus $\mu=0$. From \eqref{eq:uniquenessrewrite2} we deduce that $\lambda x_{2}+\mathbf{b}_{11}=0$ for all $x\in\Gamma_{2}$. Again, if $\lambda\neq 0$, then $\Gamma_{2}\subset \{x\in\Gamma\colon\; x_{2}=-\frac{\mathbf{b}_{11}}{\lambda}\}$ and hence $\mathscr{H}^{1}(\Gamma_{2})=0$ by \eqref{eq:uniquenesscond}. Hence $\lambda=0$, and so $\pi_{1}=b_{1}$, $\pi_{2}=b_{2}$. In this situation, linear independence of $\pi_{1},\pi_{2}$  and hereafter of $b_{1},b_{2}$ implies that $\beta_{1}=\beta_{2}=0$ $\mathscr{H}^{1}$-a.e. on $\Gamma$, a contradiction to $\beta_{1}\neq 0$ $\mathscr{H}^{1}$-a.e. on $\Gamma$. As a conclusion, $\mathscr{H}^{1}(\Gamma_{2})=0$, and similarly, now invoking the first part of \eqref{eq:uniquenesscond}, $\mathscr{H}^{1}(\Gamma_{3})=0$. 
\item Suppose that $\mathscr{H}^{1}(\Gamma_{4})>0$. For $x\in\Gamma_{4}$, we have  $\mu x_{2}+\mathbf{b}_{21}\neq 0$ and $-\mu x_{1}+\mathbf{b}_{22}\neq 0$. From here we deduce 
\begin{align}\label{eq:fourthcase}
\frac{\lambda x_{2}+\mathbf{b}_{11}}{\mu x_{2}+\mathbf{b}_{21}}= \gamma(x) = \frac{\lambda x_{1}-\mathbf{b}_{12}}{\mu x_{1}-\mathbf{b}_{22}}\qquad\text{for all}\;x\in\Gamma_{4}. 
\end{align}
Therefore, $\gamma(x)$ must be independent of $x_{1},x_{2}$ and thus is constant. Hence, there exists $a\in\R$ such that $\pi_{1}=a\pi_{2}$ on $\Gamma_{4}$. The affine-linear map $\pi_{1}-a\pi_{2}$ thus vanishes on a set of positive $\mathscr{H}^{1}$-measure. Therefore, it necessarily vanishes on a line $\ell\subset\R^{2}$. In other words, 
\begin{align}\label{eq:uniquesolvability}
(A_{1}-aA_{2})x=a(b_{2}-b_{1})\qquad\text{for}\;x\in\ell. 
\end{align}
There are three options: If $A_{1}-aA_{2}$ is invertible, then $(A_{1}-aA_{2})x=a(b_{2}-b_{1})$ has a unique solution and thus contradicts \eqref{eq:uniquesolvability} for all $x\in\ell$. Thus, $A_{1}-a A_{2}$ is not invertible, and by the structure of $A_{1},A_{2}$, this implies $\lambda = a\mu$. Either $a=0$, in which case \eqref{eq:fourthcase} yields $\mathbf{b}_{11}=\mathbf{b}_{12}=0$. Then $\pi_{1}\equiv 0$, contradicting the linear independence of $\pi_{1},\pi_{2}$. If $a\neq 0$, then \eqref{eq:uniquenessrewrite2} yields $\mathbf{b}_{11}=a\mathbf{b}_{21}$ and $\mathbf{b}_{12}=a\mathbf{b}_{22}$. In conclusion, $\pi_{1}=a\pi_{2}$, again contradicting the linear independence of $\pi_{1}$ and $\pi_{2}$. Therefore, $\mathscr{H}^{1}(\Gamma_{4})=0$. 
\end{itemize}
In conclusion, $\mathscr{H}^{1}(\Gamma)=0$ so that $\beta_{1}=0$ $\mathscr{H}^{1}$-a.e. on $\partial\Omega$. Then we obtain from $\trace_{\partial\Omega}(u_{0})=\trace_{\partial\Omega}(u)+\beta_{1}\pi_{1}=\trace_{\partial\Omega}(u)$ $\mathscr{H}^{1}$-a.e. on $\partial\Omega$ that $u\in\gm(F;u_{0})$ is a generalised minimiser which attains the correct boundary data $u_{0}$ $\mathscr{H}^{1}$-a.e.. In this situation, Proposition~\ref{prop:uniqueness}\ref{item:uniqueness1} yields $\gm(F;u_{0})=\{u\}$. In total, $\gm(F;u_{0})\subset u + \R\pi$ for some suitable $\pi\in\mathscr{R}(\Omega)$. Since in this situation $\gm(F;u_{0})$ is a closed and bounded interval by \eqref{eq:ccb}, the statement of  Proposition~\ref{prop:uniqueness} for $n=2$ follows.  
\end{proof}
Proposition~\ref{prop:uniqueness} rises the question under which minimal geometric assumptions on $\partial\Omega$ the representation \eqref{eq:oneparameter} continues to hold, an issue that we intend to pursue elsewhere.
\section{Appendix B: Proofs of auxiliary results}\label{sec:proofaux}
We now collect here the proofs of some minor auxiliary results used in the main part of the paper.
\subsection{On the $\lebe^{q}$-stability \eqref{eq:Lpstability}}\label{sec:stability}
We start by justifying \eqref{eq:Lpstability}. Let $x_{0}\in\R^{n}$ and $r>0$. Pick an $\lebe^{2}$-orthonormal basis $\{\pi_{1},...,\pi_{N}\}$ of $\mathscr{R}(\ball(0,1))$ and consider the orthonormal projection $\Pi_{\ball(0,1)}\colon\lebe^{2}(\ball(0,1);\R^{n})\to\mathscr{R}(\ball(0,1))$ given by $\Pi_{\ball(0,1)}v:=\sum_{k=1}^{N}\langle v,\pi_{k}\rangle_{\lebe^{2}}\pi_{k}$. For $\mathscr{R}(\ball(0,1))$ consists of polynomials, it is clear that we may also admit $v\in\lebe^{1}(\ball(0,1);\R^{n})$ in the last formula. By \eqref{eq:rigidscaling}, this yields the estimate 
\begin{align*}
\|\Pi_{\ball(0,1)}v\|_{\lebe^{1}(\ball(0,1);\R^{n})}\leq c(n)\|v\|_{\lebe^{1}(\ball(0,1);\R^{n})}
\end{align*}
 for $v\in\lebe^{1}(\ball(0,1);\R^{n})$ so that $\Pi_{\ball(0,1)}$ extends to a bounded linear operator from $\lebe^{1}(\ball(0,1);\R^{n})$ to $\mathscr{R}(\ball(0,1))$. Now \eqref{eq:Lpstability} follows by rescaling. 
\subsection{Proof of Lemma~\ref{lem:shifted}}\label{sec:proofshifted}
Let $a\in\rsym$ be fixed and let $\xi\in\rsym$ be arbitrary. Assertion \ref{item:shifted1} follows by differentiation, and $f_{a}\geq 0$ is a consequence of convexity of $f$. As to \ref{item:shifted2}, since $f$ is Lipschitz by Lemma~\ref{lem:boundbelow} and because of $\mathbb{B}(a,\tfrac{\varrho_{\xi_{0}}}{2})\subset\mathbb{B}(\xi_{0},\varrho_{\xi_{0}})$, 
\begin{align*}
f_{a}(\xi) & = \int_{0}^{1}\langle f'(a+t\xi)-f'(a),\xi\rangle\dif t \leq \begin{cases} (\sup_{\mathbb{B}(\xi_{0},\varrho_{\xi_{0}})}|f''|)|\xi|^{2}&\;\text{for}\;|\xi|\leq \tfrac{\varrho_{\xi_{0}}}{2},\\
2\Lip(f)|\xi|&\;\text{for}\;|\xi|>\tfrac{\varrho_{\xi_{0}}}{2}.
\end{cases}
\end{align*}
Therefore, if $|\xi|>\tfrac{\varrho_{\xi_{0}}}{2}$, we may successively apply Lemma~\ref{lem:eest}\ref{item:eest3} and \ref{item:eest1} to find
\begin{align*}
|\xi| = \frac{\varrho_{\xi_{0}}}{2}\left\vert \frac{2}{\varrho_{\xi_{0}}}\xi\right\vert \leq \frac{\varrho_{\xi_{0}}}{2}\frac{1}{\sqrt{2}-1}V\Big(\frac{2}{\varrho_{\xi_{0}}}\xi \Big)\leq \frac{8}{\sqrt{2}-1}\frac{1}{\varrho_{\xi_{0}}}V(\xi).
\end{align*}
Thus, by Lemma~\ref{lem:eest}\ref{item:eest4} with $\ell=\tfrac{\varrho_{\xi_{0}}}{2}$ and the corresponding constant $c=c(\ell)=c(\tfrac{\varrho_{\xi_{0}}}{2})$
\begin{align*}
f_{a}(\xi) \leq \Big(c(\tfrac{\varrho_{\xi_{0}}}{2})\sup_{\mathbb{B}(\xi_{0},\varrho_{\xi_{0}})}|f''| +\frac{16\Lip(f)}{(\sqrt{2}-1)\varrho_{\xi_{0}}}\Big)V(\xi).
\end{align*}
For the lower bound, we observe that by \eqref{eq:positivedefinitebound} and $\mathbb{B}(a,\tfrac{\varrho_{\xi_{0}}}{2})\subset\mathbb{B}(\xi_{0},\varrho_{\xi_{0}})$, 
\begin{align*}
f_{a}(\xi) & = \int_{0}^{1}\int_{0}^{1}\langle f''(a+st\xi)\xi,\xi\rangle\dif s\dif t \geq m_{\xi_{0},\varrho_{\xi_{0}}}|\xi|^{2}\qquad\text{for all}\;\xi\in\mathbb{B}(0,\tfrac{\varrho_{\xi_{0}}}{2}). 
\end{align*}
Similarly, if $\tfrac{\varrho_{\xi_{0}}}{2}\leq|\xi|$, then positive definiteness of $f''$ on $\rsym$ yields
\begin{align*}
f_{a}(\xi) = f_{a}(\xi)-f_{a}(0) \geq \int_{0}^{\sqrt{\frac{\varrho_{\xi_{0}}}{2|\xi|}}}\int_{0}^{\sqrt{\frac{\varrho_{\xi_{0}}}{2|\xi|}}}\left\langle f''(a+st\xi)\xi,\xi\right\rangle\dif t\dif s \geq m_{\xi_{0},\varrho_{\xi_{0}}}\frac{\varrho_{\xi_{0}}}{2|\xi|}|\xi|^{2}.
\end{align*}
Hence, we obtain for all $\xi\in\rsym$ by Lemma~\ref{lem:eest}\ref{item:eest3} and monotonicity of $\R\ni t\mapsto V(t)$,
\begin{align*}
f_{a}(\xi) & \geq m_{\xi_{0},\varrho_{\xi_{0}}}V(\xi)\mathbbm{1}_{\{|\xi|<\varrho_{\xi_{0}}/2\}}(\xi)+ m_{\xi_{0},\varrho_{\xi_{0}}}\Big(\frac{\varrho_{\xi_{0}}}{2}\Big)^{2}V\Big(\frac{2\xi}{\varrho_{\xi_{0}}}\Big)\mathbbm{1}_{\{|\xi|\geq\varrho_{\xi_{0}}/2\}}(\xi) \\ & \geq m_{\xi_{0},\varrho_{\xi_{0}}}\Big(\frac{\varrho_{\xi_{0}}}{2}\Big)^{2}V(\xi).
\end{align*}
The proof is complete. 

\subsection{Linear comparison estimates and the proof of Proposition~\ref{prop:1}}\label{sec:LCE}
Let $\Omega\subset\R^{n}$ be an open and bounded domain with smooth boundary.  For $w\in\sobo^{1,2}(\Omega;\R^{n})$, consider the variational principle
\begin{align}\label{eq:linearproblem}
\text{to minimise}\;\;G[v]:=\int_{\Omega}g(\sg(v))\dif x\qquad \text{over}\;v\in w+\sobo_{0}^{1,2}(\Omega;\R^{n}), 
\end{align}
where $g(z):=\mathscr{A}[z,z] + \langle b,z\rangle + c$ is a polynomial of degree two on $\rsym$ with a symmetric bilinear form $\mathscr{A}\colon\rsym\times\rsym\to\R$, $b\in\rsym$ and $c\in\R$. We moreover assume that $\mathscr{A}$ is elliptic in the sense that there exists $\ell_{1},\ell_{2}>0$ such that $\ell_{1}|z|^{2}\leq \mathscr{A}[z,z] \leq \ell_{2} |z|^{2}$ holds for all $z\in\R_{\sym}^{n\times n}$.
\begin{lemma}\label{lem:lineardecayestimates}
There exists a unique solution $u\in w+\sobo_{0}^{1,2}(\Omega;\R^{n})$ of \eqref{eq:linearproblem}. Moreover, this solution satisfies the following: 
\begin{enumerate}
\item\label{item:linearcomparison1} There exists a constant $c=c(n,\ell_{1},\ell_{2})>0$ such that if $\ball(x_{0},R)\Subset\Omega$, then for all $0<r<R/2$ there holds 
\begin{align*}
\int_{\ball(x_{0},r)}|\sg(u)-(\sg(u))_{x_{0},r}|^{2}\dif x \leq c\left(\frac{r}{R}\right)^{n+2}\int_{\ball(x_{0},R/2)}|\sg(u)-(\sg(u))_{x_{0},R/2}|^{2}\dif x. 
\end{align*}
\item\label{item:linearcomparison2} If $\Omega=\ball(x_{0},R)$ for some $x_{0}\in\R^{n}$ and $R>0$, then for any $\alpha\in (0,1)$ there exists a constant $c=c(n,\alpha,\ell_{1},\ell_{2})>0$ such that if $w\in\hold^{1,\alpha}(\overline{\ball(x_{0},R)};\R^{n})$, then 
\begin{align*}
[\sg(u)]_{\hold^{0,\alpha}(\overline{\ball(x_{0},R)};\rsym)}\leq c [\sg(w)]_{\hold^{0,\alpha}(\overline{\ball(x_{0},R)};\rsym)}.
\end{align*}
\end{enumerate}
\end{lemma}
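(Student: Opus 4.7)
The plan is to reduce everything to standard interior and boundary regularity for constant-coefficient linear elliptic systems, with Korn's inequality serving as the bridge between symmetric-gradient coercivity and the usual full-gradient elliptic theory. First I would establish existence and uniqueness by the direct method: since $\mathscr{A}$ is elliptic, $G$ is strictly convex on $w+\sobo_{0}^{1,2}(\Omega;\R^{n})$, and Korn's inequality in $\sobo_{0}^{1,2}(\Omega;\R^{n})$ gives coercivity of the quadratic part. Computing the first variation, using the symmetry of $\mathscr{A}$ and the fact that $b$ is constant (so the linear term $\int\langle b,\sg(\varphi)\rangle\dif x$ vanishes on $\sobo_{0}^{1,2}$), I would then obtain that the unique minimizer $u$ weakly satisfies
\[
\int_{\Omega}\mathscr{A}[\sg(u),\sg(\varphi)]\dif x=0\qquad\text{for all}\;\varphi\in\sobo_{0}^{1,2}(\Omega;\R^{n}).
\]
Korn's inequality then translates the ellipticity bound $\ell_{1}|z|^{2}\le\mathscr{A}[z,z]$ into a Legendre-Hadamard-type condition for the full-gradient system satisfied by $u$, and so by classical constant-coefficient linear elliptic regularity (difference quotients, repeatedly) we have $u\in\hold^{\infty}_{\locc}(\Omega;\R^{n})$.

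For part \ref{item:linearcomparison1}, I would exploit the fact that the constant-coefficient linear system is invariant under differentiation and under subtraction of rigid deformations (which lie in the kernel of $\sg$). Thus the componentwise partial derivatives $\partial_{k}u$, $k\in\{1,\dots,n\}$, solve the same system, and so do their differences with constants. Iterating the standard Caccioppoli inequality together with Sobolev/Morrey embedding and Korn gives the scale-invariant bound
\[
\sup_{\ball(x_{0},R/4)}|\D\sg(u)|^{2}\le\frac{c}{R^{n+2}}\int_{\ball(x_{0},R/2)}|\sg(u)-(\sg(u))_{x_{0},R/2}|^{2}\dif x,
\]
with $c=c(n,\ell_{1},\ell_{2})$. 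Combined with the elementary estimate
\[
\int_{\ball(x_{0},r)}|\sg(u)-(\sg(u))_{x_{0},r}|^{2}\dif x\le cr^{n+2}\sup_{\ball(x_{0},r)}|\D\sg(u)|^{2},
\]
valid for $0<r<R/4$ by the fundamental theorem of calculus and Jensen, the decay claimed in \ref{item:linearcomparison1} follows at once (the range $R/4\le r<R/2$ is handled trivially by absorbing constants).

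For part \ref{item:linearcomparison2}, which is a Schauder-type estimate up to the boundary on a ball with the prescribed boundary datum in $\hold^{1,\alpha}$, I would decompose $u=w+v$ with $v\in\sobo_{0}^{1,2}(\ball(x_{0},R);\R^{n})$ solving
\[
\int\mathscr{A}[\sg(v),\sg(\varphi)]\dif x=-\int\mathscr{A}[\sg(w),\sg(\varphi)]\dif x\quad\text{for all}\;\varphi\in\sobo_{0}^{1,2}(\ball(x_{0},R);\R^{n}).
\]
Since $\sg(w)\in\hold^{0,\alpha}(\overline{\ball(x_{0},R)};\rsym)$, the right-hand side defines an $\hold^{0,\alpha}$-regular data for a constant-coefficient Legendre-Hadamard elliptic system on a smooth domain. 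Agmon-Douglis-Nirenberg boundary Schauder estimates then yield $\sg(v)\in\hold^{0,\alpha}(\overline{\ball(x_{0},R)};\rsym)$ with $[\sg(v)]_{\hold^{0,\alpha}}\le c[\sg(w)]_{\hold^{0,\alpha}}$, and the triangle inequality $[\sg(u)]_{\hold^{0,\alpha}}\le[\sg(v)]_{\hold^{0,\alpha}}+[\sg(w)]_{\hold^{0,\alpha}}$ completes the proof.

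The main obstacle is part \ref{item:linearcomparison2}: one needs boundary regularity for a system rather than the easier interior regularity used in part \ref{item:linearcomparison1}. Although the system is constant-coefficient on a smooth (in fact, spherical) domain, care is needed to pass from the weak symmetric-gradient formulation to the standard ADN framework. The cleanest alternative, if one wishes to stay close to the techniques of the paper, is a Campanato-type argument: combine the interior decay of part \ref{item:linearcomparison1} with a boundary decay estimate obtained by a freezing-coefficient/reflection argument at $\partial\ball(x_{0},R)$, then recover the $\hold^{0,\alpha}$-bound for $\sg(u)$ via the Campanato-Meyers characterization. Either route avoids anything genuinely novel beyond classical linear elliptic theory.
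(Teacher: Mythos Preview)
Your plan is correct and essentially matches the paper's proof: both reduce to a constant-coefficient Legendre--Hadamard elliptic system in the full gradient and then invoke classical linear theory (the paper simply cites \cite{FS2} for \ref{item:linearcomparison1} and uses global Schauder estimates for \ref{item:linearcomparison2}, afterwards converting $[\nabla w]_{\hold^{0,\alpha}}$ back to $[\sg(w)]_{\hold^{0,\alpha}}$ via a Korn-type inequality in the Campanato scale, Lemma~\ref{lem:Cianchi}\ref{item:Cianchi0}, rather than your decomposition $u=w+v$). One small correction: the Legendre--Hadamard condition for the full-gradient form $\mathscr{B}[z,\xi]:=\mathscr{A}[z^{\sym},\xi^{\sym}]$ does \emph{not} come from Korn's inequality but is purely algebraic, namely $\mathscr{B}[a\otimes b,a\otimes b]=\mathscr{A}[a\odot b,a\odot b]\ge\ell_{1}|a\odot b|^{2}\ge\tfrac{\ell_{1}}{2}|a|^{2}|b|^{2}$; Korn is needed only for coercivity in the existence step.
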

\begin{proof}
\textsc{Korn}'s inequality $\|\nabla\varphi\|_{\lebe^{2}(\Omega;\R^{n\times n})}\leq c\|\sg(\varphi)\|_{\lebe^{2}(\Omega;\R^{n\times n})}$ for all $\varphi\in\sobo_{0}^{1,2}(\Omega;\R^{n})$ implies that minimising sequences are bounded in $\sobo^{1,2}(\Omega;\R^{n})$ (as the Dirichlet datum $w$ is fixed). From here, the existence of minima is standard by convexity of $g$, and uniqueness follows from strict convexity of $g$. The proof of \ref{item:linearcomparison1} follows along the lines of \cite[Lem.~3.0.5]{FS2}. For~\ref{item:linearcomparison2}, consider the symmetric bilinear form $\mathscr{B}\colon\R^{n\times n}\times\R^{n\times n}\to\R$ defined by $\mathscr{B}[z,\xi]:=\mathscr{A}[z^{\sym},\xi^{\sym}]$ for $z,\xi\in\R^{n\times n}$. Then $\mathscr{B}$ is strongly elliptic in the sense of Legendre-Hadamard: For all $a,b\in\R^{n}$ there holds 
\begin{align*}
\mathscr{B}[a\otimes b,a\otimes b] = \mathscr{A}[a\odot b,a\odot b] \geq c(n,\ell_{1},\ell_{2})|a|^{2}|b|^{2}, 
\end{align*}
and since trivially $|\mathscr{B}[z,\xi]|\leq c(n,\ell_{1},\ell_{2})|z||\xi|$ for all $z,\xi\in\R^{n\times n}$, $\mathscr{B}$ is a strongly elliptic bilinear form on $\R^{n\times n}$. By minimality of $u$ for $G$, $u$ satisfies the Euler-Lagrange equation 
\begin{align}
\begin{cases}
-\di(\mathscr{B}[\nabla u,\cdot]) = 0&\;\text{in}\;\Omega, \\ \;\;\;\;\;\;\;\;\;\;\;\;\;\;\;\;\;\;\;\;\;\;\;\;
u = w&\;\text{on}\;\partial\Omega. 
\end{cases}
\end{align}
Therefore, by the classical Schauder estimates for strongly elliptic systems and scaling, there exists a constant $c=c(n,\ell_{1},\ell_{2})>0$ such that 
\begin{align*}
[\nabla u]_{\hold^{0,\alpha}(\overline{\ball(x_{0},R)};\R^{n\times n})}\leq c [\nabla w]_{\hold^{0,\alpha}(\overline{\ball(x_{0},R)};\R^{n\times n})}.
\end{align*}
Trivially, $[\sg(u)]_{\hold^{0,\alpha}(\overline{\ball(x_{0},R)};\rsym)}\leq [\nabla u]_{\hold^{0,\alpha}(\overline{\ball(x_{0},R)};\R^{n\times n})}$. By the simple geometry of $\ball(x_{0},R)$, $\mathcal{L}^{2,n+2\alpha}(\ball(x_{0},R);\rsym)\simeq \hold^{0,\alpha}(\overline{\ball(x_{0},R)};\rsym)$ with the Campanato spaces $\mathcal{L}^{p,\lambda}$. We then estimate, using Lemma~\ref{lem:Cianchi}\ref{item:Cianchi0} in the third step and scaling, 
\begin{align*}
[\nabla w & ]_{\hold^{0,\alpha}(\overline{\ball(x_{0},R)};\R^{n\times n})} \leq c [\nabla w]_{\mathcal{L}^{2,n+2\alpha}(\ball(x_{0},R);\R^{n\times n})}\\ 
& = c\sup_{x\in\Omega}\sup_{0<r<2R}\Big(\frac{1}{r^{n+2\alpha}}\int_{\ball(x,r)\cap\ball(x_{0},R)}|\nabla w - (\nabla w)_{\ball(x,r)\cap\ball(x_{0},R)}|^{2}\dif\mathscr{L}^{n}\Big)^{\frac{1}{2}}\\ 
& \leq c\sup_{x\in\Omega}\sup_{0<r<2R}\Big(\frac{1}{r^{n+2\alpha}}\int_{\ball(x,r)\cap\ball(x_{0},R)}|\sg(w) - (\sg(w))_{\ball(x,r)\cap\ball(x_{0},R)}|^{2}\dif\mathscr{L}^{n}\Big)^{\frac{1}{2}} \\ 
& \leq c[\sg(u)]_{\hold^{0,\alpha}(\overline{\ball(x_{0},R)};\rsym)}, 
\end{align*} 
where still $c=c(n,\ell_{1},\ell_{2})$. This yields \ref{item:linearcomparison2}, and the proof is complete. 
\end{proof}
The key in the above proof is that an easy reduction to the strongly elliptic bilinear forms applied to the \emph{full} gradients is possible. This is \emph{not} the case for elliptic bilinear forms. Clearly, in  \ref{item:linearcomparison2} we could have allowed for more general domains, but this is not needed for the 
\begin{proof}[Proof of Proposition~\ref{prop:1}] 
We split the proof into two steps, linearisation and comparison estimates. 

\emph{Step 1. Linearisation.} We begin by defining the auxiliary integrand $g\colon\rsym\to\R$ by
\begin{align*}
g(\xi):=f(\xi_{0})+\langle f'(\xi_{0}),(\xi-\xi_{0})\rangle+\tfrac{1}{2}\langle f''(\xi_{0})(\xi-\xi_{0}),(\xi-\xi_{0})\rangle,\;\;\;\xi\in\rsym.  
\end{align*}
Using a Taylor expansion of $f$ up to order two around $\xi_{0}$, we deduce by \eqref{eq:modcon} that 
\begin{align}\label{eq:Taylorsecond}
|f(\xi)-g(\xi)|\leq\omega_{\xi_{0},\varrho_{\xi_{0}}}(|\xi-\xi_{0}|)|\xi-\xi_{0}|^{2},\;\;\;\xi\in\mathbb{B}(\xi_{0},\varrho_{\xi_{0}}). 
\end{align}
By Lemma \ref{lem:lineardecayestimates}, the unique solution $h$ of the auxiliary variational principle
\begin{align}\label{eq:auxprob}
\text{to minimise}\;\int_{\ball(x_{0},R/2)}g(\sg(w))\dif x\;\;\text{over all}\;w\in v + \sobo_{0}^{1,2}(\ball(x_{0},\tfrac{R}{2});\R^{n}), 
\end{align}
belongs to $\hold^{1,\alpha}(\overline{\ball(x_{0},R/2)};\R^{n})$. By Lemma~\ref{lem:lineardecayestimates}~\ref{item:linearcomparison1}, there exists $c=c(n,\lambda,\Lambda)>0$ such that
\begin{align}\label{eq:compare}
\int_{\ball(x_{0},r)}|\sg(h)-(\sg(h))_{x_{0},r}|^{2}\dif x \leq c\left(\frac{r}{R}\right)^{n+2}\int_{\ball(x_{0},R/2)}|\sg(h)-(\sg(h))_{x_{0},R/2}|^{2}\dif x
\end{align}
for all $0<r<R/2$. Moreover, enlarging $c>0$ if necessary, Lemma \ref{lem:lineardecayestimates}~\ref{item:linearcomparison2} gives
\begin{align}\label{eq:Schauder}
[\sg(h)]_{\hold^{0,\alpha}(\overline{\ball(x_{0},R/2)};\rsym)}\leq c [\sg(v)]_{\hold^{0,\alpha}(\overline{\ball(x_{0},R/2)};\rsym)}.
\end{align}
Since $h$ is a solution of the variational principle \eqref{eq:auxprob}, the bounds of \eqref{eq:ellipticcomparison} yield that \begin{align}\label{eq:L2compas}
\|\sg(h)-\xi_{0}\|_{\lebe^{2}(\ball(x_{0},R/2);\rsym)} \leq c\|\sg(v)-\xi_{0}\|_{\lebe^{2}(\ball(x_{0},R/2);\rsym)},
\end{align}
where still $c=c(n,\lambda,\Lambda)>0$. Therefore we deduce for every $x\in\ball(x_{0},\tfrac{R}{2})$ that
\begin{align*}
|\sg(h)(x)-\xi_{0}| & \;\;\;\;\;\;\;\leq \sup_{\ball(x_{0},R/2)}|\sg(h)-(\sg(h))_{x_{0},R/2}| + |(\sg(h))_{x_{0},R/2}-\xi_{0}|\\
& \;\;\stackrel{\eqref{eq:Schauder},\eqref{eq:L2compas}}{\leq} c R^{\alpha}[\sg(v)]_{\hold^{0,\alpha}(\overline{\ball(x_{0},R/2)};\rsym)}+\Big(\dashint_{\ball(x_{0},R/2)}|\sg(v)-\xi_{0}|^{2}\dif x \Big)^{\frac{1}{2}}\\
& \;\;\;\;\;\;\,\,\leq c R^{\alpha}[\sg(v)]_{\hold^{0,\alpha}(\overline{\ball(x_{0},R/2)};\rsym)}+\sup_{\ball(x_{0},R/2)}|\sg(v)-\xi_{0}|\\
&  \;\;\;\;\;\;\;=: c_{\comp}\mathbf{t}_{\alpha,\xi_{0}}(v;x_{0},\tfrac{R}{2}),
\end{align*}
where $c_{\comp}=c_{\comp}(\lambda,\Lambda,n)>1$ shall be the constant claimed in the proposition, and so
\begin{align}\label{eq:supremumbound}
\sup_{\ball(x_{0},R/2)}|\sg(h)-\xi_{0}| \leq c_{\comp}\mathbf{t}_{\alpha,\xi_{0}}(v;x_{0},\tfrac{R}{2}). 
\end{align}
\emph{Step 2. Comparison estimates.} We will now compare $v$ with $h$. To this end, we first notice that by Jensen's inequality,  \eqref{eq:compare} and $0<r<\tfrac{R}{2}$,
\begin{align*}
\int_{\ball(x_{0},r)}|\sg(v)- (\sg(v))_{x_{0},r}|^{2}\dif x & \leq c\Big(\int_{\ball(x_{0},r)}|\sg(v)-\sg(h)|^{2}\dif x
 \Big. \\ & \Big. \;\;\;\;\;\;\;\;\;\;\;\;\;\;\;\;\;\;\;\;\;\;\;\;\;\;\;\;\;\; + \int_{\ball(x_{0},r)}|\sg(h)-(\sg(h))_{x_{0},r}|^{2}\dif x\Big) \\
& \leq c\Big(\int_{\ball(x_{0},r)}|\sg(v)-\sg(h)|^{2}\dif x \Big. \\ & \Big.\;\;\;\;\;\;\;\;+ \left(\frac{r}{R}\right)^{n+2}\int_{\ball(x_{0},R/2)}|\sg(h)-(\sg(h))_{x_{0},R/2}|^{2}\dif x\Big)\\
& \leq c\Big( \int_{\ball(x_{0},R/2)}|\sg(v)-\sg(h)|^{2}\dif x \Big. \\ & \Big.\;\;\;\;\;\;\;\;+ \left(\frac{r}{R}\right)^{n+2}\int_{\ball(x_{0},R/2)}|\sg(v)-(\sg(v))_{x_{0},R/2}|^{2}\dif x\Big), 
\end{align*}
where $c=c(n,\lambda,\Lambda)>0$. In view of \eqref{eq:smoothes}, we thus need to control the first term on the very right hand side of the previous inequality. Since $h$ solves \eqref{eq:auxprob} and $v-h\in\sobo_{0}^{1,2}(\ball(x_{0},\tfrac{R}{2});\R^{n})$, an elementary integration by parts establishes
\begin{align*}
\frac{1}{2}  \int_{\ball(x_{0},R/2)}  \langle f''(\xi_{0})(\sg(v)  -\sg(h)) ,(\sg(v)-\sg(h))\rangle\dif x = \int_{\ball(x_{0},R/2)}g(\sg(v))-g(\sg(h))\dif x. 
\end{align*}
Using this equality in the second step, we then deduce 
\begin{align*}
\int_{\ball(x_{0},R/2)}|\sg(v)-\sg(h)|^{2} \dif x &\; \stackrel{\eqref{eq:ellipticcomparison}}{\leq} \frac{1}{\lambda} \int_{\ball(x_{0},R/2)}\langle f''(\xi_{0})(\sg(v)-\sg(h)),(\sg(v)-\sg(h))\rangle\dif x \\ 
& =\frac{2}{\lambda} \int_{\ball(x_{0},R/2)}g(\sg(v))-g(\sg(h))\dif x \\ 
& = \frac{2}{\lambda}\Big( \int_{\ball(x_{0},R/2)}g(\sg(v))-f(\sg(v))\dif x \Big. \\ &\Big. + \int_{\ball(x_{0},R/2)}f(\sg(v))-f(\sg(h))\dif x \Big. \\ & \Big. + \int_{\ball(x_{0},R/2)}f(\sg(h))-g(\sg(h))\dif x\Big) \\ & =: \frac{2}{\lambda}\big(\mathbf{I}_{1}+\mathbf{I}_{2}+\mathbf{I}_{3}\big), 
\end{align*}
the single terms $\mathbf{I}_{1},\mathbf{I}_{2},\mathbf{I}_{3}$ being defined in the obvious manner.

Ad $\mathbf{I}_{1}$. Since $c_{\comp}>1$ and by virtue of our assumption $\mathbf{t}_{\alpha,\xi_{0}}(v;x_{0},R/2)<\varrho_{\xi_{0}}/c_{\comp}$, we  obtain $\sg(v)(x)\in\mathbb{B}(\xi_{0},\varrho_{\xi_{0}})$ for all $x\in\ball(x_{0},R/2)$. In consequence, by \eqref{eq:Taylorsecond}, the definition of $\mathbf{t}_{\alpha,\xi_{0}}$ and because $\omega_{\xi_{0},\varrho_{\xi_{0}}}$ is non-decreasing, 
\begin{align*}
\mathbf{I}_{1}=\int_{\ball(x_{0},R/2)}g(\sg(v))-f(\sg(v))\dif x \leq \omega_{\xi_{0},\varrho_{\xi_{0}}}(\mathbf{t}_{\alpha,\xi_{0}}(v;x_{0},R/2))\int_{\ball(x_{0},R/2)}|\sg(v)-\xi_{0}|^{2}\dif x.
\end{align*}

Ad $\mathbf{I}_{2}$. Here we invoke the definition of $\devi$ and minimality of $h$ for \eqref{eq:auxprob}, yielding $\mathbf{I}_{2}\leq \devi(v;x_{0},R/2)$.

Ad $\mathbf{I}_{3}$. By our choice \eqref{eq:supremumbound} of $c_{\comp}>1$ and $\mathbf{t}_{\alpha,\xi_{0}}(v;x_{0},R/2)<\varrho_{\xi_{0}}/c_{\comp}(<1)$, \eqref{eq:supremumbound} implies that  $\sg(h)(x)\in\mathbb{B}(\xi_{0},\varrho_{\xi_{0}})$ for all $x\in\ball(x_{0},R/2)$. Hence, by \eqref{eq:Taylorsecond}, 
\begin{align*}
|f(\sg(h)(x))-g(\sg(h)(x))|\leq \omega_{\xi_{0},\varrho_{\xi_{0}}}(|\sg(h)(x)-\xi_{0}|)|\sg(h)(x)-\xi_{0}|^{2}
\end{align*}
for all $x\in\ball(x_{0},R/2)$. Now, because $\omega_{\xi_{0},\varrho_{\xi_{0}}}$ is non-decreasing, \eqref{eq:supremumbound} and \eqref{eq:L2compas} imply
\begin{align*}
\mathbf{I}_{3} & = \int_{\ball(x_{0},R/2)}f(\sg(h))-g(\sg(h))\dif x \\ & \leq \omega_{\xi_{0},\varrho_{\xi_{0}}}(c_{\comp}\mathbf{t}_{\alpha,\xi_{0}}(v;x_{0},R/2))\int_{\ball(x_{0},R/2)}|\sg(h)-\xi_{0}|^{2}\dif x\\
&  \leq c \omega_{\xi_{0},\varrho_{\xi_{0}}}(c_{\comp}\mathbf{t}_{\alpha,\xi_{0}}(v;x_{0},R/2))\int_{\ball(x_{0},R/2)}|\sg(v)-\xi_{0}|^{2}\dif x, 
\end{align*}
where $c=c(n,\lambda,\Lambda)>0$. In conclusion, we find with some constant $c=c(n,\lambda,\Lambda)>0$
\begin{align*}
\int_{\ball(x_{0},r)}|\sg(v)-(\sg(v)&)_{x_{0},r}|^{2}\dif x  \leq c\Big(\left(\frac{r}{R}\right)^{n+2}\int_{\ball(x_{0},R/2)}|\sg(v)-(\sg(v))_{z,R/2}|^{2}\dif x \Big. \\ & \Big. + \devi(v;x_{0},R/2) + \vartheta(\mathbf{t}_{\alpha,\xi_{0}}(v;x_{0},R/2))\int_{\ball(x_{0},R/2)}|\sg(v)-\xi_{0}|^{2}\dif x\Big), 
\end{align*}
where $\vartheta(t):=\omega_{\xi_{0},\varrho_{\xi_{0}}}(t)+\omega_{\xi_{0},\varrho_{\xi_{0}}}(c_{\comp}t)$ meets the required properties. This is \eqref{eq:smoothes}, and the proof is complete.  
\end{proof}


\begin{thebibliography}{99}
\bibitem{AcerbiFusco0}  Acerbi, E.; Fusco, N.: A regularity theorem for minimizers of quasiconvex integrals. Arch. Rational Mech. Anal. 99 (1987), no. 3, 261--281.
\bibitem{AcerbiFusco}  Acerbi, E.; Fusco, N.: Regularity for minimizers of nonquadratic functionals: the case $1<p<2$. J. Math. Anal. Appl. 140 (1989), no. 1, 115--135.
\bibitem{AcerbiMingione} Acerbi, E.; Mingione, G.: Regularity results for stationary electro-rheological fluids. Arch. Rat. Mech. Anal. 164, 213--
259 (2002).
\bibitem{Allard} Allard, W.K.: On the first variation of a varifold. Ann. of Math. (2) 95 (1972), 417--491.
\bibitem{Almgren} Almgren, F.J. Jr.: Existence and regularity almost everywhere of solutions to elliptic variational problems among surfaces
of varying topological type and singularity structure. Ann. of Math. (2) 87 (1968), 321--391.
\bibitem{ACD} Ambrosio, L.; Coscia, A.; Dal Maso, G.: Fine properties of functions with bounded deformation. (English summary) Arch. Rational Mech. Anal. 139 (1997), no. 3, 201--238. 
\bibitem{AFP} Ambrosio, L.; Fusco, N.; Pallara, D.: Functions of bounded variation and free discontinuity problems. Oxford Mathematical Monographs. The Clarendon Press, Oxford University Press, New York, 2000. xviii+434 pp.
\bibitem{Anzellotti} Anzellotti, G.: The Euler equation for functionals with linear growth, Trans. Am. Math.
Soc. 290 (1985), 483--501.
\bibitem{Anze1} Anzellotti, G.; Giaquinta, M.: Existence of the displacement field for an elastoplastic body subject to Hencky's law and von Mises yield condition. Manuscripta Math. 32 (1980), no. 1-2, 101--136.
\bibitem{AG} Anzellotti, G.; Giaquinta, M.: Convex functionals and partial regularity. Arch. Ration. Mech. Anal. 102 (1988), no. 3,
243--272.
\bibitem{Baba} Babadjian, J.--F.: Traces of functions of bounded deformation. Indiana Univ. Math. J. 64 (2015), no. 4, 1271--1290.
\bibitem{BaCoMi} Baroni, P.; Colombo, M.; Mingione, G.: Regularity for general functionals with double phase.  J. Calc. Var. (2018) 57:62.
\bibitem{BBG} Beck, L.; Bul\'{i}\v{c}ek, M.; Gmeineder, F.: On a Neumann problem for variational functionals of linear growth. To appear at Annali della Scuola Normale Superiore.
\bibitem{BS1} Beck, L.; Schmidt, T.: On the Dirichlet problem for variational integrals in BV. J. Reine Angew. Math. 674 (2013), 113-194.
\bibitem{BS2} Beck, L. Schmidt, T.: Interior gradient regularity for BV minimizers of singular variational problems, Nonlinear Anal. 120 (2015), pp. 86-106.
%\bibitem{BBMS} Beck, L.; Bul\'{i}\v{c}ek, M.; Mal\'{e}k, J.; S\"{u}li, E.: On the existence of integrable solutions to nonlinear elliptic systems and variational problems with linear growth, Arch. Ration. Mech. Anal. 225 (2017), No.2, pp. 717-769.
\bibitem{Bernstein} Bernstein, S.: Sur les equations du calcul des variations.
Ann. Sci. Ecole Norm. Sup. 29 (1912), 431--485.
\bibitem{BiFu1} Bildhauer, M.; Fuchs, M.: On a class of variational integrals with linear growth satisfying the condition of $\mu$-ellipticity. Rend. Mat. Appl. (7) 22 (2002), 249-274 (2003).
\bibitem{Bi1} Bildhauer, M.: A priori gradient estimates for bounded generalised solutions of a class of variational problems with linear growth. J. Convex Ana. 9 (2002), 117--137.
\bibitem{Bi2} Bildhauer, M.: Convex variational problems. Linear, nearly linear and anisotropic growth conditions. Lecture Notes in Mathematics, 1818. Springer-Verlag, Berlin, 2003. x+217 pp.
%\bibitem{BD} Breit, D.; Diening, L.: Sharp conditions for Korn inequalities in Orlicz spaces. J. Math. Fluid Mech. 14 (2012), no. 3, 565--573.
\bibitem{BDG} Breit, D.; Diening, L.; Gmeineder, F.: On the trace operator for functions of bounded $\A$-variation. To appear at Anal. PDE.
\bibitem{CKP} Carozza, M.; Kristensen, J.; Passarelli Di Napoli, A.: Higher differentiability of minimizers of convex variational integrals. Annales de l'Institut Henri Poincare (C) Analyse Non Lineaire issue 3 (2011), volume 28, pp. 395-411. 
\bibitem{Cianchi} Cianchi, A.: Korn type inequalities in Orlicz spaces. Journal of Functional Analysis 267 (2014), 2313--2352.
\bibitem{CFM} Conti, S., Faraco, D., Maggi, F.: A new approach to counterexamples to $\lebe^{1}$--estimates: Korn's inequality, geometric rigidity, and regularity for gradients of separately convex functions. Arch. Rat. Mech. Anal. 175, 287--300 (2005).
\bibitem{CMS} Christiansen, E.; Matthies, H.; Strang, G.: The saddle point of a differential program. Energy methods in finite element analysis, pp. 309-318, Wiley, Chichester, 1979.
\bibitem{DeGiorgi} De Giorgi, E.: Frontiere orientate di misura minima. (Italian) Seminario di Matematica della Scuola Normale Superiore di Pisa, 1960-61 Editrice Tecnico Scientifica, Pisa 1961, 57 pp.
%\bibitem{Demengel} Demengel, F.; Temam, R.: Convex functions of a measure and applications. Indiana Univ. Math. J. 33 (1984), no. 5, 673-709. 
\bibitem{Du1} Duzaar, F.; Grotowski, J.F.: Optimal interior partial regularity for nonlinear elliptic systems: the method of A-harmonic approximation. Manuscripta Math. 103 (2000), no. 3, 267--298.
\bibitem{Du3}  Duzaar, F.; Grotowski, J.F.; Kronz, M.: Regularity of almost minimizers of quasi-convex variational integrals with subquadratic growth. Ann. Mat. Pura Appl. (4) 184 (2005), no. 4, 421--448.
\bibitem{Du4} Duzaar, F.; Mingione, G.: Regularity for degenerate elliptic problems via $p$-harmonic approximation, Ann. Inst. Henri Poincar\'{e}, Anal. Non Lin\'{e}aire 21 (2004), no. 5, 735--766.
\bibitem{Du5} Duzaar, F.; Mingione, G.: The $p$-harmonic approximation and the regularity of $p$-harmonic maps,
Calc. Var. Partial Differ. Equ. 20 (2004), 235--256.
\bibitem{Ekeland} Ekeland, I.: On the variational principle. J. Math. Anal. Appl., 47 (1974), 324--353.
\bibitem{ELM} Esposito, L.; Leonetti, F.; Mingione, G.: Higher integrability for minimizers of integral functionals with $(p,q)$-growth. J. Differ. Equations 157 (1999), 414--438.
\bibitem{EvansPR} Evans, L.C.: Quasiconvexity and partial regularity in the calculus of variations. Arch. Rational Mech. Anal. 95 (1986), no. 3, 227--252.
\bibitem{Evans} Evans, L.C.: Partial differential equations. Second edition. Graduate Studies in Mathematics, 19. American Mathematical Society, Providence, RI, 2010.
\bibitem{Finn} Finn, R.: Remarks relevant to minimal surfaces and to surfaces of prescribed mean curvature, J. Anal. Math. 14 (1965), 139--160.
\bibitem{FuMi} Fuchs, M.; Mingione, G.: Full $\hold^{1,\alpha}$-regularity for free and constrained
local minimizers of elliptic variational integrals with nearly linear growth.
Manus. Math. 102 (2000), 227--250.
\bibitem{FS1} Fuchs, M.; Seregin, G.: A regularity theory for variational integrals with $L ln L$-growth. Calc. Var. Partial Differential Equations 6 (1998), no. 2, 171--187.
\bibitem{FS2} Fuchs, M.; Seregin, G.: Variational methods for problems from plasticity theory and for generalized Newtonian fluids. Lecture Notes in Mathematics, 1749. Springer-Verlag, Berlin, 2000. vi+269 pp.
\bibitem{Giaquinta} Giaquinta, M.: Multiple integrals in the calculus of variations and nonlinear elliptic systems. Annals of Mathematics Studies, 105. Princeton University Press, Princeton, NJ, 1983.
\bibitem{GiaMod} Giaquinta, M.; Modica, G.: Partial regularity of minimizers of quasiconvex integrals, Ann. Inst. Henri Poincar\'{e}, Anal. Non Lin\'{e}aire 3 (1986), 185--208.
\bibitem{GMSI} Giaquinta, M.; Modica, G.; Soucek, J.: Functionals with linear growth in the calculus of variations. I, II. Comment. Math. Univ. Carolin. 20 (1979), no. 1, 143-156, 157-172.
\bibitem{Giusti} Giusti, E.: Direct methods in the calculus of variations. World Scientific Publishing Co., Inc., River Edge, NJ, 2003.
\bibitem{G1} Gmeineder, F.: Symmetric-Convex Functionals of Linear Growth. J. Ell. and Par. Equations (2), 2016, issues 1-2, p. 59-71.
\bibitem{G3} Gmeineder, F.: Partial regularity for symmetric quasiconvex functionals. ArXiv Preprint, arXiv:1903.08639.
\bibitem{GK1} Gmeineder, F.; Kristensen, J.: Sobolev regularity for convex functionals on $\bd$. J. Calc. Var. (2019) 58:56. 
\bibitem{GK2} Gmeineder, F.; Kristensen, J.: Partial regularity for BV minimizers. Arch. Ration. Mech. Anal., June 2019, Volume 232, Issue 3, pp 1429--1473.
\bibitem{GRVS} Gmeineder, F.; Raita, B.; Van Schaftingen, J.: On limiting trace inequalities for vectorial differential operators. ArXiv Preprint, arXiv:1903.08633.
\bibitem{GoSe} Goffman, C.; Serrin, J.: Sublinear functions of measures and variational integrals, Duke Math. J. 31 (1964), 159--178. 
%\bibitem{Hajlasz} Hajlasz, P.: On approximate differentiability of functions with bounded deformation. Manuscripta Math. 91 (1996), no. 1, 61--72.
\bibitem{KaWoj} Kazaniecki, K.; Stolyarov, D.M.; Wojciechowski, M.: Anisotropic Ornstein non inequalities, Anal. PDE 10 (2017) 351--366.
\bibitem{KiKr} Kirchheim, B.; Kristensen, J.: On rank one convex functions that are homogeneous of degree one. Arch. Ration. Mech. Anal. 221 (2016), no. 1, 527--558.
\bibitem{Kohn1} Kohn, R.V.: New Estimates for Deformations in Terms of their Strains. Ph.D. Thesis, Princeton Univ., 1979.
\bibitem{Kohn2} Kohn, R.V.: New integral estimates for deformations in terms of their nonlinear strains. Arch. Rational Mech. Anal. 78 (1982), no. 2, 131--172.
\bibitem{KrMi1} Kristensen, J.; Mingione, G.: The singular set of minima of integral functionals. Arch. Ration. Mech. Anal. 180 (2006), no. 3, 331--398.
\bibitem{KrMi2}  Kristensen, J.; Mingione, G.: The singular set of $\omega$-minima. Arch. Ration. Mech. Anal. 177 (2005), no. 1, 93--114.
\bibitem{KrMi3} Kristensen, J.; Mingione, G.: Non-differentiable functionals and singular sets of minima. C. R. Math. Acad. Sci. Paris 340 (2005), no. 1, 93--98.
\bibitem{LU} Ladyzhenskaya, O.A.; Ural'ceva, N.N.: Local estimates for gradients of solutions of non-uniformly elliptic and parabolic equations. Comm. on Pure and Appl. Math. 83(1970), 677-703.
\bibitem{Mar0} Marcellini, P.; Sbordone, C.: On the existence of minima of multiple integrals of the calculus of variations,
J. Math. Pures Appl., IX. S\'{e}r. 62 (1983), 1--9.
\bibitem{Mar1} Marcellini, P.: Regularity of minimizers of integrals of the calculus of variations with non standard growth conditions. Arch. Ration. Mech. Anal. 105 (1989), 267--284.
\bibitem{Mar2} Marcellini, P.: Regularity and existence of solutions of elliptic equations with $p,q$-growth conditions. J. Differ. Equations 90 (1991), 1--30.
\bibitem{Mingione0}  Mingione, G.: The singular set of solutions to non-differentiable elliptic systems. Arch. Ration. Mech. Anal. 166 (2003), no. 4, 287-301.
\bibitem{Mingione1} Mingione, G.: Regularity of minima: an invitation to the dark side of the calculus of variations. Appl. Math. 51 (2006), no. 4, 355--426. 
\bibitem{Mingione2} Mingione, G.: Singularities of minima: a walk on the wild side of the calculus of variations. (English summary) J. Global Optim. 40 (2008), no. 1-3, 209-223. 
\bibitem{Mingione3}  Mingione, G.: Bounds for the singular set of solutions to non linear elliptic systems. Calc. Var. Partial Differential Equations 18 (2003), no. 4, 373--400.
\bibitem{Ornstein} Ornstein, D.: A non-equality for differential operators in the 
$\lebe_{1}$--norm. Arch. Rational Mech. Anal. 11 (1962) 40--49.
\bibitem{Reshetnyak1} Reshetnyak, Yu. G.: Weak convergence of completely additive vector functions on a set. Siberian Mathematical Journal (1968), Volume 9, Issue 6, 1039-1045. 
\bibitem{Reshetnyak2} Reshetnjak, Yu. G.: Estimates for certain differential operators with finite-dimensional kernel. (Russian) Sibirsk. Mat. Z. 11 1970 414--428.
\bibitem{Santi} Santi, E.: Sul problema al contorno per l'equazione delle superfici di area minima su domini limitati qualqunque, Ann. Univ. Ferrara, N. Ser., Sez. VII 17 (1972), 13-26. 
\bibitem{Schmidt1} Schmidt, T.: Partial regularity for degenerate variational problems and image restoration models in BV. Indiana Univ. Math. J. 63, 213--279 (2014).  
\bibitem{Schmidt2} Schmidt, T.: BV Minimizers of Variational Integrals: Existence, Uniqueness, Regularity.
Habilitationsschrift (postdoctoral thesis), Friedrich-Alexander-Universit\"{a}t Erlangen-N\"{u}rnberg, 86 pp. (2015).
\bibitem{Seregin1} Seregin, G.: Variation-difference schemes for problems in the mechanics of ideally elastoplastic media, U.S.S.R. Comput. Math. Math. Phys. 25 (1985), 153--165; translation from Zh. Vychisl. Mat. Mat. Fiz. 25 (1985), 237--253.
\bibitem{Seregin2} Seregin, G.: Differential properties of solutions of variational problems for functionals of linear growth, J. Sov. Math. 64 (1993), 1256--1277; translation from Probl. Mat. Anal. 11 (1990), 51--79.
\bibitem{Seregin3} Seregin, G.: Two-dimensional variational problems of the theory of plasticity, Izv. Math. 60 (1996), 179--216; translation from Izv. Ross. Akad. Nauk, Ser. Mat. 60 (1996), 175--210.
\bibitem{Seregin4} Seregin, G.: Differentiability properties of weak solutions of certain variational problems in the theory of perfect elastoplastic plates. Appl. Math. Optim. 28 (1993), 307--335. 
\bibitem{Serrin} Serrin, J.: The problem of Dirichlet for quasilinear elliptic differential equations with many independent variables. Phil. Trans, of the Royal Soc. of London. Series A, n. 1153 vol. 264(1969), 413-496.
\bibitem{Smith} Smith, K. T.: Formulas to represent functions by their derivatives. Math. Ann. 188 1970 53--77.
\bibitem{ST} Strang, G.;  Temam, R.: Functions of bounded deformation. Arch. Rational Mech. Anal. 75 (1980/81), no. 1, 7--21.
\bibitem{Strauss} Strauss, M.J.: Variations of Korn's and Sobolev's equalities. Partial differential equations (Proc. Sympos. Pure Math., Vol. XXIII, Univ. California, Berkeley, Calif., 1971), pp. 207--214. Amer. Math. Soc., Providence, R.I., 1973.
\bibitem{Suquet} Suquet, P--M.: Sur un nouveau cadre fonctionnel pour les \'{E}quations de la plasticit\'{e}. (French) C. R. Acad. Sci. Paris S\'{e}r. A-B 286 (1978), no. 23, A1129--A1132.
\bibitem{Temam} Temam, R.: Problemes math\'{e}matiques en plasticit\'{e}. Paris: Gauthier--
Villars, 1985.
\bibitem{Uhlenbeck} Uhlenbeck, K.: Regularity for a class of nonlinear elliptic systems, Acta Math. 138 (1977), 219--240.
\bibitem{Uralceva} Ural'ceva. N.N., Quasilinear degenerate elliptic systems. Leningrad Odtel. Mat. Inst. Steklov (LOMI) 7 (1968), 184--222 (in Russian).
\end{thebibliography}
\end{document}